\newcommand{\proquestmode}{}
\title{Efficient Representations of Signals in \\ Nonlinear~%
Signal~Processing \\ with Applications to Inverse Problems}
\author{Eugene Brevdo}
\newcommand{\chattr}[1]{{%
\footnote{#1}
}}
\newcommand \listoftodos{\section*{Todo list} \@starttoc{tdo}}
\newcommand\l@todo[2]
\noindent \textit{#2}, \parbox{10cm}{#1}\par} \makeatother
\newcommand{\abs}[1]{\left\lvert#1\right\rvert}
\newcommand{\norm}[1]{\left\lVert#1\right\rVert}
\newcommand{\ceil}[1]{\left\lceil #1 \right\rceil}
\newcommand{\floor}[1]{\left\lfloor #1 \right\rfloor}
\newcommand{\ip}[2]{\left\langle {#1},{#2} \right\rangle}
\newcommand{\grad}[0]{\nabla}
\newcommand{\lap}[0]{\Delta}
\newcommand{\tr}[1]{\text{tr}\set{#1}}
\newcommand{\sinc}[0]{\text{sinc}}
\newcommand{\supp}[1]{\text{supp}\left(#1\right)}
\newcommand{\diag}[1]{\text{diag}\left(#1\right)}
\newcommand{\wh}[1]{\widehat{#1}}
\newcommand{\wt}[1]{\widetilde{#1}}
\newcommand{\ol}[1]{\overline{#1}}
\newcommand{\ul}[1]{\underline{#1}}
\newcommand{\pd}[2]{\frac{\partial #1}{\partial #2}}
\renewcommand*{\Im}[1]{\operatorname{\mathfrak{Im}}{\left( #1 \right)}}
\renewcommand*{\Re}[1]{\operatorname{\mathfrak{Re}}{\left( #1 \right)}}
\def \x     {{\times}}
\def \bbC     {{\mathbb C}}
\def \bbI     {{\mathbb I}}
\def \bbR     {{\mathbb R}}
\def \cA     {{\mathcal A}}
\def \cB     {{\mathcal B}}
\def \cC     {{\mathcal C}}
\def \cD     {{\mathcal D}}
\def \cE     {{\mathcal E}}
\def \cF     {{\mathcal F}}
\def \cH     {{\mathcal H}}
\def \cJ     {{\mathcal J}}
\def \cL     {{\mathcal L}}
\def \cM     {{\mathcal M}}
\def \cN     {{\mathcal N}}
\def \cP     {{\mathcal P}}
\def \cR     {{\mathcal R}}
\def \cS     {{\mathcal S}}
\def \cW     {{\mathcal W}}
\def \cX     {{\mathcal X}}
\def \cY     {{\mathcal Y}}
\def \sfN    {{\mathsf N}}
\def \sfS    {{\mathsf S}}
\def \bdM    {{\partial \cM}}
\def \cAp     {{\mathcal A'}}
\def \bn     {\bm{n}}
\def \bv     {\bm{v}}
\def \bp     {{\bar{p}}}
\def \bd     {{\bar{d}}}
\def \bcD     {{\bar{\cD}}}
\def \td     {{\tilde{d}}}
\def \tf     {{\tilde{f}}}
\def \tg     {{\tilde{g}}}
\def \tr     {{\tilde{r}}}
\def \tp     {{\tilde{p}}}
\def \tG     {{\wt{G}}}
\def \tT     {{\wt{T}}}
\def \tW     {{\wt{W}}}
\def \tnu     {{\wt{\nu}}}
\def \tcP    {{\wt{\cP}}}
\def \hI     {\hat{I}}
\def \Ae     {{A_\epsilon}}
\def \De     {{D_\epsilon}}
\def \Le     {{L_\epsilon}}
\def \Ne     {{N_\epsilon}}
\def \Pe     {{P_\epsilon}}
\def \tLe    {{\tilde{L}_\epsilon}}
\def \hDe    {{\hat{D}_\epsilon}}
\def \hPe    {{\hat{P}_\epsilon}}
\def \bs     {\backslash}
\newcommand{\set}[1]{\left\{#1\right\}}
\newcommand{\prn}[1]{\left(#1\right)}
\def \downto {{\downarrow}}
\DeclareMathOperator*{\Cut}{Cut}
\DeclareMathOperator*{\argmax}{argmax}
\DeclareMathOperator*{\argmin}{argmin}
\DeclareMathOperator*{\sign}{sign}
\DeclareMathOperator*{\med}{med}
\DeclareMathOperator*{\maximize}{maximize}
\newcommand{\beq}{\begin{equation}}
\newcommand{\eeq}{\end{equation}}
\newcommand{\pmat}[1]{\begin{pmatrix}#1\end{pmatrix}}
  \theoremstyle{plain}
  \newtheorem{thm}{Theorem}[section]
  \newtheorem{lem}[thm]{Lemma}
  \newtheorem{prop}[thm]{Proposition}
  \newtheorem{cor}[thm]{Corollary}
  \theoremstyle{definition}
  \newtheorem{defn}{Definition}[section]
  \newtheorem{exmp}{Example}[section]
  \theoremstyle{remark}
  \newtheorem*{rem}{Remark}
  \newtheorem{case}{Case}
\newtheorem \undefined
    \newtheorem{thm}[theorem]{Theorem}
    \newtheorem{lem}[lemma]{Lemma}
    \newtheorem{prop}[theorem]{Proposition}
    \newtheorem{cor}[theorem]{Corollary}
    \newtheorem{defn}[definition]{Definition}
    \newtheorem{exmp}{Example}[theorem]
\abstract{

The focus of this thesis is the construction and analysis of
efficient representations in nonlinear signal processing, and the
applications of these structures to inverse problems in a variety of
fields.  The work is composed of three major sections, each associated
with a different form of data:

\begin{itemize}
\item Regression and Distance Estimation on Graphs and Riemannian Manifolds.
\item Instantaneous Time-Frequency Analysis via Synchrosqueezing.
\item Multiscale Dictionaries of Slepian Functions on the Sphere.
\end{itemize}

}
\begin{document}

\makefrontmatter


\chapter{Introduction\label{ch:intro}}

This chapter introduces the main topics covered in this thesis, and
the major tools and ideas that link these topics.  The discussion is
philosophical and purely motivational, and should not be considered
mathematically rigorous or exhaustive.  Chapters \ref{ch:npdr},
\ref{ch:rl}, \ref{ch:ss}, and \ref{ch:sltr} contain their own relevant
introductory discussions, descriptions of prior work, and literature
reviews.

An inverse problem is a general question of the following form:

\begin{center}
\framebox[\linewidth]{\parbox{.92\linewidth}{%
You are given observed measurements $Y=F(X)$, where $F : D \to R$,
is a mapping from domain $D$ to range $R$.  Provide an estimate of
a physical object or underlying set of parameters~$X$.
}}
\end{center}

\;

Inverse problems encompass a variety of types of questions in
many fields, from geophysics and medical imaging to
computer graphics.  In a high level sense, all of the following
questions are inverse problems:
\begin{itemize}
\item Given the output of a CT scanner (sinogram), reconstruct the 3D
  volumetric image of the patient's chest.
\item Given noisy and limited satellite observations of the magnetic
  field over Australia, estimate the magnetic field everywhere on the
  continent.
\item Given the lightcurve of a periodic dwarf star, extract the
  significant slowly time-varying modes of oscillation.  Determine
  which modes are associated with the rotation of the star, which are
  caused by transiting giant planets, and which are caused by one or
  more earth-sized planets.
\item Given a spline curve corrupted by additive Gaussian noise,
  identify its order, knot positions, and the parameters at each knot.
\item Given a social network graph with a variety of both known and
  unknown parameters at each node, and given a sample of
  known zip codes on a subset of the graph,
  estimate the zip codes of the rest of the nodes.
\end{itemize}

Of the questions above, only the first two are traditionally
considered to be inverse problems because they have a clear
physical system $F(\cdot)$ mapping inputs to outputs, and the domain
and range are also clear from the physics of the problem.
Nevertheless, all of these problems can
be formulated as general inverse problems: a domain and range are
given, as are observations, and the simplest, most accurate, or
most statistically likely parameters to match the observations are
required.
This basic premise encourages the ``borrowing'' and mixing
of ideas from many fields of mathematics and engineering: graph
theory, statistics, functional analysis, differential equations, and
geometry.

This thesis focuses on finding representations of the data $X$, or
of the underlying transform $F(\cdot)$, that lead to either simpler or
more robust ways to solve particular inverse problems, or that provide
a new perspective for existing solutions.

We especially focus on finding representations within domains $D$
which are important in fields such as medical imaging and geoscience,
or are becoming important in the signal processing community due to
the recent explosion of ``high dimensional'' data sets.  These domains are:
\begin{itemize}
\item General compact Riemannian manifolds.
\item Large graphs (especially nearest neighbor graphs from point samples of
  Riemannian manifolds embedded in $\bbR^p$).
\item The Sphere $S^2$.
\end{itemize}
A short technical description of Riemannian manifolds, and a set of
references, is given in App.~\ref{app:diffgeom}.  A description of
(Nearest Neighbor) graphs can be found in \S\ref{sec:prelim}.

The central tool that brings together all of the problems,
constructions, and insights in our work is the Laplacian.
In a way, the thrust of this thesis is the importance and flexibility
of the Laplacian as a data analysis and problem solving tool.  Roughly
speaking, the Laplacian is defined as follows:

\begin{center}
\framebox[\linewidth]{\parbox{.9\linewidth}{
Given a domain $D$, with measure $\mu$ and knowing for any
  point $x \in D$ its neighborhood $N(x)$, the Laplacian $L$ is a
  linear operator that, for any well defined
  function $f(x)$, subtracts some weighted average of $f(\cdot)$ in
  $N(x)$ from its value at~$x$: 
  $$[Lf](x) = f(x) - \int_{x' \in N(x)} w(x,x') f(x') d\mu(x'),$$
  where the weights are determined by the intrinsic relationship
  (e.g., distance) between $x$ and $x'$ on $D$.
}}
\end{center}

\;

The Laplacian is a powerful tool because it synthesizes local
information at each point $x$ in domain $D$ into global information
about the domain; and this in turn can be used to analyze functions
and other operators on this domain.  This statement is formalized in
the classical language of Fourier analysis and the recent work of
Mikhail Belkin, Ronald Coifman, Peter Jones, and~Amit~Singer~%
\cite{Belkin2008th,Coifman2006,Jones2008,Singer2011}:
the eigenfunctions of the Laplacian are a powerful tool for
both the analysis of points in $D$ and functions defined on $D$.

Throughout this thesis, we use and study a variety of Laplacians, each
for a slightly different purpose:
\begin{itemize}
\item The weighted graph Laplacian: its ``averaging'' component in
  Chapter~\ref{ch:npdr} and its regularized inverse in Chapters~%
  \ref{ch:rl}~and~\ref{ch:rlapp}.
\item The Laplace-Beltrami on a compact Riemannian manifold in
  Chapters~\ref{ch:rl}~and~\ref{ch:rlapp}.
\item The Fourier transform (projections on the eigenfunctions of the
  Laplacian~on~$\bbR$) in Chapters~\ref{ch:ss}~and~%
  \ref{ch:ssapp}.
\item Spherical harmonics (eigenfunctions of the Laplace-Beltrami
  operator~on~$S^2$) and their use in analyzing bandlimited functions
  on the sphere in Chapter \ref{ch:sltr}.
\end{itemize}
The definition and properties of the weighted graph Laplacian are
given in \S\ref{sec:ssllimit}.  A definition of the Laplacian for a
compact Riemannian manifold is given in App.~\ref{app:diffgeom}, with
specific definitions for the real line and the sphere in
App.~\ref{app:fourier}.  This appendix also contains a
comprehensive discussion of Fourier analysis on Riemannian manifolds,
and on the Sphere in particular.

Though the definition of the Laplacian differs depending on the
underlying domain $D$, all Laplacians are intricately linked: the
Laplacian on a domain $D$ converges to that on $D'$ as
the former domain converges to the latter, or when one is a special
case of the other.  We study these relationships when they are relevant
(e.g.,~in~\S\ref{sec:ssllimit}).

The term Nonlinear in the title of this thesis refers to the ways in
which our constructions use properties of the Laplacian, or of its
eigenfunctions, to represent~data:
\begin{itemize}
\item In Chapter \ref{ch:npdr}, the regularized inverse of the graph
  Laplacian is used to denoise graphs (this is not directly clear from
  the context but see \S\ref{sec:geonpdr} for a discussion).
\item In Chapters \ref{ch:rl} and \ref{ch:rlapp}, the regularized
  inverse of both the graph Laplacian, and the Laplace Beltrami
  operator, are analyzed; and a special ``trick'' of taking the
  nonlinear logarithm is used to both perform regression and to
  estimate geodesic~distances.
\item In Chapters \ref{ch:ss} and \ref{ch:ssapp}, the Fourier
  transform of the Wavelet representation of harmonic signals is used
  to motivate a time localized estimate of amplitude and frequency.
  This is followed up by a nonlinear reassignment
  of~the~Wavelet~representation.
\item In Chapter \ref{ch:sltr}, we construct a multi-scale dictionary
  composed of bandlimited Slepian functions on the sphere (which are
  in turn constructed via Fourier analysis).  This dictionary is then
  combined with nonlinear ($\ell_1$) methods for signal~estimation.
\end{itemize}

We hope that the underlying theme of this thesis is now clear: as the
movie \emph{Manhattan} is Woody Allen's ode to the city of New
York, the work herein is a testament to the flexibility and power
of the Laplacian.

Without further ado, we now describe in detail the focus of the
individual chapters of this thesis.

\section{Denoising and Inference on Graphs}
Many unsupervised and semi-supervised learning problems contain
relationships between sample points that can be modeled with a
graph.  As a result, the weighted adjacency matrix of a graph, and
the associated graph Laplacian, are often used to solve such
problems.  More specifically, the spectrum of the graph Laplacian,
and its regularized inverse, can both be used to determine
relationships between observed data points (such as
``neighborliness'' or ``connectedness''), and to perform regression
when a partial labeling of the data points is available.  Chapters
\ref{ch:npdr}, \ref{ch:rl}, and \ref{ch:rlapp} study the inverse
regularized graph Laplacian.

Chapter \ref{ch:npdr} focuses on the unsupervised problem of detecting
``bad'' edges, or bridges, in a graph constructed with erroneous
edges.  Such graphs arise, e.g., as nearest neighbor graphs from
high dimensional points sampled under noise.  A novel bridge detection
rule is constructed, based on Markov random walks with restarts,
that robustly identifies bridges.  The detection rule uses the
regularized inverse of the graph's Laplacian matrix, and its
structure can be analyzed from a geometric point of view under certain
assumptions of the underlying sample points.  We compare this
detection rule to past work and show its improved performance
as a preprocessing step in the estimation of geodesic distances on the
underlying graph, a global estimation problem.  We also show its superior
performance as a preprocessing tool when solving the random projection
computational tomography inverse problem.

Chapter \ref{ch:rl} studies a closely related problem, that of
performing regression on points sampled from a high dimensional space,
only some of which are labeled.  We focus on the common case when the
regression is performed via the nearest neighbor graph of the points,
with ridge and Laplacian regularization.
This common solution approach reduces to a
matrix-vector product, where the matrix is the regularized inverse of
the graph Laplacian, and the vector contains the partially known label
information.

In this chapter, we focus on the geometric aspects of
the problem. First, we prove that in the noiseless, low
regularization case, when the points are sampled from a smooth,
compact, Riemannian manifold, the matrix-vector product converges to a
sampling of the solution to a elliptic PDE. We use the theory of
viscosity solutions to show that in the low regularization case, the
solution of this PDE encodes geodesic distances between labeled points
and unlabeled ones.  This geometric PDE framework provides key
insights into the original semisupervised regression problem, and into
the regularized inverse of the graph Laplacian matrix in general.

Chapter \ref{ch:rlapp} follows on the theoretical analysis in Chapter
\ref{ch:rl} by displaying a wide variety of applications for the
Regularized Laplacian PDE framework.  The contributions of this
chapter include:
\begin{itemize}
\item A new consistent geodesics distance estimator on Riemannian
  manifolds, whose complexity depends only on the number of sample
  points $n$, rather than the ambient dimension $p$ or the manifold
  dimension $d$.
\item A new multi-class classifier, applicable to high-dimensional
  semi-supervised learning problems.
\item New explanations for negative results in the machine learning
  literature associated with the graph Laplacian.
\item A new dimensionality reduction algorithm called Viscous ISOMAP.
\item A new and satisfying interpretation for the bridge detection
  algorithm constructed in Chapter \ref{ch:npdr}.
\end{itemize}

\section{Instantaneous Time-Frequency Analysis}

Time frequency analysis in the form of the Fourier transform, short
time Fourier transform (STFT), and their discrete variants (e.g. the power
spectrum), have long been standard tools in science, engineering, and
mathematical analysis.  Recent advances in time-frequency
reassignment, wherein energies in the magnitude plot of the STFT are
shifted in the time-frequency plane, have found application in
``sharpening'' images of, e.g., STFT magnitude plots, and have been
used to perform ridge detection and other types of time- and
frequency-localized feature detection in time series.

Chapters \ref{ch:ss} and \ref{ch:ssapp} focus on a novel
time-frequency reassignment transform, Synchrosqueezing, which can be
constructed to work ``on top of'' many invertible transforms (e.g.,
the Wavelet transform or the STFT).  As Synchrosqueezing is itself an
invertible transform, it can be used to filter and denoise
signals.  Most importantly, it can be used to extract
individual components from superpositions of ``quasi-harmonic''
functions: functions that take the form $A(t) e^{i \phi(t)}$, where
$A(t)$ and $d\phi(t)/dt$ are slowly varying.  

Chapter \ref{ch:ss} focuses on two aspects of Synchrosqueezing.
First, we develop a fast new numerical implementation of Wavelet-based 
Synchrosqueezing.  This implementation, and other useful utilities,
have been included in the Synchrosqueezing MATLAB toolbox.  Second, 
we present a stability theorem, showing that Synchrosqueezing
can extract components from superpositions of quasi-harmonic signals
when the original observations have been corrupted by 
bounded perturbations (of the form often encountered
during the pre-processing of signals).

Chapter \ref{ch:ssapp} builds upon the work of the previous
chapter to develop novel applications of Synchrosqueezing.  We present
a wide variety of problem domains in which the Synchrosqueezing
transform is a powerful tool for denoising, feature extraction, and
more general scientific analysis.  We especially focus on estimation
in inverse problems in medical signal processing and the geosciences.
Contributions include: 
\begin{itemize}
\item The extraction of respiratory signals from ECG
  (Electrocardiogram) signals.
\item Precise new analyses of paleoclimate simulations (solar
  insolation models), individual paleoclimate proxies, and proxy
  stacks in the last 2.5 Myr.  These results are compared to
  Wavelet- and STFT-based analyses, which are less precise and harder to
  interpret.
\end{itemize}

\section{Multiscale Dictionaries of Slepian Functions on the Sphere}

Just as audio signals and images are constrained by the physical
processes that generate them, and by the sensors that observe them, so
too are many geophysical and cosmological signals, which reside on the
sphere $S^2$.  On the real line and in the plane, both physical
constraints and sampling constraints lead to assumptions of a
bandlimit: that a signal contains zero energy outside some supporting
region in the frequency domain.  Similarly, bandlimited signals on the
sphere are zero outside the low-frequency spherical harmonic
components.

In Chapter \ref{ch:sltr}, following up on Claude Shannon's initial
investigations into sampling, we describe Slepian, Landau, and
Pollak's spatial concentration problem on subsets of the real line,
and the resulting Slepian functions.  The construction of Slepian
functions have led to many important modern algorithms for the
inversion of bandlimited signals from their samples within an
interval, and especially Thompson's multitaper spectral estimator.  We
then study Simons and Dahlen's extension of these results to subsets
of the Sphere, where now the definition of frequency and bandlimit has
been appropriately modified.

Building upon these results, we develop an algorithm for the
construction of dictionary elements that are bandlimited, multiscale,
and localized.  Our algorithm is based on a subdivision scheme that
constructs a binary tree from subdivisions of the region of interest
(ROI).  We show, via numerous examples, that this dictionary has many
nice properties: it closely overlaps with the most concentrated
Slepian functions on the ROI, and most element pairs have low
coherence.

The focus of this construction is to solve ill-posed inverse problems
in geophysics and cosmology.  Though the new dictionary is no longer
composed of purely orthogonal elements like the Slepian basis, it can
be combined with modern inversion techniques that promote sparsity in
the solution, to provide significantly lower residual error after
reconstruction (as compared to classically optimal Slepian inversion
techniques).

We provide additional numerical results showing the solution path that
these techniques take when combined with the multiscale dictionaries,
and their efficacy on a standard model of the Earth's magnetic field,
POMME-4.  Finally, we show via randomized trials that the combination
of the multiscale construction and $\ell_1$-based estimation provides
significant improvement, over the current state of the art, in the
inversion of bandlimited white and pink random processes within
subsets of the sphere.

\chapter{Graph Bridge Detection via Random Walks%
\chattr{This chapter is based on work in collaboration with
Peter~J.~Ramadge, Department of Electrical Engineering,
Princeton~University.  A preliminary version appears in
\cite{Brevdo2010}.}}
\label{ch:npdr}

\section{Introduction}
Many new problems in machine learning and signal processing require
the robust estimation of geodesic distances between nodes of a nearest
neighbors (NN) graph.  For example, when the nodes represent points
sampled from a manifold, estimating feature space distances between
these points can be an important step in unsupervised
\cite{Tenenbaum2000} and semi-supervised \cite{Belkin2006} learning.
This problem often reduces to that of having accurate estimates of
each point's neighbors, as described below.

In the simplest approach to estimating geodesic distances,
the NN graph's edges are estimated from either the $k$ nearest
neighbors around each point, or from all of the neighbors
within an ambient (Euclidean) $\delta$-ball around each point.  Each
graph edge is then assigned a weight: the ambient distance between its
nodes.  A graph shortest path (SP) algorithm, e.g. Dijkstra's
\cite[\S24.3]{Cormen2001}, is then used to estimate geodesic distances
between pairs of points.

When the manifold is sampled with noise, or contains outliers,
bridges (short circuits between distant parts of the manifold) can
appear in the NN graph and this has a catastrophic effect on geodesics
estimation \cite{Balasubramanian2002}.

In this chapter, we develop a new approach for calculating point
``neighborliness'' from the NN graph.  This approach allows the robust
removal of bridges from NN graphs of manifolds sampled with noise.
This metric, which we call ``neighbor probability,'' is based on a
Markov Random walk with independent restarts.  The bridge decision
rule based on this metric is called the neighbor probability decision
rule (NPDR), and reduces to removing edges from the NN graph whose
neighbor probability is below a threshold.  We study some of the NPDR's
geometric properties when the number of samples grow large.  We also
compare the efficacy of the NPDR to other decision rules and show its
superior performance on removing bridges in simulated data, and in
the novel inverse problem of computational tomography with random
(and unknown) projection angles.

\section{Preliminaries}
\label{sec:prelim}

Let $\cX=\set{x_i}_{i=1}^n$ be nonuniformly sampled points from
manifold $\cM \subset \bbR^r$. We observe
$\cY=\set{y_i = x_i + \nu_i}_{i=1}^n$, where $\nu_i$ is noise.
A nearest neighbor (NN) graph $G = (\cY,\cE,d)$ is constructed 
from $k$-NN or $\delta$-ball neighborhoods of $\cY$
with the scale ($k$ or $\delta$)  chosen via cross-validation or
prior knowledge.  The map $d \colon \cE \to \bbR$ assigns cost $d_e =
\norm{x_k-x_l}_2$ to edge $e=(k,l) \in \cE$.  Let $\cD = \set{d_e : e \in
\cE}$.  The set $\cE$ gives initial estimates of neighbors
on the manifold. Let $\cF_k$ denote the neighbors of $x_k$~in~$G$.

In \cite{Tenenbaum2000} the geodesic distance between $(i,j) \in \cY^2$
is estimated by $\hat{g}_{ij} = \sum_{e \in \cP_{ij}} d_e$
where $\cP_{ij}$ is a minimum cost path from $i$ to $j$ in $G$
(this can be calculated via Dijkstra's algorithm).
When there is no noise, this estimate converges to the true geodesic
distance on $\cM$ as $n \to \infty$ and neighborhood size $\delta \to 0$.
However, in the presence of noise bridges form in the NN graph
and this results in significant estimation error. Forming
the shortest path in $G$ is too greedy in the presence of~bridges.

If bridges could be detected, their anomalous effect could be
removed without disconnecting the graph by substituting
a surrogate weight:
$\td_e = d_e + M, e \in \cB$ where $\cB$ is the set of detected
bridges and $M=n (\max_{e \in \cE} d_e)$, larger than the diameter of
$G$, is a penalty.
Let $\tG = (\cY,\cE,\td)$ and $\tcP_{ij}$ be
a minimum cost path between $i$ and $j$ in $\tG$.  The adjusted estimate
of geodesic distance is $\tg_{ij} = \sum_{e \in \tcP_{ij}} d_e$.

With this in mind, we first review some bridge detection methods, and
discuss recent theoretical work in random walks on graphs.

\section{Prior Work}

The greedy nature of the SP solution encourages the traversal
of bridges, thereby significantly underestimating geodesic distances.
Previous work has considered denoising the nearest neighbors
graph via rejection of edges based on local distance statistics
\cite{Chen2006,Singer2009}, or via local
tangent space estimation \cite{Li2008, CHANG2006}.  However, unlike
the method we propose (NPDR), these methods use local rather
that global statistics. We have found that using only local statistics
can be unreliable. For example, with state of the art robust
estimators of the local tangent space (as in \cite{Subbarao2006a}), local
rejection of neighborhood edges is not reliable with moderate noise or
outliers. Furthermore, edge removal (pruning) based on local edge
length statistics is based on questionable assumptions. For example, a
thin chain of outliers can form a bridge without unusually long edge
lengths.

As an example, we first describe the simplest class of bridge decision
rules (DRs): ones that classify bridges by a threshold on edge
length.  We call this the length decision rule (LDR); it is similar to the DR
of \cite{Chen2006}.  It is calculated with the following steps:
\begin{enumerate}
\item Normalize edge lengths for local sampling density by setting
$$\bd_{kl} = \frac{d_{kl}}{\sqrt{d_{k\cdot}d_{l\cdot}}},$$
where $d_{k\cdot} =\sum_{m \in \cF_k} d_{km}$ sums outgoing edge lengths from
$x_k$. 
\item Let $\bcD = \set{\bd_e : e \in \cE}$.
Select a ``good edge percentage'' $0 < q < 1$ (e.g. $99\%$) and
calculate the detected bridge set $\cB$ via:
$$\cB = \set{e \in \cE \colon  \bd_e \geq Q(\bcD,q)},$$
where $Q(\cD,q)$ is the $q$-th quantile of the set $\cD$. 
\end{enumerate}

The second decision rule, Jaccard similarity DR (JDR),
classifies bridges as edges between points with dissimilar
neighborhoods \cite{Singer2009}.  As opposed to the LDR,
the JDR uses information from immediate neighbors of two points to
detect bridges:
\begin{enumerate}
\item The Jaccard similarity between the neighborhoods of $x_l$ and
  $x_m$ is
$$j_{lm} = \frac{\abs{\cF_l \cap \cF_m}}{\abs{\cF_l \cup \cF_m}}.$$
\item Let $\cJ = \set{j_e : e \in \cE}$ be the set of Jaccard
  similarities.  Select a $q \in (0,1)$; the estimated bridge set is
$$\cB = \set{e \in \cE \colon j_e < Q(\cJ,1-q)}.$$
\end{enumerate}

We now describe a more global neighborliness metric.  The main
motivation is that bridges are short cuts for shortest paths. This
suggests detecting bridges by counting the traversals of each edge by
estimated geodesics.  This is the concept of edge centrality
(\emph{edge betweenness}) in networks \cite{Newman2004}.
In a network, the centrality of edge $e$ is
$$BE(e) = \sum_{(i,j) \in \cY^2} \bbI(e \in \cP_{ij}).$$
Edge centrality can be calculated in $O(n^2 \log n)$ time and
$O(n + \abs{\cE})$ space using algorithms of Newman or Brandes
\cite{Newman2004,Brandes2001}.
However, caution is required in using edge centrality for our purpose.
Consider a bridge $(a,b) \in \cE$ having high centrality. Suppose
there exists a point $y_c$ with $(a,c), (c,b)\in \cE$ such that
$d_{ac}+d_{cb} < d_{ab} + \delta'$,
$\delta'$ small. These edges are never preferred over $(a,b)$ in a
geodesic path, hence have low centrality.
However, once $(a,b)$ is placed into $\cB$ and given increased weight,
$(a,c),(c,b)$ reveals itself as a secondary bridge in $\tG$.
So detection by centrality must be done in
rounds, each adding edges to $\cB$. This allows
secondary bridges to be detected in subsequent rounds.
We now describe the Edge Centrality Decision Rule (ECDR):

Select quantile $q \in (0,1)$ and iterate the following steps $K$
times on $\tG$:
\begin{enumerate}
\item Calculate $BE(e)$ for each edge $e$.
\item Place $(1-q) n/K$ of the most central edges into $\cB$ and
update $\tG$.
\end{enumerate}

The result is a bridge set $\cB$ containing approximately $\lfloor
(1-q) n \rfloor$ edges, matching the $q$-th quantile sets of the previous
DRs.  The iteration count parameter $K$ trades off between
computational complexity (higher $K$ implies more iterations of edge
centrality estimation) and robustness (it also more likely to detect
bridges).  To our knowledge, the use of centrality as a bridge
detector is new.  While an improvement over LDR, the deterministic
greedy underpinnings of ECDR are a limitation: it initially fails to
see secondary bridges, and may also misclassify true high centrality
edges as bridges, e.g. the narrow path in the dumbbell manifold
\cite{Coifman2005}.

The Diffusion Maps approach to estimating feature space distances
\cite{Coifman2005}, has experimentally exhibited robustness to noise,
outliers, and finite sampling.  Diffusion distances, based on random
walks on the NN graph, are closely related to ``natural'' distances
(\emph{commute times}) on a manifold \cite{Ham2004}.  Furthermore,
Diffusion Maps coordinates (based on these distances) converge to
eigenfunctions of the Laplace Beltrami operator on the underlying
manifold.

The neighbor probability metric we construct is a global measure of edge
reliability based on diffusion distances.  The NPDR, based on this
metric, is then used to inform geodesic estimates.


\section{Neighbor Probability Decision Rule}
\label{sec:npdr}

We now propose a DR based on a Markov random walk that assigns
a probability that two points are neighbors.
This steps back from immediately looking for a shortest path
and instead lets a random walk ``explore'' the NN graph.
To this end, let $P$ be a row-stochastic matrix
with $P_{ij}=0$ if $i\neq j$ and $(i,j)\notin \cE$. Let $p \in
(0,1)$ be a parameter and $\bp = 1-p$.
Consider a random walk $s(m)$, $m = 0,1,\ldots$, on $G$ starting at $s(0)=i$
and governed by $P$. For each $t\geq 0$, with probability $p$ we stop the walk
and declare $s(m)$ a neighbor of $i$.
Let $N$ be the matrix of probabilities that the walk starting at node $i$,
stops at node $j$. The $i$-th row of $N$ is the neighbor distribution
of node $i$. This distribution can be calculated:
\begin{align}
\label{eq:pNN1} N_{ij} &= \sum_{m=0}^\infty \Pr(\text{stop at $m$}, y^{(m)}=y_j|y^{(0)}=y_i) \\
\label{eq:pNN2} &= \sum_{m=0}^\infty \Pr(\text{stop at $m$}) \Pr(y^{(m)}=y_j|y^{(0)}=y_i) \\
\label{eq:pNN3} &= \sum_{m=0}^\infty \Pr(\text{stop at $m$}) \left( P^m \right)_{ij} \\
\label{eq:pNN4} &= \sum_{m=0}^\infty p (1-p)^m \left( P^m \right)_{ij}.
\end{align}
In \eqref{eq:pNN1} we decompose the stopping event into the disjoint
events of stopping at time $t$.  In \eqref{eq:pNN2} we separate the
independent events of stopping from being at the current
position $y^{(m)}$.  In \eqref{eq:pNN3} we use the well known Markov
property $P_{ij}^{(m)} = \left( P^m \right)_{ij}$ (with $P^{(0)} =I$).
Finally, in \eqref{eq:pNN4} we recall that stopping at time $m$
means we choose not to stop, independently, for each $m'=0,\ldots,m-1$,
and finally stop at time $m$, so the probability of this event is
$(1-p)^m p$.  Thus, the neighbor probabilities are:
$$
N = \sum_{t\geq 0} p(1-p)^m P^m = p (I - \bp P)^{-1}.
$$
A smaller stopping probability $p$ induces greater weighting of long-time
diffusion effects, which are more dependent on the topology of $\cM$.
$N$ is closely related to
the \emph{normalized commute time} of \cite{Zhou2004}.
Its computation requires $O(n^3)$ time and $O(n^2)$ space ($P$ is sparse but $N$ may
not be).

The neighborhood probability matrix $N$ is heavily dependent on the
choice of underlying random walk matrix $P$.  First we relate several
key properties of $N$ to those of $P$, then we introduce a geometrically
intuitive choice of $P$ when the sample points are sampled from a
manifold.

\begin{lem}\label{lem:propN}
$N$ is row-stochastic, shares
the left and right eigenvectors of $P$, and has
spectrum $\sigma(N)=\{p(1-\bp \lambda)^{-1}$, $\lambda\in\sigma(P)\}$.
\end{lem}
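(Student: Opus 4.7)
The plan is to verify the three assertions in sequence, each of which reduces to a short manipulation once we recognize that $N = p(I-\bp P)^{-1}$ can equivalently be written as the convergent Neumann series $p\sum_{m\ge 0}(\bp P)^m$ from \eqref{eq:pNN4}. Since $P$ is row-stochastic, $\norm{P}_\infty = 1$, and because $\bp \in (0,1)$ the series converges absolutely; in particular $I-\bp P$ is invertible and $N$ has nonnegative entries.

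For row-stochasticity, I would apply both sides of $N(I-\bp P) = pI$ to the all-ones vector $\mathbf{1}$. Since $P\mathbf{1} = \mathbf{1}$, we get $(I-\bp P)\mathbf{1} = p\mathbf{1}$, hence $N\mathbf{1} = p(I-\bp P)^{-1}\mathbf{1} = \mathbf{1}$. Combined with $N \ge 0$ (from the Neumann series), this establishes that each row sums to one.

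For the shared eigenvectors and the spectral mapping, I would take any right eigenpair $(\lambda, v)$ of $P$, i.e.\ $Pv=\lambda v$, and compute
\begin{equation*}
(I-\bp P)v = (1-\bp\lambda)v,
\end{equation*}
which on inversion gives $Nv = p(1-\bp\lambda)^{-1}v$. The same manipulation applied on the left shows left eigenvectors are shared, with the identical eigenvalue map $\lambda \mapsto p(1-\bp\lambda)^{-1}$. Conversely, any eigenvalue $\mu$ of $N$ comes from some $\lambda$ of $P$ via this bijection, since the map $\lambda \mapsto p(1-\bp\lambda)^{-1}$ is invertible on the spectrum (no $\lambda$ equals $1/\bp$ because $\abs{\lambda}\le 1$ and $\bp<1$), yielding $\sigma(N) = \{p(1-\bp\lambda)^{-1} : \lambda \in \sigma(P)\}$.

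There is no real obstacle here; the only point that requires a moment of care is invertibility of $I-\bp P$, which I would justify cleanly via the spectral radius bound $\rho(\bp P) = \bp\,\rho(P) \le \bp < 1$ (row-stochasticity of $P$ forces $\rho(P)=1$). Everything else is a direct consequence of commuting polynomials in $P$ with the identity.
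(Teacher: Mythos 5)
Your proof is correct and follows essentially the same route as the paper, which simply observes that $N^{-1}=(I-\bar{p}P)/p$ is a linear polynomial in $P$, hence shares eigenvectors with $P$ and has rows summing to one, from which the stated properties of $N$ follow by inversion. You are somewhat more careful than the paper on the nonnegativity point: the paper loosely calls $N^{-1}$ ``row-stochastic'' even though it has negative off-diagonal entries, whereas your appeal to the Neumann series cleanly yields $N\ge 0$ along with invertibility of $I-\bar{p}P$.
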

\begin{proof}
Clearly, $N^{-1}=(I-\bp P)/p$ is row-stochastic, shares the left and right eigenvectors of $P$ and has
spectrum $\{(1-\bp\lambda)/p, \lambda\in\sigma(P)\}$. 
\end{proof}

Following \cite{Coifman2005}, we select $P$ to be the popular
Diffusion Maps kernel on $G$:
\begin{enumerate}
\item Choose $\epsilon > 0$ (e.g. $\epsilon = \med_{e\in\cE}d_e/2$)
and let $(\hPe)_{lk} =
\exp \left(-d_{kl}^2 / \epsilon \right)$ if $(l,k) \in \cE$,
$1$ if $l=k$, and $0$ otherwise.
\item Let $\hDe$ be diagonal
with $(\hDe)_{kk} = \sum_l (\hPe)_{kl}$, and normalize for sampling density
by setting $\Ae = \hDe^{-1} \hPe \hDe^{-1}$.
\item Let $\De$ be diagonal with $(\De)_{kk} = \sum_l (\Ae)_{kl}$
and set $\Pe = \De^{-1}\Ae$.
\end{enumerate}
$\Pe$ is row-stochastic and, as is well known, has bounded spectrum:

\begin{lem}\label{lem:eigP}
The spectrum of $\Pe$ is contained in $[0,1]$.
\end{lem}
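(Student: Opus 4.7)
The plan is to reduce the spectral claim to facts about a symmetric matrix obtained from $\Pe$ by a similarity transform, then invoke row-stochasticity for the upper bound and positive semi-definiteness of the underlying kernel for the lower bound.

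First I would symmetrize: since $\hPe$ is symmetric by construction and $\hDe$ is diagonal, the matrix $\Ae = \hDe^{-1}\hPe\hDe^{-1}$ is symmetric. The diagonal entries of $\hPe$ equal $1$, so $\hDe$ has strictly positive diagonal, and consequently $\Ae$ has strictly positive diagonal, making $\De$ strictly positive as well. With both diagonal matrices invertible, I can define
\[
S \;=\; \De^{1/2}\,\Pe\,\De^{-1/2} \;=\; \De^{-1/2}\,\Ae\,\De^{-1/2},
\]
which is symmetric. Hence $\Pe$ is similar to $S$ and shares its (real) spectrum.

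Next I would prove the upper bound: because $\Pe = \De^{-1}\Ae$ is row-stochastic with non-negative entries, Gershgorin's disc theorem places every eigenvalue in the disc of radius $1$ centered at the origin. Combined with the previous step, this forces $\sigma(\Pe)\subseteq[-1,1]$.

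For the lower bound, I would show that $S$ is positive semi-definite. Writing $S = \bigl(\hDe\De^{1/2}\bigr)^{-1}\hPe\bigl(\hDe\De^{1/2}\bigr)^{-1}$ reduces PSD of $S$ to PSD of the base kernel matrix $\hPe$. The matrix $\hPe$ has diagonal equal to $1$ and entries $\exp(-d_{kl}^2/\epsilon)$ on edges (zero elsewhere). When the graph is the full pairwise graph, PSD follows immediately from the fact that the Gaussian is a positive-definite function on $\bbR^r$ (equivalently, $\hPe$ is a Gram matrix in the associated RKHS). For the sparsified case, I would argue that the entries set to zero are exponentially small in $1/\epsilon$ when $d_{kl}$ exceeds the neighborhood scale, so $\hPe$ differs from the full Gaussian kernel matrix by an entrywise-small perturbation whose operator norm is controlled; combining this with the strictly positive spectrum of the full Gaussian matrix yields PSD of $\hPe$.

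The main obstacle will be the third step: bridging the sparsified $\hPe$ and the full Gaussian Gram matrix so that positive semi-definiteness is preserved. If instead one wishes to avoid any hypothesis on sparsification, a cleaner route is to define $\hPe$ via the full Gaussian kernel from the outset and note that the remainder of the construction (normalizations by $\hDe$, $\De$) preserves symmetry and PSD of the symmetrized $S$; in that case the $[0,1]$ conclusion is immediate.
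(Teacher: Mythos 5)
Your approach is structurally identical to the paper's terse two-line proof: symmetrize $\Pe$ by similarity to $S=\De^{-1/2}\Ae\De^{-1/2}$, appeal to row-stochasticity for the upper bound, and appeal to positive semi-definiteness of $\hPe$, transferred by congruence and similarity, for the lower bound. Your Gershgorin step is a minor presentational variant of the paper's appeal to the bound on the spectral radius by the maximum absolute row sum; both give $\sigma(\Pe)\subset[-1,1]$ once realness has been established by the similarity to $S$.

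The obstacle you flag in the last step is a genuine hole, and the paper does not address it: it simply \emph{asserts} that $\hPe$ is symmetric PSD. But $\hPe$ is the Gaussian kernel sparsified to the NN edge set $\cE$, and a PSD matrix does not remain PSD after entrywise zeroing outside an arbitrary sparsity pattern. Concretely, take three points with $(1,2),(2,3)\in\cE$, $(1,3)\notin\cE$, and $d_{12}=d_{23}=d$; then $\hPe$ is the $3\times3$ tridiagonal matrix with unit diagonal and $a=e^{-d^2/\epsilon}$ on the first off-diagonals, whose smallest eigenvalue $1-a\sqrt{2}$ is negative as soon as $\epsilon>2d^2/\ln 2$. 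Your perturbation idea alone does not close this: one would need the operator norm of the removed block to be dominated by the smallest eigenvalue of the full Gaussian Gram matrix, and that eigenvalue can be made arbitrarily small by clustering the points. The cleanest repair is the one you note at the end --- drop the sparsification and take $\hPe$ to be the full Gaussian kernel, which is a genuine Gram matrix and hence PSD; then $S=(\hDe\De^{1/2})^{-1}\hPe(\hDe\De^{1/2})^{-1}$ inherits PSD by congruence, and $\Pe$ by similarity. An alternative that keeps the sparse graph is to assume $\epsilon$ small enough that each off-diagonal row sum of $\hPe$ is strictly below $1$, making $\hPe$ strictly diagonally dominant with positive diagonal, hence PD. It is also worth noting that the paper's downstream uses of this lemma (invertibility of $I-\bp\Pe$ for $\bp\in(0,1)$ in Thm.~\ref{thm:LB}, and the truncated eigendecomposition of $\Ne$) only require $\sigma(\Pe)\subset[-1,1]$, which follows from row-stochasticity and similarity to a symmetric matrix without any PSD hypothesis.
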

\begin{proof}
$\hPe$, $\Ae$ and $S=\De^{-1/2}\Ae\De^{-1/2}$, are symmetric PSD.
$\Pe = \De^{-1} \Ae$ is row-stochastic and similar to $S$. 
\end{proof}

\noindent We define the Neighbor Probability Decision Rule (NPDR) as follows.
\begin{enumerate}
\item Let $\Ne=p(I-\bp \Pe)^{-1}$ and find the restriction $\cN =
  \set{(\Ne)_e : e \in \cE}$ of $\Ne$ to $\cE$.
\item Choose a $q \in (0,1)$ and let $\cB = \set{e \in
\cE : (\Ne)_e < Q(\cN,1-q)}$, i.e., edges connecting nodes with a
low probability of being neighbors, are bridges.
\end{enumerate}
Calculating $\Ne$ can be prohibitive for very large datasets.
Fortunately, we can effectively order the elements of $\cN$
using a low rank approximation to $\Ne$.

By Lemmas \ref{lem:propN}, \ref{lem:eigP}, we calculate
$N_{lm} \approx \sum_{j=1}^J
p(1-\bp\lambda_{j})^{-1}(v^R_j)_l(v^L_j)_m$ for $(l,m) \in \cE$,
where $\lambda_j$,$v^R_j$ and $v^L_j$ are the $J$ largest eigenvalues
of $\Pe$ and the associated right and left eigenvectors.
In practice, an effective ordering of $\cN$ is obtained with $J \ll n$
and since $\Pe$ is sparse, its largest eigenvalues and
eigenvectors can be computed efficiently using a standard
iterative algorithm.

Matrix $\Ne$ has, thanks to our choice of $\Pe$, a more geometric
interpretation which we now discuss.

\subsection{Geometric Interpretation of $N$}
\label{sec:Ngeo}

In this section, we show the close relationship between the matrices
$\Ne$, $\Pe$, and the weighted graph Laplacian matrix (to be defined
soon).  One important property of $\Pe$ is
its relationship to the Laplace Beltrami operator on $\cM$. Specifically,
when $n \to\infty$, and as $\epsilon \to 0$ at the appropriate rate
\footnote{We discuss this convergence in greater detail in Chapter~\ref{ch:rl}.}, 
$(I-\Pe)/\epsilon \to c \Delta_{LB}$ both pointwise and in spectrum
\cite{Coifman2005}. Here $\Le = (I-\Pe)/\epsilon$ is the
weighted graph Laplacian and $\Delta_{LB}$ is the Laplace Beltrami
operator on $\cM$ and $c$ is a constant.  Thus, for large $n$ and
small $\epsilon$, $\Pe$ is a neighborhood averaging operator. This
property also holds for $\Ne$:

\begin{thm} \label{thm:LB}
As $n \to \infty$ and $\epsilon \to 0$, $(I-\Ne)/\epsilon \to c' \Delta_{LB}$.
\end{thm}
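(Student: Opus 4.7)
The plan is to reduce Theorem \ref{thm:LB} to the known convergence $(I-\Pe)/\epsilon \to c \Delta_{LB}$ (stated just before the theorem, and relied upon from \cite{Coifman2005}) via a simple algebraic manipulation, followed by a limit argument that uses the spectral bound of Lemma \ref{lem:eigP} to handle the remaining factor.

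First I would derive the identity
\[
 I - \Ne \;=\; \bar{p}\,(I - \Pe)\,(I - \bar{p}\Pe)^{-1}.
\]
This is immediate from $\Ne = p(I-\bar{p}\Pe)^{-1}$: multiplying out $(I-\Ne)(I-\bar{p}\Pe) = (I-\bar{p}\Pe) - pI = \bar{p}(I-\Pe)$. Because $\Pe$ and $(I-\bar{p}\Pe)^{-1}$ share eigenvectors (Lemma \ref{lem:propN}), the factors commute, so dividing by $\epsilon$ gives
\[
 \frac{I - \Ne}{\epsilon} \;=\; \bar{p}\cdot\frac{I-\Pe}{\epsilon}\cdot (I - \bar{p}\Pe)^{-1}.
\]

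Next I would analyze the two factors separately. The first factor $(I-\Pe)/\epsilon$ converges to $c\,\Delta_{LB}$ as $n\to\infty$ and $\epsilon\to 0$ (at the rate of \cite{Coifman2005}), in the pointwise-on-smooth-functions and spectral senses noted in the paragraph preceding the theorem. For the second factor, Lemma \ref{lem:eigP} gives $\sigma(\Pe)\subset[0,1]$, hence $\sigma(I-\bar{p}\Pe)\subset[p,1]$, so $(I-\bar{p}\Pe)^{-1}$ is uniformly bounded in operator norm by $1/p$ for every $\epsilon>0$. Moreover, for any smooth $f$ on $\cM$, $\Pe f \to f$ pointwise (the local averaging becomes trivial as $\epsilon\to0$), so $(I-\bar{p}\Pe)f \to p f$ and therefore $(I-\bar{p}\Pe)^{-1}f \to (1/p)f$. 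Combining the two limits via the identity above yields
\[
 \frac{I-\Ne}{\epsilon}\,f \;\longrightarrow\; \bar{p}\cdot c\,\Delta_{LB} f\cdot \frac{1}{p} \;=\; c'\,\Delta_{LB} f, \qquad c' = \frac{c(1-p)}{p},
\]
for smooth $f$, which is the desired convergence.

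The spectral version is even cleaner: eigenvalues of $(I-\Ne)/\epsilon$ are $\bar{p}(1-\lambda_j^\epsilon)\big/[\epsilon(1-\bar{p}\lambda_j^\epsilon)]$, and since $\lambda_j^\epsilon\to 1$ for the low-index eigenvalues while $(1-\lambda_j^\epsilon)/\epsilon\to c\mu_j$ (the $j$-th eigenvalue of $\Delta_{LB}$), the product tends to $c'\mu_j$. The main obstacle is not the algebra but the mode of convergence: because $(I-\Pe)/\epsilon$ is unbounded in the limit while $(I-\bar{p}\Pe)^{-1}$ stays bounded, some care is needed to justify passing to the limit in the product. The uniform spectral bound from Lemma \ref{lem:eigP}, together with shared eigenvectors, resolves this; the analysis can be carried out eigenfunction-by-eigenfunction, which is exactly the mode of convergence in which the cited result of \cite{Coifman2005} is stated. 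No additional machinery beyond Lemmas \ref{lem:propN} and \ref{lem:eigP} and the cited pointwise/spectral limit for $\Pe$ should be required.
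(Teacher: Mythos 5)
Your proof is correct and arrives at the same key identity $I-\Ne = \bp\,(I-\Pe)\,(I-\bp\Pe)^{-1}$ as the paper (the paper derives it via the resolvent formula $(I-S)^{-1}=I+S(I-S)^{-1}$ applied to $S=\tfrac{\bp}{p}(\Pe-I)$, whereas you verify it by direct multiplication), and it then concludes identically by sending the first factor to a multiple of $\Delta_{LB}$ and the second to $p^{-1}I$. Your added remarks on the mode of convergence --- using Lemma~\ref{lem:eigP} to keep $(I-\bp\Pe)^{-1}$ uniformly bounded in operator norm and Lemma~\ref{lem:propN} to diagonalize the product eigenfunction-by-eigenfunction --- are a more careful justification of the final limit than the paper gives, which passes to the limit in the product without comment.
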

\begin{proof}
For $I-S$ invertible, $(I-S)^{-1} = I+S(I-S)^{-1}$.  From
Lem. \ref{lem:eigP}, $I-\bp \Pe$ is invertible.  Thus
\begin{align}
\label{eq:LBp1}
p(I-\bp\Pe)^{-1} &= \prn{I-\frac{\bp}{p}(\Pe-I)}^{-1}  \\
\label{eq:LBp2}
    &= I + \frac{\bp}{p}(\Pe-I)\prn{I-\frac{\bp}{p}(\Pe-I)}^{-1}.
\end{align}

Therefore
$$
 \frac{I-\Ne}{\epsilon} = \frac{\bp}{p} \frac{I-\Pe}{\epsilon}
 \left(I-\frac{\bp}{p}(\Pe-I) \right)^{-1}
  = \bp \frac{I-\Pe}{\epsilon} (I - \bp \Pe)^{-1}.
$$
The first factor on the RHS converges to $\bp \Delta_{LB}$ and the second
to $p^{-1}I$ since $\Pe \to I$.
\end{proof}
\noindent By Theorem \ref{thm:LB}, as $n \to \infty$ and $\epsilon \to 0$,
$\Ne$ acts like $\Pe$.  For finite sample sizes, however, experiments
indicate that $\Ne$ is more informative of neighborhood
relationships.  We provide here a simple justification based on the
original random walk construction.

Were we to replace $\Ne$ with $\Pe$ in the implementation of the NPDR,
edge $(i,j)$ would be marked as a bridge essentially according to
its normalized weight (that is, proportional to $P_{ij}$ and
normalized for sampling density).  As $P_{ij} =
\exp(-d_{ij}^2/\epsilon)$, this edge would essentially be marked as
a bridge if the pairwise distance between its associated points is
above a threshold.  NPDR would in this case yield a performance very
similar to that of LDR.  In contrast, NPDR via $\Ne$ uses multi-step
probabilities with an exponential decay weighting to determine whether
an edge is a bridge.  Thus, the more ways there are to
get from $y_i$ to $y_j$ over a wide variety of possible step counts,
the less likely that $(i,j)$ is a bridge.

We now provide some synthetic examples comparing NPDR to the other
decision rules for finite sample sizes.

\section{Denoising the Swiss Roll}
\label{sec:experiments}
We first test our method on the synthetic Swiss roll,
parametrized by $(a,b)$ in $U = [\pi,4\pi] \x [0,21]$
via the embedding $(x^1,x^2,x^3) =
f(a,b) = (a \cos a, b, a \sin a)$.
True geodesic distances $\set{g_{1j}}_{j=1}^n$
are computed via:
$$g_{ij} = \int_0^1 \norm{Df(\bv(t))
  \frac{\partial{\bv(t)}}{\partial t}}_2 dt$$
where $\bv(t) =(1-t)[a_i\ \ b_i]^T + t [a_j\ \ b_j]^T$, and $Df(\bv)$ is the
differential of $f$ evaluated at $(\bv^1,\bv^2)$.
We sampled $n=500$ points uniformly in the ambient space $\bbR^3$
with $x_1$ fixed ($(a_1,b_1) = (\pi,0)$).
For $t=1,\dots,T$ ($T=100$)
we generated $n$ random noise values,
$\set{u_{ti}}_{i=1}^n$, uniformly on $[-1,1]$.
Then $y_{ti} = x_i + \mu u_{ti} \bn_i$, $t=1,\dots,T$, where $\bn_i$ is the normal to
$\cM$ at $x_i$. Each experiment was repeated for $\mu \in \set{0,.05,\cdots,1.95,2}$.

\begin{figure}[h!]
  \centering
  \begin{subfigure}[b]{.45\linewidth}
    \includegraphics[width=\linewidth]{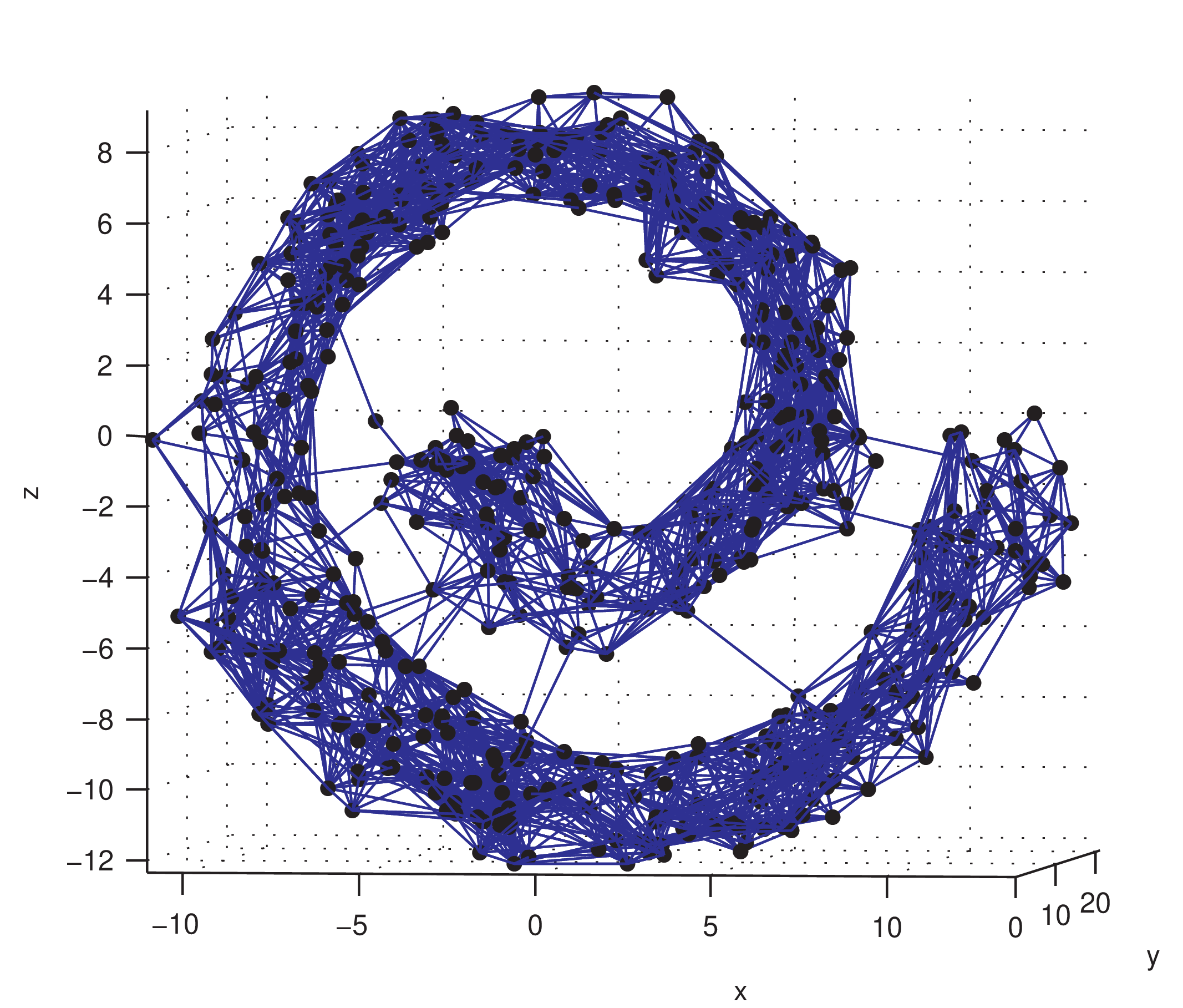}
    \caption{NN graph of Swiss roll ($\mu$=1.6) \label{sfig:swissroll_edges}}
  \end{subfigure}
  \begin{subfigure}[b]{.45\linewidth}
    \centering
    \begin{tabular}[b]{|c|c|} \hline
      $\mu$   &\#B   \\ \hline
      1.23    &0       \\
      1.28    &2       \\
      1.44    &10      \\
      1.54    &20      \\
      1.64    &32      \\
      1.74    &46      \\
      1.85    &66      \\
      1.90    &76      \\ \hline
    \end{tabular}
    \caption{Median bridge count vs. $\mu$. \label{sfig:bridges}}
  \end{subfigure}
  \caption{Noisy Swiss Roll}
\end{figure}

\begin{figure}[h!]
  \begin{subfigure}[b]{.45\linewidth}
    \caption{$\mu=.1$ \label{sfig:simple_noiseless}}
    \includegraphics[width=\linewidth]{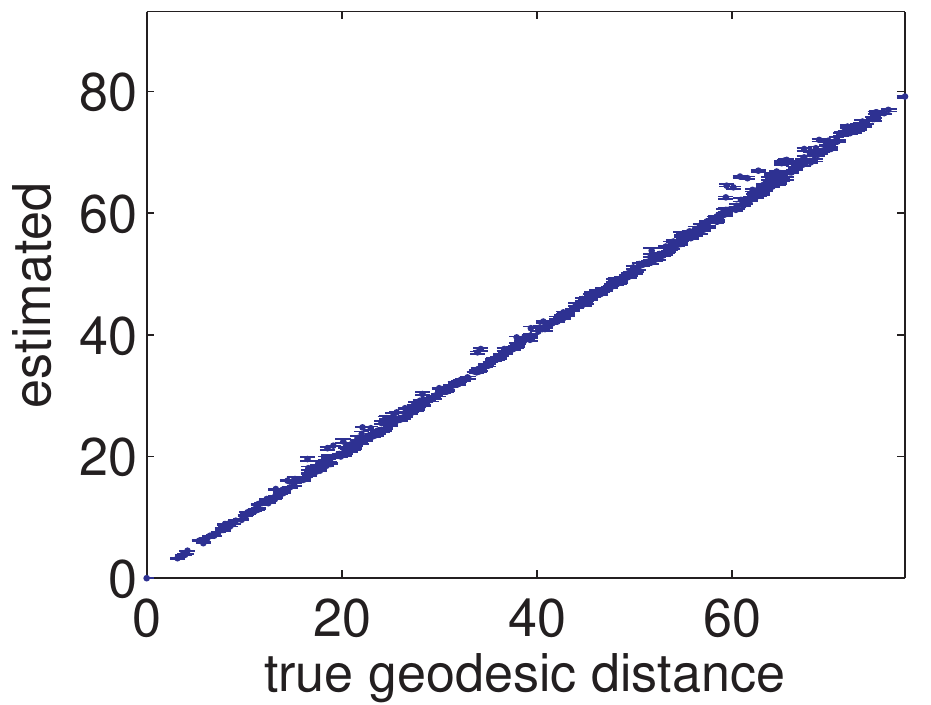}
  \end{subfigure}
  \begin{subfigure}[b]{.45\linewidth}
    \caption{$\mu=1.54$ \label{sfig:simple_noisy}}
    \includegraphics[width=\linewidth]{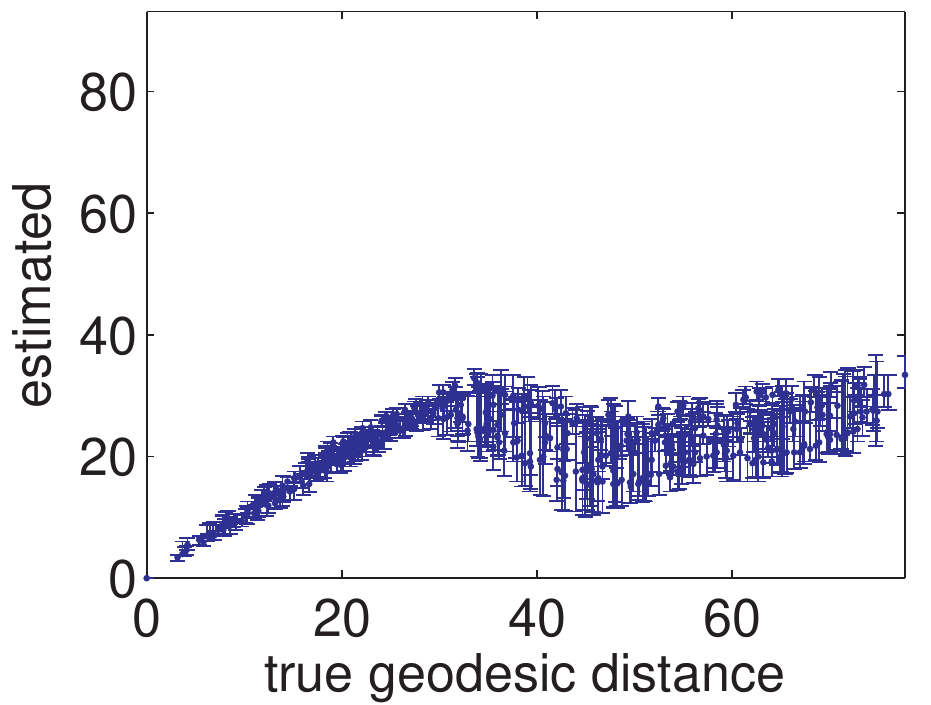}
  \end{subfigure}
  \caption{SP Denoising: Swiss Roll Geodesic estimates vs. ground truth (from $x_1$)}
\end{figure}

\begin{figure}[h!]
  \begin{subfigure}[b]{.45\linewidth}
    \caption{$\mu=.1, q=.92$ \label{sfig:ECDR_noiseless}}
    \includegraphics[width=\linewidth]{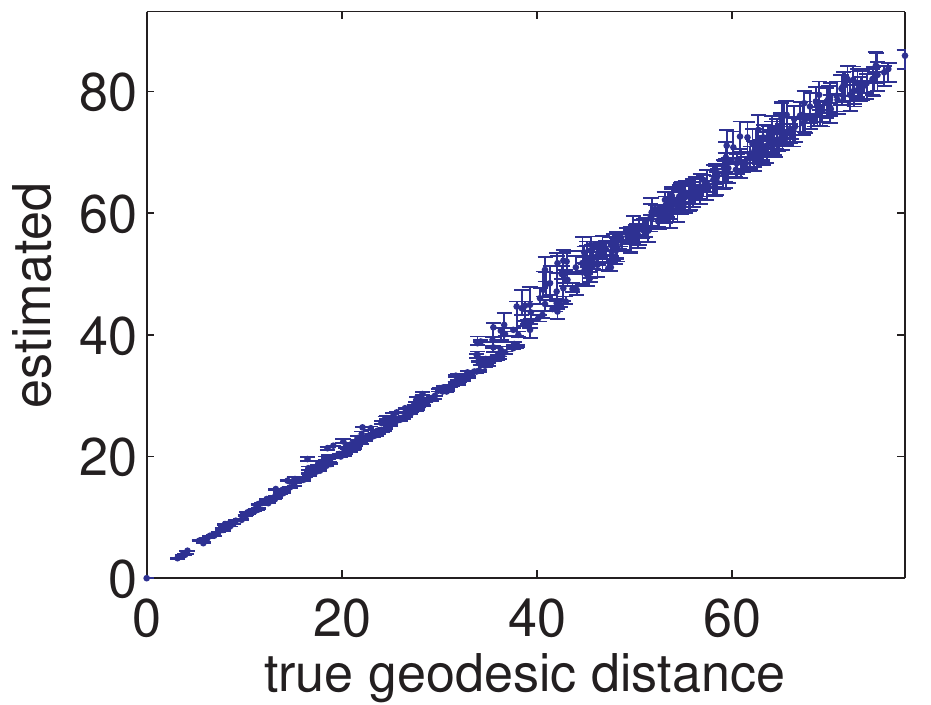}
  \end{subfigure}
  \begin{subfigure}[b]{.45\linewidth}
    \caption{$\mu=1.54, q=.92$ \label{sfig:ECDR_noisy_q1}}
    \includegraphics[width=\linewidth]{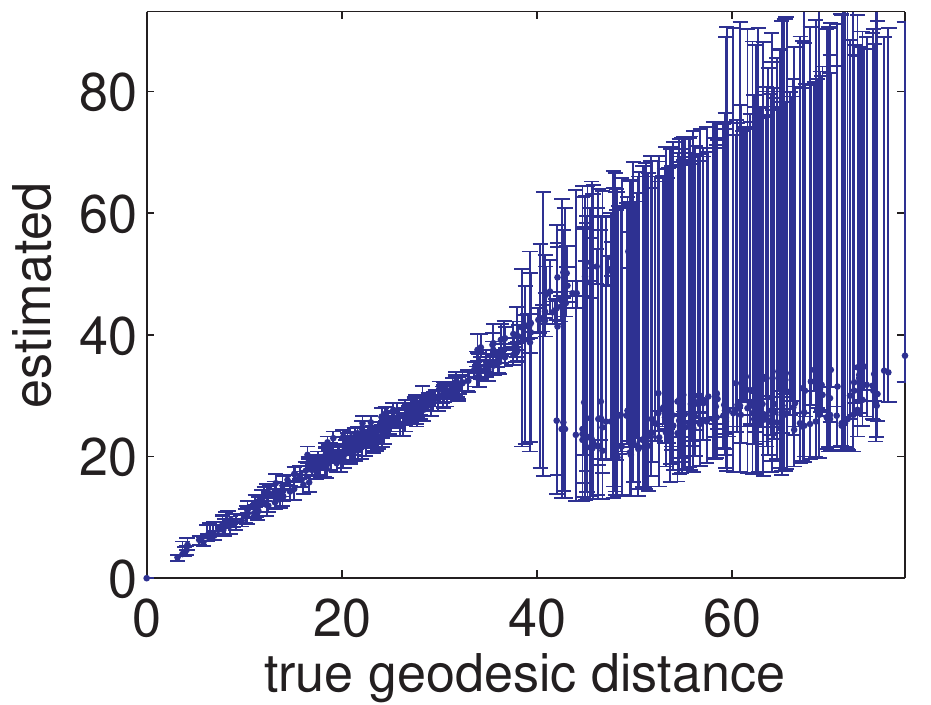}
  \end{subfigure}
  \caption{ECDR Denoising: Swiss Roll Geodesic estimates vs. ground truth (from $x_1$)}
\end{figure}

\begin{figure}[h!]
  \begin{subfigure}[b]{.45\linewidth}
    \caption{$\mu=.1, q=.92$ \label{sfig:NPDR_noiseless}}
    \includegraphics[width=\linewidth]{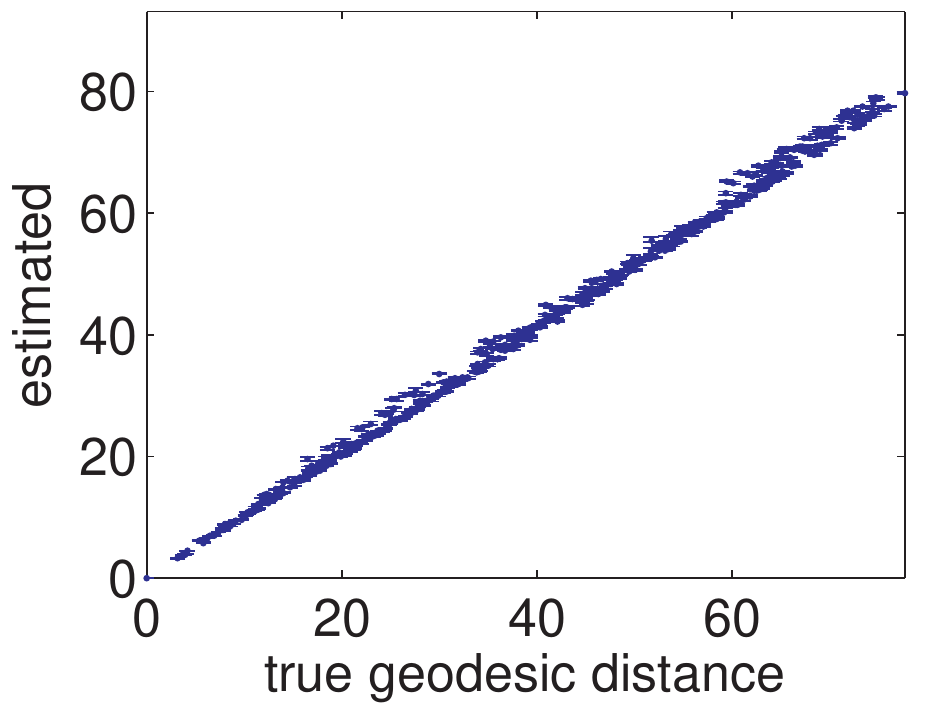}
  \end{subfigure}
  \begin{subfigure}[b]{.45\linewidth}
    \caption{$\mu=1.54, q=.92$ \label{sfig:NPDR_noisy_q1}}
    \includegraphics[width=\linewidth]{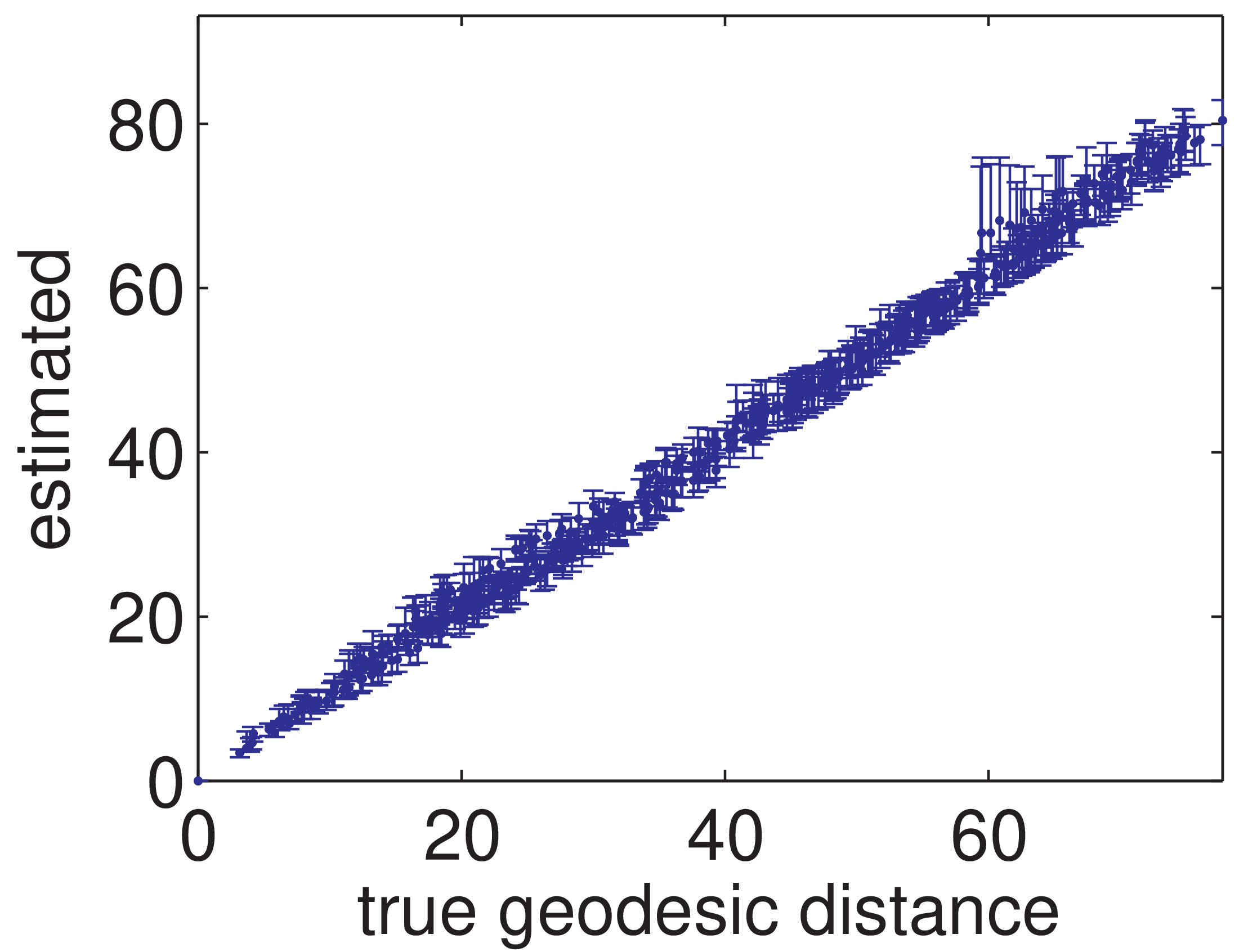}
  \end{subfigure}
  \caption{NPDR Denoising: Swiss Roll Geodesic estimates vs. ground truth (from $x_1$)
}
\end{figure}

The initial NN graph $G$ was constructed using $\delta$-balls
($\delta=4$). The median bridge counts over $T$ realizations are shown in
Fig.~\ref{sfig:bridges}.  Bridges first appear at $\mu \approx 1.2$.
Fig.~\ref{sfig:swissroll_edges} shows one realization of $\cY$ and the NN graph $G$ (note bridges).
We compare the simple SP with the LDR, ECDR ($K=15$), and NPDR ($p=0.01$)
 -based estimators by plotting the estimates of geodesic distance versus ground truth (sorted
by distance from $x_1$).  We plot the median estimate, 33\%, and 66\%
quantiles over the $T$ runs, for $\mu=.1$ and $\mu = 1.54$.  The LDR
and JDR based estimators' performance is comparable to SP for $q>.9$
(plots not included).

With no noise: SP provides excellent estimates; NPDR estimates are
accurate even after removing $8\%$ of the graph edges
(Fig.~\ref{sfig:NPDR_noiseless}); however, ECDR removes important
edges (Fig.~\ref{sfig:ECDR_noiseless}).
At $\mu = 1.54$ with approximately $25$ bridges: SP has failed (Fig.~\ref{sfig:simple_noisy});
ECDR is removing bridges but also important edges, resulting in an upward
estimation bias (Fig.~\ref{sfig:ECDR_noisy_q1}); in contrast,
NPDR is successfully discounting bridges without
any significant upward bias even at $q=.92$ (Fig.~\ref{sfig:NPDR_noisy_q1}).
This supports our claim that bridges occur between edges with low
neighbor probability in the NPDR random walk.
Lower values of $q$ remove more edges, including bridges, but removing
non-bridges always increases SP estimates, and can lead to an upward bias.
The choice of $q$ should be based on prior knowledge of the
noise or cross-validation.

\begin{table}[h!]
\caption{Comparison of mean error $E$, varying $\mu$.}
\label{fig:cmp}
\centering
\begin{tabular}{|c||c|c|c|c|c|c|c|c|}
\hline
\multirow{2}{*}{$\mu$} &
SP &
LDR, &
\multicolumn{3}{|c|}{ECDR, q=} &
\multicolumn{3}{|c|}{NPDR, q=} \\
{}        & {}    & q=.92 & .92 & .95 & .99 & .92 & .95 & .99 \\ \hline
0.10  & \textbf{0.8}   & 1.5   & 4.4   & 3.8   & 1.9   & 2.4   & 1.9   & 1.2 \\ \hline
1.44 & 12.0  & 10.9  & 10.7  & 8.1   & 2.1   & 3.6   & 2.5   & \textbf{1.6} \\
1.54 & 13.6  & 12.8  & 11.7  & 7.6   & 2.5   & 3.8   & 2.6   & \textbf{2.3} \\
1.64 & 14.4  & 13.9  & 11.6  & 6.8   & 5.1   & 4.0   & \textbf{3.1}   & 6.5 \\
1.74 & 14.9  & 14.4  & 11.4  & 6.5   & 8.9   & \textbf{5.0}   & 5.6   & 11.1 \\
1.85 & 15.3  & 14.9  & 12.1  & 8.6   & 12.0  & \textbf{8.1}   & 9.4   & 13.4 \\
\hline
\end{tabular}
\end{table}

Table~\ref{fig:cmp} compares the performance of DRs at moderate noise
levels.  For this experiment, we chose $5$ points,
$\set{x_r}_{r=1}^5$, well distributed over $\cM$.  Over $T=100$
noise realizations, we calculated the mean of the value
$$
E = \frac{1}{5n} \sum_{r=1}^5 \sum_{i=1}^n \abs{g_{ri}-\tg_{ri}},
$$
the average absolute error of the geodesic estimate from all of the
points to these 5.  As seen in this figure, given an appropriate
choice of $q$,  NPDR outperforms the other DRs at moderate noise
levels.  As expected, $q$ must grow with the noise level as more
bridges are found in the initial graph.

\section{Denoising a Random Projection Graph}
We now consider the random projection tomography problem of
\cite{Singer2009}.  Random projections of $I$ are taken at angles
$\theta \in [0, 2\pi)$.  More specifically, these projections are
$f(\theta) = R_{\theta}(I)$, where $R_\theta$ is the Radon transform
at angle $\theta$.

In \cite{Singer2009}, we observe $n=1024$ random projections:
$$y_i = f(\theta_i) + \nu_i \qquad \text{where} \qquad \nu_i \sim N(0,\sigma^2).$$
for which the ambient dimension of the projection is $r = 512$ and
$\sigma^2 = \sigma^2_f /10^{SNR_{db}/10}$, where the signal power is
$\sigma_f^2 \approx 0.0044$.  The image used in all experiments is the
Shepp-Logan phantom (Fig.~\ref{fig:tomo_true}), and the projection
angles are unknown (Fig.~\ref{fig:tomo_proj}).  After some initial
preprocessing, a NN graph ($k=50$) is constructed from the noisy
projections, and JDR is used to detect bridges in this graph.  After
detected bridges are removed (pruned), nodes with less than two
remaining edges (that is, isolated nodes) are removed from the
graph. An eigenvalue problem on the new graph's adjacency matrix is
then solved to find an angular ordering of the remaining projections
(nodes). Finally, $\hI$ is reconstructed via an inverse Radon
transform of these resorted projections.

\begin{figure}[h!]
\centering
\begin{subfigure}[b]{.45\linewidth}
  \includegraphics[width=\linewidth]{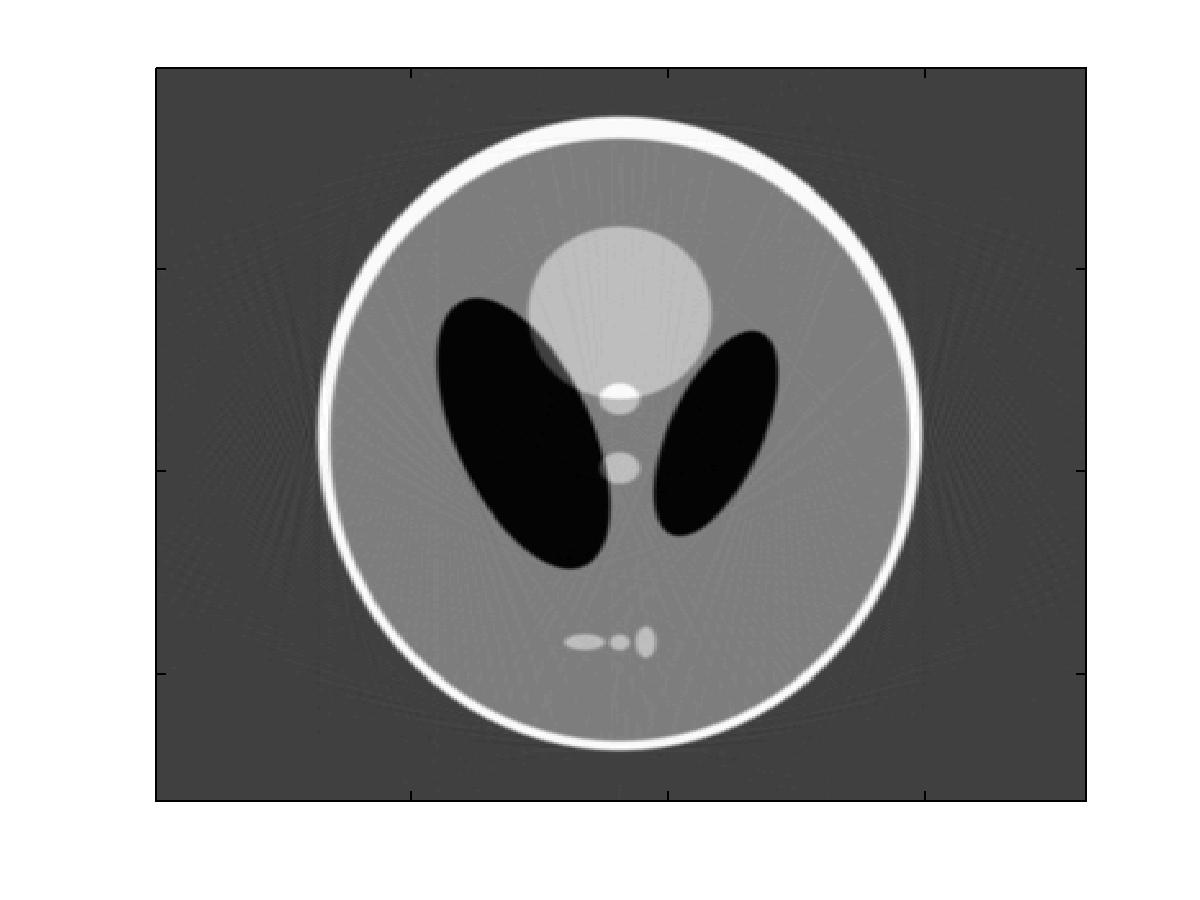}
  \caption{Original Phantom, $I$ \label{fig:tomo_true}}
\end{subfigure}
\begin{subfigure}[b]{.45\linewidth}
  \includegraphics[width=\linewidth]{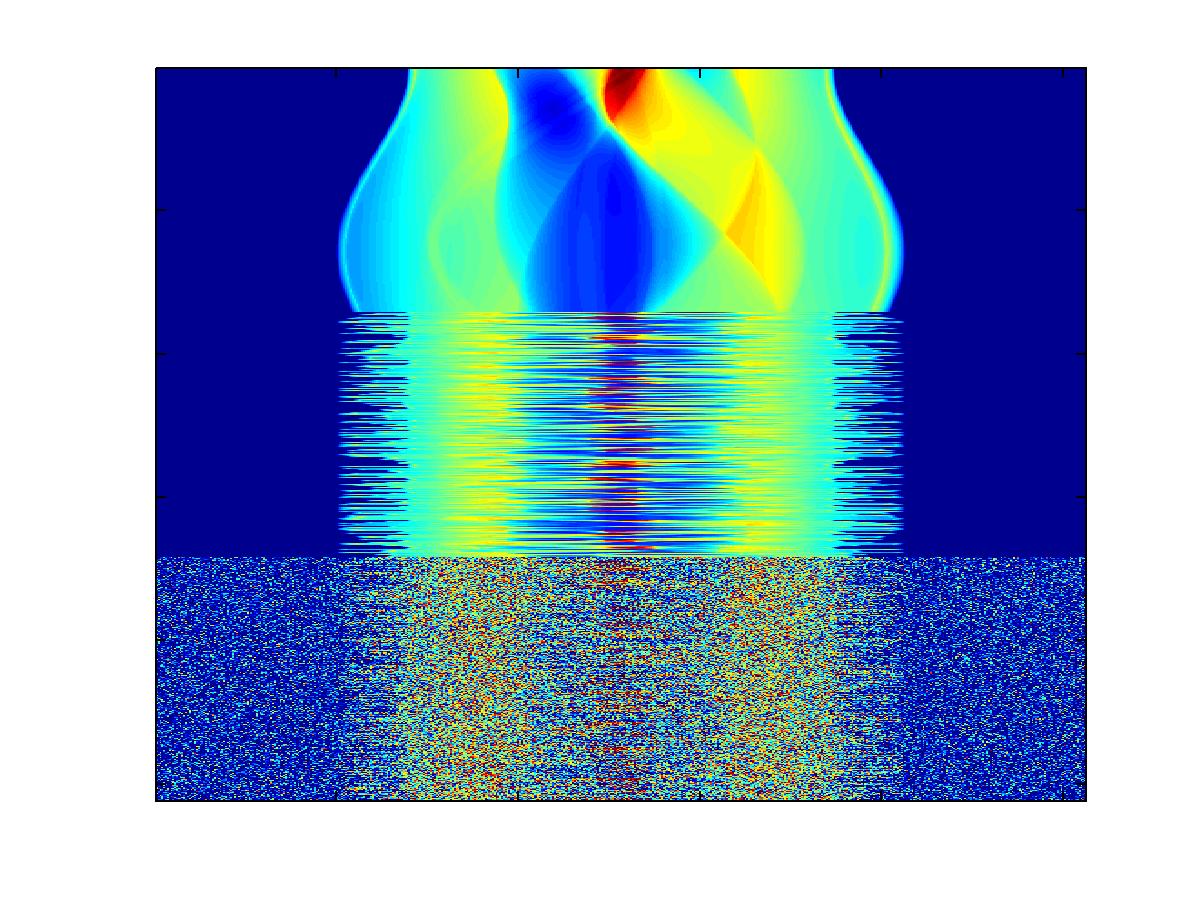}
  \caption{Projections of $I$ \label{fig:tomo_proj}}
\end{subfigure}
\caption[Shepp-Logan Phantom and Radon Projections.]%
  {Shepp-Logan Phantom and Radon Projections.  In (b), the
  y-axis represents projection angle.  The top third shows regularly
  ordered projections $f(\theta)$.  The middle third shows projections when
  $\theta$ has been randomized.  The lower third shows projections
  after angles have been randomized and noise has been added (SNR is
  -2db)}
\end{figure}

\begin{figure}[h!]
\centering
\begin{subfigure}[b]{.45\linewidth}
  \includegraphics[width=\linewidth]{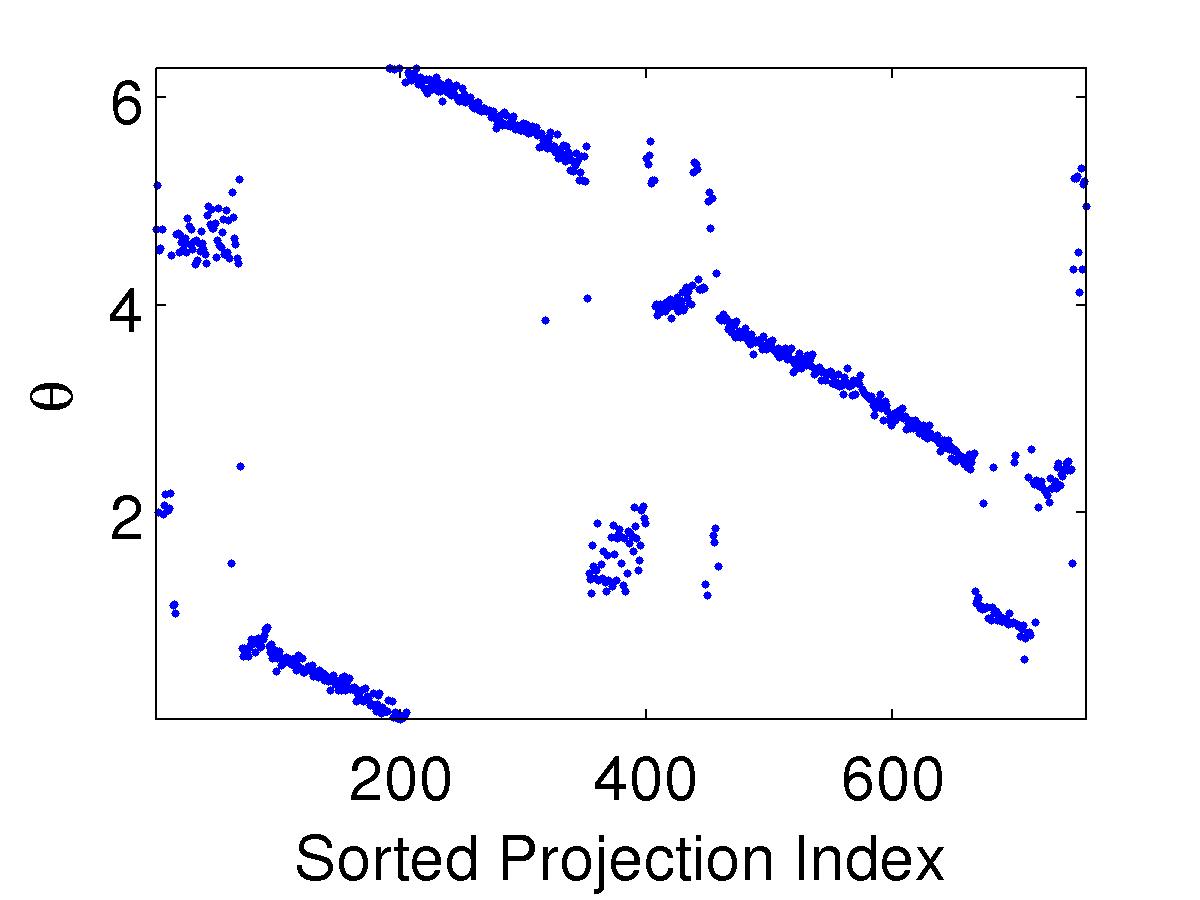}
  \caption{Estimated $\theta$, JDR \label{fig:tomo_sorted_jdr}}
\end{subfigure}
\begin{subfigure}[b]{.45\linewidth}
  \includegraphics[width=\linewidth]{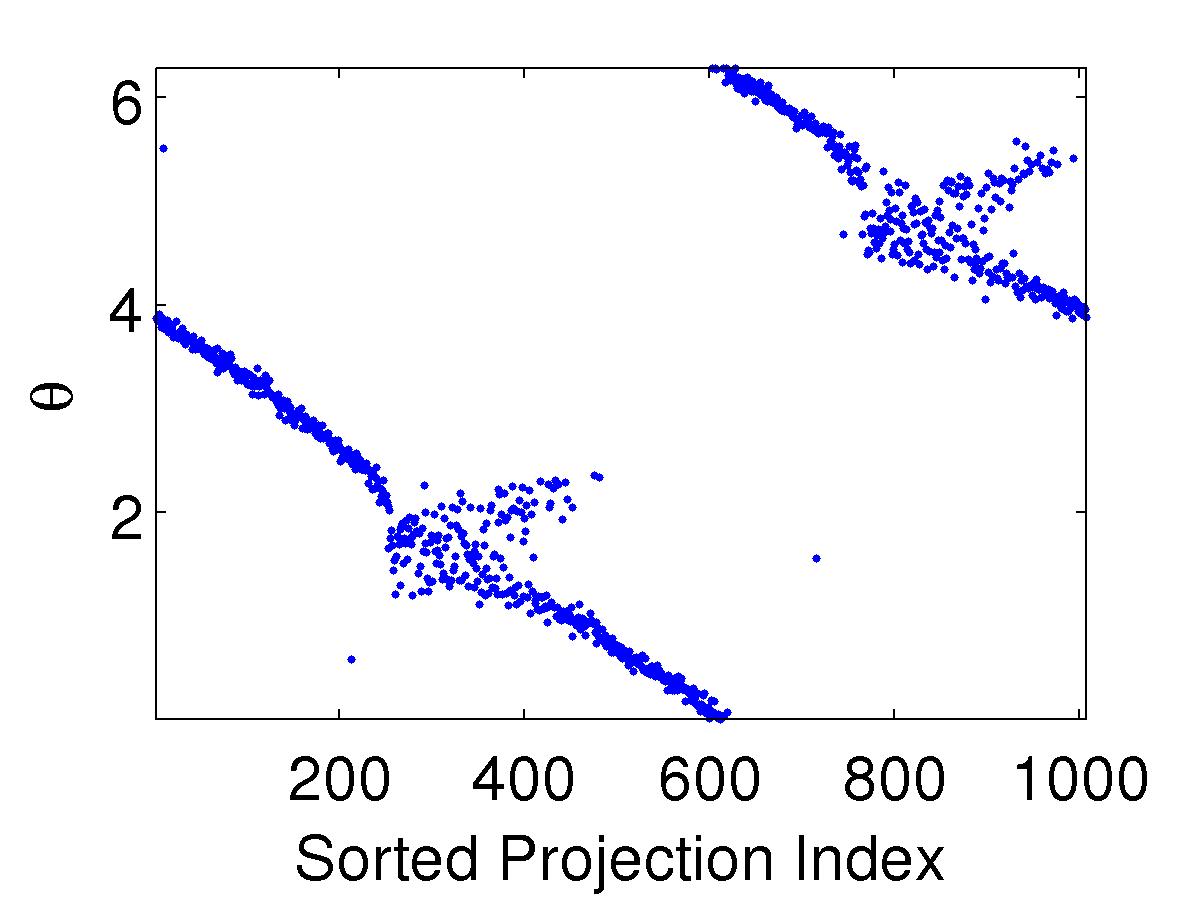}
  \caption{Estimated $\theta$, NPDR   \label{fig:tomo_sorted_npdr}}
\end{subfigure}
\begin{subfigure}[b]{.45\linewidth}
  \includegraphics[width=\linewidth]{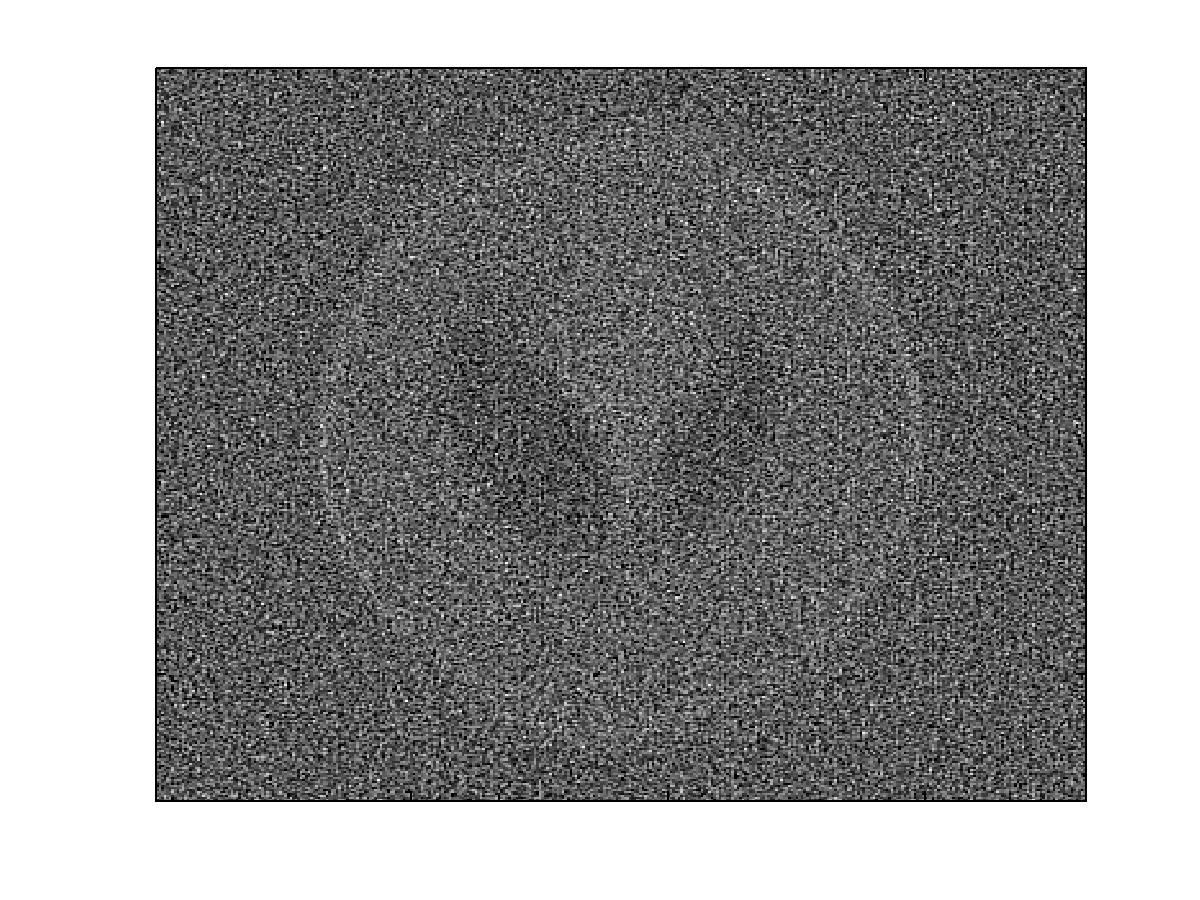}
  \caption{Reconstruction $\hI$, JDR \label{fig:tomo_recon_jdr}}
\end{subfigure}
\begin{subfigure}[b]{.45\linewidth}
  \includegraphics[width=\linewidth]{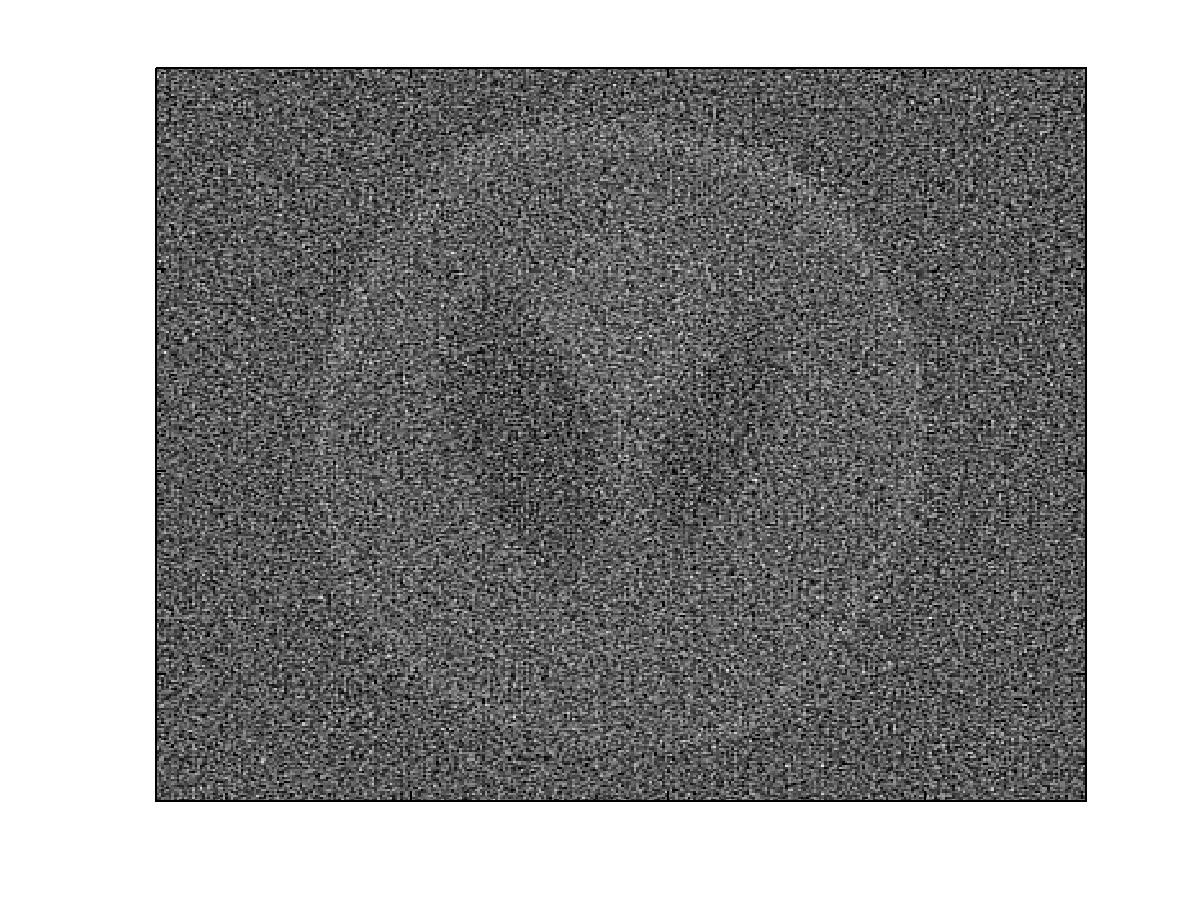}
  \caption{Reconstruction $\hI$, NPDR \label{fig:tomo_recon_npdr}}
\end{subfigure}
\caption{Tomography Reconstructions from Random Projections}
\end{figure}

We compared JDR to NPDR pruning at a SNR of $-2\text{db}$.
Exhaustive search finds the optimal $q$ for
JDR at $q=.78$.  For NPDR we used $\hat{P}_\infty$ (all edges in $G$
have weight $1$), $p=.01$, and $q=.8$. After pruning, JDR
disconnected 277 nodes compared to 21 for NPDR.
The estimated sorted angles are shown in
Figs. \ref{fig:tomo_sorted_jdr},\ref{fig:tomo_sorted_npdr}, and the
rotated reconstructions in
Figs. \ref{fig:tomo_recon_jdr},\ref{fig:tomo_recon_npdr}.
Under the similarity metric $\rho = \frac{I^T\hI}{\norm{I}\norm{\hI}}$,
with alignment of $\hI$ with $I$, the increase in NPDR similarity (0.15)
over JDR similarity ($0.12$) is 25\%.  Note the clearer boundaries
in the NPDR phantom, thanks to 256 additional (unpruned) projections
(best viewed on screen).
At moderate noise levels, NPDR removes fewer NN graph nodes
and yields a more accurate reconstruction.
As more projections are left after pruning, the final accuracy is
higher.

\section{Conclusion and Connections}
We studied the problem of estimating geodesics in a point cloud.  A
slight revision for removing edges from a neighborhood graph allows us
to avoid disconnecting weakly connected groups.  Building on this
framework, we studied several global measures for detecting
topological bridges in the NN graph.  In particular, we developed and
analyzed the NPDR bridge detection rule, which is based on a special
type of Markov random walk.  Using a special random walk matrix
derived from the geometry of the sample points, we constructed the
NPDR to detect bridges by thresholding entries of the neighborhood
matrix $\Ne$.  The entries of column $i$ in this matrix converge to
those of a special averaging operator in the neighborhood of point
$x_i$ in $\cM$: the averaging intrinsically performed by the Laplace
Beltrami operator around $x_i$.

Our experiments indicate that NPDR robustly detects bridges in the NN
graph without misclassifying edges important for geodesic estimation
or tomographic angle estimation.  Furthermore, it does so over a wider
noise range than competing methods, e.g. LDR and ECDR.  It can be
calculated efficiently via a sparse eigenvalue decomposition.
Preliminary evidence from synthetic experiments indicates that, as
\S\ref{sec:npdr} suggests, for NPDR one should choose $p$ as small as
possible while retaining numerical conditioning of $I-\bp\Pe$.

For very large $n$ and small $\epsilon$, the matrices $\Ne$ and $\Pe$
are equivalent, but in practical cases $\Ne$ yields significantly
better performance.  More testing of NPDR on non-synthetic
datasets is needed.  Possible applications include determining
bridges in social and webpage (hyperlink) network graphs, and in the
common line graphs estimated in the blind 3D tomography ``Cryo-EM''
problem~\cite{Singer2010}.

Furthermore, the matrix $\Ne$ is closely related to the regularized
inverse of the graph Laplacian.  The term $\Pe-I$ in \eqref{eq:LBp1}
is proportional to the weighted graph Laplacian $\Le$, and from this
equation it is clear that $\Ne$ is proportional to the inverse of the
Tikhonov regularized weighted graph Laplacian (with regularization
parameter $p / \bp$).  The efficacy of the NPDR, and the initial
theoretical results developed in \S\ref{sec:Ngeo} lead us to study the
regularized inverse of the graph Laplacian in more detail;
this is the focus of Chapter~\ref{ch:rl}.

\chapter{The Inverse Regularized Laplacian: Theory\label{ch:rl}%
\chattr{This chapter, and the next, are based on work in collaboration with
Peter~J.~Ramadge, Department of Electrical Engineering, Princeton
University, as submitted in \cite{Brevdo2011a}.}}



\section{Introduction}

Semi-supervised learning (SSL) encompasses a class of machine learning
problems in which both labeled data points and unlabeled data points
are available during the training (fitting) stage~\cite{Chapelle2006}.
In contrast to supervised learning, the goal of SSL algorithms is to
improve future prediction accuracy by including information from the
unlabeled data points during training.  SSL extends
a wide class of standard problems, such as classification and
regression.

A number of recent nonlinear SSL algorithms use aggregates of nearest
neighbor (NN) information to improve inference performance.  These
aggregates generally take the form of some transform, or
decomposition, of the weighted adjacency, or weight, matrix of the NN
graph.  Formal definitions of NN graphs, and associated weight
matrices, are given in~\S\ref{sec:prelim}; we will also review them in
the SSL learning context in~\S\ref{sec:sslintro}.

Motivated by several of these algorithms, we show a connection between
a certain nonlinear transform of the NN graph weight matrix, the
regularized inverse of the graph Laplacian matrix, and the
solution to the regularized Laplacian partial differential equation
(PDE), when the underlying data points are sampled from a compact
Riemannian manifold.
We then show a connection between this PDE and the Eikonal equation,
which generates geodesics on the manifold.

These connections lead to intuitive geometric interpretations of
learning algorithms whose solutions include a regularized inverse
of the graph Laplacian.
As we will show in chapter~\ref{ch:rlapp}, it
also enables us to build a robust geodesic distance estimator, a
competitive new multiclass classifier, and a regularized version of
ISOMAP.

This chapter is organized as follows:
\S\ref{sec:sslintro}-\S\ref{sec:ssllimitthm} motivate our study by
showing that in a certain limiting case, a standard SSL problem can be
modeled as a regularized Laplacian (RL) PDE problem.
\S\ref{sec:rlpde}-\S\ref{sec:lap} derive the relationship between
the regularized Laplacian and geodesics and discusses convergence
issues as an important regularization term (viscosity) goes to zero.
%
%
%
%

\section{Prior Work}

The graph Laplacian is an important tool for regularizing
the solutions of unsupervised and semi-supervised learning
problems, such as classification and regression, in high-dimensional
data analysis \cite{Smola2003, Belkin2004, Belkin2006, Zhu2003, Luxburg2007}.
Similarly, estimating geodesic distances from sample points on a
manifold has important applications in manifold learning and SSL
(see, e.g., chapter~\ref{ch:npdr} and \cite{Tenenbaum2000,
Chapelle2005, Bengio2004}). Though heavily used in the learning and
geometry communities, these methods still raise many questions.
For example, with dimension $d>1$, graph Laplacian regularized
SSL does not work as expected in the large sample limit \cite{Nadler2009}.
It is also desirable to have geometric intuition about the behavior 
of the solutions of models like those proposed in \cite{Belkin2006, Zhu2003}
in the limit when the data set is large.

To this end, we elucidate a connection between three important 
components of analysis of points sampled from manifolds:
\begin{enumerate}
\item The inverse of the regularized weighted graph Laplacian matrix.
\item A special type of elliptic Partial Differential Equation (PDE) on the
manifold.
\item Geodesic distances on the manifold.
\end{enumerate}
This connection provides a novel geometric interpretation for
machine learning algorithms whose solutions require the regularized
inverse of a graph Laplacian matrix.
It also leads to a consistent geodesic distance estimator with two
desired properties: the complexity of the estimator depends only on the
number of sample points and the estimator naturally incorporates a
smoothing penalty. 

\section{Manifold Laplacian, Viscosity, and Geodesics}
\label{sec:summary}
We motivate our study by first looking at a standard
semisupervised learning problem (\S\ref{sec:sslintro}).
We show that as the amount of data increases and regularization is
relaxed, this problem reduces to a PDE
(\S\ref{sec:sslassume}-\S\ref{sec:ssllimit}).  We then analyze this PDE in 
the low regularization setting to uncover new geometric insights into its
solution (\S\ref{sec:rlpde}-\S\ref{sec:transport}).  In
\S\ref{sec:sslr}, these insights will allow us to analyze the original
SSL problem from a geometric perspective.

\subsection{A Standard SSL Problem}
\label{sec:sslintro}
We present a classic SSL problem in which points are sampled, some
with labels, and a regularized least squares regression is performed to
estimate the labels on the remaining points.  The regression contains
two regularization penalties: a ridge regularization penalty and a graph Laplacian
``smoothing'' penalty.

Data points $\cX = \set{x_i}_{i=1}^n$ are sampled from a space $\cM
\subset \bbR^p$.  The first $l$ of these $n$ points have associated
labels: $w_i \in \cL \subset \bbR, i=1,\ldots,l$.  For binary
classification, one could take $\cL = \set{\pm 1}$.  
The goal is to find a vector $\tf \in \bbR^n$ that approximates the $n$
samples at the points in $\cX$ of an unknown smooth function $f$ on $\cM$ and
minimizes the regression penalty $\sum_{i=1}^l (f(x_i)-w_i)^2$.
The solution is regularized by two penalties: the ridge
penalty $\|\tf\|_2$ and the graph Laplacian penalty
$\wt{J}(\tf) = \tf^T L \tf$; details of the construction
of $L$ are given in~\S\ref{sec:ssllimit}.
The second penalty approximately penalizes the gradient of $f$.  It is
a discretization of the functional:
$
{J(f) = \int_{\cM} \norm{\grad f(x)}^2 d\cP(x) = \int_{\cM} f(x)\Delta
 f(x) d\cP(x),}
$
where $\cP(x)$ is the sampling density.

To find $\tf$ we solve the convex minimization problem:
\beq
\label{eq:rlsp}
\min_{\tf \in \bbR^n} \sum_{i=1}^l (\tf_i-w_i)^2
 + \gamma_A \|\tf\|^2_2 + \gamma_I \wt{J}(\tf),
\eeq
where the nonnegative regularization parameters $\gamma_A$
and $\gamma_I$ depend on $l$ and $n$.

We can rewrite \eqref{eq:rlsp} in its matrix form.  Let
$E_\cA = \diag{1\cdots 1\ \ 0\cdots 0}$ have $l$ ones followed by
$n-l$ zeros on its diagonal, and let $E_\cAp = I -
E_\cA$.  Further, let $\wt{w} =
\left[w_1\ \cdots\ w_l\ \ 0\ \cdots\ 0\ \right]^T$ be a vector of the 
labels $w_i$ for the first $l$ sample points, and zeros for the
unlabeled points.  Then Eq.~\eqref{eq:rlsp} can be written as:
\beq
\label{eq:rlspdisc}
\min_{\tf \in \bbR^n} \|\wt{w}-E_\cA \tf\|^2_2 + \gamma_A \tf^T \tf +
\gamma_I \tf^T L \tf
\eeq
This is a quadratic program for $\tf$.  Setting the gradient, with
respect to $\tf$, to zero yields the linear system
\beq
\label{eq:lrlssysorig}
\left(E_\cA + \gamma_A I + \gamma_I L \right) \tf = \wt{w}.
\eeq

The optimization problem \eqref{eq:rlsp} and the linear
system \eqref{eq:lrlssysorig} are related to two previously
studied problems.  The first is graph regression with Tikhonov
regularization \cite{Belkin2004}.  Problem
\eqref{eq:lrlssysorig} is closely related to the one solved in
Algorithm 1 of that paper, where we replace their general penalty
$\gamma S$ term with the more specific form $\gamma_A I + \gamma_I L$.
The ridge penalty $\gamma_A I$ encourages stability in the solution,
replacing their zero-sum constraint on $f$.
The second related problem is Laplacian
Regularized Least Squares (LapRLS) of \cite{Belkin2006}.
Specifically, \eqref{eq:lrlssysorig} is identical to Eq.~35 of
\cite{Belkin2006} with one of the regularization terms removed by
setting the kernel matrix $K$ equal to the identity.  In that
framework, the matrix $K$ is defined as $K_{ij} \propto
e^{-\norm{x_i-x_j}^2/(2 \sigma^2)}$, $i,j = 1,\ldots,n$ for some
$\sigma > 0$.
A choice of $\sigma > 0$
regularizes for finite sample size and sampling noise
\cite[Thm.~2, Remark 2]{Belkin2006}.  Eq.~\eqref{eq:lrlssysorig} is
thus closely related to the limiting solution to the LapRLS problem
in the noiseless case, where the sampling size $n$ grows and $\sigma$
shrinks quickly with $n$.

We study the following problem: suppose the function $w$
is sampled without noise on specific subsets of
$\cM$.  The estimate $\tf$ represents an \emph{extension} of $w$ to
the rest of $\cM$.  What does this extension look like, and how
does it depend on the geometry of $\cM$?  The first step is to
understand the implications of the noiseless case on
\eqref{eq:lrlssysorig}; we study this next.

\subsection{SSL Problem -- Assumptions}
\label{sec:sslassume}

We now list our main assumptions for the SSL problem in
\eqref{eq:lrlssysorig}:
\begin{enumerate}
 \item \label{it:as1} $\cM$ is a $d$-dimensional ($d<p$) manifold
   $(\cM,g)$,  which is compact, and Riemannian.
 \item \label{it:as2} The labeled points $\set{x_i}_{i=1}^l$ are
   sampled from a regular and closed nonempty subset $\cA \subset \cM$.
 \item \label{it:as3} The labels $\set{w_i}_{i=1}^l$ are sampled from a
   smooth (e.g. $C^2$ or Lipschitz) function $w : \cA \to \cL \subset
   \bbR$.
 \item \label{it:as4} The density $\cP$ is nonzero everywhere on
   $\cM$, including on $\cA$.
\end{enumerate}
These assumptions ensure that the points $\cX$ are sampled without
noise from a bounded, and smooth space, that the labels are sampled
without noise, and that the label data is also sampled from a bounded
and smooth function.  We will use these assumptions to show the
convergence of $\tf$ to a smooth function $f$ on $\cM$.

In assumption \ref{it:as2} we use the term regular in the PDE
sense \cite[Irregular Boundary Point]{Hazewinkel1995}; we discuss
this further in~\S\ref{sec:irr}.  We call $\cA$ the
\emph{anchor set} (or anchor); note that it is not necessarily
connected.  In addition, let $\cAp = \cM \bs
\cA$ denote the complement set.  In assumption \ref{it:as3}, for
$w$ to be smooth it suffices that it is smooth on all connected
components of $\cA$.  Thus we can allow $\cL$ to take on discrete
values, as long as the classes they represent are separated from each
other on $\cM$.  We call the function $w$ the \emph{anchor condition}
or \emph{anchor function}. Note finally that assumption \ref{it:as4}
implies that the labeled data size $l$ grows with $n$.

As we have assumed that there is no noise
on the labels (assumptions \ref{it:as2} and \ref{it:as3}), we will not
apply a regularization penalty to the labeled data.  On the labeled
points, therefore, \eqref{eq:lrlssysorig} reduces to $E_\cA \tf = \wt{w}$.
Hence, the regularized problem becomes an interpolation problem.
The ridge penalty, now restricted to the unlabeled data, changes from
$\tf^T \tf$ to $\tf^T E_\cAp \tf$.  The Laplacian penalty function
becomes ${J(f) = \int_{\cAp} f(x) \lap f(x) d\cP(x)}$,  and the
discretization of this penalty similarly changes from $\tf^T L \tf$ to
$(E_\cAp \tf)^T E_\cAp L \tf$.
The original linear system \eqref{eq:lrlssysorig} thus becomes
\begin{align}
\label{eq:lrlssys} &M_n \tf = \wt{w}, \\
\nonumber \text{where } 
  &M_n = E_\cA + \gamma_A(n) E_\cAp + \gamma_I(n) E_\cAp L.
\end{align}
We note that the dimensions of $M_n$, $\tf$, and $\wt{w}$
in \eqref{eq:lrlssys} grow with $n$.

A more detailed explanation of the component terms in the
original problem \eqref{eq:rlspdisc} and the linear systems
\eqref{eq:lrlssysorig} and \eqref{eq:lrlssys} are available in
\S\ref{app:lrldetail}, where we show that the solution
to the simplified problem \eqref{eq:lrlssys} depends only on the ratio
$\gamma_I(n) / \gamma_A(n)$.  We will return to this in~\S\ref{sec:ssllimit}.

\subsection{SSL Problem -- the Large Sample Limit -- Preliminary Results}
\label{sec:ssllimit}

We study the linear system \eqref{eq:lrlssys} as $n \to \infty$,
and prove that given an appropriate choice of graph Laplacian $L$ and
growth rate of the regularization parameters $\gamma_A(n)$ and $\gamma_I(n)$,
the solution $\tf$ converges to the solution $f$ of a
particular PDE on $\cM$. The convergence occurs in the $\ell_\infty$
sense with probability $1$.

Our proof of convergence has two parts.  First, we must
show that $M_n \tf$ models a forward PDE with increasing accuracy as
$n$ grows large; this is called \emph{consistency}.  Second, we must
show that the norm of $M_n^{-1}$ does not grow too quickly as $n$
grows large; this is called \emph{stability}.  These two results will
combine to provide the desired proof.  As all of our estimates rely on
the choice of graph Laplacian matrix on the sample points $\cX$, we
first detail the specific construction that we use throughout:

\begin{enumerate}
\item Construct a weighted nearest neighbor (NN) graph $G = (\cX,\cE,\td)$
from $k$-NN or $\delta$-ball neighborhoods of $\cX$,
where edge $(l,k) \in \cE$ is assigned the distance
$\td_{lk} = \norm{x_k-x_l}_{\bbR^p}$.
\item Choose $\epsilon > 0$ and let $(\hPe)_{lk} = \exp(-\td_{kl}^2 /
  \epsilon)$ if $(l,k) \in \cE$, $1$ if $l=k$, and $0$ otherwise.
\item Let $\hDe$ be diagonal with $(\hDe)_{kk} = \sum_l (\hPe)_{kl}$,
  and normalize for sampling density by setting $\Ae = \hDe^{-1} \hPe
  \hDe^{-1}$.
\item Let $\De$ be diagonal with $(\De)_{kk} = \sum_l (\Ae)_{kl}$ and define the
row-stochastic matrix  $\Pe = \De^{-1}\Ae$.
\item The asymmetric normalized graph Laplacian is $\Le = (I-\Pe)/\epsilon$.
\end{enumerate}
We will use $\Le$ from now on in place of the generic Laplacian matrix
$L$.

Regardless of the sampling density, as $n \to
\infty$ and $\epsilon(n) \to 0$ at the appropriate rate, $\Le$
converges (with probability 1) to the Laplace-Beltrami operator on
$\cM$: ${\Le \to -c \lap}$, for some $c > 0$, uniformly pointwise
\cite{Hein2005,Singer2006,Coifman2006,Ting2010} and in spectrum
\cite{Belkin2008th}.  The concept of correcting for sampling density
was first suggested in \cite{Lafon2004}. 

This convergence forms the basis of our consistency argument.
We first introduce a result of \cite{Singer2006}, which shows that
with probability 1, as $n \to \infty$, the system \eqref{eq:lrlssys} with
$L=\Le_{(n)}$ consistently models the Laplace-Beltrami operator in the
$\ell_\infty$ sense.

Let $\pi_\cX : L^2(\cM) \to \bbR^n$ map any square integrable function
on the manifold to the vector of its samples on the discrete set $\cX
\subset \cM$.

\begin{thm}[Convergence of $\Le$: \cite{Singer2006}, Eq.~1.7]
\label{thm:convLe}
Suppose we are given a compact Riemannian manifold $\cM$ and smooth
function $f : \cM \to \bbR$.  Suppose the points $\cX$ are sampled iid
from everywhere on $\cM$.  Then, for $n$ large and $\epsilon$ small,
with probability $1$
$$
(\Le \pi_\cX(f))_i = -c \lap f(x_i)
  + O\left(\frac{1}{n^{1/2} \epsilon^{1/2 + d/2}} + \epsilon \right),
$$
where $\lap$ is the negatively defined Laplace-Beltrami operator on
$\cM$, and $c>0$ is a constant.  Choosing $\epsilon = C
n^{-1/(3+d/2)}$ (where $C$ depends on the geometry of $\cM$) leads to
the optimal bound of $O(n^{-1/(3+d/2)})$.  Following
\cite{Hein2005}, the convergence is uniform.
\end{thm}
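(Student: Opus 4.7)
The plan is to decompose the error $[\Le \pi_\cX(f)]_i + c\lap f(x_i)$ into a deterministic bias term and a stochastic variance term, and then balance them through the choice of $\epsilon$. I would first introduce population (continuous) analogues of $\hPe$, $\hDe$, $\Ae$, $\De$, $\Pe$ by replacing the row sums over $\cX$ with integrals against the sampling density $p(x)$ on $\cM$; call the resulting integral operators $\hat P_\epsilon^{\mathrm{cts}}$, $P_\epsilon^{\mathrm{cts}}$, and $L_\epsilon^{\mathrm{cts}} = (I - P_\epsilon^{\mathrm{cts}})/\epsilon$. The error then splits as $[\Le \pi_\cX(f)]_i - [L_\epsilon^{\mathrm{cts}} f](x_i)$ (stochastic) plus $[L_\epsilon^{\mathrm{cts}} f](x_i) + c\lap f(x_i)$ (deterministic bias), and the two pieces can be estimated independently.

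For the bias I would pass to Riemannian normal coordinates centered at the fixed point $x_i$. In those coordinates the ambient distance $\|x - y\|_{\bbR^p}^2$ agrees with the squared geodesic distance up to an $O(d_\cM(x,y)^4)$ correction coming from the second fundamental form, so the kernel $\exp(-\|x-y\|^2/\epsilon)$ is a Gaussian concentrated at the scale $\sqrt{\epsilon}$, with controllable higher-order corrections. Taylor-expanding $f$ and $p$ around $x_i$, integrating against the Gaussian (computing moments up to order four, including the Jacobian of the exponential map), yields the standard small-$\epsilon$ expansion $\int k_\epsilon(x_i,y) p(y) f(y)\,d\dvol(y) = (\pi\epsilon)^{d/2}\bigl[p(x_i) f(x_i) + \tfrac{\epsilon}{4}(\lap(pf) + \text{curvature terms})(x_i) + O(\epsilon^2)\bigr]$. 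The key algebraic observation, due to Coifman and Lafon, is that the two-sided normalization $\Ae = \hDe^{-1}\hPe\hDe^{-1}$ exactly cancels the $\grad p \cdot \grad f$ cross terms that would otherwise appear; after the row normalization by $\De$ and subtraction of $I$, one recovers $[L_\epsilon^{\mathrm{cts}} f](x_i) = -c \lap f(x_i) + O(\epsilon)$, a clean $p$-independent expression.

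For the variance, each empirical quantity $n^{-1}(\hDe)_{ii}$ and $n^{-1}\sum_j (\hPe)_{ij} f(x_j)$ is an iid average of bounded random variables whose second moment is $\Theta(\epsilon^{d/2})$, because the Gaussian is essentially supported in a geodesic ball of volume $\Theta(\epsilon^{d/2})$. A Bernstein-type concentration inequality therefore gives deviations of order $O\bigl((\epsilon^{d/2}/n)^{1/2}\bigr)$ with probability $1 - n^{-\alpha}$ for any prescribed $\alpha$, provided $n\epsilon^{d/2}/\log n \to \infty$. Propagating these bounds through the ratios that define $\Pe$, and dividing by $\epsilon$ to form $\Le$, inflates the error to $O(1/(n^{1/2}\epsilon^{1/2+d/2}))$. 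Summing the two contributions gives the stated $O(\epsilon + 1/(n^{1/2}\epsilon^{1/2+d/2}))$, and optimizing in $\epsilon$ yields $\epsilon \asymp n^{-1/(3+d/2)}$ with the stated optimal rate. Uniformity in the index $i$ follows by a union bound over the $n$ sample points (each failure probability is polynomially small); the stronger uniform-in-$x \in \cM$ statement of Hein et al.~uses a covering net of $\cM$ combined with Lipschitz control on $f$ and on the kernel.

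The main obstacle I expect is the bias expansion: one has to keep careful simultaneous track of the ambient-versus-intrinsic distance discrepancy, the Taylor expansion of $p$ (nonuniform sampling), the volume-form Jacobian of the exponential map, and curvature corrections, and then verify the precise algebraic cancellation produced by the two-sided density normalization so that the surviving operator is exactly $\lap$ and not $\lap + \grad \log p \cdot \grad$. Once that expansion is locked down, the stochastic half is largely routine concentration, and the choice of $\epsilon$ that balances the two error sources is elementary calculus.
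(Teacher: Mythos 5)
The paper does not actually prove Theorem~\ref{thm:convLe}; it cites the result directly from Singer (2006) with uniformity from Hein et al.\ (2005), so there is no in-text proof to compare against. That said, your reconstruction of the argument from the cited literature has the right skeleton: the bias/variance split between the empirical operator and its population integral analogue, the passage to Riemannian normal coordinates with Gaussian moment expansions to compute the bias, the crucial cancellation of the $\grad\log p\cdot\grad f$ drift via the two-sided density normalization $\Ae = \hDe^{-1}\hPe\hDe^{-1}$ (Coifman--Lafon), and Bernstein concentration plus a union bound over the $n$ samples for uniformity in $i$.

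Your variance estimate, however, has a concrete gap that loses a factor of $\epsilon^{1/2}$. You apply concentration to the uncentered sums $n^{-1}\sum_j (\hPe)_{ij}f(x_j)$ and $n^{-1}(\hDe)_{ii}$, whose per-term second moments are $\Theta(\epsilon^{d/2})$, giving deviations $O\bigl((\epsilon^{d/2}/n)^{1/2}\bigr)$. Dividing by the row sum $\Theta(\epsilon^{d/2})$ and then by $\epsilon$ to form $\Le$ yields a stochastic error of $O\bigl(n^{-1/2}\epsilon^{-(1+d/4)}\bigr)$, \emph{not} the $O\bigl(n^{-1/2}\epsilon^{-(1/2+d/2)}\bigr)$ you write down --- that exponent is copied from the theorem statement but does not follow from your computations, and no choice of $\epsilon$ balancing it against the $O(\epsilon)$ bias reproduces the stated optimal rate. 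Singer's sharp bound comes from concentrating the \emph{centered} sum $n^{-1}\sum_j (\hPe)_{ij}\bigl(f(x_j)-f(x_i)\bigr)$: on the kernel's effective support one has $|f(x_j)-f(x_i)| = O(\sqrt\epsilon)$, so each term has second moment $\Theta(\epsilon^{1+d/2})$ rather than $\Theta(\epsilon^{d/2})$, and this extra factor of $\epsilon$ propagates to a stochastic error $O\bigl(n^{-1/2}\epsilon^{-(1/2+d/4)}\bigr)$. Balancing that against the $O(\epsilon)$ bias gives precisely $\epsilon\asymp n^{-1/(3+d/2)}$, whereas your uncentered bound would give only $\epsilon\asymp n^{-1/(4+d/2)}$. (Incidentally, the exponent $1/2+d/2$ printed in the theorem statement is internally inconsistent with its own quoted optimal $\epsilon$; Singer's exponent is $1/2+d/4$.)
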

As convergence is uniform in Thm.~\ref{thm:convLe}, we may write the bound
in terms that we will use throughout:
\beq
\label{eq:bdconvLe}
\norm{\Le \pi_\cX (f) - c' \pi_\cX (\lap f)}_\infty =
  O(n^{-1/(3+d/2)}),
\eeq
where $c'=-c$ and $\norm{\cdot}_\infty$ is the vector infinity norm.

We now show that $M_n$ (as defined in \eqref{eq:lrlssys}) is consistent:
\begin{cor}[Consistency of $M_n$]
\label{cor:consistency}
Assume that $\bdM = \emptyset$ or that $\bdM \subset \cA$.  Let $F_n$ be
the following operator for functions $f \in C^2(\cAp) \cap C^0(\cA)$:
\beq
\label{eq:defFn}
F_n f(x) = \begin{cases}
 \gamma_A(n) f(x) - c \gamma_I(n) \lap f(x) & x \in \cAp \\
 f(x) & x \in \cA
\end{cases}
\eeq

Then, under the same conditions as in Thm.~\ref{thm:convLe}, with probability $1$
for $n$ large and $\epsilon$ chosen as in Thm.~\ref{thm:convLe},
$\norm{M_n \pi_\cX(f) - \pi_\cX(F_n f)}_\infty = O(n^{-1/(3+d/2)})$.
\end{cor}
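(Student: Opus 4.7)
The plan is to exploit the piecewise definition of both $M_n$ and $F_n$: each is built from the same indicator matrices $E_\cA$ and $E_\cAp$, so the error $M_n \pi_\cX(f) - \pi_\cX(F_n f)$ can be evaluated coordinate-by-coordinate according to whether $x_i \in \cA$ or $x_i \in \cAp$. For $x_i \in \cA$, the $i$-th row of $M_n$ reduces to $E_\cA$, so $(M_n \pi_\cX(f))_i = f(x_i) = F_n f(x_i)$, and the contribution to the infinity norm is exactly zero. The entire error therefore lives in the coordinates indexed by $\cAp$, where I would rewrite
$$(M_n \pi_\cX(f) - \pi_\cX(F_n f))_i = \gamma_I(n)\bigl[(\Le \pi_\cX(f))_i - c'\,\lap f(x_i)\bigr], \quad x_i \in \cAp,$$
with $c' = -c$ as in \eqref{eq:bdconvLe}, since the $\gamma_A(n) f(x_i)$ terms cancel.

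The remainder of the argument is then a direct invocation of Thm.~\ref{thm:convLe}: on $\cAp$ the function $f$ is $C^2$, so the uniform pointwise bound \eqref{eq:bdconvLe} applies at each sample point in $\cAp$ with probability one, provided $\epsilon$ is chosen at the optimal rate $\epsilon = C n^{-1/(3+d/2)}$. Taking the maximum over $i$ gives $\norm{E_\cAp(\Le \pi_\cX(f) - c'\pi_\cX(\lap f))}_\infty = O(n^{-1/(3+d/2)})$ almost surely, and absorbing the (bounded) factor $\gamma_I(n)$ into the implicit constant yields the claimed rate for $\norm{M_n \pi_\cX(f) - \pi_\cX(F_n f)}_\infty$. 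Combined with the exact matching on $\cA$, the corollary follows.

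The main obstacle, and the reason for the hypothesis $\bdM = \emptyset$ or $\bdM \subset \cA$, is that Thm.~\ref{thm:convLe} is a genuine interior estimate: at sample points within $O(\sqrt{\epsilon})$ of $\bdM$, the $\epsilon$-neighborhood kernel truncates against the boundary, the density correction in $\Ae = \hDe^{-1}\hPe\hDe^{-1}$ no longer cancels the leading boundary term, and $\Le$ acquires an additional first-order Neumann-type contribution that destroys the $O(n^{-1/(3+d/2)})$ rate. Insisting that $\bdM$ lies entirely inside the anchor set $\cA$ means we never evaluate an $\Le$-row at a boundary sample (those rows are killed by $E_\cAp$), so the interior estimate suffices uniformly. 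This is the only delicate point; aside from it, the proof is a bookkeeping exercise that reduces the consistency of the composite operator $M_n$ to the already-known consistency of $\Le$.
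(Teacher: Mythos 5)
Your proof is correct and follows the same route the paper takes: the paper's own proof is a single sentence ("This follows directly from Thm.~\ref{thm:convLe} and the fact that $(E_\cS f)_i = f(x_i)\,\delta_{x_i \in \cS}$"), and you have simply written out the coordinate-wise bookkeeping that this sentence compresses, splitting the error into the $\cA$-rows (exactly zero) and the $\cAp$-rows (where it reduces to $\gamma_I(n)$ times the $\Le$-consistency error from \eqref{eq:bdconvLe}). Your parenthetical about absorbing a \emph{bounded} $\gamma_I(n)$ into the constant is worth flagging -- the corollary's stated hypotheses (just those of Thm.~\ref{thm:convLe}) do not themselves constrain $\gamma_I(n)$, so the claimed rate implicitly assumes $\gamma_I(n) = O(1)$; the paper is silent about this too, and the assumption is satisfied in the downstream use in Thm.~\ref{thm:fconv}. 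Your discussion of \emph{why} the boundary hypothesis $\bdM \subset \cA$ (or $\bdM = \emptyset$) is needed is a genuine and correct amplification of the paper's argument: it makes explicit that $\Le$-consistency is an interior estimate, and that pushing boundary points into $\cA$ ensures the $\Le$-rows are never evaluated where the estimate would break down, a point the paper defers to \S\ref{app:derivbd}.
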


\begin{proof}
This follows directly from Thm.~\ref{thm:convLe} and the fact that
$(E_\cS f)_i = f(x_i) \delta_{x_i \in \cS}$ for any set
$\cS \subset \cM$.
\end{proof} 

Our notion of stability is described in terms of certain limiting
inequalities.  We use the notation $a_n <_n b_n$ to mean that
there exists some $n_0$ such that for all $n' > n_0$, $a_{n'} <
b_{n'}$.

\begin{prop}[Stability of $M_n$]
\label{prop:fstable}
Suppose that $\gamma_I(n)/\gamma_A(n) <_n
\epsilon(n)/\kappa$, $\kappa>2$.  Then
$\norm{M_n^{-1}}_\infty = O(\gamma_A^{-1}(n))$.
\end{prop}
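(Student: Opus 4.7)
The plan is to factor $M_n$ into a diagonal scale matrix times $I+K_n$, where $K_n$ is a small perturbation, and then invert via a Neumann series whose convergence is secured by the $\kappa>2$ hypothesis. Define the diagonal matrix $T = E_\cA + \gamma_A(n) E_\cAp$, which acts as the identity on labeled rows and as $\gamma_A(n) I$ on unlabeled rows. Since $E_\cAp \Le$ already carries the left factor $E_\cAp$, one can write
\[
M_n \;=\; T + \gamma_I(n)\, E_\cAp \Le \;=\; T\bigl(I + K_n\bigr), \qquad K_n \;:=\; \gamma_I(n)\, T^{-1} E_\cAp \Le.
\]
For $n$ large enough that $\gamma_A(n) \leq 1$ (the regime of interest, since $\gamma_A(n) \to 0$), the diagonal matrix $T^{-1}$ has entries in $\{1,\,\gamma_A^{-1}(n)\}$, so $\|T^{-1}\|_\infty = 1/\gamma_A(n)$.

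The one nontrivial ingredient is the operator-norm bound $\|\Le\|_\infty \leq 2/\epsilon(n)$. Because $\Pe$ is nonnegative and row-stochastic by construction, for every row $i$,
\[
\sum_j \bigl|(\Le)_{ij}\bigr| \;=\; \frac{1}{\epsilon(n)}\Bigl(\,1-(\Pe)_{ii} + \sum_{j\neq i}(\Pe)_{ij}\Bigr) \;=\; \frac{2\bigl(1-(\Pe)_{ii}\bigr)}{\epsilon(n)} \;\leq\; \frac{2}{\epsilon(n)}.
\]
Since $K_n$ vanishes on labeled rows and equals $(\gamma_I(n)/\gamma_A(n))\,\Le$ on unlabeled rows, the hypothesis $\gamma_I(n)/\gamma_A(n) <_n \epsilon(n)/\kappa$ yields $\|K_n\|_\infty <_n 2/\kappa$. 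The $\kappa>2$ clause in the hypothesis is precisely the quantitative threshold that makes this contraction constant strictly less than $1$ uniformly in large $n$; nothing weaker would do with this proof strategy.

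Assembly is then immediate: with $\|K_n\|_\infty <_n 2/\kappa < 1$, the Neumann expansion $(I+K_n)^{-1} = \sum_{m\geq 0}(-K_n)^m$ converges in $\|\cdot\|_\infty$ and gives $\|(I+K_n)^{-1}\|_\infty \leq (1 - 2/\kappa)^{-1}$ for all sufficiently large $n$. Combining with $M_n^{-1} = (I+K_n)^{-1}T^{-1}$,
\[
\|M_n^{-1}\|_\infty \;\leq\; \frac{\|T^{-1}\|_\infty}{1 - \|K_n\|_\infty} \;\leq\; \frac{\kappa}{\kappa - 2}\cdot\frac{1}{\gamma_A(n)} \;=\; O\bigl(\gamma_A^{-1}(n)\bigr),
\]
which is the claim. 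The only moderately delicate step is the row-$\ell_1$ bound $\|\Le\|_\infty \leq 2/\epsilon(n)$; everything else is mechanical perturbation theory, and the role of $\kappa > 2$ surfaces as the condition that the diagonal term $T$ dominates the diffusion term $\gamma_I(n) E_\cAp \Le$ strongly enough to be inverted by geometric series.
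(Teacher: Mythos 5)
Your proof is correct, and it takes a somewhat different route from the paper's. The paper partitions $M_n$ into $2\times 2$ block form along the labeled/unlabeled split, applies the block-triangular inverse formula, and then hits only the unlabeled block $G_n = \gamma_A I + \gamma_I L_{uu}$ with a Neumann series, using the bound $\|P_{uu}-I\|_\infty \le 2$ (deduced from $P_{uu}$ being a principal submatrix of a row-stochastic matrix) to control the geometric sum; it then separately bounds the off-diagonal block $\gamma_I G_n^{-1} L_{ul}$ and reassembles via $\|(A^T\ B^T)^T\|_\infty = \max(\|A\|_\infty,\|B\|_\infty)$. You instead peel off a diagonal scale matrix $T = E_\cA + \gamma_A E_{\cAp}$ from the full $n\times n$ matrix and run a single global Neumann series on $I + K_n$ with $K_n = \gamma_I T^{-1}E_{\cAp}\Le$, using the full-matrix row-sum bound $\|\Le\|_\infty \le 2/\epsilon$ (which is essentially the same stochasticity observation, applied to the whole of $\Pe$ rather than to a principal submatrix). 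What this buys you: the block inverse formula, the off-diagonal estimate on $\gamma_I L_{ul}$, and the invocation of Lemma~\ref{lem:bdspec} all disappear, since invertibility of $I+K_n$ follows directly from $\|K_n\|_\infty < 1$. What you lose is the explicit block description of $M_n^{-1}$, though the paper does not use it for anything beyond this proof. One shared caveat: both arguments need the regime $\gamma_A(n)\le 1$ to conclude $O(\gamma_A^{-1})$ rather than $O(\max(1,\gamma_A^{-1}))$; you flag this assumption explicitly, while the paper defers it to the closing line (``For small $\gamma_A(n)>0$, the second term is the maximum''). That is the same implicit restriction, handled in the same place.
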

\noindent The proof of Prop. \ref{prop:fstable} is mainly technical and is given
in~\S\ref{sec:deferproof}.

If we modify $M_n$ as 
\beq
\label{eq:lrlssysp}
M'_n = E_\cA + E_\cAp(I + h^2 \Le),
\eeq
we can also state the following corollary, which will be useful in
chapter~\ref{ch:rlapp}.

\begin{cor}[Stability of $M'_n$]
\label{cor:fpstable}
Suppose $h^2(n) <_n \epsilon(n)/\kappa$, $\kappa>2$.  Then
$\norm{{M'}_n^{-1}}_\infty = O(1)$.
\end{cor}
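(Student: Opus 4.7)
The plan is to derive the corollary as a direct specialization of Proposition \ref{prop:fstable} by choosing the regularization parameters appropriately. Observe that
\[
M'_n = E_\cA + E_\cAp(I + h^2 \Le) = E_\cA + E_\cAp + h^2 E_\cAp \Le,
\]
so $M'_n$ has exactly the form of $M_n$ from \eqref{eq:lrlssys} with the identifications $\gamma_A(n) \equiv 1$ and $\gamma_I(n) \equiv h^2(n)$, together with the graph Laplacian $L = \Le$.

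First I would verify that the hypothesis of Proposition \ref{prop:fstable} is met under these choices. The proposition requires $\gamma_I(n)/\gamma_A(n) <_n \epsilon(n)/\kappa$ with $\kappa > 2$. Plugging in the identifications above, this ratio becomes $h^2(n)/1 = h^2(n)$, and the hypothesis of the corollary is precisely $h^2(n) <_n \epsilon(n)/\kappa$ with $\kappa > 2$. So the proposition applies verbatim.

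Applying the conclusion of Proposition \ref{prop:fstable} then yields $\norm{{M'_n}^{-1}}_\infty = O(\gamma_A^{-1}(n)) = O(1^{-1}) = O(1)$, which is the desired bound.

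The only subtle point — and the place where I would want to be careful — is confirming that the proposition's proof does not secretly require $\gamma_A(n) \to 0$ or otherwise behave asymptotically; if the proof in \S\ref{sec:deferproof} rests on row-sum / diagonal-dominance arguments applied to $M_n$, then a constant $\gamma_A \equiv 1$ should only make the diagonal more dominant and the argument stronger, not weaker. I would therefore re-read that proof with $\gamma_A \equiv 1$ to ensure no step implicitly assumes $\gamma_A$ depends on $n$ in a nontrivial way. Assuming this check passes (which I expect it does, since the proposition is stated for general admissible sequences $(\gamma_A, \gamma_I)$), the corollary follows with essentially no additional work beyond the bookkeeping above.
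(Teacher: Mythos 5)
Your proposal is correct and matches the paper's intended (but unstated) argument: the corollary is a direct specialization of Prop.~\ref{prop:fstable} with $\gamma_A \equiv 1$, $\gamma_I \equiv h^2$, under which the hypothesis $\gamma_I/\gamma_A <_n \epsilon/\kappa$ becomes exactly $h^2 <_n \epsilon/\kappa$. Your caution about the line ``for small $\gamma_A(n)>0$'' in the proof of Prop.~\ref{prop:fstable} is well placed but ultimately harmless: that remark only decides which term of $\max\bigl(1,\ \gamma_A^{-1}(1-2\kappa^{-1})^{-1}(1+\gamma_I)\bigr)$ dominates, and with $\gamma_A \equiv 1$ and $\gamma_I = h^2 \to 0$ both terms are $O(1)$ regardless, so the conclusion survives unchanged.
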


\subsection{SSL Problem -- the Large Sample Limit -- Convergence Theorem}
\label{sec:ssllimitthm}

We are now ready to state and prove our main theorem about the convergence of
$\tf$. We assume that $\cM$ has empty boundary, or that $\bdM \subset
\cA$.  For the case of nonempty manifold boundary $\bdM \not \subset
\cA$, there are additional constraints at this boundary in the
resulting PDE.  We discuss this case in~\S\ref{app:derivbd}.
Note, by assumption \ref{it:as2} of~\S\ref{it:as2}, the anchor set
$\cA$ and its boundary $\partial \cA$ are not empty.

\begin{thm}[Convergence of $\tf$ under $M_n$]
\label{thm:fconv}
Consider the solution of $M_n \tf_n = \wt{w}$
(Eq.~\eqref{eq:lrlssys} with $L=\Le$), with $\cM$, $\cA$, and $w$ as 
described in~\S\ref{sec:sslassume}, and with either $\bdM =
\emptyset$ or $\bdM \subset \cA$.
Further, assume that
\begin{enumerate}
\item \label{it:fc1} $\epsilon(n)$ shrinks as given in
  Thm.~\ref{thm:convLe}.
\item \label{it:fc2} $\lim_{n \to \infty} \epsilon(n)/\gamma_A(n) = 0$.
\item \label{it:fc3} As in Prop. \ref{prop:fstable}, $\gamma_I(n) <_n
  \gamma_A(n) \epsilon(n) / \kappa$ for some $\kappa > 2$.
\end{enumerate}

\noindent Then for $n$ large, with probability 1,
$$
\norm{\tf_n - \pi_\cX f_n}_\infty <_n \epsilon(n)/\gamma_A(n)
$$
where the function $f_n \in C^2(\cM)$ is the unique, smooth, solution
to the following PDE for the given $n$:
\beq
\label{eq:lrlssim2}
f_n(x) - c\frac{\gamma_I(n)}{\gamma_A(n)} \lap f_n(x) = 0, \quad x \in \cAp
\qquad \text{and} \qquad
f_n(x) = w(x), \quad x \in \cA.
\eeq
\end{thm}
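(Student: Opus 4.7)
The plan is to combine the consistency result (Cor.~\ref{cor:consistency}) and the stability result (Prop.~\ref{prop:fstable}) in the standard Lax--Richtmyer manner, after first establishing that the limiting PDE~\eqref{eq:lrlssim2} admits a sufficiently smooth solution $f_n$. More precisely, the proof would proceed in three steps: a PDE step (existence, uniqueness, and $C^2$ regularity of $f_n$), a consistency step (comparing $M_n \pi_\cX(f_n)$ to $\wt{w}$), and a stability step (inverting $M_n$ to bound the error).

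First I would verify that, for each fixed $n$, the boundary value problem~\eqref{eq:lrlssim2} has a unique $C^2$ solution. The operator $I - c\frac{\gamma_I(n)}{\gamma_A(n)}\lap$ is uniformly elliptic and coercive on the compact Riemannian manifold $\cM$ (the zeroth-order term is positive and the Laplacian term is negative-definite). Since the anchor set $\cA$ is closed, nonempty, and regular, Lax--Milgram supplies a unique weak solution with boundary data $w$ on $\cA$. Standard interior Schauder estimates then promote the solution to $C^2(\cAp)$, and the regularity of $\partial \cA$ together with the smoothness of $w$ (assumptions~\ref{it:as2},\ref{it:as3} of~\S\ref{sec:sslassume}) give $C^0$ matching on $\cA$, which is enough to invoke Cor.~\ref{cor:consistency}. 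The case $\bdM \subset \cA$ is handled by the same regularity argument since $\bdM$ is then inside the Dirichlet region.

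Second, I would apply the operator $F_n$ of \eqref{eq:defFn} to $f_n$ and read off, by direct substitution into~\eqref{eq:lrlssim2}, that $F_n f_n \equiv 0$ on $\cAp$ and $F_n f_n = w$ on $\cA$, so that $\pi_\cX(F_n f_n) = \wt{w}$. Cor.~\ref{cor:consistency}, under hypothesis~\ref{it:fc1}, then yields with probability one
\begin{equation*}
\norm{M_n \pi_\cX(f_n) - \wt{w}}_\infty = O\bigl(n^{-1/(3+d/2)}\bigr) = O(\epsilon(n)).
\end{equation*}
Since $M_n \tf_n = \wt{w}$, this rewrites as $\norm{M_n(\pi_\cX(f_n) - \tf_n)}_\infty = O(\epsilon(n))$. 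Hypothesis~\ref{it:fc3} is exactly the stability hypothesis of Prop.~\ref{prop:fstable}, so $\norm{M_n^{-1}}_\infty = O(\gamma_A(n)^{-1})$, and therefore
\begin{equation*}
\norm{\pi_\cX(f_n) - \tf_n}_\infty \le \norm{M_n^{-1}}_\infty \cdot \norm{M_n(\pi_\cX(f_n) - \tf_n)}_\infty = O\bigl(\epsilon(n)/\gamma_A(n)\bigr),
\end{equation*}
which, after absorbing the hidden constant, gives the claimed bound $<_n \epsilon(n)/\gamma_A(n)$; hypothesis~\ref{it:fc2} then guarantees this bound shrinks to zero.

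The main obstacle is the regularity step for $f_n$ up to $\partial \cA$: Cor.~\ref{cor:consistency} requires pointwise $C^2$ control, which in turn needs the anchor boundary to be regular in the PDE sense (the reason for assumption~\ref{it:as2} in~\S\ref{sec:sslassume}) and the anchor function $w$ to extend with enough smoothness. A secondary technical issue is that the implicit constants in the consistency bound must be uniform in $n$; one must check that elliptic a~priori bounds on $f_n$ and its derivatives stay controlled as $\beta_n = c\gamma_I(n)/\gamma_A(n)$ varies, which is fine because hypothesis~\ref{it:fc3} keeps $\beta_n \le \epsilon(n)/\kappa \to 0$. Finally, the invocation of Prop.~\ref{prop:fstable} and Cor.~\ref{cor:consistency} each fail on an event of probability zero, so the combined statement holds almost surely by a union bound.
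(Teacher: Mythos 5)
Your proof is correct and follows essentially the same route as the paper: existence and $C^2$ regularity of $f_n$ via standard elliptic theory (Lax--Milgram plus Schauder estimates, which the paper invokes by reference to \S\ref{sec:rlpde}), then the observation that $\pi_\cX(F_n f_n) = \wt{w}$, then consistency (Cor.~\ref{cor:consistency}) combined with stability (Prop.~\ref{prop:fstable}) to bound $\norm{\tf_n - \pi_\cX f_n}_\infty$ by $\norm{M_n^{-1}}_\infty \norm{M_n\pi_\cX f_n - \pi_\cX(F_n f_n)}_\infty = O(\epsilon(n)/\gamma_A(n))$. Your version is somewhat more explicit about the regularity argument and the bookkeeping of constants (including flagging that the implicit constant in the consistency estimate should be uniform as $\beta_n = c\gamma_I/\gamma_A$ varies), but the logical skeleton coincides with the paper's.
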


\begin{proof}[Proof of Thm.~\ref{thm:fconv}, Convergence]
Let $F_n$ be given by \eqref{eq:defFn} and
let $f_n$ be the solution to \eqref{eq:lrlssim2}.  The existence and
uniqueness of $f_n$, under assumptions \ref{it:as1}-\ref{it:as3} of
\S\ref{sec:sslassume}, is well known (we show it in
\S\ref{sec:rlpde}).

We bound $\norm{\tf_n-\pi_\cX f_n}_\infty$ as follows:
\begin{align*}
\norm{\tf_n - \pi_\cX f_n}_\infty &= \norm{M_n^{-1}\wt{w} - \pi_\cX f_n}_\infty \\
 &= \norm{M_n^{-1} (\pi_\cX (F_n f_n) - M_n \pi_\cX f_n)}_\infty \\
 &\leq \norm{M_n^{-1}}_\infty \norm{\pi_\cX(F_n f_n) - M_n \pi_\cX f_n}_\infty.
\end{align*}

From stability (Prop. \ref{prop:fstable}), $\norm{M_n^{-1}}_\infty =
O(\gamma_A^{-1}(n))$, and by consistency (Cor. \ref{cor:consistency}),
with probability 1 for large $n$,
$\norm{\pi_\cX(F_n f_n) - M_n  \pi_\cX f_n}_\infty = O(\epsilon(n))$.

The theorem follows upon applying assumption \ref{it:fc2}.
\end{proof}

If we modify $M_n$ in Thm.~\ref{thm:fconv} as in \eqref{eq:lrlssysp},
we obtain Cor. \ref{cor:fpconv} below.

\begin{cor}[Convergence of $\tf'$ under $M'_n$]
\label{cor:fpconv}
Consider the solution of $M'_n \tf_n' = \wt{w}$, with $\cM$, $\cA$,
and $w$ as described in~\S\ref{sec:sslassume}, and with $\partial
\cM = \emptyset$ or $\bdM \subset \cA$.
Further, assume $\epsilon(n)$ shrinks as given in
Thm.~\ref{thm:convLe}, and $h(n)$ as in
Cor. \ref{cor:fpstable}.  Then for $n$ large, with probability 1,
$$
\norm{\tf_n' - \pi_\cX f_n}_\infty = O(\epsilon(n))
$$
where $f_n \in C^2(\cM)$ again solves \eqref{eq:lrlssim2}:
$$
f_n(x) - h^2(n) \lap f_n(x) = 0, \quad x \in \cAp
\qquad \text{and} \qquad
f_n(x) = w(x), \quad x \in \cA.
$$
\end{cor}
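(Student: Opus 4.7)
The plan is to run the same three-step consistency-plus-stability argument used in the proof of Thm.~\ref{thm:fconv}, now for the primed operator $M'_n$. Two of the three ingredients are already in hand: Cor.~\ref{cor:fpstable} supplies the (now sharper) stability bound $\|{M'_n}^{-1}\|_\infty = O(1)$, and Thm.~\ref{thm:convLe} supplies the uniform pointwise consistency of $\Le$ toward $-c\lap$ on smooth sampled functions. The only genuine work is to assemble these against the appropriate forward operator and to invoke existence and $C^2$-regularity for the limiting PDE.

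First I would define the continuous forward operator
\[
F'_n f(x) = f(x) - c\, h^2(n)\, \lap f(x), \quad x \in \cAp; \qquad F'_n f(x) = f(x), \quad x \in \cA,
\]
mirroring the block structure of $M'_n$. The constant $c$ from Thm.~\ref{thm:convLe} is absorbed when identifying the PDE of the statement (equivalently, $h$ is rescaled by $\sqrt{c}$), so the equation $F'_n f_n = \widetilde w$ is exactly the Dirichlet problem in the corollary: $f_n = w$ on $\cA$ and a Helmholtz-type equation on $\cAp$. Under assumptions \ref{it:as1}--\ref{it:as4} of \S\ref{sec:sslassume} this is the same class of elliptic problem treated in Thm.~\ref{thm:fconv} (with the ratio $c\gamma_I/\gamma_A$ replaced by $h^2$); accordingly the existence-and-uniqueness argument developed in \S\ref{sec:rlpde} applies verbatim to produce a unique $f_n \in C^2(\cM)$.

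Next I would establish the consistency estimate $\|M'_n \pi_\cX f_n - \pi_\cX F'_n f_n\|_\infty = O(\epsilon(n))$. On $\cA$-nodes both operators act as the identity, so the difference vanishes. On $\cAp$-nodes the difference collapses to $h^2(n)$ times the pointwise discretization error of $\Le$ against $-c\lap$ applied to $f_n$, which Thm.~\ref{thm:convLe} (using \eqref{eq:bdconvLe} with the optimal $\epsilon$) bounds uniformly by $O(n^{-1/(3+d/2)}) = O(\epsilon(n))$ with probability $1$. Since $h^2(n) <_n \epsilon(n)/\kappa < 1$, the product is $O(\epsilon)$. The conclusion then follows by the now-standard decomposition
\[
\tf'_n - \pi_\cX f_n \;=\; {M'_n}^{-1}\bigl(\pi_\cX F'_n f_n - M'_n \pi_\cX f_n\bigr),
\]
taking $\ell_\infty$ norms and combining stability from Cor.~\ref{cor:fpstable} with the consistency estimate just obtained.

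The main obstacle, if any, is the $C^2$-regularity of $f_n$ needed so that the pointwise rate in Thm.~\ref{thm:convLe} applies; but since the anchor $\cA$ is regular and the Dirichlet data $w$ is smooth by assumptions \ref{it:as2}--\ref{it:as3}, standard elliptic regularity on the compact Riemannian manifold suffices, and this is precisely what \S\ref{sec:rlpde} will carry out. Everything else is a direct carry-over from Thm.~\ref{thm:fconv}, with the $O(\gamma_A^{-1}(n))$ blow-up in stability replaced by the tighter $O(1)$ bound from Cor.~\ref{cor:fpstable}; this is what permits the cleaner $O(\epsilon(n))$ convergence rate stated in the corollary, without any extra factor of $\gamma_A^{-1}$.
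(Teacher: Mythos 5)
Your proof is correct and follows precisely the route the paper takes: the paper's own proof of this corollary is the one-line remark that it is ``similar to that of Thm.~\ref{thm:fconv}, with stability given by Cor.~\ref{cor:fpstable},'' and you have simply unpacked that remark into the explicit consistency-plus-stability decomposition, correctly noting that the $O(1)$ stability bound (in place of $O(\gamma_A^{-1})$) is what yields the cleaner $O(\epsilon(n))$ rate.
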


\begin{proof}[Proof of Cor. \ref{cor:fpconv}]
Similar to that of Thm.~\ref{thm:fconv}, with stability given by
Cor. \ref{cor:fpstable}.
\end{proof}

We will call \eqref{eq:lrlssim2} the Regularized Laplacian PDE (RL
PDE) with the regularization parameter $h(n) = \sqrt{c \gamma_I(n) /
\gamma_A(n)}$.  By assumptions \ref{it:fc1} and \ref{it:fc2} of
Thm.~\ref{thm:fconv} , $h(n) \to 0$ as $n \to \infty$.  This, and the
analysis in~\S\ref{sec:sslassume}, motivate us to study the RL PDE
when $h$ is small to gain some insight into its solution, and hence
into the behavior of the original SSL problem.

\subsection{The Regularized Laplacian PDE}
\label{sec:rlpde}
We now study the RL PDE in greater detail.
We will assume a basic knowledge of differential geometry on compact
Riemannian manifolds throughout.  For basic definitions and notation,
see App.~\ref{app:diffgeom}.

We rewrite the RL PDE \eqref{eq:lrlssim2}, now
denoting the explicit dependence on a parameter $h$, and making the
problem independent of sampling:
\beq
\label{eq:rl}
  - h^2 \lap f_h(x) + f_h(x) = 0 \quad x \in \cAp
  \qquad\text{and}\qquad
  f_h(x) = w(x) \quad x \in \cA,
\eeq
where $\cM$, $\cA$, and $w$ are defined as in~\S\ref{sec:sslassume}.
The idea is that $f_h$ is specified smoothly on $\cA$ and by solving
\eqref{eq:rl} we seek a smooth extension of $f_h$ to all of $\cM$.

The RL PDE, \eqref{eq:rl}, has been well studied
\cite[Thm.~6.22]{Gilbarg1983}, \cite[App.~A]{Jost2001}.
It is uniformly elliptic, and for $h>0$ it admits a unique, bounded,
solution ${f_h \in C^2(\cAp) \cap C^0(\cM)}$.
The boundedness of $f_h$ follows from the strong maximum principle
\cite[Thm.~3.5]{Gilbarg1983}.  One consequence is that $f_h$
will not extrapolate beyond an interval determined by the anchor
values.

\begin{prop}[\cite{Jost2001},~\S A.2] \label{prop:bound}
The RL PDE \eqref{eq:rl} has a unique, smooth solution that is bounded within the range
$(w_-,w_+)$ for $x \in \cAp$, where
$$
w_- = \inf_{y \in \cA} \min(w(y),0)
\qquad\text{and}\qquad
w_+ = \sup_{y \in \cA} \max(w(y),0).
$$
\end{prop}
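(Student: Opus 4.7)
The plan is to separate the proposition into two claims and handle them with standard tools: (i) existence, uniqueness, and smoothness of $f_h$, and (ii) the two-sided pointwise bound. Part (i) is a direct appeal to elliptic theory: the operator $L := -h^2 \lap + I$ on $\cM$ is uniformly elliptic with smooth coefficients and a strictly positive zeroth-order coefficient, and since $\cA$ is closed, nonempty, and regular in the PDE sense (\S\ref{sec:irr}), the Dirichlet problem $Lf_h = 0$ on $\cAp$ with boundary data $w|_{\partial \cAp}$ admits a unique solution in $C^2(\cAp) \cap C^0(\cM)$ by \cite[Thm.~6.22]{Gilbarg1983}; interior $C^2$ (indeed $C^\infty$) regularity is upgraded by standard interior elliptic regularity applied componentwise in local charts of $\cM$.

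For part (ii), I would run the classical weak maximum principle argument adapted to $L$. In the paper's sign convention $\lap$ is negatively defined, so at an interior local maximum $x_0 \in \cAp$ of $f_h$ one has $\lap f_h(x_0) \leq 0$, and the PDE forces $f_h(x_0) = h^2 \lap f_h(x_0) \leq 0$. Hence every interior maximum of $f_h$ is nonpositive, so its global maximum on $\cM$ is at most $\max\bigl(\max_{\cA} w,\, 0\bigr) = w_+$. Applied to $-f_h$, which satisfies the same PDE with anchor data $-w$, the same step gives $f_h \geq w_-$.

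To upgrade these to the strict bounds $w_- < f_h(x) < w_+$ on $\cAp$, I would invoke the strong maximum principle (Hopf) for $L$, using that its zeroth-order coefficient is $1 > 0$. If $f_h$ attained its maximum at an interior point $x_0$, then $f_h$ would be constant on the connected component of $\cAp$ containing $x_0$; substituting this constant into the PDE forces the constant to be $0$, which contradicts attaining the value $w_+ > 0$ inside $\cAp$. The case $w_+ = 0$ is handled by the same argument applied to the sign of $f_h$, with the genuinely degenerate situation $w \equiv 0$ on an entire connected component of $\partial \cA$ (in which case $f_h \equiv 0$ on the adjacent component of $\cAp$) excluded implicitly.

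The only real obstacle is the bookkeeping of the sign convention, since the paper writes $\lap$ for the negatively defined Laplace--Beltrami operator; once this is respected, the proof reduces to two or three lines invoking the weak and strong maximum principles. No machinery is needed beyond \cite[Thm.~6.22]{Gilbarg1983} and a standard Hopf lemma on a compact Riemannian manifold, both of which are well established in the references already cited.
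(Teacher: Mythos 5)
Your proof is correct. The paper itself gives no proof, only citations to \cite[Thm.~6.22, Thm.~3.5]{Gilbarg1983} and \cite[\S A.2]{Jost2001}; your argument is precisely the standard unpacking of those references — existence, uniqueness, and regularity from Dirichlet theory for uniformly elliptic operators, the two-sided bound from the weak maximum principle at interior critical points (correctly keeping track that the paper's $\lap = \mathrm{div}\,\mathrm{grad}$ satisfies $\lap f_h(x_0)\le 0$ at an interior maximum, so $f_h(x_0)=h^2\lap f_h(x_0)\le 0$), and strictness from the Hopf strong maximum principle, with the degenerate case $w\equiv 0$ on a boundary component correctly set aside.
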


Our goal is to understand the solution of \eqref{eq:rl} as the
regularization term vanishes, i.e., $h \downarrow 0$.  To do so,
we introduce the Viscous Eikonal Equation.

\subsection{The Viscous Eikonal Equation}
\label{sec:ve}
The RL PDE is closely related to what we will call the Viscous Eikonal
(VE) equation.  This is the following ``smoothed'' Hamilton-Jacobi
equation of Eikonal type:
\beq
\label{eq:ve}
  - h \lap S_h(x) + \norm{\grad S_h(x)}^2 = 1 \quad x \in \cAp
  \qquad\text{and}\qquad
  S_h(x) = u_h(x) \quad x \in \cA.
\eeq
The term containing the Laplacian is called the \emph{viscosity term},
and $h$ is called the \emph{viscosity parameter}.

The two PDEs, \eqref{eq:rl} and \eqref{eq:ve}, are connected via
the following proposition:
\begin{prop}
\label{prop:rl2ve}
Consider \eqref{eq:rl} with $h>0$ and $w(x) > 0$, and \eqref{eq:ve} with
$u_h(x) = -h \log w(x)$.  When solution $f_h$ exists for
\eqref{eq:rl}, then $S_h = -h \log f_h$ is the unique, smooth,
bounded solution to \eqref{eq:ve}.
\end{prop}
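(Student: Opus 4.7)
The plan is to perform the classical Hopf--Cole (logarithmic) transformation $S_h = -h \log f_h$ and verify that it solves \eqref{eq:ve} by direct substitution. The computation itself is elementary; the only real technical point is ensuring that $f_h > 0$ throughout $\cM$, so that $\log f_h$ is well defined and inherits the smoothness of $f_h$. That positivity step is the main (and essentially the only) obstacle.

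For positivity, I would invoke Prop.~\ref{prop:bound}: on $\cAp$, $f_h$ lies strictly between $w_-$ and $w_+$, and the hypothesis $w(x) > 0$ on $\cA$ forces $w_- = \inf_{y\in\cA}\min(w(y),0) = 0$. Hence $f_h > 0$ on $\cAp$, while $f_h = w > 0$ on $\cA$. By continuity and compactness of $\cM$, $f_h$ attains a strictly positive minimum, and together with the upper bound $w_+ < \infty$ this shows that $S_h := -h \log f_h$ is well defined, bounded, and of class $C^2(\cAp) \cap C^0(\cM)$.

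Next I would substitute. Using
$$\grad S_h = -\frac{h}{f_h}\grad f_h \quad\text{and}\quad \lap S_h = -\frac{h}{f_h}\lap f_h + \frac{h}{f_h^2}\norm{\grad f_h}^2,$$
a short calculation gives
$$-h\lap S_h + \norm{\grad S_h}^2 = \frac{h^2}{f_h}\lap f_h.$$
The RL PDE \eqref{eq:rl} states $h^2 \lap f_h = f_h$ on $\cAp$, so the right-hand side equals $1$, which is exactly the VE interior equation. On $\cA$ we have $S_h = -h \log w = u_h$, yielding the anchor condition.

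Finally, I would obtain uniqueness by exploiting the bijectivity of the transformation rather than attempting a direct PDE uniqueness argument for \eqref{eq:ve}. Given any $C^2(\cAp) \cap C^0(\cM)$ solution $S$ of \eqref{eq:ve}, the function $F := e^{-S/h}$ is strictly positive, and reversing the substitution above shows $F$ solves \eqref{eq:rl} with $F = e^{-u_h/h} = w$ on $\cA$. The known uniqueness for the linear, uniformly elliptic RL PDE (cited just after \eqref{eq:rl}) then forces $F = f_h$, and hence $S = -h \log f_h = S_h$. Everything beyond the positivity step reduces to mechanical computation plus an appeal to the already-established uniqueness for \eqref{eq:rl}.
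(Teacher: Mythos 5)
Your proof follows the same Hopf--Cole substitution strategy as the paper, but it is somewhat more careful in two places where the paper's own argument is a bit loose. First, the boundedness of $S_h$: the paper bounds $R_h$ by $[-h\log w_+,\, -h\log w_-)$, but with $w>0$ the formula in Prop.~\ref{prop:bound} gives $w_-=0$, so $-h\log w_-=+\infty$ and that interval is not actually bounded. Your appeal to compactness of $\cM$ (a continuous, strictly positive $f_h$ on a compact manifold attains a strictly positive minimum) is exactly the missing step that makes boundedness of $S_h=-h\log f_h$ legitimate. Second, the paper verifies that $S_h$ solves \eqref{eq:ve} but never explicitly argues uniqueness of the VE solution; you supply this by inverting the transformation, showing any $C^2(\cAp)\cap C^0(\cM)$ solution $S$ of \eqref{eq:ve} yields $F=e^{-S/h}$ solving \eqref{eq:rl}, and invoking the known uniqueness for the linear elliptic problem. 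The core computation is the same in spirit (you differentiate $S_h$ directly; the paper differentiates $e^{-R_h/h}$), and both reduce to the identity $-h\lap S_h+\norm{\grad S_h}^2 = h^2\lap f_h / f_h = 1$.
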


\begin{proof}
Let $f_h$ be the unique solution of \eqref{eq:rl}.  From Prop. \ref{prop:bound},
$f_h > 0$ for $x \in \cAp$.  Apply the inverse
of the smooth monotonic bijection $\tau_h(t) = e^{-t/h}$, $\tau_h :
[0,\infty) \to (0,1]$ to $f_h$.  Let $R_h = -h \log f_h$, hence $f_h =
e^{-R_h/h}$.

We will need the standard product rule for the divergence
``$\grad \cdot$''.  When $f$ is a differentiable function and $\bm{f}$
is a differentiable vector field,
\beq
\label{eq:prodrule}
\grad \cdot (f \bm{f}) = (\grad f) \cdot \bm{f} + (\grad \cdot \bm{f}) f.
\eeq

As $f_h$ is harmonic, on $\cAp$:
\begin{align}
\label{eq:rl2ve1}
0 &= -h^2 \grad \cdot \grad e^{-R_h/h} + e^{-R_h/h} \\
\label{eq:rl2ve2}
&= h \grad \cdot (e^{-R_h/h} (\grad R_h)) + e^{-R_h/h} \\ 
\label{eq:rl2ve3}
&= h (\grad e^{-R_h/h} \cdot \grad R_h
 + e^{-R_h/h} \grad \cdot \grad R_h) + e^{-R_h/h} \\ 
\label{eq:rl2ve4}
&= e^{-R_h/h} (- \norm{\grad R_h}^2 + h \lap R_h + 1).
\end{align}
Here, from \eqref{eq:rl2ve1} to \eqref{eq:rl2ve2} we use the
chain rule, and from \eqref{eq:rl2ve2} to \eqref{eq:rl2ve3} we
use the product rule \eqref{eq:prodrule}.
After dropping the positive multiplier in \eqref{eq:rl2ve4},
we see that that $R_h$ satisfies the first part of \eqref{eq:ve}.
Further, $R_h \in C^2(\cAp) \cap C^0(\cM)$ because $\tau_h$ is a
smooth bijection.  Similarly, $R_h$ is bounded because $f_h$ is:
${R_h \in [-h \log w_+, -h \log w_-)}$.

Finally, on the boundary $\cA$ we have $e^{-R_h/h} = w$, equivalently $R_h =
-h \log w$.  Hence, $R_h$ solves \eqref{eq:ve}.

\end{proof}


To summarize: for $h>0$, \eqref{eq:ve} has a unique solution $S_h \in
C^2(\cAp)$ (Prop. \ref{prop:rl2ve}; see also
\cite{Gilbarg1983,Jost2001}).

We are interested in the solutions of the VE Eq. for the case of $h=0$
as well as for solutions obtained for $h>0$ small and for more general
$u_h$. When $h=0$ and $u_h=0$, it is well known that on a compact
Riemannian manifold $\cM$, \eqref{eq:ve} models propagation from $\cA$
through $\cAp$ along shortest paths.  Results are known
for a number of important cases, and we will discuss them after
describing the following assumption.

\theoremstyle{plain}
\newtheorem{assm}{Assumption}[section]
\begin{assm}
\label{conj:vedist}
For $h=0$ and $u_0$ sufficiently regular, \eqref{eq:ve} has the
unique \emph{viscosity} solution:
\beq
\label{eq:vedist}
S_0(x) = \inf_{y \in \cA} \left( d(x,y) + u_0(y) \right),
\eeq
where $d(x,y)$ is the geodesic distance between $x$ and $y$ through $\cAp$.
Furthermore, as $h \to 0$, $S_h$ converges to $S_0$ in $L^p(\cAp)$, $1
\leq p <  \infty$, and in $(L^\infty(\cAp))^*$ (i.e. essentially
pointwise) when $u_h$ converges to $u_0$ in the same sense.  The rate
of convergence is 
$\norm{S_0-S_h}_\infty^* = O(h)$.
\end{assm}
\noindent From now on we will denote  $S_0$ simply by $S$.

\begin{proof}[Discussion of Assum. \ref{conj:vedist} on compact $\cM$]
To our knowledge, a complete proof of \eqref{eq:vedist} for compact
Riemannian $\cM$ is not known; the theory of unique viscosity
solutions (nondifferentiable in some areas), on manifolds is
an open area of research \cite{Crandall1992, Azagra2008}.  However,
below we cite known partial results.

Eq.~\eqref{eq:vedist} was shown to hold for $u_0 = 0$ on compact
$\cM$ in \cite[Thm.~3.1]{Mantegazza2002}, and for $u_0$ sufficiently regular on
bounded, smooth, and connected subsets of $\bbR^d$ in \cite[Thms. 2.1, 6.1,
6.2]{Lions1982}, and e.g., when $u_0$ is Lipschitz 
\cite[Eq.~4.23]{Kruzkov1975}.  Convergence and the convergence rate of
$S_h$ to $S_0$ were also shown on such Euclidean subsets in
\cite[Eq.~69]{Lions1982}.  Conditions of convergence to a viscosity
solution are not altered under the exponential map \cite[Cor.
2.3]{Azagra2008}, thus convergence in local coordinates around
$\cA$ (which follows from \cite[Thm.~6.5]{Lions1982} and
Prop. \ref{prop:rl2ve}) implies convergence on open subsets of
$\cAp$. However, global convergence of $S_h$ to $S_0$ on $\cM$ is
still an open problem. 

Not surprisingly, despite the lack of formal proof, and in light of
the above evidence, our numerical experiments on a variety of
nontrivial compact Riemannian manifolds (e.g. compact subsets of
hyperbolic paraboloids) give additional evidence that this convergence
\emph{is} achieved.
\end{proof}

\subsection{What happens when $h$ converges to $0$: Transport Terms}
\label{sec:transport}
To study the relationship between $S_h$ and
$S$,  we look for a higher order expansion of $f_h$ using a tool
called Transport Equations \cite{Schuss2005}.

Assume $f_h$ can be expanded into the following form:
\beq
\label{eq:trans}
 f_h(x) = e^{-R(x)/h} \sum_{k \geq 0} h^{\alpha k} Z_k(x)
\eeq
with $\alpha>0$.  The terms $Z_k$, $k = 0,1,\ldots$ are called the
transport terms.  Substitution of this form into \eqref{eq:rl} will
give us the conditions required on $R$ and $Z_k$.

\begin{thm}
\label{thm:transport}
If \eqref{eq:trans}, (with $\alpha = 1$) is a solution to the RL PDE
\eqref{eq:rl} for all $h>0$, then:
\beq
\forall x \in \cA \text{ and } \forall k \geq 1 \qquad
 R(x) = 0,
 \qquad Z_0(x) = w(x),
 \qquad Z_k(x) = 0,
\eeq
and \eqref{eq:rl} reduces to a series of PDEs:
\begin{align}
0 &= -\norm{\grad R}^2 + 1 \nonumber \\
\label{eq:trl}
0 &= Z_0 \lap R + 2 \grad R \cdot \grad Z_0 \\
0 &= Z_{k} \lap R + 2 \grad R \cdot \grad Z_{k} - \lap Z_{k-1},
 \quad k > 0 \nonumber
\end{align}

In particular, letting $d_\cA(x) = \inf_{y \in \cA} d(x,y)$ denote
the shortest geodesic distance from $x$ to $\cA$, we have that
$R(x) = d_\cA(x)$ everywhere.

\end{thm}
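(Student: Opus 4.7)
The plan is to substitute the WKB--type ansatz \eqref{eq:trans} (with $\alpha=1$) into the RL PDE \eqref{eq:rl}, expand using the product/chain rule for $\grad$ and $\lap$ on $\cM$, and then collect terms order by order in $h$. Writing $Y(x)=\sum_{k\geq 0}h^{k}Z_k(x)$ so that $f_h=e^{-R/h}Y$, I would first compute
\[
\grad f_h = e^{-R/h}\bigl(\grad Y - \tfrac{1}{h}Y\,\grad R\bigr),
\]
and then apply the divergence product rule \eqref{eq:prodrule} again to obtain
\[
\lap f_h = e^{-R/h}\Bigl(\lap Y - \tfrac{2}{h}\grad R\cdot\grad Y - \tfrac{1}{h}Y\lap R + \tfrac{1}{h^{2}}Y\norm{\grad R}^{2}\Bigr).
\]
Plugging this into $-h^{2}\lap f_h+f_h=0$ and dividing by the positive factor $e^{-R/h}$ yields the single identity
\[
Y\bigl(1-\norm{\grad R}^{2}\bigr) + h\bigl(Y\lap R + 2\grad R\cdot\grad Y\bigr) - h^{2}\lap Y = 0
\quad\text{on }\cAp.
\]

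Next I would substitute $Y=\sum_{k\geq 0}h^{k}Z_k$ and demand that the resulting power series in $h$ vanish identically, so that each coefficient of $h^{k}$ equals zero. The $h^{0}$ coefficient forces $Z_0(1-\norm{\grad R}^{2})=0$; assuming $Z_0\not\equiv 0$ this gives the Eikonal equation $\norm{\grad R}=1$. Using this, the $h^{1}$ coefficient collapses to the first transport equation $Z_0\lap R + 2\grad R\cdot\grad Z_0=0$, and for $k\geq 1$ the $h^{k+1}$ coefficient reduces to $Z_k\lap R + 2\grad R\cdot\grad Z_k - \lap Z_{k-1}=0$, matching \eqref{eq:trl} after an index shift.

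To fix the boundary data, I would evaluate the ansatz on $\cA$ and match with $f_h=w$. Since this must hold for \emph{all} $h>0$, and the factor $e^{-R/h}$ degenerates as $h\downto 0$ unless $R\equiv 0$ on $\cA$, the only way for $e^{-R(x)/h}\sum_k h^{k}Z_k(x)$ to equal the $h$-independent function $w(x)$ on $\cA$ is to force $R=0$ on $\cA$ and then match the remaining polynomial in $h$ term by term, giving $Z_0=w$ and $Z_k=0$ for all $k\geq 1$ on $\cA$. The final step is the identification $R(x)=d_{\cA}(x)$ on all of $\cM$: $R$ satisfies $\norm{\grad R}=1$ in $\cAp$ with the Dirichlet condition $R=0$ on $\cA$, i.e., the Eikonal equation with zero boundary data, whose unique (viscosity) solution on the compact Riemannian manifold $\cM$ is the distance function $\inf_{y\in\cA}d(x,y)$ by Assumption~\ref{conj:vedist} (taking $u_0\equiv 0$).

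The main obstacle is the last identification: the transport/eikonal system is obtained cleanly from a formal expansion, but $R$ a priori is only required to be a smooth \emph{local} solution of $\norm{\grad R}^{2}=1$ in $\cAp$, and such solutions are generally non-unique once characteristics from different components of $\cA$ collide (i.e.\ at the cut locus of $\cA$). Lifting the formal local analysis to the global statement $R=d_{\cA}$ therefore relies on interpreting $R$ as the viscosity solution of the Eikonal equation, which is exactly the content of the compact-manifold case of Assumption~\ref{conj:vedist}; all other steps are routine algebraic manipulations using the product rule and matching of powers of $h$.
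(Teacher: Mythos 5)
Your proof is essentially the same argument as the paper's: substitute the WKB ansatz, expand via the product/chain rules (the paper notes this identically, using \eqref{eq:prodrule}), factor out $e^{-R/h}$, collect powers of $h$, use the $h^0$ Eikonal relation to cancel the $Z_j(1-\norm{\grad R}^2)$ term at each subsequent order, and match the boundary data by demanding $w=e^{-R/h}\sum_k h^k Z_k$ for all $h>0$. Your algebra checks out exactly.

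The one place you go beyond the paper's proof is the final identification $R=d_\cA$. The paper's proof of Theorem~\ref{thm:transport} simply asserts it (the argument for it appears later, in the proof of Theorem~\ref{thm:rl2e}, where it is attributed to Thm.~3.1 of Mantegazza--Mennucci as discussed in Assumption~\ref{conj:vedist}). You correctly flag this as the genuine obstacle --- the Eikonal equation with zero Dirichlet data has many smooth local solutions away from the cut locus, and singling out the distance function requires the viscosity-solution framework --- and you resolve it the same way the paper does, by invoking Assumption~\ref{conj:vedist} with $u_0\equiv 0$. Two tiny caveats worth noting if you write this up more fully: (i) the reduction from $Z_0(1-\norm{\grad R}^2)=0$ to $\norm{\grad R}=1$ implicitly requires $Z_0$ not to vanish on a set with interior, which you signal but should state; and (ii) the boundary argument $R|_\cA=0$ implicitly uses $w\not\equiv 0$ on each component of $\cA$ (otherwise the exponential factor is unconstrained there), which is fine under the paper's standing assumptions but worth making explicit.
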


\begin{proof}
The anchor conditions follow from the fact that for all $h>0$, $w=Z_0
e^{-R/h}+ h Z_1 e^{-R/h} + \ldots$ (thus forcing $R=0$ and
therefore $Z_0 = w$, and $Z_k = 0, \forall k > 0$).

Plugging \eqref{eq:trans} into \eqref{eq:rl}, and applying the product
and chain rules, we get
$$
0 = \sum_{k \geq 0} e^{-R/h} h^k \left(-h^2 \lap Z_k + 2 h \grad R \cdot \grad Z_k
 + h Z_k \lap R - Z_k \norm{\grad R}^2 + Z_k \right).
$$
Eqs. \eqref{eq:trl} follow after collecting like powers of $h$ and
simplifying.
\end{proof}

Thm.~\ref{thm:transport} shows first that $R$ is determined by the
Eikonal equation with zero boundary conditions.  Second, it shows that
$Z_0$ is the dominant term affected by the boundary values $w$ as $h
\downto 0$.  For $k>0$, the transport terms $Z_k$ are affected by $w$
via $Z_{k-1}$, but these are not the dominant terms for small $h$.
The existence, uniqueness, and smoothness of $Z_0$ on $\cAp$ and
within the cut locus of $\cA$, is proved in~\S\ref{sec:deferproof}
(Thm.~\ref{thm:Zsmooth}).

Note that the choice of $\alpha=1$ is not arbitrary.
For $\alpha<1$ in \eqref{eq:trans}, \eqref{eq:rl} does not admit a
consistent set of solvable transport equations.
For $\alpha = 2$, the resulting transport equations reduce to those of
Eqs. \eqref{eq:trl} (the nonzero odd $k$ terms are forced to zero and
the even $k$ terms are related to each other via Eqs. \eqref{eq:trl}).


\subsection{Manifold Laplacian and Vanishing Viscosity}
\label{sec:lap}

We now combine Assum. \ref{conj:vedist} and Thm.~\ref{thm:transport} in
a way that summarizes the solution of the RL PDE \eqref{eq:rl} for
small $h$, taking into account possible arbitrary nonnegative boundary
conditions.

\begin{thm}
\label{thm:rl2e}
Let $w(x)\geq 0, \forall x \in \cA$ and let $\hat{\cA} = \supp{w}$.
Further, define
$
  f^*_h(x) = w(x') e^{-d_{\cA}(x)/h}
$
where $x' = \arg\inf_{y \in \cA} d(x,y)$.
Then for a situation where Assum. \ref{conj:vedist} holds,
and for small $h > 0$, the solution of \eqref{eq:rl} with sufficiently
regular anchor $\cA$, satisfies:
\begin{align}
\abs{f_h(x) - f^*_h(x)} &= O(h e^{-d_\cA(x)/h}); \text{and}
\label{eq:rlappr}\\
\lim_{h \to 0} -h \log f_h(x) &= d_{\hat{\cA}}(x). \label{eq:rllim}
\end{align}
\end{thm}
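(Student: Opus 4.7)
The plan is to leverage the correspondence $f_h = e^{-S_h/h}$ from Prop.~\ref{prop:rl2ve}, which converts the problem about the exponentially decaying function $f_h$ into an additive question about the VE solution $S_h$ with boundary data $u_h(y) = -h\log w(y)$. Everything reduces to controlling $S_h$ for small $h$ and then exponentiating.

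First I would establish \eqref{eq:rllim}. As $h \downto 0$, $u_h(y) = -h\log w(y)$ tends to $0$ on $\hat{\cA}$ and to $+\infty$ on $\cA\setminus\hat{\cA}$; informally, the effective anchor shrinks to $\hat{\cA}$ with zero boundary data. A monotone/truncation argument---replacing $u_h$ by the bounded approximations $u_h^{(N)}(y) = \min\{-h\log w(y), N\}$, applying Assum.~\ref{conj:vedist} for each $N$, then sending $N\to\infty$---yields
$$S_h(x) \to S_0(x) = \inf_{y\in\cA}\bigl(d(x,y)+u_0(y)\bigr) = \inf_{y\in\hat{\cA}}d(x,y) = d_{\hat{\cA}}(x).$$
Since $-h\log f_h = S_h$, this gives \eqref{eq:rllim} pointwise.

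For \eqref{eq:rlappr} I would push Assum.~\ref{conj:vedist} one order further. The rate statement gives
$$S_h(x) = \inf_{y\in\cA}\bigl(d(x,y) - h\log w(y)\bigr) + O(h^2),$$
and for small $h$ the infimum is attained near the Euclidean nearest point $x' = \arg\inf_{y\in\cA}d(x,y)$ whenever $w(x')>0$. Expanding about $x'$ yields $S_h(x) = d_\cA(x) - h\log w(x') + O(h^2)$, and exponentiating produces
$$f_h(x) = e^{-d_\cA(x)/h}\cdot w(x')\cdot e^{O(h)} = w(x')\,e^{-d_\cA(x)/h}\bigl(1+O(h)\bigr),$$
which is exactly $|f_h - f^*_h| = O(h\, e^{-d_\cA(x)/h})$. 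For points where $w(x')=0$ (so $f^*_h = 0$), the bound is instead recovered from Step 1: $f_h(x) \asymp e^{-d_{\hat{\cA}}(x)/h}$, and since $d_{\hat{\cA}}(x) > d_\cA(x)$ strictly there, the ratio $e^{-(d_{\hat{\cA}}-d_\cA)/h}$ is super-polynomially small in $h$, so the inequality is satisfied (loosely) for all small $h$.

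The hard parts will be technical rather than conceptual. First, Assum.~\ref{conj:vedist} is stated for a fixed regular $u_0$, but here the boundary datum $u_h$ itself depends on $h$ and becomes singular in the limit, so the truncation argument above must be handled carefully, especially near the boundary of $\hat{\cA}$ and across the cut locus of $\cA$ (where $d_\cA$ is non-smooth and $x'$ is non-unique). Second, to pass from the $L^\infty$-type $O(h)$ rate in Assum.~\ref{conj:vedist} to the multiplicative $(1+O(h))$ factor after exponentiation, one really needs an $O(h^2)$ correction in $S_h$, not merely an $O(h)$ one; as a backup, one can invoke the transport expansion $f_h = e^{-R/h}(Z_0 + hZ_1 + O(h^2))$ from Thm.~\ref{thm:transport} and verify, using that $\lap R = \lap d_\cA$ vanishes at $\cA$ for a sufficiently regular anchor, that $Z_0(x) = w(x') + O(d_\cA(x))$ along the minimizing geodesic, so the leading correction to $w(x')e^{-d_\cA/h}$ is indeed of size $h\,e^{-d_\cA/h}$.
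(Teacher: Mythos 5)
Your truncation argument for \eqref{eq:rllim} essentially matches the paper's proof: the paper carries out the same idea with a floor $e^{-c/h}$ on the anchor value (your $N$ plays the role of the paper's $c$), applies Assum.~\ref{conj:vedist} to the truncated problem, and then takes $c$ large. For \eqref{eq:rlappr}, you correctly flag that Assum.~\ref{conj:vedist} only delivers an $O(h)$ rate, not the $O(h^2)$ your primary route would require; the paper's proof goes directly through the transport expansion of Thm.~\ref{thm:transport}, which is your fallback.

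That fallback, however, contains a concrete error. You assert that $\lap R = \lap d_\cA$ vanishes on $\cA$ for a sufficiently regular anchor and use this to conclude $Z_0(x) = w(x') + O(d_\cA(x))$. Neither is true: in the paper's own Annulus example (Ex.~\ref{ex:annul}), $\lap d_\cA(r) = 1/r$, which equals $1/r_0 \neq 0$ on the inner anchor circle, and $Z_0(r) = \sqrt{r_0/r}$ deviates from $w(r_0) = 1$ by $\Theta(d_\cA(r))$, not $o(d_\cA(r))$. And even granting your claimed estimate, it would not close the gap: $|Z_0(x)-w(x')| = O(d_\cA(x))$ is $h$-independent, so $|Z_0(x)-w(x')|\,e^{-d_\cA(x)/h}$ is $\Theta(e^{-d_\cA(x)/h})$ rather than $O(h\,e^{-d_\cA(x)/h})$. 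What the transport expansion actually produces is $f_h(x) = Z_0(x)\,e^{-d_\cA(x)/h} + O(h\,e^{-d_\cA(x)/h})$, where $Z_0$ is fixed by the transport equations and matches $w$ only on $\cA$ itself; for $x$ off $\cA$ the correct prefactor is $Z_0(x)$, which is in general different from $w(x')$. The paper's proof of \eqref{eq:rlappr} stops at the observation that $Z_0 = w$ on $\cA$ and leaves this identification implicit, so it does not supply the missing step either; but your attempt to justify it via a vanishing-Laplacian claim introduces an assertion that the paper's own example contradicts.
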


\begin{proof} First, apply Thm.~\ref{thm:transport} to decompose $f_h$
in terms of $R$ and the transport terms $Z_k$, $k \geq 0$.  Next,
by Thm.~3.1 of \cite{Mantegazza2002}, as discussed in
Assum. \ref{conj:vedist}, we obtain $R(x) = d_{\cA}(x)$.
We can therefore write $f_h(x) = Z_0(x) e^{-d_\cA(x)/h} + O(h
e^{-d_\cA(x)/h})$.  Further, $Z_0(x)$ is unique and smooth 
within an intersection of $\cA'$ and a cut locus of $\cA$,
and satisfies the boundary conditions ($Z_0(x) = w(x)$ for $x \in
\cA$) .  This can be shown using the method of characteristics
(Thm.~\ref{thm:Zsmooth}).  This verifies \eqref{eq:rlappr}.

Showing that \eqref{eq:rllim} holds requires more work due to possible
zero boundary conditions on $\cA$.  To prove \eqref{eq:rllim}, we
find a sequence of PDEs, parametrized by viscosity $h$ and ``height''
$c>0$; we denote these solutions $\hat{f}_{h,c}(x)$.  These solutions
match $f_h$ as $h \to 0$.    We then show that for large $c$, they
also match $f_h$ for nonzero $h$.

Let $\hat{\cA}(c,h) = \set{x \in \cA : w(x) > e^{-c/h}}$ and
$\cA_0(c,h) = \set{x \in \cA : w(x) \leq e^{-c/h}}$.  We define
$\hat{f}_{h,c}$ as the solution to \eqref{eq:rl} with the modified
boundary conditions
\beq
w_{h,c}(x) = \begin{cases}
 w(x) & x \in \hat{\cA}(c,h) \\
 e^{-c/h} & x \in \cA_0(c,h)
\end{cases}. \label{eq:whc}
\eeq
This is a modification of the original problem with a lower bound
saturation point of $e^{-c/h}$.  Clearly, as $h \to 0$, $w_{h,c}(x)
\to w(x)$ on the boundary.

As in Prop. \ref{prop:rl2ve}, for fixed $c$ we can write $w_{h}(x) =
e^{-u_{h}(x)/h}$ for any $x \in \cA$ and $h>0$.  Then 
$$
u_{h}(x) = \begin{cases} c & x \in \cA_0(c,h) \\
  -h \log w(x) & x \in \hat{\cA}(c,h)
\end{cases}.
$$
Let $\cA_0 = \set{x \in \cA : w(x) = 0}$ and $\hat{\cA} = \set{x
\in \cA : w(x) > 0}$, and define
$$
u_{0}(x) = \begin{cases} c & x \in \cA_0 \\
  0 & x \in \hat{\cA}
\end{cases}.
$$
Then
$$
u_h(x)-u_0(x) = \begin{cases}
0 & w(x) = 0 \\
c & 0 < w(x) \leq e^{-c/h} \\
-h \log w(x) & w(x) > e^{-c/h}
\end{cases}.
$$
As $w(x)$ is regular and $\cA$ is compact, $u_h \to u_0$ pointwise on
$\cA$, and the convergence is also uniform.
Clearly, then, $u_h \to u_0$ almost everywhere on $\cA$.  Furthermore,
for any $h>0$, $u_h \leq c$ everywhere on $\cA$.
Therefore $u_h \to u_0$ in $L^p$ for all $p \geq 1$
\cite[Prop. 6.4]{Kubrusly2007}.
The rate of convergence, $O(h)$, is determined
by the set of points $\set{x : w(x) > e^{-c/h}}$.
Thus, by Prop. \ref{prop:rl2ve} and Assum. \ref{conj:vedist},
\beq
\hat{S}_{0,c}(x) = \lim_{h \to 0} -h \log \hat{f}_{h,c}(x) = \min
\left(d_{\hat{\cA}}(x), c + d_{\cA_0}(x)\right).
\label{eq:Scx}
\eeq

To match the boundary conditions of $\hat{f}_{h,c}$ to those of
$f_h$ for a fixed $h>0$, we must choose $c$ large in \eqref{eq:whc}.
Subsequently, when $c$ is large in \eqref{eq:Scx}, e.g., when $c \geq
\text{diam}(\cM)$, we have $\hat{S}_{0,c}(x) = d_{\hat{\cA}}(x)$.
This verifies \eqref{eq:rllim}.
\end{proof}

When $w(x) = 1$ on $\cA$, and for small $h$, the exponent
of $f_h$ directly encodes $d_\cA$.  The following simple
example illustrates Thm.~\ref{thm:rl2e}. Additional examples on the
Torus $T=S^1 \x S^1$ and on a complex triangulated mesh are included
in~\S\ref{sec:geoexmp} of chapter~\ref{ch:rlapp}.

\begin{exmp}[The Annulus in $\bbR^2$]
\label{ex:annul}
Let $\cM = \set{r_0 \leq r \leq 1}$, where $r=\norm{x}$ is the
distance to the origin.  Let $\cA = \set{r=r_0} \cup \set{r=1}$ be the inner
and outer circles.  Letting $w=1$ ($u_h=0$), we get $S(r) = d_{\cA}(r)
= \min(1-r,r-r_0)$.
For symmetry reasons, we can assume a radially symmetric solution to
the RL Eq.  For a given dimension $d$, the radial Laplacian is
$\lap f(r) = f''(r) + (d-1)r^{-1} f'(r)$.
So \eqref{eq:rl}
becomes:
$-h^2 \left(f_h''(r) + r^{-1} f_h'(r) \right) + f_h(r) = 0$
for $r \in (r_0,1)$, and $f_h(r_0)=f_h(1)=1$.
The solution, as calculated in Maple \cite{Maple10}, is
$$
f_h(r) = \frac{I_0(r/h)K_0(1/h)-I_0(r/h)K_0(r_0/h) -
  K_0(r/h)I_0(1/h)+K_0(r/h)I_0(r_0/h)}{K_0(1/h)I_0(r_0/h) -
  K_0(r_0/h)I_0(1/h)},
$$
where $I_j$ and $K_j$ are the $j$'th order modified Bessel functions
of the first kind and second kind, respectively.
A series expansion of $f_h(r)$ around $h=0$ (partially calculated with
Maple) gives
\beq
\label{eq:fhr}
f_h(r) = \sqrt{r_0/r} e^{-(r-r_0)/h} + \sqrt{1/r} e^{-(1-r)/h} + O(h)
\eeq
As the limiting behavior of $f_h$, as $h$ grows small, depends on the
exponents of the two terms in \eqref{eq:fhr}, one can check that the limit
depends on whether $r$ is nearer to $r_0$ or $1$.  Depending on this,
one of the terms drops out in the limit.
From here, it is easy to check that $\lim_{h \to 0} -h \log
f_h(r) = S(r)$, confirming \eqref{eq:rllim}.

We simulated this problem with $r_0=0.25$ by sampling
$n=1500$ points from the ball $B(0,1.25)$, rescaling points having
$r \in [0,0.25)$ to $r=0.25$, and rescaling points having $r \in
  (1,1.25]$ to $r=1$.  $S_h$ is approximated up to a constant using
the numerical discretization, via \eqref{eq:lrlssysp}, of
\eqref{eq:lrlssim2}.  For the graph Laplacian we used a $k=20$
NN graph and $\epsilon=0.001$.

\begin{figure}[h!]
\centering
\begin{minipage}{\linewidth}
  \centering
  \parbox{.3\linewidth}{
    \centering
    \includegraphics[width=1.2\linewidth]{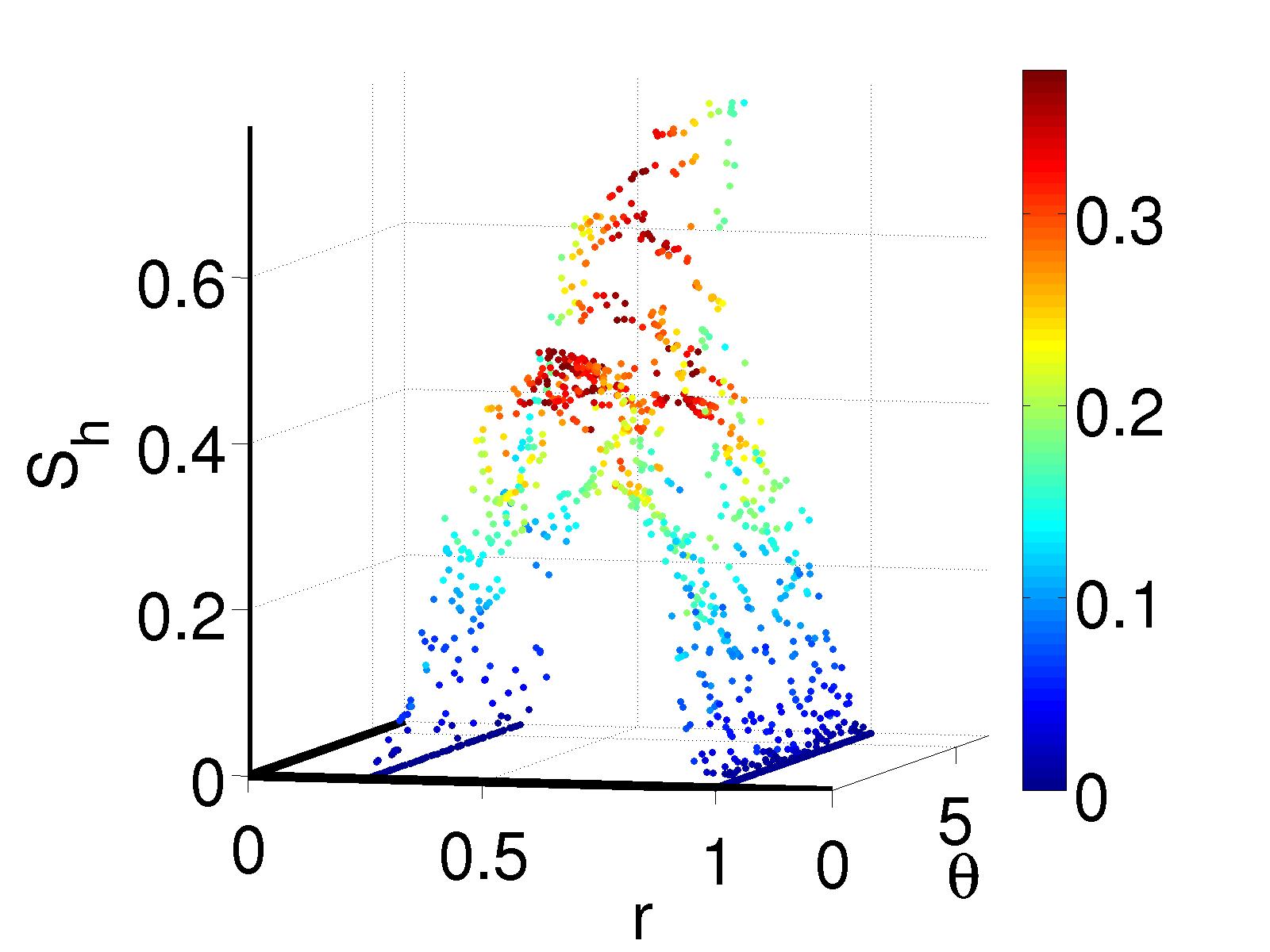}
  }
  \quad
  \parbox{.3\linewidth}{
    \centering
    \includegraphics[width=1.2\linewidth]{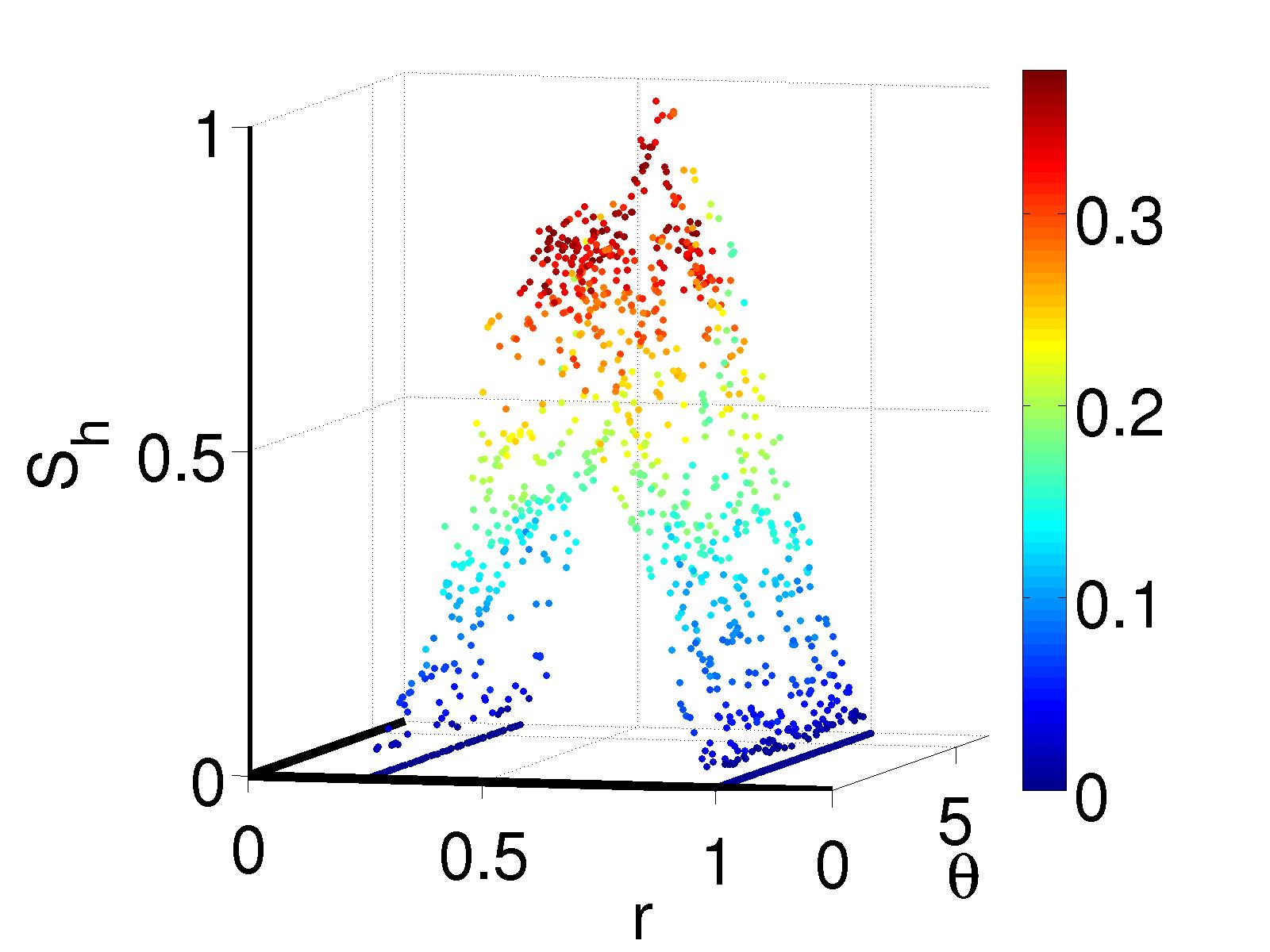}
  }
  \quad
  \parbox{.3\linewidth}{
    \centering
    \includegraphics[width=1.2\linewidth]{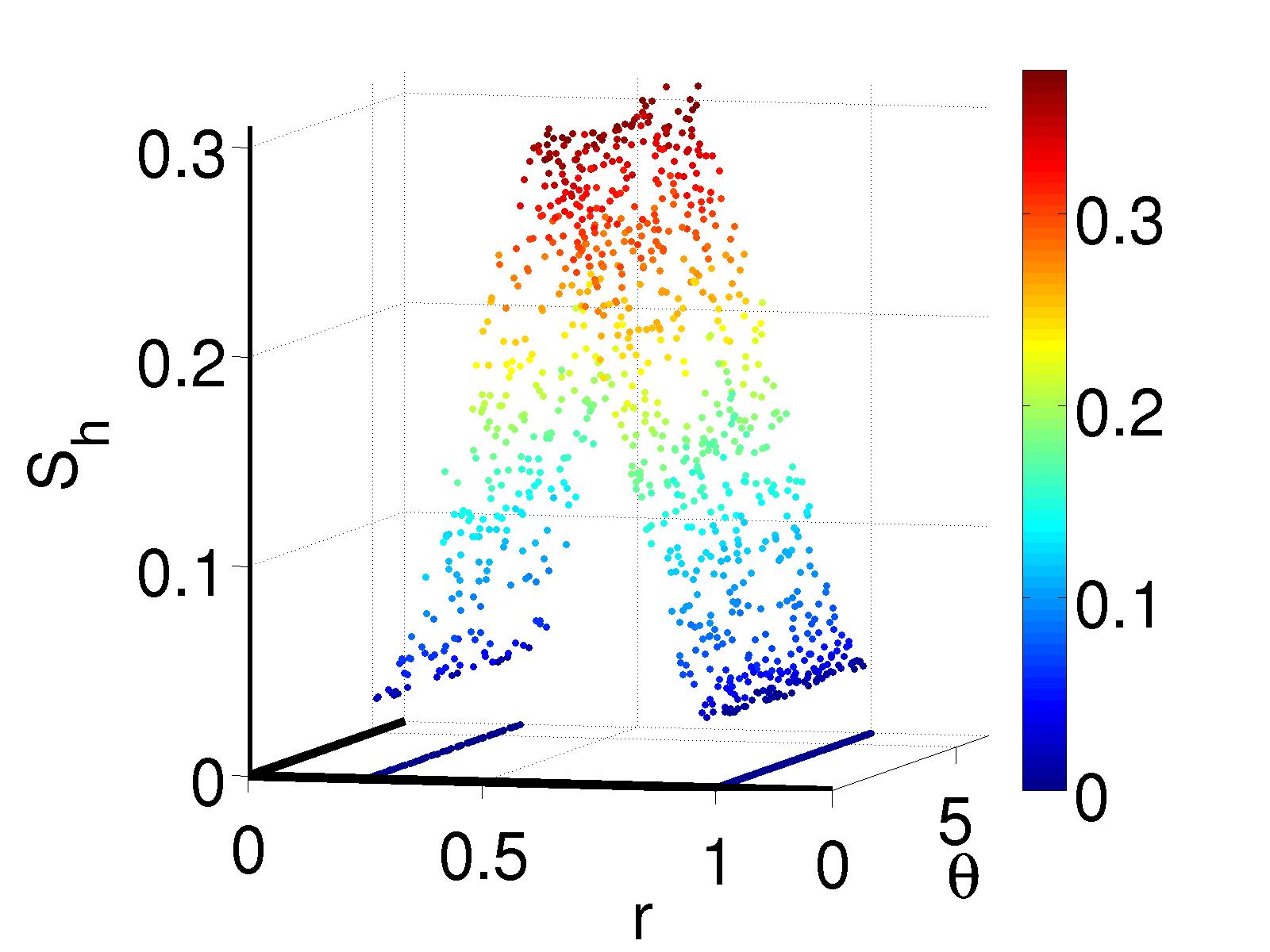}
  }
  \caption[Geodesics Estimates $S_h(r,\theta)$ on Annulus.]%
    {Geodesics Estimates $S_h(r,\theta)$ on Annulus.  Left to
    Right: $h=1,\ 0.1,\ 0.001$.}
  \label{fig:geoannul}
\end{minipage}
\end{figure}

Fig.~\ref{fig:geoannul} shows (in the $z$ axis) the estimate $S_h(x)$
as $h$ grows small.  The colors of the points reflect the true
distance to $\cA$: $S(r) = S_0(r) = \min(1-r,r-r_0)$.  Note the
convergence as $h \downto 0$, and also the clear offset of $S_h$ which
is especially apparent in the right panel at $r=0.25$ and $r=1$.







From the second of Eqs. \eqref{eq:trl} and the fact that $\grad S(r) =
1, \lap S(r) = 1/r$, we have $Z_0/r + 2 Z'_0 = 0$ for $r_0 < r \leq 1$,
and $Z_0 = 1$ for $r \in \set{r_0,1}$. To solve this near $r=r_0$, we use
the boundary condition $Z_0(r_0)=1$ and get $Z_0(r) = \sqrt{r_0/r}$.
Likewise, near $r=1$ we use the boundary condition $Z_0(1)=1$ and get
$Z_0(r) = \sqrt{1/r}$.
Near $r=r_0$, the solution becomes $f_h(r) =
e^{-(r-r_0)/h}(\sqrt{r_0/r} + O(h))$, and near $r=1$, it becomes
$f_h(r) = e^{-(1-r)/h}(\sqrt{1/r} + O(h))$, which match the earlier
series expansion of the full solution.
Furthermore, upon an additional Taylor expansion near $r=r_0$, we have
$S_h(r) = r-r_0 - h \log(r_0/r)/2 + O(h \log h)$.
Note the extra term in the $S_h$ estimate,
which has a large effect when $r-r_0$ is small (as seen
in the right pane of Fig.~\ref{fig:geoannul}).  A similar expansion
can be made around the outer circle, at $r=1$.
\end{exmp}

\subsection{The SSL Problem of~\S\ref{sec:sslintro}, Revisited}
\label{sec:sslr}
Armed with our study of the RL PDE, we can now return
to the original SSL problem of~\S\ref{sec:sslintro}.

Suppose the anchor is composed of two simply connected domains $\cA_0$
and $\cA_1$, where $w$ takes on the constant values $c_0$ and $c_1$,
respectively, within each domain.
When $c_1 > c_0 \geq 0$, we can directly apply the result of
Thm.~\ref{thm:rl2e} to \eqref{eq:lrlssim2}.  The solution, for
$\gamma_I \ll \gamma_A$, is given by \eqref{eq:rlappr} with
$h=\sqrt{c  \gamma_I / \gamma_A}$:
$$
f(x) \approx w(x') \sup_{y \in \cA} {e^{- d(x,y) \sqrt{\gamma_A / c
      \gamma_I} }}
\qquad \text{where} \qquad x' = \arg\inf_{y \in \cA} d(x,y)
$$
The solution depends on both the geometry of $\cM$ (via the
geodesic distance to $\cA_0$ or $\cA_1$) and on
the values chosen to represent the class labels.  For example,
suppose $\cL = \set{0,1}$.  As $n$ grows large and $h$ grows small, we
apply \eqref{eq:rllim} to see that the classifier is biased towards
the class in $\cA_0$:
\beq
\label{eq:sslasymm}
f(x) \approx \sup_{y \in \cA_1} e^{-d(x,y) \sqrt{\gamma_A / c
    \gamma_I}}.
\eeq
Choosing the symmetric labels $\cL = \set{c_0,-c_0}$ is more natural.
In this case, we decompose \eqref{eq:rl} into two problems:
\begin{align*}
-h^2 \lap f_{h,0} + f_{h,0} &= 0 \quad x \in \cAp
\qquad \text{and} \qquad  f_{h,0} = c_0 \quad x \in \cA_0;
\quad  f_{h,0} = 0 \qquad x \in \cA_1 \\
-h^2 \lap f_{h,1} + f_{h,1} &= 0 \quad x \in \cAp
\qquad \text{and} \qquad  f_{h,1} = 0\ \quad x \in \cA_0;
\quad  f_{h,1} = -c_0 \quad x \in \cA_1,
\end{align*}
and note that by linearity of the problem and the separation of the
anchor conditions, the solution to \eqref{eq:rl} is
given by $f_h = f_{h,0} + f_{h,1}$.
Therefore, by taking $h=\sqrt{c \gamma_I / \gamma_A}$, we separate
\eqref{eq:lrlssim2} into two problems with
nonnegative anchor conditions (one in $f_{h,0}$ and one in $-f_{h,1}$).
Applying the result of Thm.~\ref{thm:rl2e} to each of these
individually, and combining the solutions, yields
\beq
\label{eq:sslsymm}
f(x) \approx c_0 \sup_{y \in \cA_0} e^{-d(x,y)/h}
             - c_0 \sup_{y \in \cA_1} e^{-d(x,y)/h}
     \propto e^{-d_{\cA_0}(x)/h} - e^{-d_{\cA_1}(x)/h}.
\eeq
This solution is zero when $d_{\cA_0}(x) = d_{\cA_1}(x)$, positive
when $d_{\cA_0}(x) < d_{\cA_1}(x)$, and negative otherwise.  That is,
in the noiseless, low regularization regime with symmetric anchor values,
algorithms like LapRLS classification assign the point $x$ to the
class that is closest in geodesic distance.
We illustrate this with a simple example of classification on the
sphere $S^2$.

\begin{exmp}
\label{ex:sslsphere}
We sample $n=1000$ points from the sphere $S^2$ at random, and define
the two anchors $\cA_0 = B_g((1,0,0),\pi/16)$ and $\cA_1 =
B_g((-1,0,0), \pi/16)$.  Here $B_g(x,\theta)$ is a cap of angle
$\theta$ around point $x$.  The associated anchor labels are $w_0 =
+1$ and $w_1 = -1$.
We discretize the Laplacian $\Le$ using $k=50$,
and $\epsilon = 0.001$ and solve \eqref{eq:lrlssys}.
Fig.~\ref{fig:sslsphere} compares the numerical
solutions at small $h$ to our estimates from \eqref{eq:sslsymm}.
The two solutions are comparable up to a positive multiplicative
factor (due to the fact that $\Le$ converges to $\lap$ times a
constant).  \hfill $\square$

\begin{figure}[h!]
\centering
\begin{subfigure}[b]{.49\linewidth}
  \includegraphics[width=\linewidth]{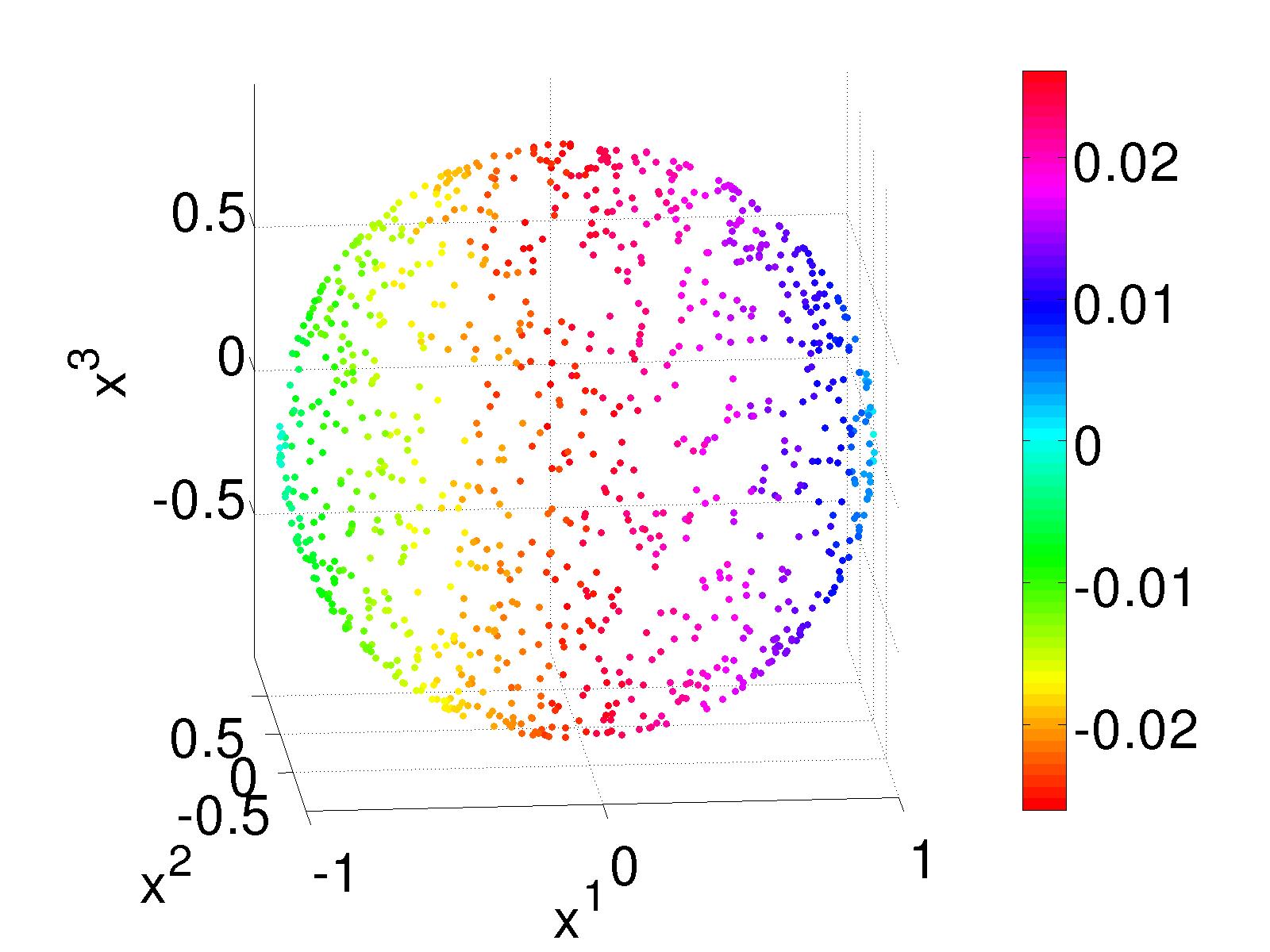}
  \caption{$f_h$ on $\cM$, numerical, log scale}
  \label{fig:sslsa}
\end{subfigure}
\begin{subfigure}[b]{.49\linewidth}
  \includegraphics[width=\linewidth]{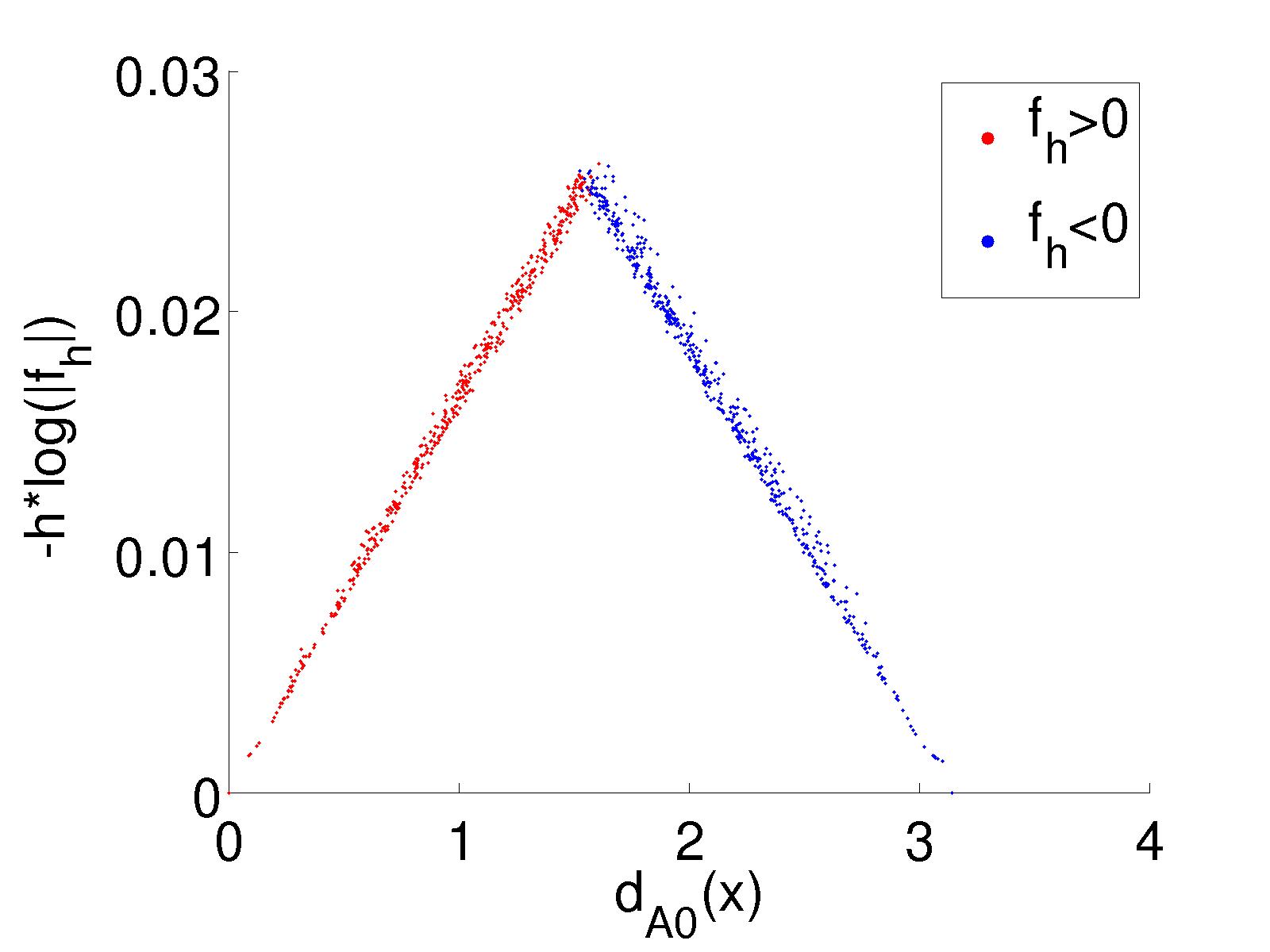}
  \caption{$S_h$ (numerical)} 
  \label{fig:sslsc}
\end{subfigure}
\begin{subfigure}[b]{.49\linewidth}
  \includegraphics[width=\linewidth]{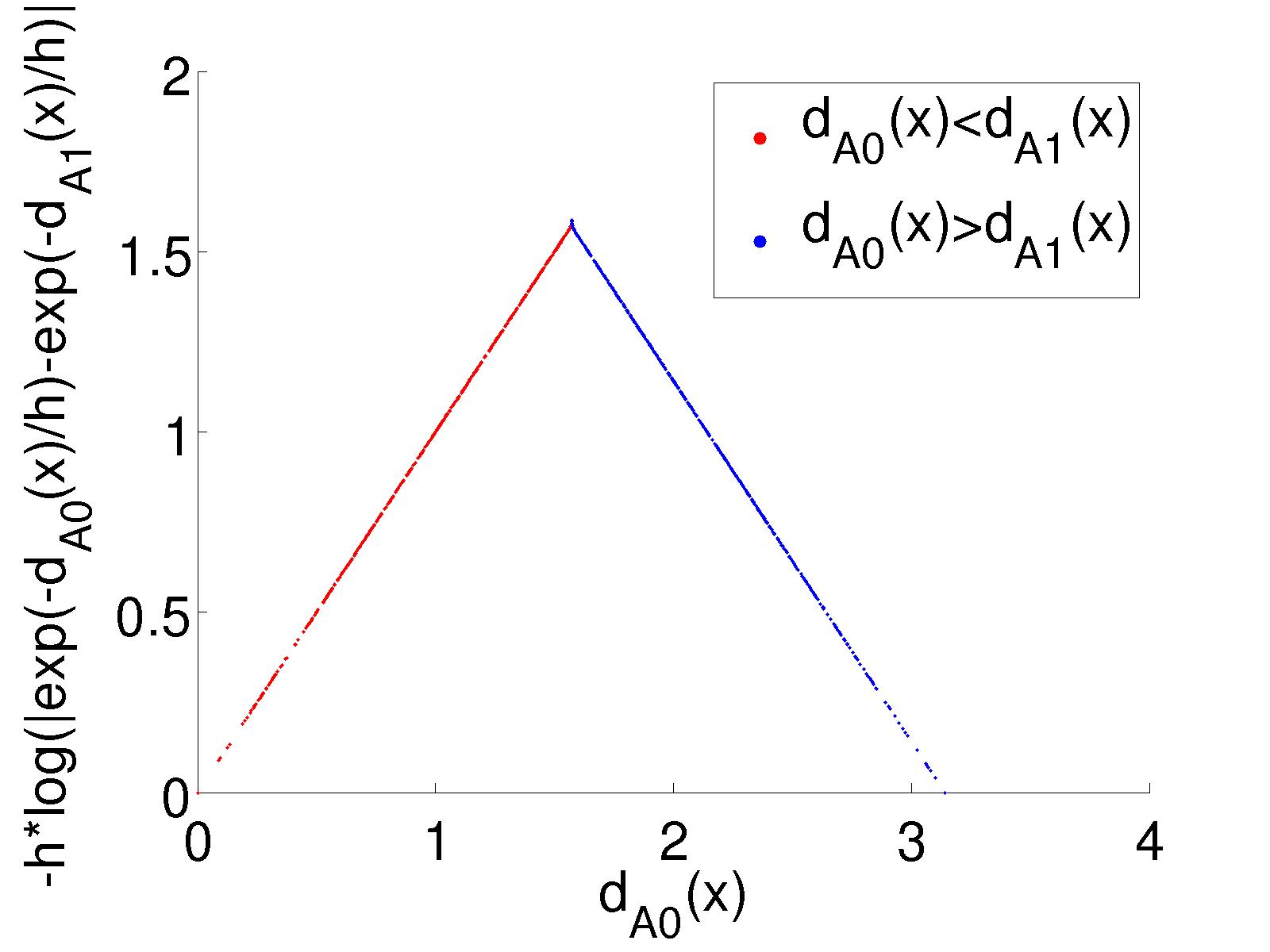}
  \caption{$S_h$ (prediction)} 
  \label{fig:sslsb}
\end{subfigure}
\caption[Comparing solution of \eqref{eq:lrlssys} to model prediction \eqref{eq:sslsymm}]%
  {Comparing solution of \eqref{eq:lrlssys} to model prediction \eqref{eq:sslsymm}.
   Colors
    in (\subref{fig:sslsa}), are given by $-h \sign(f_h) \log \abs{f_h}$;
    in (\subref{fig:sslsc}), encode $\sign(f_h)$;
    in (\subref{fig:sslsb}), encode closeness of $x$ to $\cA_0$ vs.
    $\cA_1$.
}
\label{fig:sslsphere}
\end{figure}
\end{exmp}


\section{Technical Details}

\subsection{Deferred Proofs}
\label{sec:deferproof}

\subsubsection{Stability of $M_n$}
To prove the stability of $M_n$, we first need to present
some notation.  The matrices $\Pe, \Le,$ and $\De$ can be written in
terms of submatrices to simplify the exposition.
Separating these matrices into submatrices associated with the
$l$ labeled points and the $n-l$ unlabeled points, we write:
\begin{align*}
\Pe &= \pmat{P_{ll} & P_{lu} \\ P_{ul} & P_{uu} }, \quad
\Le = \pmat{L_{ll} & L_{lu} \\ L_{ul} & L_{uu}}
   = \frac{1}{\epsilon} \pmat{I - P_{ll} & - P_{lu} \\ - P_{ul} & I - P_{uu} }, \quad
\\
\text{and }
\De &= \pmat{D_{ll} & 0 \\ 0 & D_{uu} }.
\end{align*}
Note that the two identities in the definition of $\Le$ are of size $l$
and $n-l$, respectively.

We will also also need a lemma bounding the spectrum of the matrix
$I-P_{uu}$.

\begin{lem}
\label{lem:bdspec}
The matrix $I-P_{uu}$ is bounded in spectrum between $0$ and $1$.
\end{lem}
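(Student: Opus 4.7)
The plan is to reduce everything to the spectrum of the full matrix $P_\epsilon$, for which we already have Lem.~\ref{lem:eigP}. Recall that $P_\epsilon$ is similar to the symmetric matrix $S = D_\epsilon^{-1/2} A_\epsilon D_\epsilon^{-1/2}$, which is PSD with $\sigma(S) = \sigma(P_\epsilon) \subseteq [0,1]$, equivalently both $S$ and $I - S$ are PSD. The goal is to transfer both bounds from $P_\epsilon$ to its principal block $P_{uu}$.

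First I would write $S$ in the labeled/unlabeled block form. Because $D_\epsilon$ is diagonal, it is block-diagonal under the same partition, so $D_\epsilon^{1/2}$ and $D_\epsilon^{-1/2}$ are block-diagonal too, and the block decomposition commutes cleanly with the symmetric conjugation. Explicitly, $S_{uu} = D_{uu}^{-1/2} A_{uu} D_{uu}^{-1/2}$ is a genuine principal submatrix of $S$, and $I - S_{uu}$ is the corresponding principal submatrix of $I - S$.

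Second, I would invoke the standard compression principle: a principal submatrix of a PSD matrix is PSD (this is immediate from $v^T S_{uu} v = [0\ v]^T S [0\ v]^T \geq 0$). Applied to $S$ and to $I - S$, this gives $0 \preceq S_{uu} \preceq I$, so $\sigma(S_{uu}) \subseteq [0,1]$.

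Third, I would transfer back from $S_{uu}$ to $P_{uu}$. A direct computation shows $P_{uu} = D_{uu}^{-1} A_{uu} = D_{uu}^{-1/2} S_{uu} D_{uu}^{1/2}$, so $P_{uu}$ is similar to the symmetric $S_{uu}$ (with real, nonnegative eigenvalues) and hence $\sigma(P_{uu}) = \sigma(S_{uu}) \subseteq [0,1]$. The lemma follows from $\sigma(I - P_{uu}) = \{1 - \lambda : \lambda \in \sigma(P_{uu})\}$.

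There is no real obstacle here; the only thing to be careful about is verifying that the diagonal structure of $D_\epsilon$ makes $S_{uu}$ a true principal submatrix of $S$ (not merely a block related by a more complicated formula), which is why the compression step applies so cleanly. If a strict bound $\sigma(P_{uu}) \subset [0,1)$ were needed later (e.g.\ to ensure invertibility of $I - P_{uu}$ in the stability argument), one would additionally use row-stochasticity of $P_\epsilon$: since $P_{uu}\mathbf{1}_u = \mathbf{1}_u - P_{ul}\mathbf{1}_l$ and $P_{ul}$ has at least one nonzero row whenever $\cA$ meets the edge set of $G$, the Perron eigenvalue of $P_{uu}$ is strictly less than $1$; but this refinement is not required for the stated claim.
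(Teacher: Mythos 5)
Your proof is correct and follows essentially the same route as the paper: pass to the symmetric matrix $S = D_\epsilon^{-1/2} A_\epsilon D_\epsilon^{-1/2}$ (the paper's $\tilde P_\epsilon$), observe that its $uu$ principal submatrix has spectrum in $[0,1]$, and conjugate back by $D_{uu}^{\pm 1/2}$ to transfer the bound to $P_{uu}$. The only cosmetic difference is that where you argue the principal-submatrix spectral bound directly via PSD compression of both $S$ and $I-S$, the paper cites the eigenvalue interlacing theorem (Horn and Johnson, Thm.~4.3.15); these are interchangeable, with yours being marginally more self-contained.
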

\begin{proof}
The hermitian matrix $\tilde{P}_\epsilon = \De^{-1/2} \Ae \De^{-1/2}$ has
eigenvalues bounded between $0$ and $1$.  The eigenvalues of its lower
right principal submatrix, $\tilde{P}_{uu}$, are therefore also bounded
between $0$ and $1$ \cite[Thm.~4.3.15]{Horn1985}.  Finally, $P_{uu}$
is similar to $\tilde{P}_{uu}$ via the transformation $P_{uu} =
D_{uu}^{-1/2} \tilde{P}_{uu} D_{uu}^{1/2}$.
\end{proof}

We are now ready to prove the stability of $M_n$.

\begin{proof}[Proof of Prop. \ref{prop:fstable}, Stability]
We first expand $M_n$ in block matrix form: 
$$
M_n = \pmat{I & 0 \\ \gamma_I L_{ul} & G_n}
\quad \text{where} \quad G_n = \gamma_A(n) I + \gamma_I(n) L_{uu}.
$$
From the block matrix inverse formula, the inverse of $M_n$ is:
$$
M_n^{-1} = \pmat{I & 0 \\ -G_n^{-1} \gamma_I L_{ul} &  G_n^{-1}},
$$
and the norm may be bounded as:
\beq
\label{eq:maxFn}
\norm{M_n^{-1}}_\infty \leq \max \left(1, \norm{G_n^{-1}\gamma_I
  L_{ul}}_\infty + \norm{G_n^{-1}}_\infty \right)
\eeq
where we use the inequalities $\norm{(A^T \ B^T)^T}_\infty =
\max(\norm{A}_\infty,\norm{B}_\infty)$ and $\norm{(C\ \ D)}_\infty \leq
\norm{C}_\infty + \norm{D}_\infty$.


We first expand $G_n^{-1} = (\gamma_A(n) I + \gamma_I(n) L_{uu})^{-1} =
\gamma_A^{-1}(n) \left(I + \frac{\gamma_I}{\gamma_A \epsilon(n) }(I - P_{uu})
\right)$.  
By Lem. \ref{lem:bdspec}, $I-P_{uu}$ is bounded in spectrum between $0$ and
$1$, and by the first assumption in the proposition, when $\kappa > 2$
we have $\gamma_I <_n \gamma_A \epsilon(n) / \kappa <_n \gamma_A(n) \epsilon(n)$.
Thus there exists some $n_0$ so that for all $n'>n_0$ we can write
$$
G_n^{-1}
 = \gamma_A^{-1}(n) \left(I + \frac{\gamma_I(n)}{\gamma_A(n) \epsilon(n) }(I - P_{uu}) \right)^{-1}
 = \gamma_A^{-1} \sum_{k = 0}^\infty \left( \frac{\gamma_I(n)}{\gamma_A(n)
   \epsilon(n)} \right)^k (P_{uu}-I)^k.
$$
Now we use this expansion to bound $\norm{G_n^{-1}}_\infty$.  Let $a_n
= \gamma_I(n) (\gamma_A(n) \epsilon(n))^{-1}$.  As the norm is
subadditive,
\beq
\label{eq:Gnibnd}
\norm{G_n^{-1}}_\infty  \leq \gamma_A^{-1}(n) \sum_{k \geq 0} a^k_n \norm{P_{uu}-I}_\infty^k.
\eeq
Furthermore, we can bound
$\norm{P_{uu}-I}_\infty^k$ as follows.  Since the entries of $P_{uu}$
are nonnegative, $\norm{P_{uu}-I}_\infty \leq
\norm{(P_{uu}+I)1}_\infty$ where $1$ is a vector of all ones.  Further,
since $P_{uu}$ is a submatrix of a stochastic matrix,
$\norm{(P_{uu}+I)1}_\infty \leq 2$.  Thus
since $a_n = \gamma_I(n) (\gamma_A(n) \epsilon(n))^{-1} <_n \kappa^{-1} < 1/2$,
for $n$ large enough \eqref{eq:Gnibnd} is bounded by the
geometric sum:
$$
\norm{G_n^{-1}}_\infty \leq \gamma_A^{-1}(n) \sum_{k \geq 0} (2a_n)^k = \frac{1}{\gamma_A(n)(1-2 a_n)}
= \frac{\epsilon(n)}{\epsilon(n) \gamma_A(n) - 2 \gamma_I(n)}
$$
and this last term is bounded based on our initial assumption:
$\epsilon(n)(\epsilon(n) \gamma_A(n) - 2 \gamma_I(n))^{-1} <_n \gamma_A^{-1}(n) (1-2 \kappa^{-1})^{-1}$.  Thus, for large enough $n$,
$\norm{G_n^{-1}}_\infty \leq \gamma_A^{-1}(n) (1-2 \kappa^{-1})^{-1}$.

Now we bound $\norm{G_n^{-1} \gamma_I L_{ul}}_\infty$.  Note
that $\norm{G_n^{-1} \gamma_I(n) L_{ul}}_\infty \leq \gamma_I(n)
\norm{G_n^{-1}}_\infty \norm{L_{ul}}_\infty$.  As $L_{ul} = -P_{ul}$ and
$\Pe$ is stochastic, $\norm{L_{ul}}_\infty \leq 1$.  Putting together
these two steps, we have $\norm{G_n^{-1} \gamma_I(n) L_{ul}}_\infty \leq
\gamma_I(n) \norm{G_n^{-1}}_\infty$.

Combining these two bounds, \eqref{eq:maxFn} finally becomes
$$
\norm{M_n^{-1}}_\infty \leq \max{\left( 1,
\gamma_A(n)^{-1}(1-2 \kappa^{-1})^{-1} (1 + \gamma_I(n))\right)}.
$$
For small $\gamma_A(n) > 0$, the second term is the maximum and the result
follows.
\end{proof}

\subsubsection{Characterization of $Z_0$}
We first need some preliminary definitions and results.

We define the \emph{cut locus} of the set $\cA$ as closure of the set
of points in $\cAp$ where $d_\cA^2(x)$ is not differentiable (i.e.,
where there is more than one minimal geodesic between $x$ and $\cA$):
$$
\Cut(\cA) = \overline{\set{x \in \cAp \ |\ 
    d_\cA^2 \text{ is not differentiable at } x}}.
$$

The cut locus and $d_\cA$ have several important properties, which
we now list:
\begin{enumerate}
\item \label{it:cutloc1} The Hausdorff dimension of $\Cut(\cA)$ is at
most $d-1$ \cite[Cor. 4.12]{Mantegazza2002}.
\item \label{it:cutloc2} $\Cut(\cA) \cup \cA$ is closed in $\cM$.
\item \label{it:cutloc3} The open set $\cAp \bs \Cut(\cA)$ can be
continuously retracted to $\partial \cA$.
\item \label{it:cutloc4} If $\cA \in C^r$ then $d_\cA$ is $C^r$ in
 $\cAp \bs \Cut(\cA)$.
\end{enumerate}
\noindent Items \ref{it:cutloc2}-\ref{it:cutloc4} are proved in
\cite[Prop 4.6]{Mantegazza2002}.


Property \ref{it:cutloc1} shows that $d_\cA$ is smooth almost
everywhere on $\cAp$.  Properties \ref{it:cutloc2}-\ref{it:cutloc3}
show that $\cAp \bs \Cut(\cA)$ is composed of a finite number of
disjoint connected components, each touching $\cA$.  Finally, property
\ref{it:cutloc4} shows that $d_\cA$ is as smooth as the boundary
$\partial \cA$.

Let $\cM$ be a $d$-dimensional Riemannian manifold and let $\cA
\subset \cM$ be a Riemannian submanifold such that $\partial \cA$ is
regular (in the PDE sense).  As in Thm.~\ref{thm:transport}, define the
differential equation in $Z_0$ as
\begin{align}
\label{eq:diffZ0}
Z_0(x) \lap d_\cA(x) + 2 \grad d_\cA(x) \cdot \grad Z_0(x) = 0 &\quad x \in \cAp \\
Z_0(x) = w(x) &\quad x \in \cA \nonumber
\end{align}

We first show that $Z_0$ of Thm.~\eqref{thm:transport} has a unique,
smooth, local solution in a chart at $\cA$.  To do this we will use
the method of characteristics \cite[Chap. 3]{Evans1998}.  We will need
an established result for the local solutions of PDEs on open subsets
of $\bbR^d$.

Let $V$ be an open subset in $\bbR^d$ and let $\Gamma \subset \partial
V$.  Let $u : V \to \bbR$ and $Du$ be its derivative on $\bbR^d$.
Finally, suppose $x \in V$ and let $w : \Gamma \to \bbR$.  We study
the first-order~PDE 
\begin{align*}
  F(Du, u, x) = 0 &\qquad x \in V, \\
  u = w &\qquad x \in \Gamma
\end{align*}
\noindent Note that we can write $F = F(p, z, x) : \bbR^d \x \bbR \x
\overline{V} \to \bbR$.  The main test for existence, uniqueness, and
smoothness is the test for noncharacteristic boundary conditions.

\begin{defn}[\cite{Evans1998}, Noncharacteristic boundary condition]
\label{defn:nonchar}
Let $p^0 \in \bbR^d$, $z^0 \in \bbR$ and and $x^0 \in \Gamma$.  We say
the triple $(p^0,z^0, x^0)$ is noncharacteristic if
$$
D_p F(p^0, z^0, x^0) \cdot \nu(x^0)  \neq 0,
$$
where $\nu(x^0)$ is the outward unit normal to $\partial V$ at $x^0$.
We also say that the noncharacteristic boundary condition holds at
$(p^0,z^0, x^0)$.
\end{defn}

\noindent This test is sufficient for local existence:

\begin{prop}[\cite{Evans1998},~\S 3.3, Thm.~2 (Local Existence)]
\label{prop:locexist}
Assume that $F(p,z,x)$ is smooth and that the noncharacteristic
boundary condition holds on $F$ for some triple $(p^0, z^0, x^0)$.
Then there exists a neighborhood $V'$ of $x^0$ in $\bbR^d$ and a
unique, $C^2$ function $u$ that solves the PDE
\begin{align*}
F(Du(x), u(x), x) = 0 &\qquad x \in V', \\
u(x) = w(x) &\qquad x \in \Gamma \cap V'.
\end{align*}
\end{prop}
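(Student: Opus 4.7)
The plan is to invoke the classical method of characteristics. The idea is to reduce the PDE $F(Du, u, x) = 0$ to a system of ordinary differential equations along curves emanating from $\Gamma$ that foliate a neighborhood of $x^0$, and to read off both $u$ and its gradient from the resulting ODE flow. Throughout, by a smooth local straightening I may assume $\Gamma$ is a smooth hypersurface through $x^0$, so that $\nu(x^0)$ is a well-defined outward normal.

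First I would construct admissible boundary data. Parametrize a neighborhood of $x^0$ in $\Gamma$ by coordinates $y \in \bbR^{d-1}$ via a smooth map $x^0(y)$ with $x^0(0) = x^0$, set $z^0(y) = w(x^0(y))$, and then solve for a smooth extension $p^0(y) \in \bbR^d$ satisfying $F(p^0(y), z^0(y), x^0(y)) = 0$ together with the $(d-1)$ tangential compatibility relations $\partial_{y^i} z^0 = p^0 \cdot \partial_{y^i} x^0$. These compatibility relations pin down the tangential components of $p^0$; the remaining component along $\nu(x^0)$ is fixed by the algebraic equation $F = 0$, and its unique smooth resolution near $y=0$ follows from the implicit function theorem applied in the transverse $p$-variable, using precisely the hypothesis $D_p F(p^0, z^0, x^0) \cdot \nu(x^0) \neq 0$.

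Next I would integrate the characteristic system
\begin{align*}
\dot x(s) &= D_p F(p, z, x), \\
\dot z(s) &= D_p F(p, z, x) \cdot p, \\
\dot p(s) &= -D_x F(p, z, x) - D_z F(p, z, x)\, p,
\end{align*}
with initial data $(x^0(y), z^0(y), p^0(y))$ at $s=0$. Standard ODE theory gives a smooth solution $(x(s,y), z(s,y), p(s,y))$ on a neighborhood of the origin in $\bbR^d$. The main obstacle, and the place where the noncharacteristic condition reappears in its essential role, is showing that the map $\Phi(s,y) = x(s,y)$ is a local diffeomorphism at the origin: the columns of its Jacobian are the $(d-1)$ tangential vectors $\partial_{y^i} x^0(0)$ spanning $T_{x^0}\Gamma$ together with the transverse vector $\dot x(0) = D_p F(p^0, z^0, x^0)$, and these span $\bbR^d$ exactly when $D_p F \cdot \nu(x^0) \neq 0$. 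The inverse function theorem then furnishes smooth inverse coordinates $(s(x), y(x))$ on some neighborhood $V'$ of $x^0$.

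Finally, I would define $u(x) = z(s(x), y(x))$ and verify the three required properties. Regularity $u \in C^2$ is inherited from the smoothness of $F$ and the ODE flow. The boundary condition $u = w$ on $\Gamma \cap V'$ holds by construction since $z(0, y) = z^0(y) = w(x^0(y))$. To see $Du = p \circ \Phi^{-1}$, and hence that $u$ solves the PDE, I would check that the identity $dz = p \cdot dx$ holds on the image of $\Phi$; the $s$-direction is the tautology $\dot z = p \cdot \dot x$ from the second characteristic equation, and the $y^i$-directions reduce to showing that $\partial_{y^i} z - p \cdot \partial_{y^i} x$ vanishes on $\Gamma$ (which is the compatibility relation built into $p^0$) and is preserved along the flow (by differentiating in $s$ and substituting the characteristic ODEs). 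The algebraic constraint $F(p, z, x) = 0$ is likewise preserved along characteristics since $\tfrac{d}{ds} F = D_p F \cdot \dot p + D_z F\, \dot z + D_x F \cdot \dot x = 0$ by direct substitution. Uniqueness is immediate: any $C^2$ solution with the prescribed boundary data must, along any characteristic curve starting from $\Gamma$, satisfy the same ODE system with the same initial data, and hence must coincide with the constructed $u$ on $V'$.
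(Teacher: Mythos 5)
The paper does not prove this proposition: it is stated as a cited result from Evans (\S 3.3, Thm.~2) and then invoked as a black box in the proofs of Lemma~\ref{lem:Zsmoothchart} and Theorem~\ref{thm:Zsmooth}. Your sketch correctly reproduces the standard method-of-characteristics argument from Evans' text, which is the proof the citation points to: you construct admissible boundary data $(x^0(y), z^0(y), p^0(y))$ by combining the $d-1$ strip (compatibility) conditions with the algebraic constraint $F=0$ and closing via the implicit function theorem in the transverse $p$-component; you integrate the correct characteristic ODE system; you invert the flow map near $x^0$ using the noncharacteristic hypothesis to check that $\{\partial_{y^i}x^0(0)\}$ together with $\dot{x}(0) = D_pF$ span $\bbR^d$; and you verify that $z$ reproduces $u$ and $p$ reproduces $Du$ by showing $F$ and the strip identities $\partial_{y^i}z - p\cdot\partial_{y^i}x$ are conserved along characteristics. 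This is the argument Evans gives, so there is no divergence from the paper's (implicit) route.

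One small point worth flagging: as you state it, the hypothesis is only the noncharacteristic condition at $(p^0,z^0,x^0)$, yet your construction implicitly requires that $(p^0,z^0,x^0)$ also be an \emph{admissible} triple, i.e., that $F(p^0,z^0,x^0)=0$ and the tangential compatibility relations hold at $y=0$ so that the implicit function theorem has a valid starting point. Evans' actual Theorem~2 includes admissibility in its hypotheses. The paper's Definition~\ref{defn:nonchar} omits this, and you inherit the omission; for the application to the transport equation \eqref{eq:diffZ0} the admissibility is automatic because $p^0$ there is simply $\nabla Z_0$ evaluated on $\cA$, but the proposition as stated would be cleaner with admissibility made explicit.
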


\noindent We are now ready to prove the existence, uniqueness, and
smoothness of $Z_0$.

\begin{lem}Let $\cA'_0$ be one of the connected components of $\cAp
\bs \Cut(\cA)$.  Then on any chart $(U,\phi)$ that satisfies $U
\subset \cA'_0 \cup \cA$ and for which $U \cap \cA$ is sufficiently
regular, the differential equation \eqref{eq:diffZ0} has a unique, and
smooth solution.
\label{lem:Zsmoothchart}
\end{lem}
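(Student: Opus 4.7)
The plan is to work in the fixed coordinate chart $(U,\phi)$ and translate \eqref{eq:diffZ0} into a first-order linear PDE in $Z_0$ with smooth coefficients, then apply the method of characteristics (Prop.~\ref{prop:locexist}) locally near $U\cap\cA$ and extend the local solutions globally on $U$ via the characteristic foliation. The coefficients are smooth on $U\cap \cAp$: since $U\subset \cA_0'\cup \cA$, we stay inside a single connected component of $\cAp\setminus\Cut(\cA)$, so by property~\ref{it:cutloc4} of the cut locus, $d_\cA\in C^r$ there, whence $\grad d_\cA$ and $\lap d_\cA$ are smooth in $U$. Setting
\[
 F(p,z,x) \;=\; z\,\lap d_\cA(x) + 2\,\grad d_\cA(x)\cdot p,
\]
we see that $F$ is smooth in $(p,z,x)$ and linear (hence smooth) in $(p,z)$, so the standing hypotheses of Prop.~\ref{prop:locexist} are satisfied.

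Next I verify the noncharacteristic boundary condition on $\Gamma := U\cap \cA$. A short computation gives $D_p F(p,z,x) = 2\,\grad d_\cA(x)$, so the condition at $x^0\in\Gamma$ reduces to
\[
  2\,\grad d_\cA(x^0)\cdot \nu(x^0) \;\neq\; 0,
\]
where $\nu(x^0)$ is the unit outward normal to the solution domain $\cA_0'$ at $\partial\cA$. The key geometric fact is that $\grad d_\cA$ at a boundary point of $\cA$ is a unit vector pointing normally \emph{away} from $\cA$ into $\cA_0'$; equivalently, $\grad d_\cA(x^0) = -\nu(x^0)$. Thus $D_pF\cdot \nu = -2 \neq 0$ uniformly along $\Gamma$, and the noncharacteristic condition holds at every $(p^0,w(x^0),x^0)$ with $x^0\in \Gamma$ (the regularity assumption on $U\cap\cA$ is precisely what ensures this outward normal exists and is smooth). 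Applying Prop.~\ref{prop:locexist} at each $x^0\in \Gamma$ then produces a neighborhood $V'(x^0)\subset U$ on which \eqref{eq:diffZ0} has a unique $C^2$ solution $Z_0$ agreeing with $w$ on $\Gamma \cap V'(x^0)$; uniqueness on overlaps is automatic.

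To extend these local solutions to all of $U$, I use the characteristic structure of the PDE. The characteristic ODE associated with $F$ has $\dot x(s)=D_pF = 2\,\grad d_\cA(x(s))$; integral curves are thus reparametrized unit-speed geodesics that start normally from $\cA$ and remain inside $\cA_0'$ since we are away from the cut locus. Because $U\subset \cA_0'\cup\cA$, every point $x\in U\cap\cA_0'$ lies on a unique such characteristic that hits $U\cap\cA$ at some $x'$, and $Z_0(x)$ is determined uniquely by integrating the linear ODE along this characteristic with initial value $w(x')$. Smoothness of the flow in its initial data (a consequence of smoothness of $\grad d_\cA$ on $U$) then upgrades the solution from $C^2$ to $C^\infty$ (or whatever regularity $\cA$ and $w$ allow) on all of $U$, while uniqueness along characteristics yields uniqueness globally on $U$.

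The main obstacle is the global patching step: Prop.~\ref{prop:locexist} is a purely local statement, and one must verify that the characteristic geodesics originating at $\Gamma$ actually sweep out all of $U\cap\cA_0'$ without crossing, terminating, or leaving $U$. This is exactly what the hypothesis $U\subset \cA_0'\cup\cA$, together with property~\ref{it:cutloc3} of the cut locus (the continuous retraction of $\cA_0'$ onto $\partial\cA$ along $\grad d_\cA$), is designed to deliver; making this foliation argument precise and checking that smoothness is preserved across it is the technical heart of the proof.
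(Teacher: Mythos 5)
Your core argument is the same as the paper's: translate \eqref{eq:diffZ0} into local coordinates, observe that $F(p,z,v) = z\,l(v) + 2\sum_{i,j} g^{ij}\partial_i d_\cA\, p_j$ is smooth and linear, compute $D_pF = 2\grad d_\cA$, verify the noncharacteristic condition using the Eikonal identity $\norm{\grad d_\cA}=1$, and invoke Prop.~\ref{prop:locexist}. That part is correct and matches the paper almost line for line. One cosmetic remark: you write $\nu = -\grad d_\cA$ while the paper writes $\nu = \grad d_\cA$; by the letter of Def.~\ref{defn:nonchar} (``outward unit normal to $\partial V$'' where $V$ is the solution domain on the $\cAp$ side) your sign is the more defensible convention, but since the test is only ``$\neq 0$'' it has no bearing on the conclusion.

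Where you diverge is in attempting the global extension from a boundary neighborhood to all of $U$ via the characteristic foliation $\dot x = 2\grad d_\cA$. That extension is not part of the paper's proof of Lemma~\ref{lem:Zsmoothchart} at all — the paper's lemma proof stops at local existence near $\cA$ in the chart, and that is all that is later used. The push away from the boundary is carried out in Thm.~\ref{thm:Zsmooth}, and it is done differently there: instead of your one-shot foliation argument, the paper covers $\partial\cA$ by finitely many charts, solves near the boundary, and then iteratively re-poses the Cauchy problem on successive level sets $\set{d_\cA = d_0}$, reapplying Lemma~\ref{lem:Zsmoothchart} at each step with the already-solved data as new Dirichlet conditions. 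Your characteristic-foliation picture is geometrically the same content (the level sets are exactly the time-$d_0$ slices of your gradient flow), but the paper's staged version avoids having to argue up front that the characteristics foliate $U$ without leaving or re-entering — the issue you yourself flag as the ``technical heart.'' So: your local argument is right, your global sketch is a plausible alternative to the paper's, but it belongs to Thm.~\ref{thm:Zsmooth} rather than to this lemma, and the paper sidesteps the global foliation subtlety by iterating the local lemma rather than trying to close it in one step.
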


\begin{proof}
Under the diffeomorphism $\phi$, \eqref{eq:diffZ0} is modified.
Choose a point $u_0 \in U \cap \cA$ and apply $\phi$.  The boundary
$\cA$ becomes a boundary $\Gamma$ in $\bbR^d$.  Let $V$
represent the rest of the mapped space.  Eq.~\eqref{eq:diffZ0} then
becomes
\begin{align*}
Z_0(v) l(v) + 2 \sum_{i,j}^{d} g^{ij}(v) \partial_i d_{\cA}(v) \partial_j Z_0(v) = 0 &\quad v \in V \\
Z_0(x) = w(x) &\quad v \in \Gamma \nonumber,
\end{align*}
where we use the abusive notation $f(v) =
f(\phi^{-1}(v))$ for a function $f : \cM \to \bbR$, and where $l(v) =
\lap d_\cA(v)$ is the (smooth) Laplacian of $d_\cA(x)$ mapped into
local coordinates.  Using the notation of Lem. \ref{defn:nonchar}, we
can write the equation above as $F(DZ_0(v), Z_0(v), v) = 0$ where
$$
F(p,z,v) = z l(v) + 2 \sum_{i,j=1}^{d} g^{ij}(v) \partial_i d_{\cA}(v) p_j,
$$
and therefore $D_p F(p,z,v)$ becomes
$$
(D_pF)^k(p,z,v) = 2 \sum_{i=1}^{d} g^{ik}(v) \partial_i d_{\cA}(v).
$$
\noindent for $k=1,\ldots,d$.  At the point $\phi(u^0) = v^0 \in \Gamma$, the
outward unit normal is $\nu(v^0) = \grad d_A(v^0)$, which in local
coordinates is given by the vector $\nu^j(v^0) = \sum_{k=1}^{d} g^{jk}(v^0)
\partial_k d_{\cA}(v^0)$ for $j=1,\ldots,d$.

The uniqueness, existence, and smoothness of $Z_0$ near $\cA$ in this
chart follows by Prop. \ref{prop:locexist} after checking the
noncharacteristic boundary condition for $F$ at $v^0$:
\begin{align*}
D_p F(p^0, z^0, v^0) \cdot \nu(v^0)
  &= 2 \sum_{kj} g_{kj}(v^0)
    \sum_{i=1}^{d} g^{ik}(v^0) \partial_i d_{\cA}(v^0)
    \sum_{k=1}^{d} g^{jk}(v^0) \partial_k d_{\cA}(v^0) \\
  &= 2 \sum_{kj} g^{kj}(v^0) \partial_k d_{\cA}(v^0) \partial_j  d_{\cA}(v^0) \\
  &= 2 \norm{\grad d_{\cA}(v^0)}^2 = 2 \neq 0,
\end{align*}
where the last equality follows by definition of the distance function
in terms of the Eikonal equation.
\end{proof}

\begin{thm}
Let $\cA'_0$ be one of the connected components of $\cAp
\bs \Cut(\cA)$.  The differential equation \eqref{eq:diffZ0} has a
unique, and smooth solution on $\cA'_0$.
\label{thm:Zsmooth}
\end{thm}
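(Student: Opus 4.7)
The plan is to extend the local existence result of Lem.~\ref{lem:Zsmoothchart} to all of $\cA'_0$ via the method of characteristics. Since $d_\cA$ satisfies the Eikonal equation $\norm{\grad d_\cA}^2 = 1$ on $\cAp$, the operator $\grad d_\cA \cdot \grad$ appearing in \eqref{eq:diffZ0} is differentiation along unit-speed curves. Specifically, the characteristic curves of the first-order linear PDE \eqref{eq:diffZ0} are the integral curves of $\grad d_\cA$, which on $\cA'_0$ are precisely the unit-speed geodesics $\gamma_y(s)$ leaving each $y \in \partial \cA$ perpendicular to $\partial \cA$, parameterized so that $s = d_\cA(\gamma_y(s))$.

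Along any such characteristic $\gamma_y$, the PDE \eqref{eq:diffZ0} reduces to the linear first-order ODE
\begin{equation*}
\frac{d}{ds} Z_0(\gamma_y(s)) = -\frac{1}{2}\, \lap d_\cA(\gamma_y(s)) \cdot Z_0(\gamma_y(s)),
\qquad Z_0(\gamma_y(0)) = w(y),
\end{equation*}
whose unique explicit solution is
\begin{equation*}
Z_0(\gamma_y(s)) = w(y)\, \exp\!\left( -\frac{1}{2} \int_0^s \lap d_\cA(\gamma_y(\tau))\, d\tau \right).
\end{equation*}
By the cut-locus properties \ref{it:cutloc3} and \ref{it:cutloc4} cited before the lemma, together with the regularity of $\partial \cA$, the normal exponential map $(y,s) \mapsto \gamma_y(s)$ restricts to a smooth bijection from $\{(y,s) : y \in \partial \cA,\, 0 \le s < \tau_y\}$ onto $\cA'_0$, where $\tau_y$ is the cut time of $y$. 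Defining $Z_0(x)$ at each $x \in \cA'_0$ by the displayed formula applied at the unique preimage pair $(y(x), s(x)) = (y(x), d_\cA(x))$ thus produces a globally well-defined function on $\cA'_0$ that satisfies the boundary condition $Z_0 = w$ on $\partial \cA$ by construction. Smoothness of $Z_0$ on $\cA'_0$ follows from smoothness of $d_\cA$, $\lap d_\cA$, $w$, and of the normal exponential map and its inverse. Uniqueness is immediate, since any smooth solution of \eqref{eq:diffZ0} on $\cA'_0$ must satisfy the same ODE along each characteristic and hence must coincide with the explicit formula; in particular, it agrees with the chart-local solution of Lem.~\ref{lem:Zsmoothchart} on their common domain.

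The main obstacle is the smooth global parameterization of $\cA'_0$ by the normal exponential map, which is the geometric content of the cut-locus properties \ref{it:cutloc3} and \ref{it:cutloc4} combined with sufficient regularity of $\partial \cA$. Once that ingredient is in place the rest of the argument is a standard characteristic reduction to a linear ODE, and the explicit formula both provides existence on all of $\cA'_0$ and gives uniqueness for free.
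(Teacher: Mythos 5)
Your argument is correct, but it takes a genuinely different route from the paper's. The paper proves the theorem by a purely local, iterative patching argument: it applies Lem.~\ref{lem:Zsmoothchart} in charts near $\cA$, uses compactness of $\cA'_0 \cap \cA$ and the diffeomorphism overlaps to glue a consistent solution in a collar $\{d_\cA \leq d_0\}$, then re-poses the problem with Dirichlet data on the level set $\cD_0 = \{d_\cA = d_0\}$ (whose outward normal is again $\grad d_\cA$, so the noncharacteristic test still passes) and marches the collar outward step by step to exhaust $\cA'_0$. You instead work globally: you identify the characteristics of \eqref{eq:diffZ0} as the integral curves of $\grad d_\cA$ — i.e., the unit-speed geodesics normal to $\partial\cA$ — reduce the PDE to the linear ODE $\frac{d}{ds}Z_0 = -\tfrac12(\lap d_\cA)\,Z_0$ along each, and write the closed-form exponential solution. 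Existence and uniqueness on all of $\cA'_0$ then follow in one stroke, provided the normal exponential map $(y,s)\mapsto\gamma_y(s)$ is a diffeomorphism of $\{(y,s): y\in\partial\cA,\,0\le s<\tau_y\}$ onto $\cA'_0$; you correctly flag this as the load-bearing geometric fact and point to properties~\ref{it:cutloc3}--\ref{it:cutloc4} and the regularity of $\partial\cA$, which is what Mantegazza--Mennucci's results supply. The trade-off: your approach is shorter and gives an explicit representation $Z_0(\gamma_y(s)) = w(y)\exp\bigl(-\tfrac12\int_0^s \lap d_\cA\,d\tau\bigr)$ that is useful for downstream estimates, at the cost of invoking the tubular-neighborhood/Fermi-coordinate structure of $\cA'_0$ up front; the paper's layered argument re-uses only the chart-local noncharacteristic lemma and never names the global geodesic flow, which keeps the PDE machinery more elementary but buries the geometric picture. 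Both proofs rest on the same cut-locus properties, just deployed at different granularity.
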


\begin{proof}[Proof of Thm.~\ref{thm:Zsmooth}]
A local solution exists in an open ball around each point $u_0$ in the
region $\cA'_0 \cap \cA$,  due to Lem. \ref{lem:Zsmoothchart}.  The
size of each ball is bounded from below, so by compactness we can find
a finite number of subsets $U = \cup_i U_{0,i}$ that cover $\cA'_0
\cap \cA$, for which \eqref{eq:diffZ0} has a smooth unique solution,
and which overlap.  As the charts overlap and the associated mappings
are diffeomorphic, a consistent, smooth, unique solution therefore
exists near $\cA$.

To extend this solution away from the boundary, we choose a small
distance $d_0$ such that for all $x$ with $d_\cA(x) \leq d_0$, that
$x$ is also in the initially solved region $U$.
This set, which we call $\cD_0$, is a contour of $d_\cA$ within
$\cA'_0$.  From the previous argument, $Z_0$ has been solved up to
this contour, and we now look at an updated version of
\eqref{eq:diffZ0} by setting the new Dirichlet anchor conditions at
$\cD_0$ from the solved-for $Z_0$, and setting the interior of the
updated problem domain to the remainder of $\cA'_0$.

Let $U_0 \ \set{x \in U : d(x) \leq d_0}$ and let $\cA'_1 = \cA'_0 \bs
U_0$.  The method of characteristics also applies on $\cA_1'$ near
$\cD_0$.  We apply Lem. \ref{lem:Zsmoothchart} with the updated
Dirichlet boundary conditions.  As $\cD_0$ defines a contour of $d_\cA$, its outward
normal direction is $\grad d_\cA$.  Similarly, $D_p F$ (of
Lem. \eqref{lem:Zsmoothchart}) has not changed.  A solution
therefore exists locally around each point $u_1 \in \cA'_1 \cap
\cD_0$.  The process above can be repeated to ``fill in'' the solution
within all of $\cA'_0$.
\end{proof}

\subsection{Details of the Regression Problem of~\S\ref{sec:sslintro}}
\label{app:lrldetail}
In this deferred section, we decompose the problem \eqref{eq:rlspdisc} into two
parts: elements associated with the first $l$ labeled points (these
are given subscript $l$) and elements associated with the remaining
unlabeled points (given subscript $u$).  This decomposition provides a
more direct look into the how the assumptions in~\S\ref{sec:sslassume}
simplify the original problem, and how the resulting optimization
problem depends only on the ratio of the two parameters $\gamma_I$ and
$\gamma_A$.

We first rewrite \eqref{eq:rlspdisc}, expanding all the parts:
$$
\min_{\tf_l, \tf_u} \left\{
  \norm{\pmat{w_l \\ 0} - \pmat{\tf_l \\ 0}}^2_2
  + \gamma_A \pmat{\tf_l \\ \tf_u}^T \pmat{\tf_l \\ \tf_u}
  + \gamma_I \pmat{\tf_l \\ \tf_u}^T \pmat{L_{ll} & L_{lu} \\ L_{ul} & L_{uu}} \pmat{\tf_l \\\tf_u}
  \right\}.
$$
In this system, the optimization problems on $\tf_u$ and $\tf_l$ are
coupled by the matrix $L$.

The assumptions in~\S\ref{sec:sslassume} decouple
\eqref{eq:rlspdisc}.  This comes from the equality constraint $E_\cA
\tf = \tilde{w}$, equivalently $\tf_l = \tilde{w}_l$.  The problem is
further simplified by the restriction of the
integral domain from $\cM$ to $\cAp$ in the modified penalty
$J(f)$.  We can write $J(f) = \int_\cAp f(x) \lap f(x) dx =
\int_\cM f(x) 1_{\set{x \in \cAp}} \lap f(x) dx$, and the
discretization of this term is $\wt{J}(\tf) = (E_\cAp \tf)^T L \tf$.
After these reductions, and the reduction of the ridge term to $\tf^T
E_\cAp \tf$, the problem \eqref{eq:rlspdisc} becomes:
\begin{align*}
 \min_{\tf_u} \left\{
  \gamma_A \tf_u^T \tf_u  
  + \gamma_I \pmat{\tilde{w}_l \\ \tf_u}^T \pmat{0 & 0 \\ L_{ul} & L_{uu}}
  \pmat{\tilde{w}_l \\ \tf_u}
  \right\} \\
= \min_{\tf_u} \left\{
  \gamma_A \tf_u^T \tf_u
  + \gamma_I (\tf_u^T L_{ul} \tilde{w}_l + \tf_u^T L_{uu} \tf_u )
  \right\}.
\end{align*}
The solution to this problem, combined with the constraint $\tf_l =
\tilde{w}_l$, leads to \eqref{eq:lrlssys}.

The $\gamma_A$ term above normalizes the Euclidean norm of $\tf_u$, thus
earning it the mnemonic ``ambient regularizer''.  The first $\gamma_I$
term is an inner product between $\tf_u$ and $L_{ul} \tilde{w}_l$.  As
$L_{ul}$ is an averaging operator with negative coefficients, the
component $(L_{ul} \tilde{w}_l)_j$ contains the negative average of
the labels for points in $\cA$ near $x_j \in \cAp$.  If $x_j$ is far
from $\cA$, this component is near zero.  Minimizing $\tf_u^T L_{ul}
\tilde{w}_l$ therefore encourages points near $\cA$ to take on the
labels of their labeled neighbors.  For points away from $\cA$ it has
no direct effect.  Minimizing the second $\gamma_I$ term encourages a
diffusion of values between points in $\cAp$, thus diffusing these
near-boundary labels to the rest of the space.  This process earns the
$\gamma_I$ term the mnemonic ``intrinsic regularizer'', because it
encourages diffusion of the labels across $\cM$.

Dividing the problem by $\gamma_A$, we see that the solution
depends only on the ratio $\gamma_I(n) / \gamma_A(n)$.
When $\gamma_I \gg \gamma_A$ the solution of \eqref{eq:rlsp}
is biased towards a constant \cite{Kim2009} on $\cAp$, equivalent to
solving the Laplace equation $\lap f = 0$ on $\cAp$ with the anchor
conditions $f = w$ on $\cA$.  This case of heavy regularization is
useful when $n$ is small, but offers little insight about how the
solution depends on the geometry of $\cM$.  We
are interested in the situation of light regularization: $\gamma_I(n)
= o(\gamma_A(n))$.  We also independently see this assumption as a
requirement for convergence of $\tf$ in~\S\ref{sec:ssllimit}.

\subsection{The RL PDE with Nonempty Boundary ($\bdM \neq \emptyset$)\label{app:derivbd}}

When the boundary of $\cM$ is not empty,
Thm.~\ref{thm:fconv} and Cor. \ref{cor:fpconv} no longer apply in
their current form.  In this section, we provide a road map for how
these results must be modified.  We also argue why in the case of
small $h>0$, the limiting results (expressions for $S_h$ and $f^*_h$
in Assum. \ref{conj:vedist}, Thm.~\ref{thm:transport}, and
Thm.~\ref{thm:rl2e}) are not affected by these modifications.

Let $\cM_{\epsilon} = \set{x \in \cM : d_{\bdM}(x) >
\epsilon^{\gamma}}$ where $\gamma \in (0,1/2)$.  For points
in the intersection of $\cA$ and $\cM \bs \cM_{\epsilon}$ (for example, when
$\cM \bs \cM_{\epsilon} \subset \cA$, and the anchor ``covers'' the boundary of $\cM$), we
need only consider the standard anchor conditions.  For other cases,
we proceed thus:

It has been shown \cite[Prop. 11]{Coifman2006} that as $n \to \infty$:
\begin{enumerate}
\item $\Le \pi_{\cX}(f)_i = -c \lap f(x_i) + O(\epsilon)$ for $x_i \in
\cM_{\epsilon}$ (this matches Thm.~\ref{thm:convLe}).  
\item For $x_i \in \cM \bs \cM_{\epsilon}$, $(\Le \pi_{\cX}(f))_i \approx
\pd{f}{\nu}(x'_i)$, where $x'_i$ is the nearest point in $\bdM$ to
$x_i$ and $\nu$ is the outward normal at $x'_i$.  That is, near the
boundary $\Le$ takes the outward normal derivative.
\item This region $\cM \bs \cM_{\epsilon}$ is small, and shrinks with
decreasing $\epsilon$: ${\mu(\cM \bs \cM_{\epsilon}) = O(\epsilon^{1/2})}$.
\end{enumerate}

One therefore expects that Thm.~\ref{thm:fconv} and Cor. \ref{cor:fpconv}
still hold, albeit with the norms restricted to points in
$\cM_{\epsilon}$.  More specifically, the set $\cA'$ must necessarily
become $\cAp(n) = \cM_{\epsilon(n)} \bs \cA$.
Furthermore, as $n \to \infty, \epsilon \to 0$, this set grows to
encompass more of $\cAp$.

As a result, the domains of the RL PDE \eqref{eq:rl} change.
It is hard to write down the boundary condition at
$\bdM$, precisely because there is no analytical description for how
$\Le$ acts on functions in $\cM \bs \cM_{\epsilon}$.  However, from
item 2 above, we can model it as an unknown Neumann condition.

Fortunately, for vanishing viscosity (small $h$), the effect of this
second boundary condition disappears: the Eikonal equation depends only on
the (Dirichlet) conditions at $\cA$.  More specifically, regardless of
other Neumann boundary conditions away from $\cA$,
Assum. \ref{conj:vedist} still holds and, 
as a result, so do Thm.~\ref{thm:transport} and Thm.~\ref{thm:rl2e}.
This follows because the Eikonal equation is a first order
differential equation, and so some of the boundary conditions may be
dropped in the small $h$ approximation.  A more rigorous discussion
requires a perturbation analysis (see, e.g., \cite{Nayfeh1973}).
We instead provide an example, mimicking Ex. \ref{ex:annul},
except now we let the anchor domain be the inner circle only.

\begin{exmp}[The Annulus in $\bbR^2$ with reduced anchor]
\label{ex:annulred}
Let $\cM = \set{r_0 \leq r \leq 1}$, where $r=\norm{x}$ is the
distance to the origin.  Let $\cA = \set{r=r_0}$ be the inner
circle.  Letting $w=1$ ($u_h=0$), we get $S(r) = d_{\cA}(r) = r-r_0$.
We again assume a radially symmetric solution to
the RL Eq. and \eqref{eq:rl} becomes:
$-h^2 \left(f_h''(r) + r^{-1} f_h'(r) \right) + f_h(r) = 0$
for $r \in (r_0,1]$, $f_h(r_0)=1$.
Furthermore, since the boundary condition at $r=1$ is unknown, we
set it to be an arbitrary Neumann condition: $f'_h(1) = b$.
The solution is
$$
f_h(r) = \frac{bh[I_0(r/h)K_0(r_0/h)-K_0(r/h)I_0(r_0/h)]
    + I_0(r/h)K_1(1/h) + K_0(r/h) I_1(1/h)}
  {K_0(r_0/h)I_1(1/h)+K_1(1/h)I_0(r_0/h)},
$$
A series expansion of $f_h(r)$ around $h=0$ gives
$f_h(r) = \sqrt{r_0/r} e^{-(r-r_0)/h} + O(h)$,
and therefore $\lim_{h \to 0} -h \log f_h(r) = S(r)$
(again confirming \eqref{eq:rllim}).

We simulated this problem with $r_0=0.25$ by sampling
$n=1000$ points from the ball $B(0,1)$, and rescaling points
with $r \leq r_0$ to $r=r_0$.  $S_h$ is approximated up to a constant
using the numerical discretization, via \eqref{eq:lrlssysp}, of
\eqref{eq:lrlssim2}.  For the graph Laplacian we used a $k=20$ NN
graph and $\epsilon=0.001$.

\begin{figure}[h!]
\centering
\begin{minipage}{\linewidth}
  \centering
  \parbox{.3\linewidth}{
    \centering
    \includegraphics[width=1.2\linewidth]{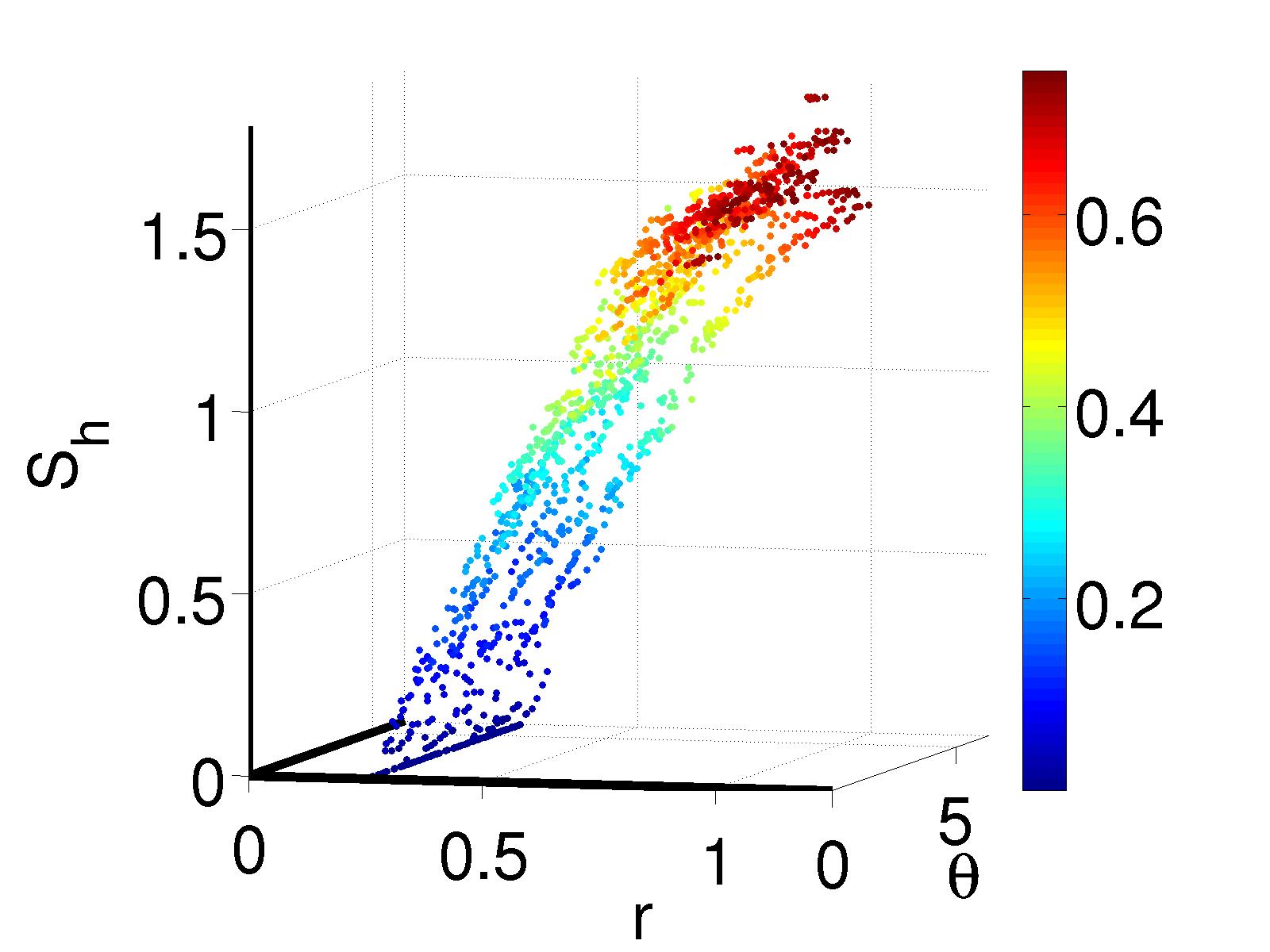}
  }
  \quad
  \parbox{.3\linewidth}{
    \centering
    \includegraphics[width=1.2\linewidth]{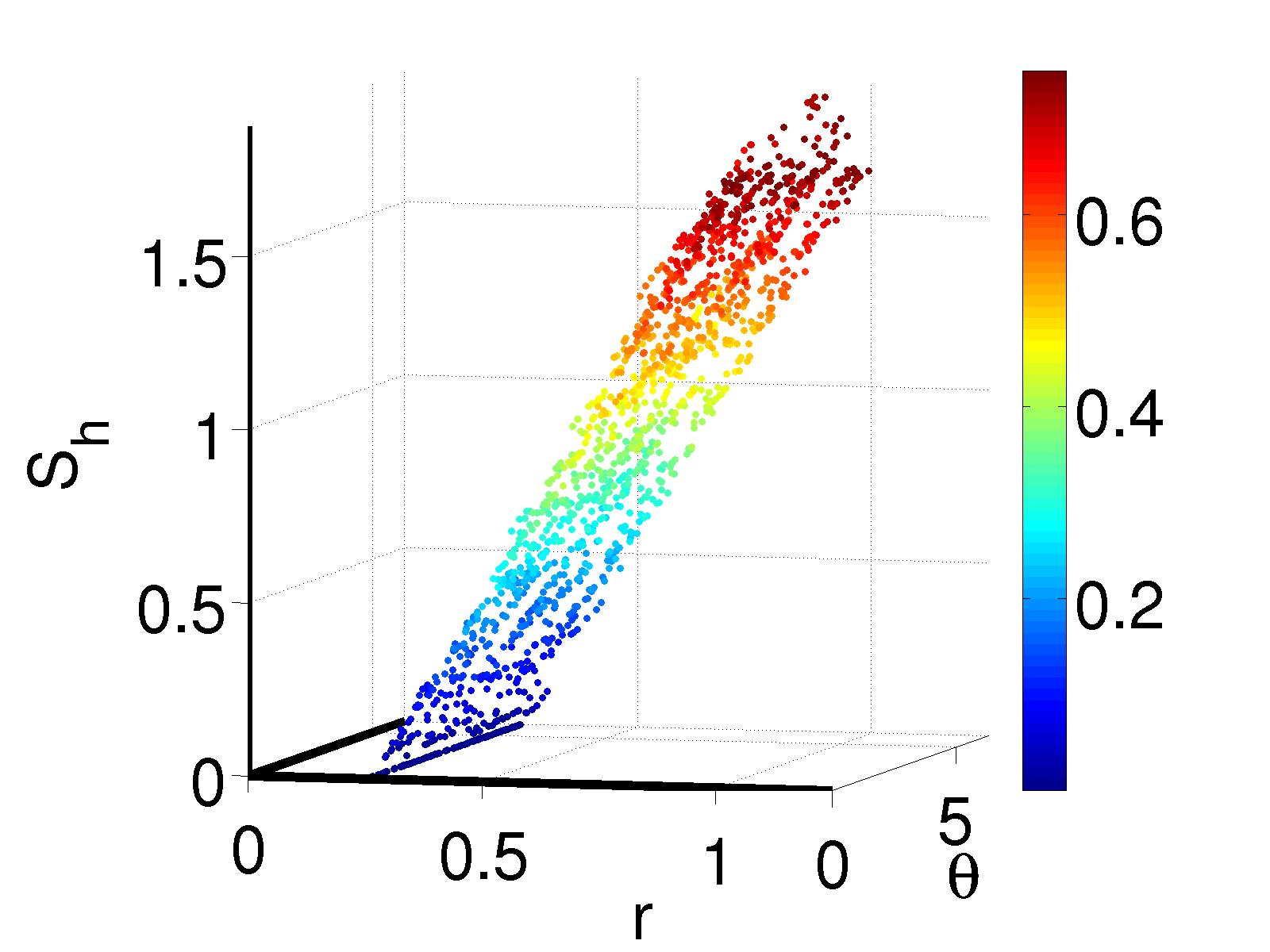}
  }
  \quad
  \parbox{.3\linewidth}{
    \centering
    \includegraphics[width=1.2\linewidth]{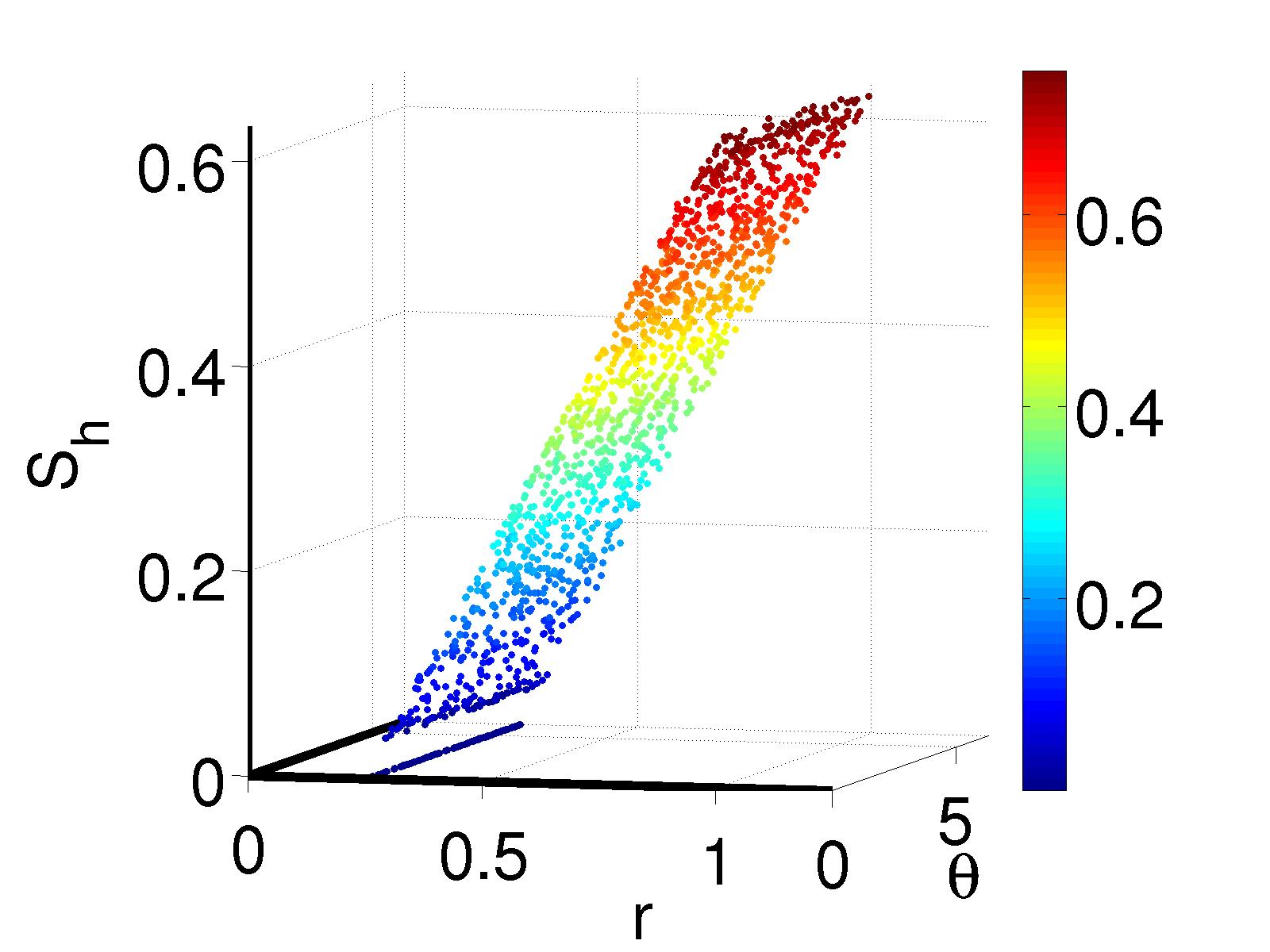}
  }
  \caption[Geodesics Estimates $S_h(r,\theta)$ on Modified Annulus.]%
   {Geodesics Estimates $S_h(r,\theta)$ on Modified Annulus.  Left to
    Right: $h=1,\ 0.1,\ 0.001$}
  \label{fig:geoannulred}
\end{minipage}
\end{figure}

Fig.~\ref{fig:geoannulred} shows (in the $z$ axis) the estimate $S_h(x)$
as $h$ grows small.  The colors of the points reflect the true
distance to $\cA$: $S(r) = S_0(r) = r-r_0$.  Note the convergence as $h
\downto 0$, and also the clear offset of $S_h$ which is especially
apparent in the right pane near $r=0.25$.

From the second of Eqs. \eqref{eq:trl} and the fact that $\grad S(r) =
1, \lap S(r) = 1/r$, we have $Z_0/r + 2 Z'_0 = 0$ for $r_0 < r \leq 1$
and $Z_0 = 1$ for $r=r_0$. Solving this we get $Z_0(r) = \sqrt{r_0/r}$.
The solution becomes $f_h(r) = e^{-(r-r_0)/h}(\sqrt{r_0/r} +
O(h))$, which matches the earlier series expansion of the full solution.
Furthermore, upon an additional Taylor expansion we have
$S_h(r) = r-r_0 - h \log(r_0/r)/2 + O(h \log h)$.
As before, the extra term in the $S_h$ estimate
has a large effect when $r-r_0$ is small (as seen
in the right pane of Fig.~\ref{fig:geoannulred}).
\end{exmp}

\section{Conclusion and Future Work\label{sec:concl}}

We have proved that the solution to the SSL problem \eqref{eq:lrlssys}
converges to the sampling of a smooth solution of a Regularized
Laplacian PDE, in certain limiting cases.  Furthermore, we have
applied the established theory of Viscosity PDE solutions to analyze
this Regularized Laplacian PDE.   Our analysis leads to a geometric
framework for understanding the regularized graph Laplacian in the
noiseless, low regularization regime (where $h \to 0$). This framework
provides intuitive explanations for, and validation of, machine
learning algorithms that use the inverse of a regularized Laplacian
matrix.

We have taken the first steps in extending the theoretical analysis in
this chapter to manifolds with boundary (\S\ref{app:derivbd})
While the results within this section can be confirmed numerically, in
some cases additional work must be done to confirm them in full
generality. Furthermore, Assum. \ref{conj:vedist} awaits confirmation
within the viscosity theory community.

There are a host of applications derived from the work in this
chapter, and we turn our focus to them in chapter~\ref{ch:rlapp}.


\chapter{The Inverse Regularized Laplacian: Applications\label{ch:rlapp}}

\section{Introduction}

Thanks to the theoretical development in chapter \ref{ch:rl},
we now have a framework within which we can construct new tools for
learning (e.g. a regularized geodesic distance estimator and a new
multiclass classifier).  These tools can also shed light on other
results in the literature (e.g. a result of \cite{Nadler2009}).
Throughout this chapter we will use the notation developed in chapter
\ref{ch:rl}.

\section{Regularized Nearest Sub-Manifold (NSM) Classifier}
\label{sec:geo}
We now construct a new robust geodesic distance estimator and employ
it for classification.  We then demonstrate the classifier's efficacy on
several standard data sets.
To construct the estimator, first choose some anchor
set $\cA \subset \cM$, and suppose the points $\set{x_i}_{i=1}^l$ are
sampled from $\cA$.  To calculate the distance $d_\cA(x_i)$ for $i=l+1
\ldots n$, construct the normalized graph Laplacian
$\Le$.  Choosing $\wt{h} > 0$ appropriately,
solve the linear system \eqref{eq:lrlssysp}:
\beq
\label{eq:egeo}
(E_\cA + E_\cAp(I + \wt{h}^2 \Le)) a = \wt{w}
\eeq
where $\wt{w} = \left[1\ \cdots\ 1\ \ 0\ \cdots\ 0\ \right]^T$ is a
vector of all zeros for sample points in $\cAp$ and all ones for
sample points in $\cA$.  For $n$ large, $\epsilon$ small, and
$\wt{h}$ small, this linear system approximates \eqref{eq:rl}
with $h = \wt{h} \sqrt{c}$.  Applying Thm. \ref{thm:rl2e}, we see
that $\wt{S}_i = -\wt{h} \log a_i \approx c^{-1/2} d_\cA(x_i)$.

While the estimator $\wt{S}$ is approximate and only valid up to a
constant, it is also simple to implement and consistent (due to 
Cor. \ref{cor:fpconv}).

We know of two other consistent geodesics estimators that work on
point samples from $\cM$.  One performs fast
marching by constructing complex local upwind schemes
that require the iterative solution of sequences of
high dimensional quadratic systems \cite{Sethian2000}.  Another
performs fast marching in $\bbR^p$ on offsets of $\cX$ and is also
approximate \cite{Memoli2005}.  The first scheme is complex
to implement; the second is exponential in the ambient dimension $p$.
Our estimator, on the other hand, can be implemented in Matlab in
under 10 lines, given one of many fast approximate NN
estimators. Furthermore, it requires the solution of a linear system
of size essentially $n$, so its complexity depends only on the number
of samples $n$, not on the ambient dimension $p$.
Finally, our scheme allows for a natural regularization by tweaking
the viscosity parameter $\wt{h}$.
\S\ref{sec:geoexmp} contains numerical comparisons between our
estimator and, e.g. Dijkstra's Shortest Path and Sethian's Fast
Marching estimators.

The lack of dynamic range in the estimator $\wt{S}$, following
\eqref{eq:egeo}, leads to important numerical considerations.  According to
Thm. \ref{thm:rl2e}, for a given sampling $\cX$ one would choose
$\wt{h} \ll \min_{e \in \cE} \td_e$ to have an accurate estimate of
geodesics for all point samples. In this case, however, many
points far from $\cA$ may have their associated estimate $a_i$ drop
below the machine epsilon.  In this case an iterative multiscale
approach will work: estimates are first calculated for points nearest
to $\cA$ for which no estimate yet exists (but $a_i$ is above machine
epsilon), then $\wt{h}$ is multiplied by some factor $\gamma > 1$,
and the process is repeated.

We now use the above estimator to form the Nearest Sub-Manifold (NSM)
classifier.  The classifier is based on two simplifications.  First,
for noisy samples, one would want to select $\wt{h}$ based on the
noise level or via cross-validation; it therefore becomes
a regularization term.
Second, as seen in \S\ref{sec:sslr}, for classification the exact estimate of geodesic
distance is less important than relative distances; hence
there is no need to estimate scaling constants.

As before, suppose we are given $n$ samples from a manifold $\cM$.  Of
these, each of the first $l$ belong to one of $M$ classes; that
is, $x_i \in \cC_m$, $m \in \set{1,\ldots,M}$.  We assume that all
points within class $m$ belong to a smooth closed subset of
$\cM$, which we call anchor $\cA_m$, $m=1,\ldots,M$.  For each anchor,
we define the anchor data vector $\wt{w}^m$ via $\wt{w}^m_i =
\delta(x_i \in \cC_m)$, $i=1,\ldots,n$.
To classify, first choose $\wt{h}>0$ and solve
\eqref{eq:egeo} for each of the $M$ different anchor sets $\cA_m$ (and
associated $\wt{w}^m$), to get solutions $\set{a^m}_{m=1}^M$.  Then
for each unlabeled point $x_i$, $a^m_i$ encodes its distance to anchor
$\cA_m$.  The decision rule is $C(x_i) = \argmax_m a^m_i$.

For $n$ and $l$ large, $\epsilon$, $\wt{h} > 0$ small, and
no noise, $C(x_i)$ will accurately estimate the class
which is closest in geodesic distance to $x_i$.  In the noisy, finite
sample case with irregular boundaries, $C$ provides a regularized
estimate of the same.

\section{NSM Classifier: Performance}

We compare the classification performance of the NSM classifier to
several state-of-the-art classifiers using the test set from
\cite{Chapelle2006} (testing protocol and datasets:
\url{http://www.kyb.tuebingen.mpg.de/ssl-book/}).
For the NSM classifier, we performed a parameter search as described
in \cite[\S 21.2.5]{Chapelle2006}, and additionally cross-validated
over the viscosity parameter $\wt{h} \in \set{0.1, 1, 10}$ scaled by
the median distance between pairs of points in $\cE$.

We compare our results with publicly available implementations of:
\begin{itemize}
\item LapRLS from M. Belkin's website,
with obvious modifications for one-vs-all multiclassification
and with the exception that, as opposed to \cite{Chapelle2006}, we
used $p=1$ in the kernel $\tLe^p$ instead of $p=2$.  Here, we also
performed a parameter search as in \cite[\S 21.2.5]{Chapelle2006}.
\item LDS from O. Chapelle's website with parameters optimized as in
\cite[\S 21.2.11]{Chapelle2006}.
\item Kernel TSVM using primal gradient descent (available in the LDS
  package) with parameters optimized as in \cite[\S
    21.2.1]{Chapelle2006}.
\end{itemize}
For testing, we also included the LIBRAS (LIB) dataset with 12 splits
of $l=30, 100$ labeled points and the ionosphere (Ion) dataset with 12
splits of $l=10, 100$ labeled points.
All datasets have $M=2$ (the task is binary classification) except
COIL, which has $M=6$.

Table \ref{tab:ssl} shows percent classification error vs. percentage
labeled points, over 12 randomized splits of the testing and training
data set.  Parameter optimization (cross-validation) was always
performed on the training splits only; classification error is
reported over the testing data.
Note that that the NSM classifier is competitive with the
others, especially on those datasets where we expect a manifold
structure (e.g. the image sets USPS and COIL).

\begin{table}[t]
\caption{Percent classification error over 12 splits. Clear~winners~in~bold.}
\footnotesize
\label{tab:ssl}
\centering
\begin{tabular}{|>{\bfseries}c|c|c|c|c|c|c|c|c|c|c|c|c|c|c|c|}
\hline
\ & 
\multicolumn{2}{|c|}{{\bf USPS}} & 
\multicolumn{2}{|c|}{{\bf BCI}} &
\multicolumn{2}{|c|}{{\bf g241n}} &
\multicolumn{2}{|c|}{{\bf COIL}} &
\multicolumn{2}{|c|}{{\bf LIB}} &
\multicolumn{2}{|c|}{{\bf Ion}} \\
\hline
$100 (l/n)$
      & 0.66 & 6.66
      & 2.5 & 25
      & 0.66 & 6.66
      & 0.66 & 6.66
      & 8.3 & 28
      & 2.8 & 28 \\
\hline
$l$ & 10 & 100
    & 10 & 100
    & 10 & 100
    & 10 & 100
    & 30 & 100
    & 10 & 100 \\
\hline
NSM
& {\bf 10.0} & {\bf 5.9} 
& 49.0 & 44.8 
& 46.0 & 39.0 
& 58.8 & {\bf 11.6} 
& 51.0 & {\bf 30.3} 
& 30.4 & 14.2 \\ 

LapRLS
& 15.0 & 10.6 
& 48.7 & 45.4 
& 43.0 & 33.9 
& 63.1 & 20.6 
& 48.9 & {\bf 30.4} 
& 31.9 & 13.0 \\ 

LDS
& 22.5 & 11.5 
& 48.7 & 46.0 
& 49.0 & 41.0 
& 56.7 & 16.2 
& 54.9 & 36.6 
& {\bf 17.3} & 13.5 \\ 

TSVM
& 17.4 & 12.0 
& 48.8 & 46.1 
& 47.3 & {\bf 23.7} 
& 69.5 & 39.9 
& 66.0 & 36.6 
& 48.1 & 20.7 \\ 
\hline
\end{tabular}
\end{table}

\section{Irregular Boundaries and the counterexample of Nadler et al.}
\label{sec:irr}
We relate the Annulus example (Ex. \ref{ex:annul}) to a
negative result of \cite[Thm. 2]{Nadler2009}, which
essentially states that no solution exists for \eqref{eq:rl} for $\cM$
with $d \geq 2$ and the anchor set a countable number of points.
%
%
This yields a special case of a result known in PDE theory: no
solution exists to \eqref{eq:rl} when $\cA$ is irregular; and isolated
points on subsets of $\bbR^d$, $d \geq 2$, are irregular
\cite[Irregular Boundary Point]{Hazewinkel1995}.

This is very clearly seen in Ex. \ref{ex:annul}, where attempting
to let $r_0 \to 0$ (thus forcing a single point anchor) forces
the first term of the solution $f_h$ in \eqref{eq:fhr} to zero for any
$r>r_0$, regardless of the anchor condition at $r=r_0$ and of $h$.
The major culprit here is the $(d-1)/r$ term that appears in the
radial Laplacian and is unbounded at the origin.
Note, however, that viscosity solutions to
\eqref{eq:ve} do exist even for singular anchors
\cite{Mantegazza2002}.
%

In many practical cases (i.e., if we had chosen single point anchors
in \S\ref{sec:sslr}, Ex. \ref{ex:sslsphere}, etc), the sampling
size is finite and we keep $\epsilon \geq \epsilon_0$ for some
$\epsilon_0>0$.  In these cases, the issues raised here do not affect
the numerical analysis because even single points act like
balls of radius $\epsilon$ in $\bbR^p$.

\section{Beyond Classification: Graph Denoising, Manifold Learning \label{sec:beyond}}

The ideas presented in the previous sections can also be applied to
other areas of machine learning.  As illustrations, we show that the
graph denoising scheme of \cite{Brevdo2010} is a special case of our
geodesics estimator.  Further, we show how to construct a regularized
variant of ISOMAP and provide some numerical examples of geodesics
estimation.

\section{The Graph Denoising Algorithm of Chapter \ref{ch:npdr}}
\label{sec:geonpdr}
In chapter \ref{ch:npdr}, we studied decision rules for denoising
(removing) edges from NN graphs that have been corrupted by sampling
noise.  We examine the Neighborhood Probability Decision Rule (NPDR)
of \S\ref{sec:npdr}, and show that in the low noise,
low regularization regime, it removes graph edges between
geodesically distant points.

The NPDR is constructed in three stages:  (a) the NN graph $G$ is
constructed from the sample points $\cX$.  $\cE$ contains an initial
estimate of neighbors in $G$, but may contain incorrect edges
due to sampling noise;  (b) a special Markov random walk
is constructed on $G$, resulting in the transition probability matrix
$\Ne \propto (I-\bp \Pe)^{-1}$ for $\bp \in (0,1)$;
(c) the edges $(l,k) = e \in \cE$ with the smallest associated
entries $(\Ne)_{lk}$ are removed from $G$.
In chapter \ref{ch:npdr}, we provide a probabilistic
interpretation for the coefficients of $\Ne$.

We show that $\Ne$ encodes geodesic distances by 
reducing  $\Ne^{-1}$ to look like \eqref{eq:rl}:
\begin{align}
I - \bp \Pe &= (1-\bp)I + \bp (I - \Pe) 
 \propto ({1}/{\epsilon}) I + {\bp}(1-\bp)^{-1} \Le.
\label{eq:redNe}
\end{align}
For $\epsilon$ small,
after applying the RHS of \eqref{eq:redNe}, $\Ne e_i$
approximately solves \eqref{eq:rl} with $\cA = \set{x_i}$,
$w(x_i) = c'$ (where $c'$ is a function of ${\bp,\epsilon}$), and
$h^2 = c \epsilon \bp / (1-\bp)$.  
Then by Thm. \ref{thm:rl2e}
$$(\Ne)_{lk} \approx c' e^{-d(x_l,x_k) \sqrt{(1-\bp)/(c \epsilon
\bp)}}, \mbox{some } c'>0.
$$
%
Thus in the noiseless case and with $\bp \approx 0$, the NPDR
algorithm will remove edges in the graph between
points that are geodesically far from each other. 
As edges of this type are the most detrimental to learning
\cite{Balasubramanian2002}, the NPDR is a powerful denoising rule.
As shown in chapter \ref{ch:npdr},
for noisy samples one would choose $\bp \approx 1$ to regularize for
noisy edges.  In this case, one can think
of $\Ne$ as a highly regularized encoding of pairwise geodesic distances.

\section{Viscous ISOMAP}
\label{sec:isomap}
As a second example of how the ideas from \S\ref{sec:sslr} can be
used, we construct a regularized variant of ISOMAP
\cite{Tenenbaum2000}, which we call Viscous ISOMAP.

ISOMAP is a dimensionality reduction algorithm that constructs an
embedding for $n$ points sampled from a high-dimensional space by
performing Multidimensional Scaling (MDS) on the estimated geodesic
distance matrix of the NN graph of these points.

The first step of ISOMAP is to estimate all pairwise geodesic
distances.  Traditionally this is done via Dijkstra's Shortest Path
algorithm.  We replace this step with our regularized geodesics
estimator.  A direct implementation requires $n$ calculations of
\eqref{eq:egeo}.  However, a faster estimator can be constructed,
based on our analysis of the NPDR algorithm in \S\ref{sec:geonpdr}.
Specifically, to calculate pairwise distances, first
calculate $M = (I + \wt{h}^2 \Le)^{-1}$. Then the symmetrized
geodesics estimates are $H = -\wt{h}(\log M + \log M^T)$, where the
logarithm is taken elementwise.  Finally, perform MDS on the matrix
$H$ to calculate the ISOMAP embedding.

For small $\wt{h}$, the Viscous ISOMAP embedding matches that
of standard ISOMAP.  For large $\wt{h}$, the additional
regularization can remove the effects of erroneous edges caused by
noise and outliers.

We provide a rather simple numerical example.  It confirms
that for small viscosity $\wt{h}$, Viscous ISOMAP embeddings match
standard ISOMAP embeddings, and that for larger viscosities the
embeddings are less sensitive to outliers in the original sampling set
$\cX$ and in $\cE$.

Fig.~\ref{fig:isomap} compares Viscous ISOMAP to regular ISOMAP
on a noisy Swiss Roll with topological shortcuts.  We used the same
$n=1000$ samples and $\delta=4$ for NN estimation for both algorithms,
and $\epsilon=1$ for Viscous ISOMAP.  Note how for small $\wt{h}$,
the Viscous ISOMAP embedding matches the standard one. Also note how
increasing the viscosity term $\wt{h}$ leads to the an accurate
embedding in the principal direction, ``unrolling'' the Swiss Roll.

\begin{figure}[ht]
 \centering
 \includegraphics[width=.45\linewidth]{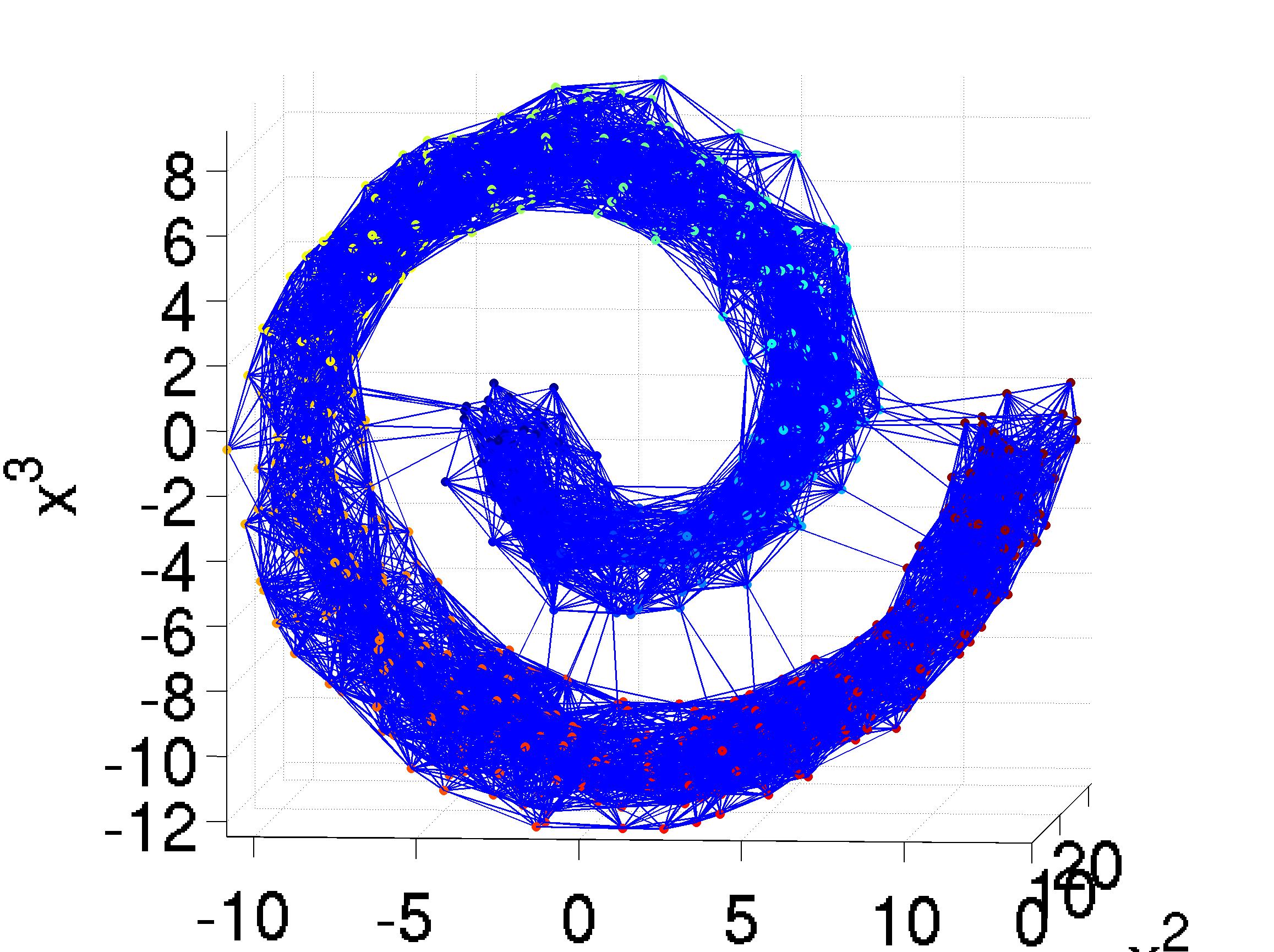}
 \includegraphics[width=.45\linewidth]{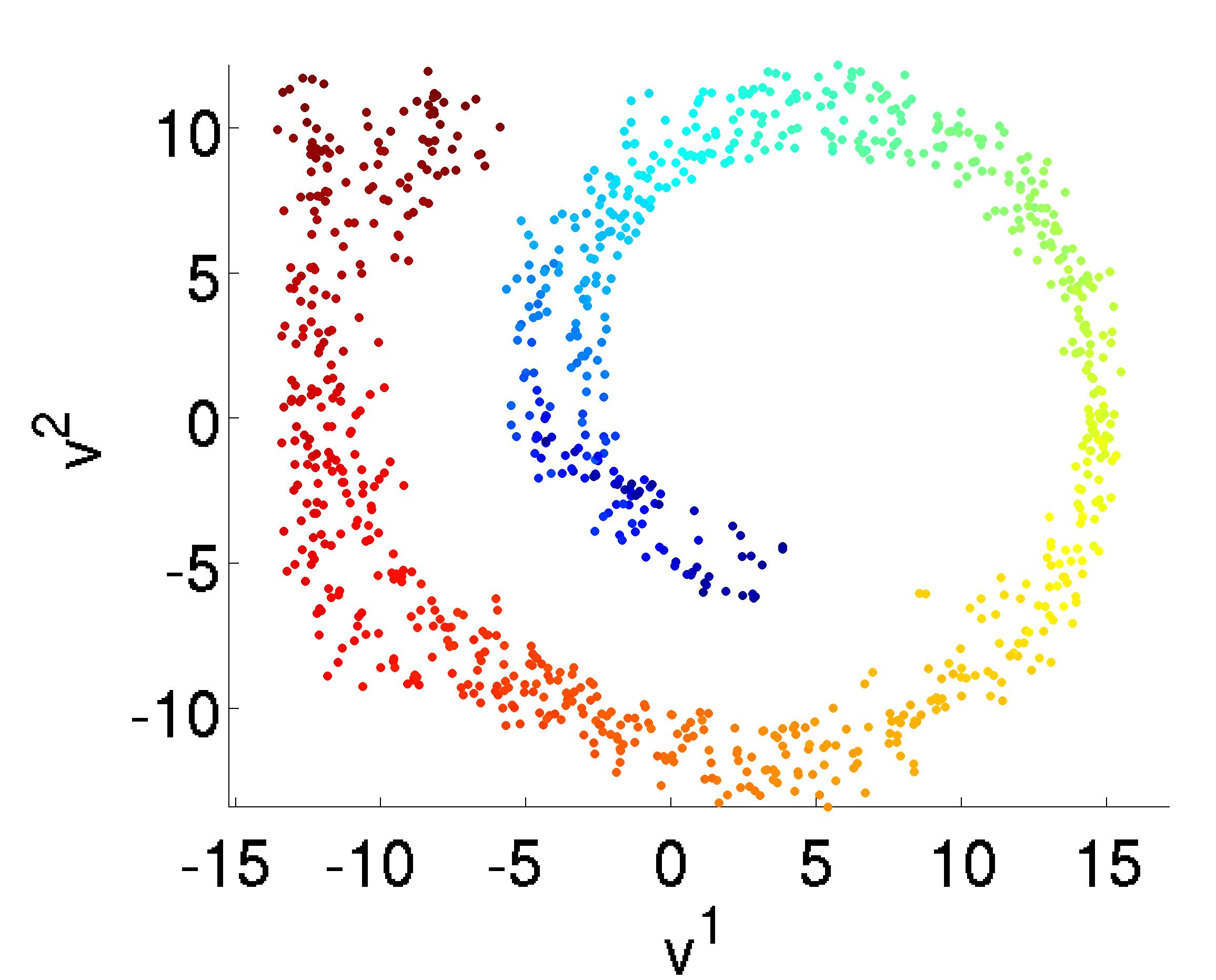}
 \includegraphics[width=.45\linewidth]{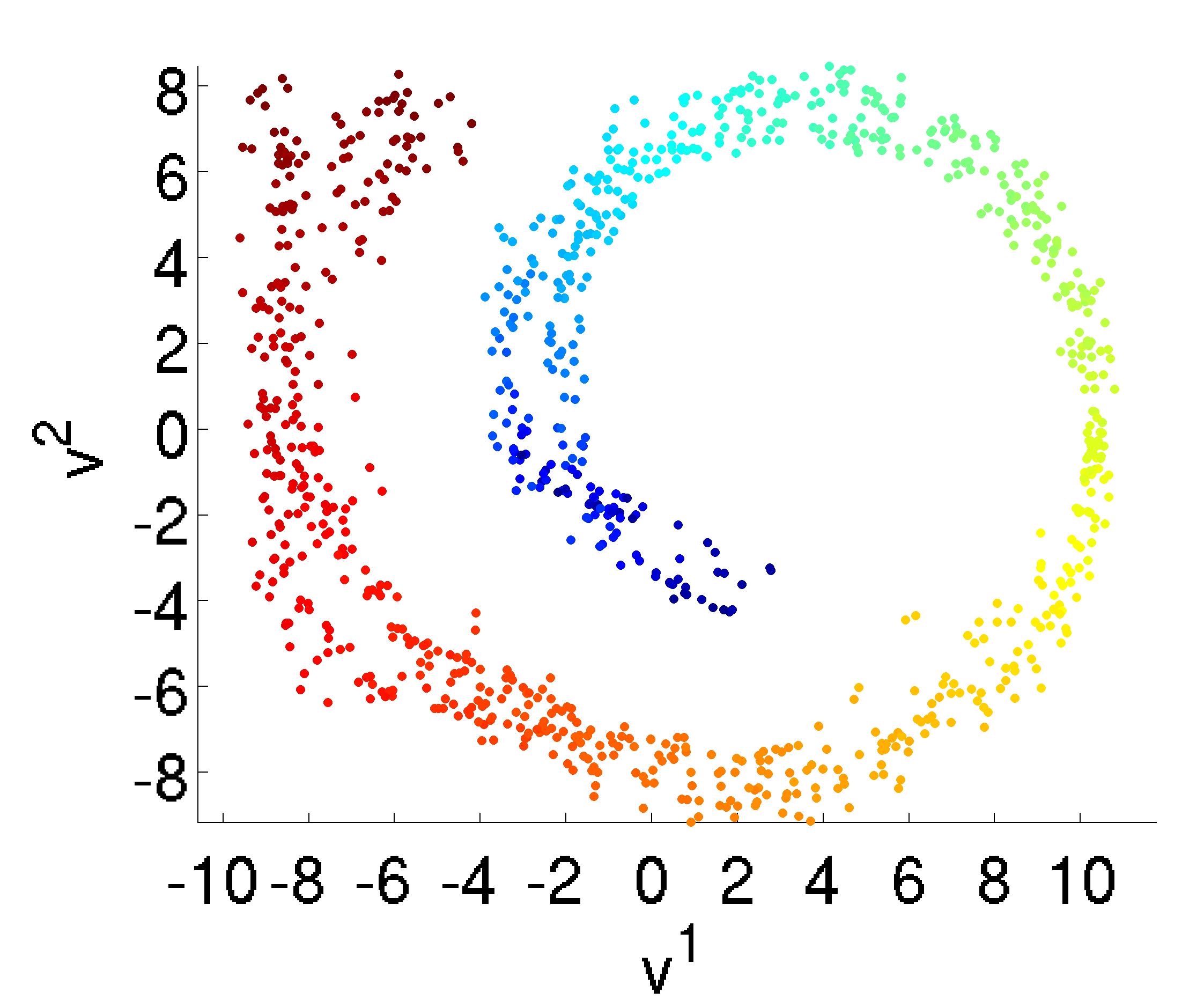}
 \includegraphics[width=.45\linewidth]{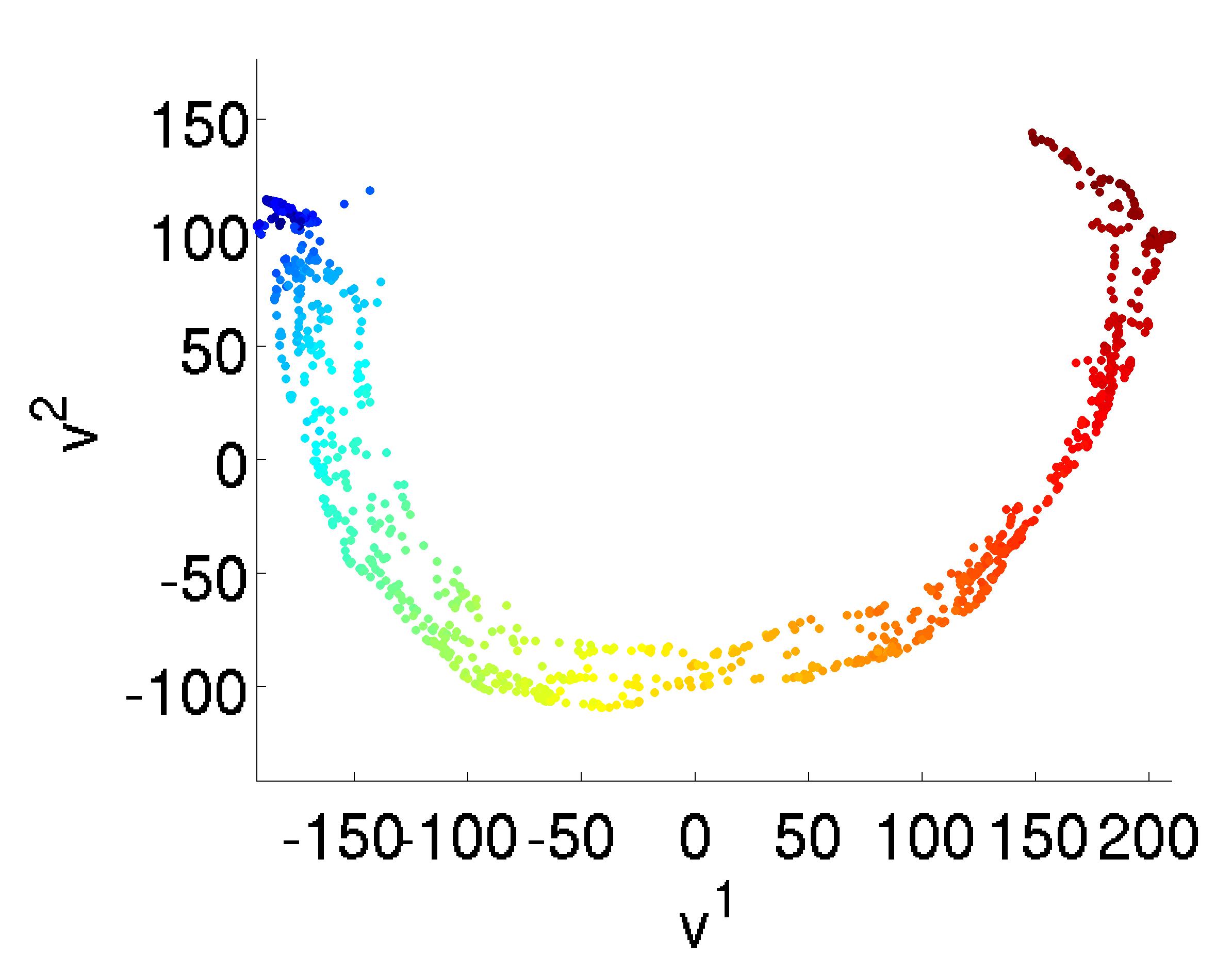}
 \caption[ISOMAP vs. Viscous ISOMAP.]%
  {ISOMAP vs. Viscous ISOMAP.  Top Left: Noisy Swiss Roll,
   Top Right: ISOMAP Embedding, first 2 components.
   Bottom Left: Viscous ISOMAP Embedding, $\wt{h}=.1$.  Bottom Right:
   Viscous ISOMAP Embedding, $\wt{h}=20$.}
 \label{fig:isomap}
\end{figure}

\section{Numerical Examples of Geodesics Estimation}
\label{sec:geoexmp}

We provide two examples of Geodesic Estimation: on the Torus, and on a
triangulated mesh.  For the Torus, we used the normalized Graph Laplacian
of \S\ref{sec:sslintro} and ground truth geodesic distances were given
by Dijkstra's Shortest Path algorithm.  On the mesh, we used the mesh
surface Laplacian of \cite{Belkin2008}, and for ground truth geodesic
distances the mesh Fast Marching algorithm of \cite{Kimmel1998} (As
implemented in Toolbox Fast Marching at
\url{http://www.ceremade.dauphine.fr/~peyre/}).  In both cases, $S_h$
was calculated via the geodesics estimator of \S\ref{sec:geo}; thus,
as always, $S_h$ estimates geodesic distances up to a constant.

\begin{exmp}[The Torus $T = S^1 \x S^1$ in $\bbR^3$]
\label{ex:torus}
The torus $T$ is defined by the points $(x^1,x^2,x^3) = ((2+\cos
v^1)\cos v^2, (2+\cos v^1)\sin v^2, \sin  v^2)$ for $v^1 \in [0,2\pi)$
and $v^2 \in [0,2\pi)$.  We used $n=1000$ randomly sampled points,
with $k=100$ neighbors for the initial NN graph, $\epsilon = .01$ and
$\wt{h} = .001$.  Setting $\cA = \set{x_i}$, $i=1,3$ where $x_1$
corresponds to $(v^1,v^2) = (0,0)$, and $x_3$ is a randomly chosen point,
we can calculate geodesic distances of all points in $\cX$ to these
anchors.  The results are shown in Fig.~\ref{fig:geotorus}.

\begin{figure}[h!]
\centering
\includegraphics[width=.49\linewidth]{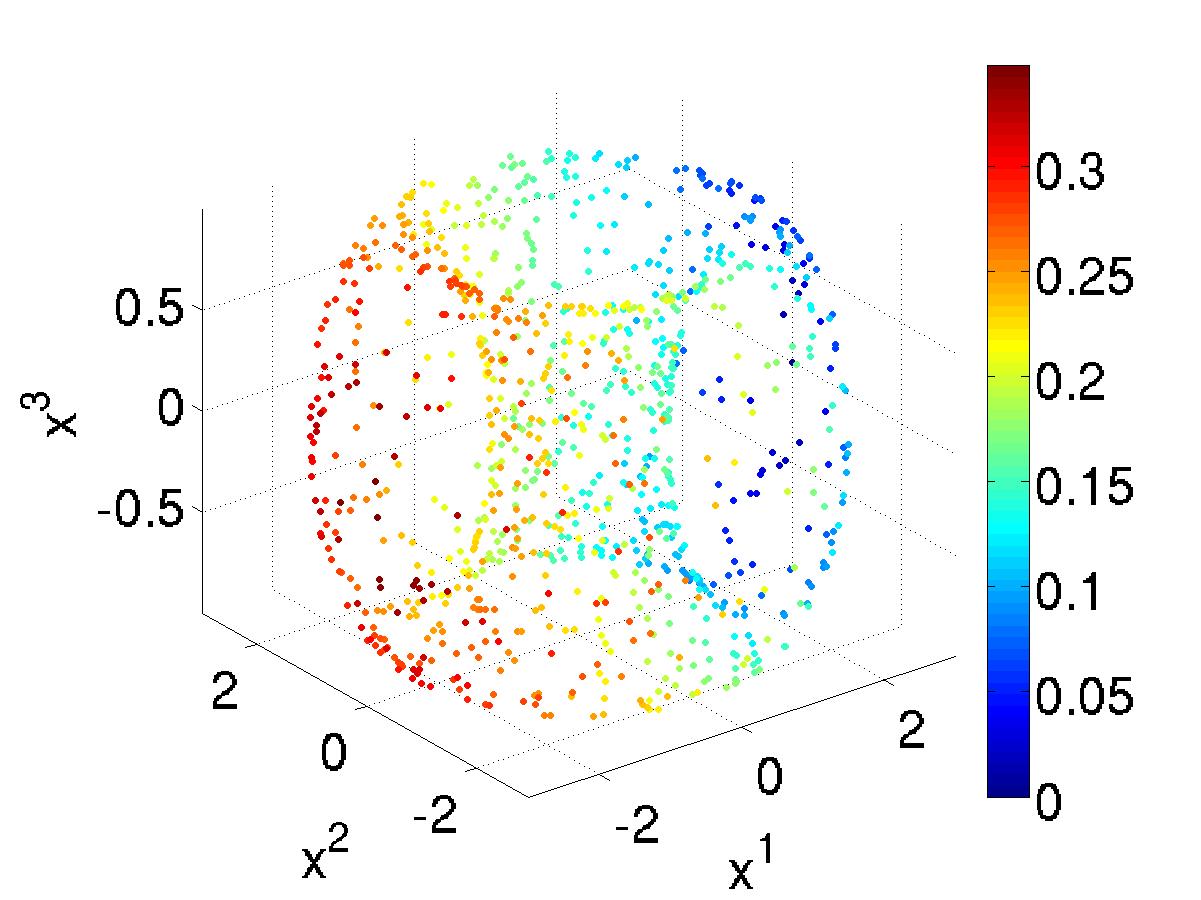}
\includegraphics[width=.49\linewidth]{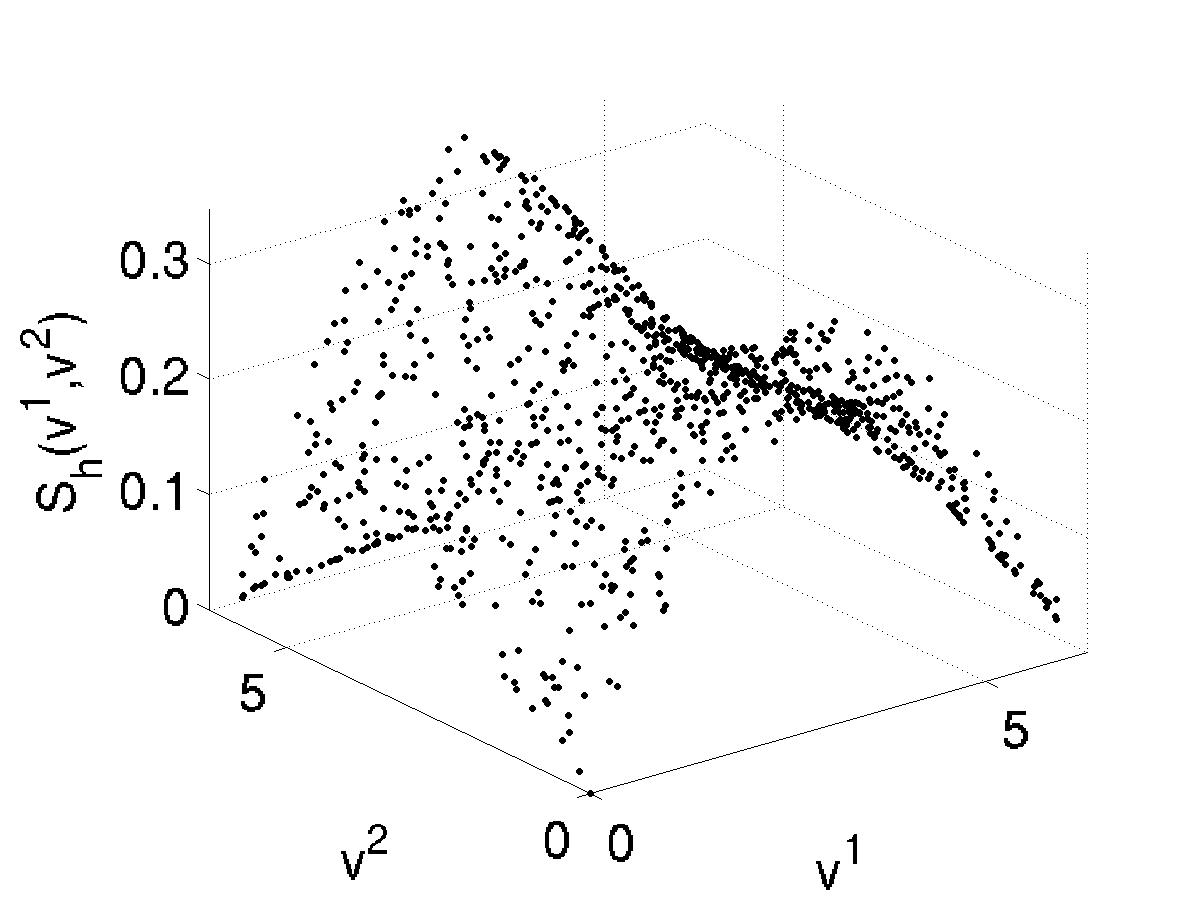} \\
\includegraphics[width=.6\linewidth]{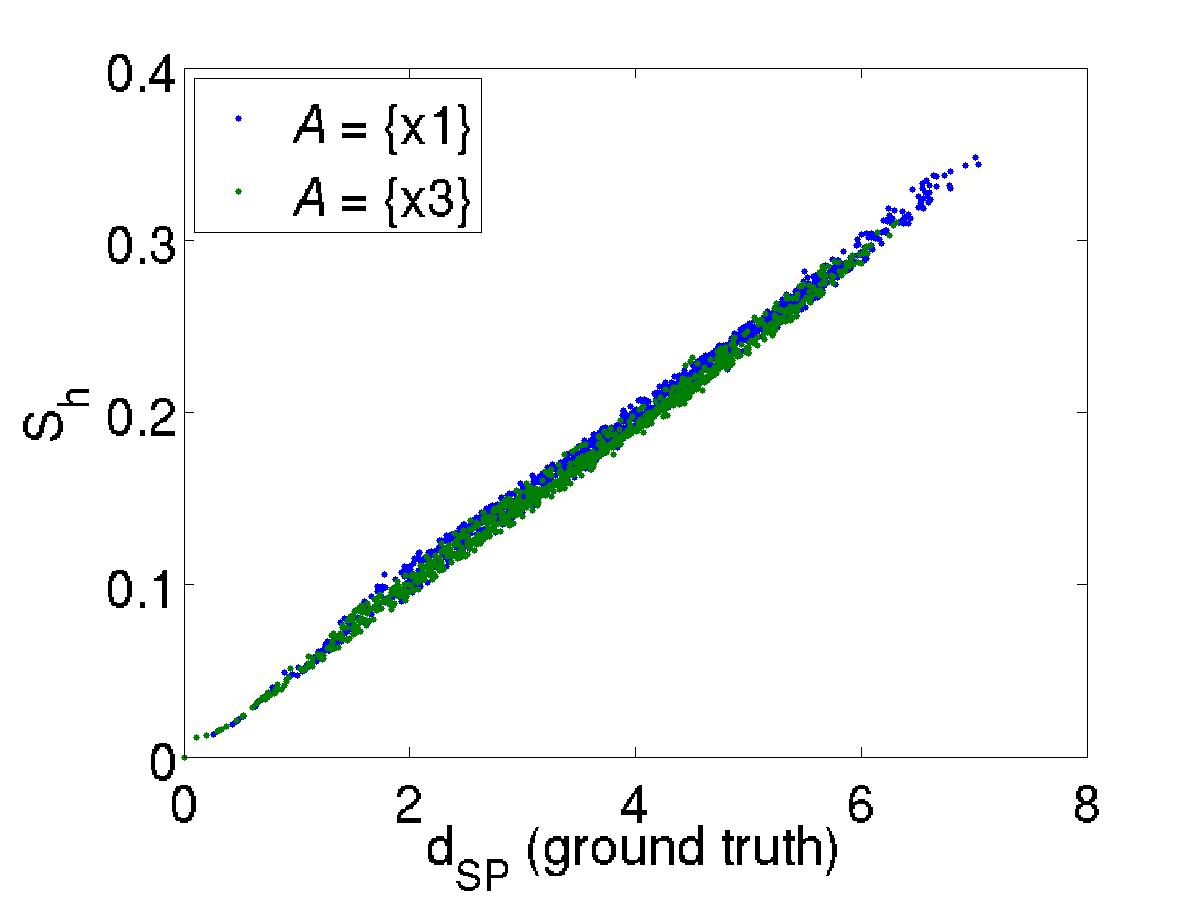}
\caption[Geodesics Estimation on $T$.]%
 {Geodesics Estimation on $T$.
  Top Left: $S_h(x)$ in $\bbR^3$; $\cA=\set{x_1}$.
  Top Right: $S_h(v^1,v^2)$ in the parameter space; $\cA = \set{x_1}$.
  Bottom: $S_h$ vs. Shortest Path estimates for $\cA = \set{x_1}$, $\set{x_3}$.}
\label{fig:geotorus}
\end{figure}
\end{exmp}

\begin{exmp}[The Dancing Children Mesh]
\label{ex:mesh}
The Dancing Children mesh is a complex (high genus) mesh from the
Aim$@$Shape Repository (\url{http://shapes.aimatshape.net/}).
The mesh $G$ is composed of $n \approx 36000$ vertices; and we used
$\epsilon=.1$ (times the mean edge distance), and $\wt{h}=.01$ for
the estimation procedure.
The anchor point $x_1$ was chosen randomly.
Our results are shown in Fig.~\ref{fig:geomesh}.  Note that minor
discrepancies between the Fast Marching estimate $d_{FM}$ and our
estimate $S_h$ occur near areas with complex topology and areas of high
curvature (e.g. near a hole in the mesh).

\begin{figure}[h!]
  \centering
  \includegraphics[width=.49\linewidth]{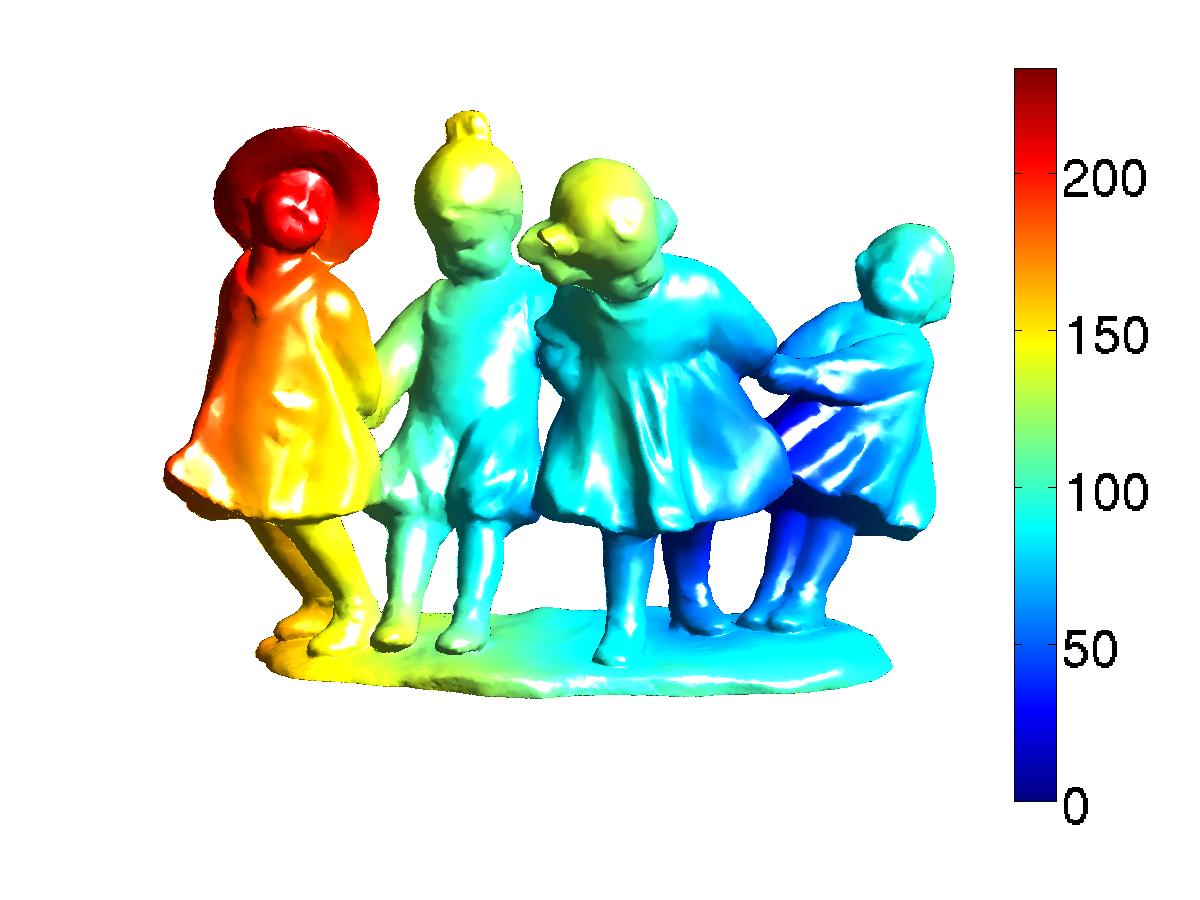}
  \includegraphics[width=.49\linewidth]{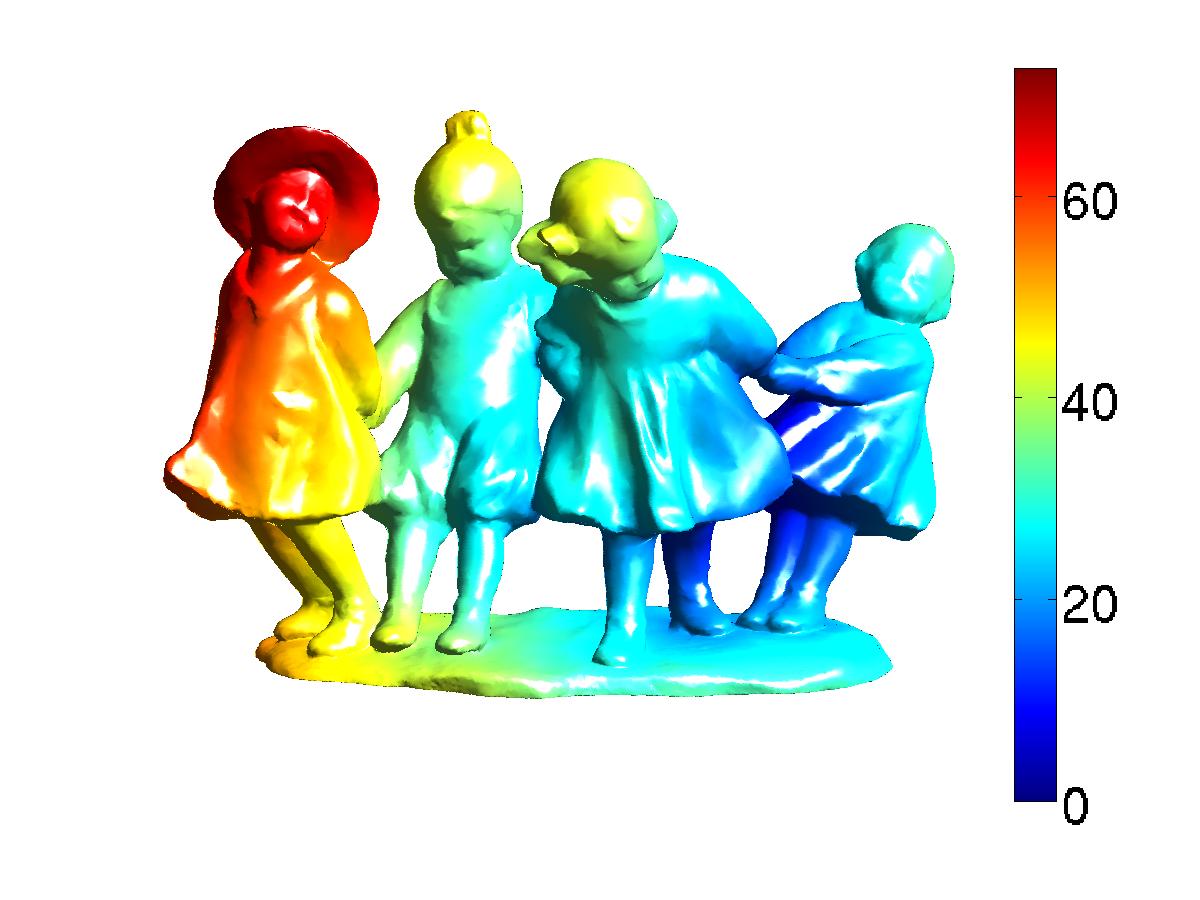} \\
  \includegraphics[width=.6\linewidth]{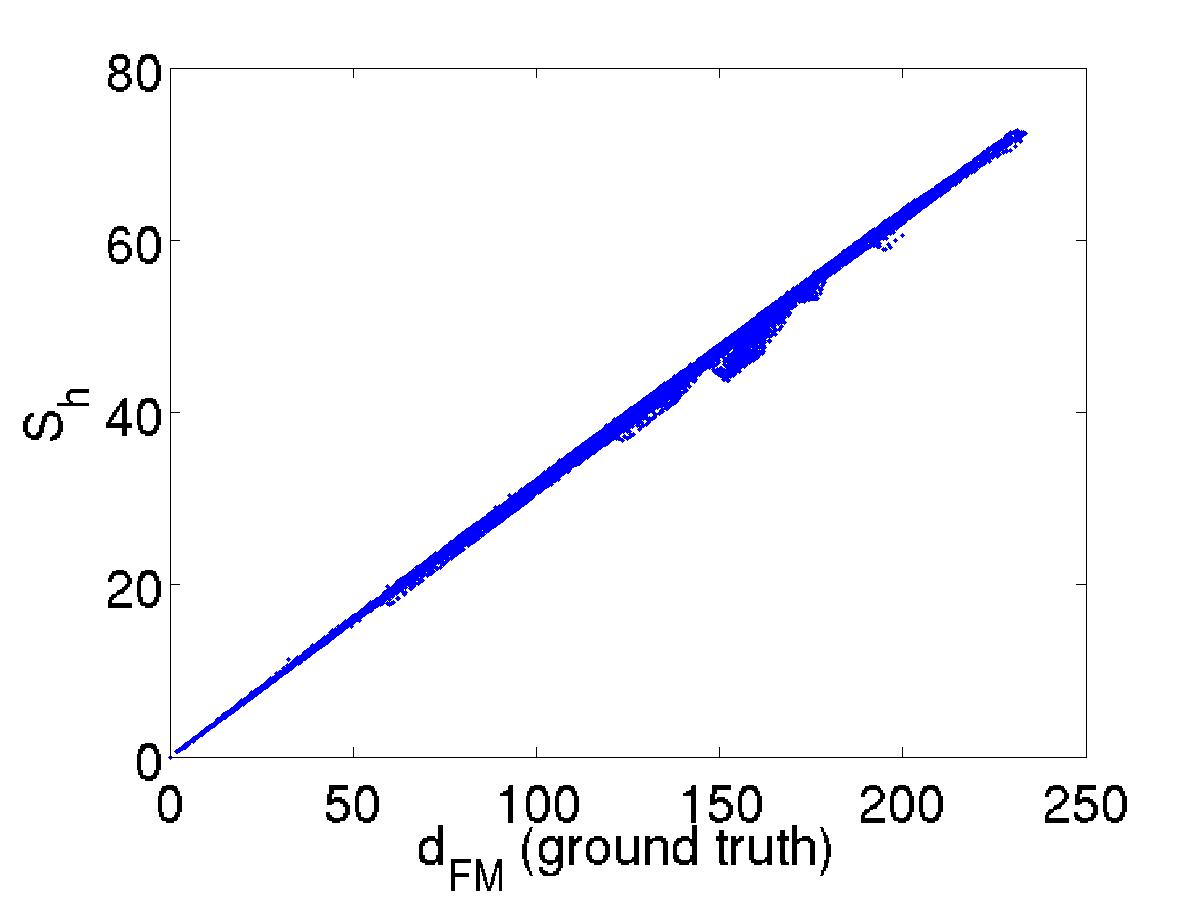}
  \caption[Geodesics Estimation on the Dancing Children mesh.]%
   {Geodesics Estimation on the Dancing Children mesh.
    Top Left: Fast Marching Estimate $d_{FM}(x)$ on the mesh; $\cA=\set{x_1}$.
    Top Right: $S_h(x)$ on the mesh; $\cA = \set{x_1}$.
    Bottom: $S_h$ vs. Fast Marching estimates for $\cA = \set{x_1}$.}
  \label{fig:geomesh}
\end{figure}
\end{exmp}


\chapter{Synchrosqueezing\label{ch:ss}%
\chattr{This chapter is based on work in collaboration with
Hau-Tieng~Wu, Department of Mathematics, and Gaurav~Thakur, Program in
Applied and Computational Mathematics, Princeton University, as
submitted in \cite{Brevdo2011b}.}}

\section{Introduction}

In this chapter, we analyze the Synchrosqueezing transform, a
consistent and invertible time-frequency analysis tool that can
identify and extract oscillating components (of time-varying frequency
and amplitude) from regularly sampled time series.  We first describe
a fast algorithm implementing the transform.  Second, we show
Synchrosqueezing is robust to bounded perturbations
of the signal.  This stability property extends the applicability of
Synchrosqueezing to the analysis of nonuniformly sampled and noisy
time series, which are ubiquitous in engineering and the natural
sciences.  Numerical simulations show that Synchrosqueezing provides a
natural way to analyze and filter a variety of signals.  In Chapter
\ref{ch:ssapp}, we use Synchrosqueezing to analyze a variety of
data, including ECG signals and climate proxies.

The purpose of this chapter is twofold.
We first describe the Synchrosqueezing transform in detail and
highlight the subtleties of a new fast numerical implementation.
Second, we show both numerically and theoretically that
Synchrosqueezing is stable under bounded signal perturbations.  It is
therefore robust to noise and to errors incurred by preprocessing
using approximations, such as interpolation.
%

The chapter is organized as follows.  We first describing
Synchrosqueezing, and in \S\ref{sec:ssimp} we provide a fast new
implementation%
\footnote{The Synchrosqueezing Toolbox for MATLAB, and the codes
used to generate all of the figures in this chapter, are available
at \url{http://math.princeton.edu/~ebrevdo/synsq/}.}.
In \S\ref{sec:SStheory} we provide theoretical evidence that
Synchrosqueezing analysis and reconstruction are stable to bounded
perturbations.  In \S\ref{sec:wmisc}, we numerically compare
Synchrosqueezing to other common transforms, and provide
examples of its stability properties.  Conclusions and ideas for
future theoretical work are in \S\ref{sec:ssfuture}.

Comprehensive numerical examples and applications are deferred to
Chapter~\ref{ch:ssapp}.

\section{Prior Work}

Synchrosqueezing is a tool designed to extract and compare
oscillatory components of signals that arise in complex systems. It
provides a powerful method for analyzing signals with time-varying behavior
and can give insight into the structure of their constituent
components. Such signals $f(t)$ have the general form
\begin{equation}\label{eq:sig}
  f(t) \,=\, \sum_{k=1}^K f_k(t) + e(t),
\end{equation}
where each component $f_k(t) = A_k(t)\cos(\phi_k(t))$ is an
oscillating function, possibly with smoothly time-varying amplitude and
frequency, and $e(t)$ represents noise or observation
error.  The goal is to extract the amplitude factor $A_k(t)$ and the
Instantaneous Frequency (IF) $\phi'_k(t)$ for each $k$.

Signals of the form \eqref{eq:sig} arise naturally in engineering and
scientific applications, where it is often important to understand
their spectral properties. Many time-frequency (TF) transforms exist
for analyzing such signals, such as the Short Time Fourier Transform
(STFT), Wavelet Transform, and Wigner-Ville distribution
\cite{Flandrin1999}, but these methods can fail to capture key
short-range characteristics of the signals.  As we will see,
Synchrosqueezing deals well with such complex data.

Synchrosqueezing is a TF transform that is ostensibly
similar to the family of time-frequency reassignment (TFR) algorithms,
methods used in the estimation of IFs in signals of the form given in
\eqref{eq:sig}. TFR analysis originates from a study of the STFT,
which smears the energy of the superimposed IFs around their center
frequencies in the spectrogram. TFR analysis ``reassigns'' these
energies to sharpen the spectrogram \cite{Flandrin2002, Fulop2006}.
However, there are some significant differences between
Synchrosqueezing and most standard TFR techniques.

Synchrosqueezing was originally introduced in the context of audio
signal analysis \cite{Daubechies1996}.  In \cite{Daubechies2010}, it
was further analyzed theoretically as an alternative way to understand
the {\em Empirical Mode Decomposition} (EMD) algorithm
\cite{Huang1998}.  EMD has proved to be a
useful tool for analyzing and decomposing natural
signals. Like EMD, Synchrosqueezing can extract and
clearly delineate components with time varying spectrum.  Furthermore,
like EMD, and unlike most TFR techniques, it allows individual
reconstruction of these components.

\section{\label{sec:analysis}Synchrosqueezing: Analysis}
Synchrosqueezing is performed in three steps.  First, the Continuous
Wavelet Transform (CWT) $W_f(a,b)$ of $f(t)$ is calculated
\cite{Daubechies1992}.  Second, an initial estimate of the
FM-demodulated frequency, $\omega_f(a,b)$, is calculated on the
support of $W_f$.  Finally, this estimate is used to squeeze $W_f$ via
reassignment; we thus get the Synchrosqueezing representation
$T_f(\omega, b)$. Synchrosqueezing is invertible: we can calculate
$f$ from $T_f$.  Our ability to extract individual components
stems from filtering $f$ by keeping energies from
specific regions of the support of $T_f$ during reconstruction.

Note that Synchrosqueezing, as originally
proposed \cite{Daubechies1996}, estimates the FM-demodulated frequency
from the wavelet representation $W_f(a,b)$ before performing reassignment.
However, it can be adapted to work on ``on top of'' many invertible
transforms (e.g. the STFT \cite{Thakur2010}).  We focus on the
original wavelet version as described in \cite{Daubechies2010}.

We now detail each step of Synchrosqueezing, using the harmonic signal
$h(t) = A \cos(\Omega t)$ for motivation.  As a visual aid,
Fig.~\ref{fig:simple} shows each step on the signal
$h(t)$ with $A=1$ and $\Omega = 4 \pi$.  Note that
Figs.~\ref{fig:simple}(b,d) show that Synchrosqueezing is more
``precise'' than the CWT.
%

\begin{figure}[ht]
  \centering
  \includegraphics[width=\columnwidth]{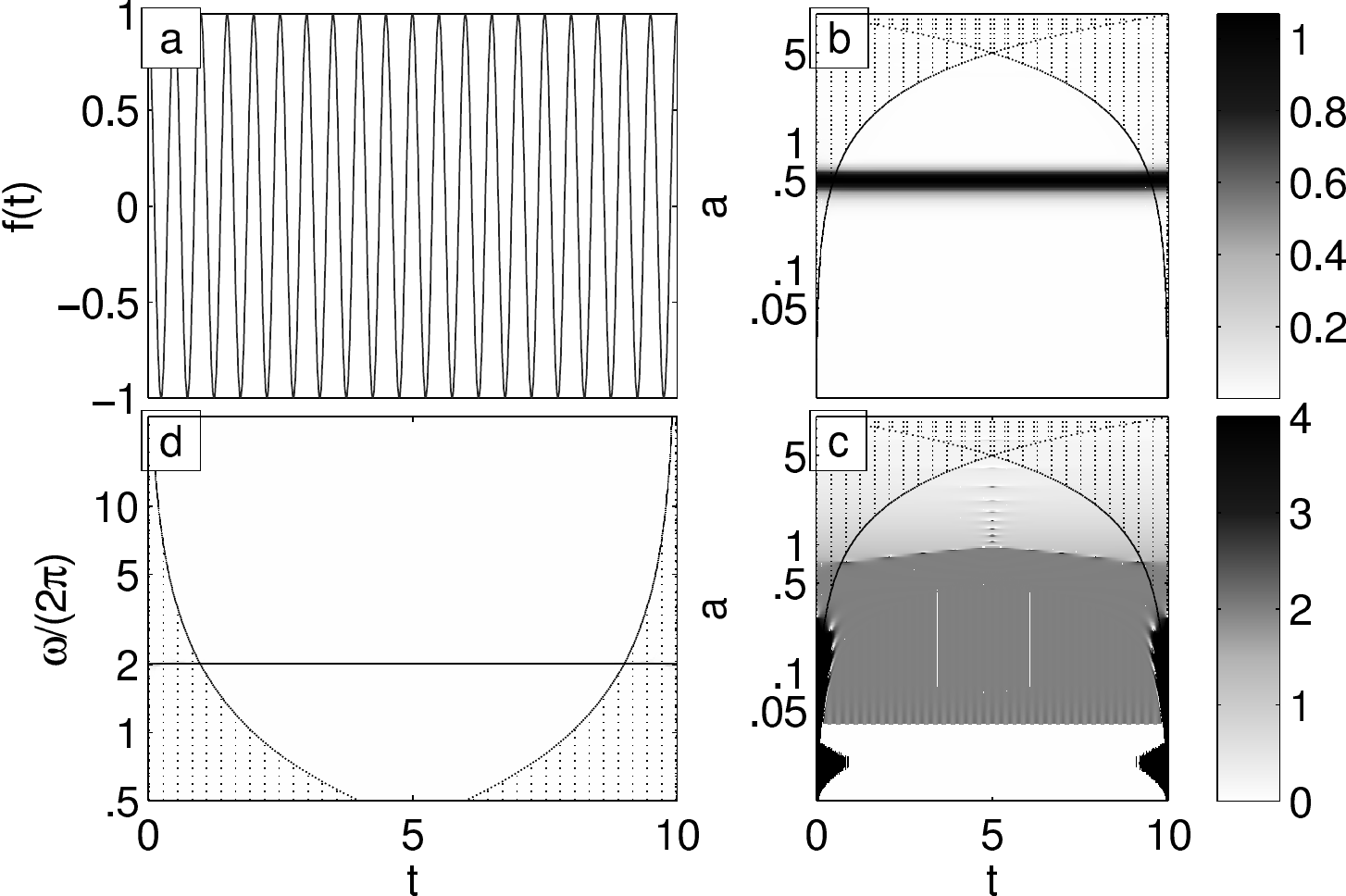}
  \caption[Synchrosqueezing example for $h(t) = \cos(4 \pi t)$.]%
   {\label{fig:simple}
    Synchrosqueezing example for $h(t) = \cos(4 \pi t)$.
    Clockwise:
    a) $h(t)$, sampled, $n=1024$ points.
    b) CWT of $h$, $|W_h|$.
    c) FM-demodulated frequency from $W_h$, ${\omega}_{h}$.
    d) Synchrosqueezing of $h$: $|T_h|$; line at
    $w_{\hat{l}} \approx 2$.
  }
\end{figure}

\subsection{CWT of ${f(t)}$}
For a given mother wavelet $\psi$, the CWT of
$f$ is given by $W_f(a,b) = \int_{-\infty}^\infty f(t) a^{-1/2}
\overline{\psi \left(\frac{t-b}{a} \right)} dt,$ where $a$ is the
scale and $b$ is the time offset.
We assume that $\psi$ has fast decay, and that its Fourier
transform $\wh{\psi}(\xi) = (2\pi)^{-1/2} \int_{-\infty}^\infty
\psi(t) e^{-i \xi t} dt$ is approximately zero in the negative frequencies
\footnote{More details about the Fourier transform, and analysis
  on intervals, are available in App.~\ref{app:fourier}}:
$\wh{\psi}(\xi) \approx 0$ for $\xi < 0$, and is concentrated around
some positive frequency $\xi = \omega_0$ \cite{Daubechies2010}. Many
wavelets have these properties (several examples and
compared in \S\ref{sec:wcmp}).  For $h(t)$, the harmonic signal above,
upon applying our assumptions we get $W_{h}(a,b) = \frac{1}{2 \sqrt{2
\pi}} A a^{1/2} \overline{\wh{\psi}(a \Omega)} e^{i b \Omega}$.

\subsection{Calculate the FM-demodulated frequency ${\omega(a,b)}$}
The wavelet representation of the harmonic signal $h(t)$
(with frequency $\Omega$) will have its energy spread out in the time-scale
plane around the line $a = \omega_0/\Omega$, and this frequency will
be encoded in the phase \cite{Daubechies1996, Daubechies2010}.
In those regions where $\abs{W_h} > 0$ we would like to remove the
effect of the Wavelet on this frequency.   We perform
a type of FM demodulation by taking derivatives: $ (W_{h}(a,b))^{-1}
\partial_b W_{h}(a,b) = i \Omega $.  This simple model
leads to an estimate of the frequency in the time-scale plane:
\beq
\omega_{f}(a,b)=
\label{eq:omegax0}
\begin{cases} \frac{-i\partial_{b}W_{f}(a,b)}{W_{f}(a,b)} & |W_{f}(a,b)|>0\\
\infty & |W_{f}(a,b)|=0
\end{cases}.
\eeq
\subsection{Squeezing in the time-frequency plane: ${T_f(\omega,b)}$}
The final step of Synchrosqueezing is reassigning energy
in the time-scale plane to the TF plane according to the frequency map
$(a,b) \to (\omega(a,b), b)$.  Reassignment follows from the inversion
property of the CWT: when $f(t)$ is real,
\beq
\label{eq:Winv}
f(b) = 2 \cR_\psi^{-1} \Re{\int_0^\infty W_f(a,b) a^{-3/2} da},
\eeq
where $\cR_\psi = \sqrt{2 \pi} \int_0^\infty \xi^{-1}
\overline{\wh{\psi}(\xi)} d\xi$ is a normalizing constant.

We first break up the integrand in \eqref{eq:Winv} according to the
FM-demodulated frequency estimate $\omega_f$.  Define frequency divisions
$\set{w_l}_{l=0}^\infty$ s.t. $w_0 > 0$ and $w_{l+1} > w_l$ for all
$l$.  Further, let the frequency bin $\cW_l$ be the set of
points $w' \in \bbC$ closer to $w_l$ than to any other $w_l'$.
We define the Discrete-Frequency Wavelet Synchrosqueezing transform of
$f$
as:
\beq
\label{eq:Tfdef}
T_f(w_l, b) = \int_{\set{a : \omega_f(a,b) \in \cW_l}} W_f(a,b) a^{-3/2} da.
\eeq
In other words, $T_f(w_l,b)$ is the ``volume'' of the
frequency preimage set
$\cW_l^{-1}(b) = \set{a : \omega_f(a,b) \in \cW_l}$ under
the signed measure $\mu_{f,b}(a) = W_f(a,b) a^{-3/2} da$.

This definition has several favorable properties.  First, it allows
us to reconstruct $f$ from $T_f$:
\beq
\label{eq:Tfrecon}
f(b) = 2 \cR_\psi^{-1} \Re{\sum_l T_f(w_l, b)}.
\eeq
Second, for the
harmonic signal $h(t)$, with $\omega_{h}(a,b) = \Omega$, there
will be a single $\hat{l}$ such that $w_{\hat{l}}$ is closest to
$\omega_{h}(a,b)$.  From \eqref{eq:Winv}, we have $h(b) =
2\Re{\cR_\psi^{-1} T_{h}(w_{\hat{l}},b)}$.  Further, the
magnitude of $T_h$ is proportional to that of $h(t)$:
$\abs{T_{h}(w_{\hat{l}},b)} = \frac{\abs{A}}{2 \pi} \abs{\cR_\psi}$.

More generally, for a wide class of signals with slowly varying
$A_k(t)$ and well separated ${\phi_k'}(t)$, given a
sufficiently fine division of the frequency bins $\set{w_l}$, each of
the $K$ components can be well concentrated into its own ``curve'' in the
TF plane (see Thm. \ref{SSThm} below).
This allows us to analyze such signals: by looking at $\abs{T_f(w,b)}$
to identify and extract the curves, and to reconstruct
their associated components.

\section{\label{sec:ssimp}A Fast Implementation}

In practice, we observe the vector $\tf \in \bbR^n$, $n = 2^{L+1}$,
where $L$ is a nonnegative integer.  Its elements, $\tf_m,
m=0,\ldots,n-1$, correspond to a uniform discretization of $f(t)$ taken at
the time points $t_m = t_0 + m \Delta t$.  To prevent boundary
effects, we pad $\tf$ on both sides (using, e.g., reflecting boundary
conditions).

We now describe a fast numerical implementation of Synchrosqueezing.
The speed of our algorithm lies in two key steps.  First, we
calculate the Discrete Wavelet Transform (DWT) of the vector $\tf$ using
the Fast Fourier Transform (FFT).  Second, we discretize the squeezing
operator $T$ in a way that lends itself to a fast numerical
implementation.

\subsection{DWT of sampled signal $\tf$}
The DWT samples the CWT $W_f$ at the locations $(a_j,t_m)$, where $a_j
= 2^{j/n_v} \Delta t$, $j=1,\ldots,L n_v$, and the number of voices
$n_v$ is a user-defined ``voice number'' parameter
\cite{Goupillaud1984} (we have found that $n_v = 32$ works well).  The
DWT of $\tf$ can be calculated in $O(n_v n \log_2^2 n)$ operations
using the FFT.  We outline the steps below.

First note that $W_f(a,b) = \left[ a^{-1/2} \overline{\psi(-t/a)} *
  f(t) \right](b)$, where $*$ denotes convolution over $t$.
In the frequency domain, this relationship becomes:
$\wh{W}_f(a,\xi) = a^{1/2} \wh{f}(\xi) \wh{\psi}(a \xi)$.
We use this to calculate the DWT, $\tW_\tf(a_j,t_m)$.  Let $\cF_n$
($\cF_n^{-1}$) be the standard (inverse) circular Discrete Fourier
Transform.  Then
\beq
\label{eq:Wxdisc}
\tW_\tf(a_j, \cdot) = \cF_n^{-1} \left( (\cF_n \tf) \odot \wh{\psi}_j \right).
\eeq
Here $\odot$ denotes elementwise
multiplication and $\wh{\psi}_j$ is an $n$-length vector with
$(\wh{\psi}_j)_m = a_j^{1/2} \wh{\psi}(a_j \xi_m)$; $\xi_m$
are samples in the unit frequency interval: $\xi_m = 2 \pi m / n$,
$m=0,\ldots, n-1$.



\subsection{A Stable Estimate of $\omega_f$: $\wt{\omega}_\tf$}

We first require a slight modification of the FM-demodulated frequency
estimate 
\eqref{eq:omegax0},
\beq
\label{eq:omegax}
\omega_f(a,b) = \Im{(W_f(a,b))^{-1} \partial_b W_f(a,b)}.
\eeq
This definition is equivalent to \eqref{eq:omegax0} when
Synchrosqueezing is performed via \eqref{eq:Tfdef}, and simplifies the
algorithm.

In practice, signals have noise and other artifacts due to,
e.g., sampling errors, and the phase of $W_f$ is unstable when
$\abs{W_f} \approx 0$.  As such the user should choose some
$\gamma > 0$ (we often use $\gamma \approx 10^{-8}$) as a hard
threshold on $\abs{W_f}$.  We define the numerical support of
$\tW_\tf$, on which $\omega_f$ can be estimated:
\begin{center}
$ \wt{\cS}^\gamma_\tf(m) = \set{j : \abs{\tW_\tf(a_j,t_m)} >
  \gamma}$,
for $m = 0,\ldots,n-1$.
\end{center}

The estimate of $\omega_f$, $\wt{\omega}_\tf$, can be calculated
by taking differences of $\tW_\tf$ with respect to $m$ before
applying \eqref{eq:omegax}, but we provide a more direct way.
Let 
Using the property $\wh{\partial_b W_f}(a,\xi) = i \xi
\wh{W_f}(a,\xi)$, we estimate the FM-demodulated frequency, for
$j \in \wt{\cS}^\gamma_\tf(m)$, as
\begin{center}
$ \wt{\omega}_\tf(a_j,t_m) =
  \Im{\left(\tW_\tf(a_j,t_m)\right)^{-1} \partial_b \tW_\tf(a_j,t_m) },
$
\end{center}
with the time derivative of $W_f$ estimated via (e.g., \cite{Tadmor1986}): 
\begin{center}
$ \partial_b \tW_\tf(a_j,\cdot) = 
   \cF^{-1}_n \left( (\cF_n \tf) \odot \wh{\partial \psi}_j \right),
$
\end{center}
where
$(\wh{\partial \psi}_j)_m = a_j^{1/2} i \xi_m \wh{\psi}(a_j \xi_m)/\Delta
  t$ for $m=0,\ldots,n-1$.

Finally, we normalize $\wt{\omega}$ by $2 \pi$ so that the dominant
frequency estimate is $\alpha$ when $f(t) = \cos(2 \pi \alpha t)$.

\subsection{Fast estimation of ${T_f}$ from ${\tW_\tf}$ and ${\wt{\omega}_\tf}$}
The representation $\tW_\tf$ is given with respect to $n_a = L n_v$
log-scale samples of the scale $a$, and this leads to several
important considerations when estimating $T_f$ via \eqref{eq:Winv}
and \eqref{eq:Tfdef}.  First, due to lower resolutions in coarser
scales, we expect to get lower resolutions in the lower frequencies.
We thus divide the frequency domain into $n_a$ components on a
log scale.  Second, sums with respect to $a$ on a
log scale, $a(z) = 2^{z/n_v}$ with $da(z) = a \frac{\log
  2}{n_v} dz$, lead to the modified integrand
${W_{f}(a, b) a^{-1/2} \frac{\log 2}{n_v} dz}$ in \eqref{eq:Tfdef}.

To choose the frequency divisions, note that the discretization
period $\Delta t$ limits the maximum frequency $\ol{w}$ that can be
estimated.  The Nyquist theorem suggests that this frequency is
$\ol{w} = w_{n_a-1} = \frac{1}{2 \Delta t}$.  Further, if we assume
periodicity, the maximum period of an input signal is $n \Delta t$;
thus the minimum frequency is $\ul{w} = w_0 = \frac{1}{n \Delta
  t}$.  Combining these limits with the log scaling of
the $w$'s we get the divisions: $w_l = 2^{l \Delta w} \ul{w}$, $l =
0, \ldots, n_a-1$, where $\Delta w = \frac{1}{n_a-1} \log_2 (n/2)$.
Note, the voice number $n_v$ has a big
effect on the frequency resolution.

We can now calculate the Synchrosqueezed estimate $\tT_\tf$.  Our
fast implementation of \eqref{eq:Tfdef} finds the associated $\cW_l$
for each $(a_j,t_m)$ and adds it to the
correct sum, instead of performing a search over all
scales for each $l$.  This is possible because
$\wt{\omega}_\tf(a_j, t_m)$ only ever lands in one frequency bin.
We provide pseudocode for this $O(n_a)$ implementation in
Alg. \ref{alg:Tf}.

\begin{algorithm}[ht]
\caption{Fast calculation of $\tT_\tf$ for fixed $m$}
\label{alg:Tf}
  \begin{algorithmic}
    \small
    \FOR[Initialize $\tT$ for this $m$]{$l = 0$ to $n_a-1$}
    \STATE $\tT_{\tf}(w_l, t_m) \leftarrow 0$
    \ENDFOR
    \FORALL[Calculate \eqref{eq:Tfdef}]{$j \in
      \wt{\cS}^\gamma_{\tf}(m)$}
    \STATE \COMMENT{Find frequency bin via $w_l = 2^{l \Delta w}
      \ul{w}$, and $\wt{\omega}_{\tf} \in \cW_l$}
    \STATE $l \leftarrow
    \textrm{ROUND} \left[ \frac{1}{\Delta w} \log_2 \left(
      \frac{\wt{\omega}_{\tf}(a_j,b_m)}{\ul{w}} \right) \right]$
    \IF{$l \in [0, n_a-1]$} 
    \STATE \COMMENT{Add normalized term to appropriate integral;
      $\Delta z = 1$}
    \STATE $\tT_{\tf}(w_l,t_m) \leftarrow \tT_{\tf}(w_l,t_m) + \frac{\log 2}{n_v}
    \tW_{\tf}(a_j,t_m) a^{-1/2}_j $
    \ENDIF
    \ENDFOR
  \end{algorithmic}
\end{algorithm}

\subsection{IF Curve Extraction and Filtered Reconstruction}

A variety of signals, especially sums of quasi-harmonic signals with
well-separated IFs, will have a frequency image
$\abs{T_f(w,b)}$ composed of several curves in the $(w,b)$ plane.
The image of the $k$th curve corresponds to both the IF
${\phi_k'}(b)$, and the entire component $A_{k}(b) \cos(\phi_{k}(b))$.

To extract a discretized curve $c^*$ we maximize a functional of the
energy of the curve that penalizes variation%
\footnote{The implementation of this step in the Synchrosqueezing
  Toolbox is a heuristic (greedy) approach that maximizes the
  objective at each time index, assuming the objective has been
  maximized for all previous time indices.}:
\beq
\label{eq:Cextract}
 \max_{c \in \set{w_l}^n} \sum_{m=0}^{n-1}
  E_\tf(w_{c_m}, t_m) - 
  \lambda \sum_{m=1}^{n-1} \Delta w |c_m - c_{m-1}|^2,
\eeq
where $E_\tf(w_l, t_m) = \log (|\tT_\tf(w_l,t_m)|^2)$ is the
normalized energy of $\tT$.  The user-defined parameter $\lambda > 0$
determines the ``smoothness'' of the resulting curve estimate 
(we use $\lambda = 10^5$).  Its associated component
$\tf^*$ can be reconstructed via \eqref{eq:Tfrecon}, by restricting the
sum over $l$, at each $t_m$, to the neighborhood $\cN_m =
[{c^*_m-n_w},{c^*_m+n_w}]$ (we use the window size $n_w =
n_v/2$).  The next curve is extracted by setting $\tT_\tf(\cN_m,t_m)=0$
for all $m$ and repeating the process above.

\section{Consistency and Stability of Synchrosqueezing}
\label{sec:SStheory}

We first review the main theorem on wavelet-based
Synchrosqueezing, as developed in \cite{Daubechies2010} (Thm. \ref{SSThm}).
Then we show that the components extracted via Synchrosqueezing
are stable to bounded perturbations such as noise and
discretization error.

We specify a class of functions on which these results hold.  In
practice, Synchrosqueezing works on a wider function class.

\begin{defn}[Sums of Intrinsic Mode Type (IMT) Functions]
The space $\mathcal{A}_{\epsilon,d}$ of superpositions
of IMT functions, with smoothness $\epsilon$ and separation $d$,
consists of functions having the form
$f(t)=\sum_{k=1}^{K}f_{k}(t)$ with $f_{k}(t) = A_{k}(t)
e^{i\phi_{k}(t)}$.  For $t \in \bbR$ the IF components
$\phi'_{k}$ are ordered and relatively well separated (high
frequency components are spaced further apart than low frequency
ones):
\begin{align*}
\forall t \quad \phi_{k}'(t)&>\phi_{k-1}'(t), \quad \text{and} \\
\inf_{t}\phi'_{k}(t) - \sup_{t}\phi'_{k-1}(t) 
  &\geq d(\inf_{t}\phi'_{k}(t)+\sup_{t}\phi'_{k-1}(t)).
\end{align*}
Functions in the class $\mathcal{A}_{\epsilon,d}$ are
essentially composed of components with time-varying
amplitudes. Furthermore, the amplitudes vary slowly, and the
individual IFs are sufficiently smooth.  For each $k$,
\begin{align*}
& A_k \in L^{\infty}\cap C^{1}, \quad \phi_k \in C^{2}, 
  \quad \phi'_k,\phi''_k \in L^{\infty}, \quad \phi'_k(t)>0, \\
& \| A'_k \| _{L^{\infty}} \leq \epsilon \| \phi'_k \|_{L^{\infty}}, 
\quad \text{and} \quad \| \phi''_k \|_{L^{\infty}} \leq \epsilon \| \phi'_k \|_{L^{\infty}}.
\end{align*}
\end{defn}

For the theoretical analysis, we also define the Continuous
Wavelet Synchrosqueezing transform, a smooth version of $T_f$.

\begin{defn}[Continuous Wavelet Synchrosqueezing]
Let $h\in C_{0}^{\infty}$ be a smooth function such that
$\norm{h}_{L^1}=1$.  The Continuous Wavelet
Synchrosqueezing transform of function $f$, with accuracy $\delta$ and
thresholds $\epsilon$ and $M$, is defined by
\beq
S_{f,\epsilon}^{\delta,M}(b,\eta) = 
\int_{\Gamma_{f,\epsilon}^{\delta, M}}
   \frac{W_f(a,b) }{a^{3/2}} \delta^{-1} h\left(\frac{\abs{\eta-\omega_{f}(a,b)}}{\delta}\right) da
\label{SS}
\eeq
where $\Gamma_{f,\epsilon}^{M} = \set{(a,b) : a \in
[M^{-1},M],|W_{f}(a,b)|>\epsilon}$.  We also denote
$S_{f,\epsilon}^{\delta} = S_{f,\epsilon}^{\delta,\infty}$ and
$\Gamma_{f,\epsilon}^{\infty} = \Gamma_{f,\epsilon}$, where the
condition $a \in [M^{-1},M]$ is replaced by $a>0$.
\end{defn}

The continuous ($S_f^{\delta}$) and discrete frequency
($T_f$) Synchrosqueezing transforms are equivalent for small
$\delta$ and large $n_v$, respectively.  The frequency term $\eta$ in
\eqref{SS} is equivalent to $w_l$ in \eqref{eq:Tfdef}, and the
integrand term $\frac{1}{\delta}h(\frac{\cdot}{\delta})$ in \eqref{SS}
takes the place of constraining the frequencies to $\cW_l$ in
\eqref{eq:Tfdef}. Signal reconstruction and filtering analogues via
the continuous Synchrosqueezing transform thus reduce to
integrating $S_{f,\epsilon}^\delta$ over $\eta>0$, similar
to summing over $l$ in \eqref{eq:Tfrecon}.

The following consistency theorem was proved in \cite{Daubechies2010}:

\begin{thm}[Synchrosqueezing Consistency]
\label{SSThm}
Suppose $f \in \cA_{\epsilon,d}$. Pick a wavelet $\psi\in C^{1}$
such that its Fourier transform $\wh{\psi}(\xi)$
is supported in $[1-\Delta,1+\Delta]$ for some $\Delta<\frac{d}{1+d}$.
Then for sufficiently small $\epsilon$, Synchrosqueezing can
identify and extract the components $\set{f_k}$ from $f$:

\textbf{1}. The Synchrosqueezing plot $|S^\delta_f|$ is concentrated
around the IF curves $\{\phi_{k}'\}$.
For each $k$, define the ``scale band''
$Z_k = \{(a,b):|a\phi_{k}'(b)-1|<\Delta\}$.
For sufficiently small $\epsilon$, the FM-demodulated frequency
estimate $\omega_f$ is 
accurate inside $Z_k$ where $W_f$ is sufficiently
large ($|W_{f}(a,b)|>{\epsilon^{1/3}}$):

\begin{center}
$\abs{\omega_f(a,b) - \phi_k'(b)} \leq {\epsilon^{1/3}}$.
\end{center}
\noindent Outside the scale bands $\set{Z_k}$, $W_f$ is small:
\begin{center}
$|W_{f}(a,b)|\leq{\epsilon^{1/3}}$.
\end{center}

\textbf{2}.  Each component $f_k$ may be reconstructed
by integrating $S^\delta_f$ over a neighborhood around $\phi_{k}'$.
Choose the Wavelet threshold $\epsilon^{1/3}$ and
let $N_k(b) = \{\eta : |\eta-\phi'_{k}(b)|\leq{\epsilon^{1/3}}\}$.
For sufficiently small $\epsilon$, there is a constant $C_{1}$
such that for all $b\in\mathbb{R}$,
$$
\left|\lim \limits_{\delta\rightarrow0}\left(\mathcal{R}_{\psi}^{-1}
\int_{N_k(b)}
S_{f,\epsilon^{1/3}}^{\delta}(b,\eta)d\eta\right)-f_k(b)\right|\leq
C_{1} \epsilon^{1/3}.
$$
\end{thm}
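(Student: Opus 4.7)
The plan is to prove the two parts in sequence, with Part~1 supplying the localization and frequency-accuracy estimates needed to justify Part~2. I would begin by exploiting linearity of the CWT to write $W_f(a,b)=\sum_k W_{f_k}(a,b)$ and then locally linearizing each component. For a single IMT term $f_k(t)=A_k(t)e^{i\phi_k(t)}$, a first-order Taylor expansion of $A_k$ and $\phi_k$ about $t=b$ inside the wavelet integral produces an ``ideal'' contribution $W_{f_k}^0(a,b)=A_k(b)\,a^{1/2}e^{i\phi_k(b)}\overline{\wh\psi(a\phi_k'(b))}$, together with a remainder controlled by $\norm{A_k'}_\infty$ and $\norm{\phi_k''}_\infty$, both of order $\epsilon\norm{\phi_k'}_\infty$ by hypothesis (the fast decay of $\psi$ makes the localization of these bounds legitimate). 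Because $\wh\psi$ is supported in $[1-\Delta,1+\Delta]$, $W_{f_k}^0$ is supported precisely in $Z_k$, and the separation hypothesis $\Delta<d/(1+d)$ combined with the ordering of the $\phi_k'$ forces $\set{Z_k}$ to be pairwise disjoint. Consequently (i) outside $\cup_k Z_k$ only remainder terms survive so $\abs{W_f}=O(\epsilon)\leq\epsilon^{1/3}$ for $\epsilon$ small, and (ii) inside $Z_k$ all components $f_j$ with $j\neq k$ contribute only $O(\epsilon)$.

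Next I would estimate the FM-demodulated frequency. Differentiating the ideal part gives $\partial_b W_{f_k}^0(a,b)=i\phi_k'(b)\,W_{f_k}^0(a,b)$ plus a remainder of the same order $\epsilon$. For $(a,b)\in Z_k$ with $\abs{W_f(a,b)}>\epsilon^{1/3}$, substituting into
\begin{equation*}
\omega_f(a,b)-\phi_k'(b)=\frac{-i\partial_b W_f(a,b)-\phi_k'(b)\,W_f(a,b)}{W_f(a,b)}=\frac{O(\epsilon)}{\abs{W_f(a,b)}}=O(\epsilon^{2/3})
\end{equation*}
yields the bound $\leq\epsilon^{1/3}$ for small $\epsilon$. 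This calculation also explains the cube-root threshold: it balances an $O(\epsilon)$ numerator error against a denominator that can be as small as $\epsilon^{1/3}$, and any threshold taken substantially smaller would fail to control the quotient while any taken substantially larger would remove too much signal energy to reconstruct from.

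For Part~2, I would combine the above with the wavelet inversion formula $f_k(b)=2\mathcal{R}_\psi^{-1}\Re\int_0^\infty W_{f_k}(a,b)a^{-3/2}\,da$. The mollifier $\delta^{-1}h(\abs{\eta-\omega_f}/\delta)$ in \eqref{SS} converges to a Dirac mass as $\delta\downto 0$, so $\lim_{\delta\to 0}\int_{N_k(b)}S_{f,\epsilon^{1/3}}^\delta(b,\eta)\,d\eta$ becomes a change-of-variables integral of $W_f(a,b)a^{-3/2}$ over the set $\set{a:\omega_f(a,b)\in N_k(b),\ \abs{W_f(a,b)}>\epsilon^{1/3}}$. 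By Part~1 this set agrees, up to an $O(\epsilon^{1/3})$-measure correction, with $\set{a:(a,b)\in Z_k}$, and on that set $W_f$ agrees with $W_{f_k}$ up to $O(\epsilon)$; replacing $W_f$ by $W_{f_k}$ and extending back to the full integral $\int_0^\infty W_{f_k}a^{-3/2}\,da$ introduces only $O(\epsilon^{1/3})$ total error (using the decay of $\wh\psi$ outside $[1-\Delta,1+\Delta]$ to dispose of the tails where $\abs{W_{f_k}^0}$ is negligible). The inversion formula then delivers the claimed $\epsilon^{1/3}$ bound.

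The hardest steps, in my view, are making the heuristic Taylor remainder in the first step into a uniform-in-$(a,b)$ estimate (since $\norm{A_k}_\infty$ and $\norm{\phi_k'}_\infty$ themselves are not small, the extra factor of $a^{1/2}$ from the wavelet normalization must be tracked carefully), and handling the ``boundary'' of $\Gamma_{f,\epsilon^{1/3}}\cap Z_k$ where $\abs{W_f}\approx\epsilon^{1/3}$, since the $\omega_f$ estimate degrades there. I expect the latter to require either an auxiliary cutoff (e.g. restricting to $\Gamma_{f,\epsilon^{1/3}}^{M}$ as in the definition of $S_{f,\epsilon}^{\delta,M}$ and then taking $M\to\infty$), or an integrability argument showing that the measure of this marginal region is itself $O(\epsilon^{1/3})$.
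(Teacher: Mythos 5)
The paper does not prove this theorem; it recalls the statement with the attribution that it ``was proved in \cite{Daubechies2010}.'' Your outline is nonetheless a faithful sketch of the proof in that reference: linearity of the CWT reduces matters to a single IMT component; a Taylor expansion about $t=b$ yields the model term proportional to $a^{1/2}A_k(b)e^{i\phi_k(b)}\overline{\wh{\psi}(a\phi_k'(b))}$ with an $O(\epsilon)$ remainder controlled by $\|A_k'\|_{L^\infty}$, $\|\phi_k''\|_{L^\infty} \lesssim \epsilon\|\phi_k'\|_{L^\infty}$; the support condition on $\wh{\psi}$ together with the separation hypothesis $\Delta < d/(1+d)$ forces the scale bands $Z_k$ to be disjoint; and the frequency-error bound $|\omega_f-\phi_k'| = O(\epsilon)/|W_f| = O(\epsilon^{2/3})$ follows once $|W_f|>\epsilon^{1/3}$. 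Your reading of the cube-root exponent as balancing the $O(\epsilon)$ Taylor remainder in the numerator against the smallest admissible denominator is exactly the right intuition, and explains why $1/3$ is natural rather than arbitrary.

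The two difficulties you flag are also the genuinely delicate steps. The uniform-in-$(a,b)$ remainder estimate is obtained by integrating the second-order Taylor error against $|\psi|$ and using the rapid decay of $\psi$ together with the fact that $a$ remains in a bounded interval inside each $Z_k$ (so the $a^{1/2}$ weight causes no trouble). The boundary region $|W_f|\approx\epsilon^{1/3}$ feeds directly into the constant $C_1$ rather than requiring an auxiliary cutoff: on that set $|W_f|\,a^{-3/2}$ is itself $O(\epsilon^{1/3})$ and the $a$-measure of each band is bounded, so its contribution to the reconstruction integral is absorbed rather than excised. The finite-range variant $S_{f,\epsilon}^{\delta,M}$ is introduced in the thesis primarily for the stability result (Thm.~\ref{SSStableThm}), where the range restriction $\phi_k'\in[M^{-1},M]$ becomes load-bearing.
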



Note that, as expected, Thm. \ref{SSThm} implies that components $f_k$
with low amplitude may be difficult to identify and extract (as their
Wavelet magnitudes may fall below $\epsilon^{1/3}$).

Thm. \ref{SSThm} also applies to discrete Synchrosqueezing,
with the following modifications: letting $\delta \to 0$ is equivalent
to letting $n_v \to \infty$.  For reconstruction via
\eqref{eq:Tfrecon}, the integral over $\eta$ should be replaced by a sum
over $l$ in the discrete neighborhood $N_k(b) = \set{l : \abs{w_l -
\phi'_k(b)} \leq \epsilon^{1/3}}$.  Finally, the threshold
$\epsilon^{1/3}$ in Thm. \ref{SSThm} part 2 can be applied numerically
by letting $\gamma > \epsilon^{1/3}$ when calculating the discrete support
$\cS_{\tf}^\gamma$.

We prove the following theorem in \cite{Brevdo2011b}:

\begin{thm}[Synchrosqueezing stability to small perturbations]
\label{SSStableThm}
The statements in Thm. \ref{SSThm} essentially still hold
if $f$ is corrupted by a small error $e$, especially for
mid-range IFs.

Let $f\in\mathcal{A}_{\epsilon,d}$ and suppose we have a corresponding
$\epsilon$, $h$, $\psi$, $\Delta$, and $Z_k$ as given in Thm. \ref{SSThm}.
Furthermore, assume that $g=f+e$, where $e$ is a bounded perturbation
such that $\norm{e}_{L^\infty} \leq C_\psi \epsilon$, where
$C_\psi^{-1}=\max(\|\psi\|_{L^{1}},\|\psi'\|_{L^{1}})$.
For each $k$ define the ``maximal frequency range'' $M_k \geq 1$ such that
$\phi'_k(t) \in [M^{-1},M]$ for all $t$.  A mid-range IF is
defined as having $M_k$ near $1$.

\textbf{1}. The Synchrosqueezing plot $|S_g^\delta|$ is
concentrated around the IF curves $\set{\phi_k'}$.
For sufficiently small $\epsilon$, the FM-demodulated frequency
estimate $\omega_g$ is accurate inside $Z_k$ where $W_g$ is sufficiently large
($|W_{g}(a,b)|>M_{k}^{1/2}\epsilon+{\epsilon^{1/3}}$):
\begin{center}
$|\omega_{g}(a,b)-\phi_{k}'(b)| \leq C_2 \epsilon^{1/3}$,
\end{center}
where $C_2 = O(M_k)$.
\noindent Outside the scale bands $\set{Z_{k}}$, $W_g$ is small:
\begin{center}
$|W_{g}(a,b)|\leq M_{k}^{1/2}\epsilon+{\epsilon^{1/3}}$.
\end{center}

\textbf{2}. Each component $f_k$ may be reconstructed with 
accuracy proportional to the noise magnitude and its maximal frequency
range by integrating $S_g^\delta$ over a neighborhood around $\phi_{k}'$.
Choose the wavelet threshold $M_k^{1/2}{\epsilon^{1/3}} + \epsilon$
and let $N'_k(b) = \set{\eta : |\eta-\phi'_k(b)| \leq C_2
 \epsilon^{1/3}}$, where (as before) $C_2 = O(M_k)$.
For sufficiently small $\epsilon$,
$$
\left|\lim_{\delta\rightarrow0}\left(\mathcal{R}_{\psi}^{-1}
\int \limits_{N'_k(b)}
S_{g,M_{k}^{1/2}\epsilon+\epsilon^{1/3}}^{\delta,M_{k}}(b,\eta)d\eta\right)
 - f_k(b)\right|\leq C_{3}\epsilon^{1/3},
$$
where $C_3 = O(M_k)$.
\end{thm}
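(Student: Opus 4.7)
The approach is to view $g$'s wavelet transform as a small additive perturbation of $f$'s, propagate that perturbation through the FM-demodulated frequency estimator $\omega$, and then invoke Thm.~\ref{SSThm} on $f$ itself. First I would establish the two basic wavelet perturbation bounds. Since $W_e(a,b) = \int e(t) a^{-1/2} \overline{\psi((t-b)/a)}\,dt$, a direct change of variables gives $|W_e(a,b)| \leq a^{1/2}\|\psi\|_{L^1}\|e\|_{L^\infty}$. Differentiating under the integral gives $\partial_b W_e = -a^{-3/2}\int e(t)\overline{\psi'((t-b)/a)}\,dt$ and hence $|\partial_b W_e(a,b)| \leq a^{-1/2}\|\psi'\|_{L^1}\|e\|_{L^\infty}$. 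Restricting to $a \in [M_k^{-1},M_k]$ and using the hypothesis $\|e\|_{L^\infty} \leq C_\psi \epsilon$ with $C_\psi^{-1} = \max(\|\psi\|_{L^1},\|\psi'\|_{L^1})$, both estimates collapse to at most $M_k^{1/2}\epsilon$. This already yields the second half of Part~1 outside the scale bands, because $|W_g| \leq |W_f| + |W_e| \leq \epsilon^{1/3} + M_k^{1/2}\epsilon$ there by Thm.~\ref{SSThm}.

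Next I would bound $\omega_g - \omega_f$ in the region where $|W_g| > M_k^{1/2}\epsilon + \epsilon^{1/3}$. Writing $W_g = W_f + W_e$ and expanding, one obtains the algebraic identity
\begin{equation*}
\omega_g(a,b) - \omega_f(a,b)
= \operatorname{Im}\!\left(\frac{W_f\,\partial_b W_e - W_e\,\partial_b W_f}{W_f\,W_g}\right),
\end{equation*}
valid whenever $|W_f|,|W_g|>0$. Inside $Z_k$ the hypothesis on $|W_g|$ forces $|W_f| \geq |W_g|-M_k^{1/2}\epsilon \geq \epsilon^{1/3}$, so Thm.~\ref{SSThm} applies and in particular $|\partial_b W_f| \lesssim M_k |W_f|$ since $|\omega_f|$ is controlled by $\phi'_k \in [M_k^{-1},M_k]$. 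Combining with the bounds on $|W_e|$, $|\partial_b W_e|$ gives $|\omega_g - \omega_f| = O(M_k \cdot M_k^{1/2}\epsilon / \epsilon^{1/3}) = O(M_k^{3/2}\epsilon^{2/3})$, which is $O(M_k)\epsilon^{1/3}$ for small $\epsilon$. A triangle inequality $|\omega_g - \phi_k'| \leq |\omega_g-\omega_f|+|\omega_f - \phi_k'|$ with the second term bounded by $\epsilon^{1/3}$ via Thm.~\ref{SSThm} then closes Part~1.

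For Part~2 the plan is to split the reconstruction error as
\begin{equation*}
\mathcal{R}_\psi^{-1}\!\!\int_{N_k'(b)}\!\! S_{g,M_k^{1/2}\epsilon+\epsilon^{1/3}}^{\delta,M_k}(b,\eta)\,d\eta - f_k(b)
= \Big(\mathcal{R}_\psi^{-1}\!\!\int_{N_k'(b)}\!\! S_f^\delta - f_k\Big) + \mathcal{R}_\psi^{-1}\!\!\int_{N_k'(b)}\!\!(S_g^\delta - S_f^\delta).
\end{equation*}
The first piece is handled by Thm.~\ref{SSThm}, yielding error $O(\epsilon^{1/3})$ (the enlarged neighborhood $N_k'$ only extends the integration region where $S_f^\delta$ is already negligible off-band). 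For the second piece I would observe that $S_g^\delta - S_f^\delta$ has two sources: the additive perturbation $W_e/a^{3/2}$ in the integrand, and the change of reassignment map from $\omega_f$ to $\omega_g$. The first contributes at most $\int_{M_k^{-1}}^{M_k} M_k^{1/2}\epsilon \cdot a^{-3/2}\,da \cdot |N_k'(b)| = O(M_k)\epsilon$, and the second is controlled by the Part~1 estimate $|\omega_g - \omega_f| = O(M_k)\epsilon^{1/3}$ together with the truncation $|W_g|>M_k^{1/2}\epsilon+\epsilon^{1/3}$.

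The main obstacle will be the second piece above: a clean bound on how much mass of the reassignment is shifted when $\omega_g$ differs from $\omega_f$ requires estimating $S_g^\delta$ and $S_f^\delta$ uniformly in $\delta$ and passing to the $\delta \to 0$ limit, which is delicate near the boundary of $\Gamma$. The reason for restricting to mid-range IFs (small $M_k$) is precisely that all constants in the above chain degrade polynomially in $M_k$; large $M_k$ amplifies both the wavelet-derivative bound and the ratio $|\partial_b W_f|/|W_f|$, giving the $C_2, C_3 = O(M_k)$ dependence in the statement.
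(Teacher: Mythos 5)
The paper does not actually contain a proof of this theorem; it states ``We prove the following theorem in \cite{Brevdo2011b}'' and defers to the companion paper. So a line-by-line comparison is impossible here, but the approach can still be assessed on its own merits.

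Your Part~1 is essentially complete and correct. The two change-of-variables bounds $|W_e| \leq a^{1/2}\|\psi\|_{L^1}\|e\|_{L^\infty}$ and $|\partial_b W_e| \leq a^{-1/2}\|\psi'\|_{L^1}\|e\|_{L^\infty}$, combined with the hypothesis $\|e\|_{L^\infty}\le C_\psi\epsilon$ and the scale restriction $a\in[M_k^{-1},M_k]$, give $|W_e|,|\partial_b W_e|\le M_k^{1/2}\epsilon$; the algebraic identity $\omega_g-\omega_f = \Im\bigl((W_f\,\partial_b W_e - W_e\,\partial_b W_f)/(W_f W_g)\bigr)$ is the right object; and bootstrapping $|W_f|\ge\epsilon^{1/3}$ and $|\partial_b W_f|\lesssim M_k|W_f|$ through Thm.~\ref{SSThm} closes the estimate at $O(M_k^{3/2}\epsilon^{2/3})\le C_2\,\epsilon^{1/3}$ with $C_2=O(M_k)$. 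One minor caveat you should spell out: the bound $|\partial_b W_f|\lesssim M_k|W_f|$ requires the \emph{complex} estimate $|(\partial_b W_f)/W_f - i\phi_k'|\le\epsilon^{1/3}$ from \cite{Daubechies2010}, not merely a bound on $\Im(\partial_b W_f/W_f)$; with the definition in \eqref{eq:omegax} (only the imaginary part) the real part of $(\partial_b W_f)/W_f$ is unconstrained, so cite the original, unprojected estimate.

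Part~2 has a genuine gap, and you flag it yourself. The decomposition into $\bigl(\mathcal{R}_\psi^{-1}\int_{N_k'} S_f^\delta - f_k\bigr) + \mathcal{R}_\psi^{-1}\int_{N_k'}(S_g^\delta-S_f^\delta)$ is not directly usable for two reasons. First, the objects do not match: the theorem's reconstruction uses $S^{\delta,M_k}_{g,\,M_k^{1/2}\epsilon+\epsilon^{1/3}}$ with threshold $M_k^{1/2}\epsilon+\epsilon^{1/3}$, a scale cutoff $[M_k^{-1},M_k]$, and the \emph{enlarged} band $N_k'$, while Thm.~\ref{SSThm}(2) controls $S^\delta_{f,\epsilon^{1/3}}$ with threshold $\epsilon^{1/3}$, no scale cutoff, and the band $N_k$; the first term in your split is therefore not what Thm.~\ref{SSThm} bounds. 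Second, and more fundamentally, controlling $S_g^\delta - S_f^\delta$ requires understanding how the preimage set $\{a:\omega_g(a,b)\in N_k'\}$ differs from $\{a:\omega_f(a,b)\in N_k'\}$, which you acknowledge but do not resolve. The clean way to close Part~2 is not to subtract $S_g-S_f$ at all, but to characterize the preimage directly using Part~1: inside $Z_k$ with $|W_g|>\theta:=M_k^{1/2}\epsilon+\epsilon^{1/3}$, Part~1 forces $\omega_g\in N_k'$; outside $Z_k$, $|W_g|\le\theta$, so the truncation kills those $a$; hence the reassignment preimage is exactly $\{a\in Z_k\cap[M_k^{-1},M_k]:|W_g|>\theta\}$, and (as $\delta\to0$) the reconstruction is $\mathcal{R}_\psi^{-1}\int_{\{a\in Z_k:|W_g|>\theta\}} W_g\,a^{-3/2}\,da$. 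Compare this to the analogous $W_f$-integral from Thm.~\ref{SSThm}(2). The discrepancies are then (i) replacing $W_g$ by $W_f$ in the integrand, costing $\int_{Z_k}|W_e|\,a^{-3/2}\,da=O(M_k)\epsilon$, and (ii) the symmetric difference $\{|W_g|>\theta\}\triangle\{|W_f|>\epsilon^{1/3}\}$, on which $|W_g|\lesssim M_k^{1/2}\epsilon+\epsilon^{1/3}$, contributing another $O(M_k)\epsilon^{1/3}$. Both are subsumed in $C_3=O(M_k)$, and the delicate $\delta$-limit is avoided because the preimage set is identified before taking the limit.
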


Thm. \ref{SSStableThm} has two important implications.  First,
components with mid-range IF tend to have the best estimates and lowest
reconstruction error under bounded noise.  Second,
to best identify signal component $f_k$ with IF $\phi'_k \in
[M^{-1},M]$, from a noisy signal, the
threshold $\gamma$ should be chosen proportional to $M^{1/2}
\epsilon$, where $\epsilon$ is an estimate of the noise magnitude.

\subsection{Stability under Spline Interpolation}
In many applications, samples of a signal $f \in \cA$ are
only given at irregular sample points $\set{t'_m}$, and these
are spline interpolated to a function $f_s$.
Thm. \ref{SSStableThm} bounds the error incurred due to this
preprocessing:

\begin{cor}
\label{cor:splinestable}
Let $\displaystyle D = \max_m |t'_{m+1}-t'_m|$ and let
$e = f_s-f$.  Then the error in the estimate of the $k$th IF of $T_{f_s}$
is $O(M_k D^{4/3})$, and the error in extracting $f_k$ is 
$O(M_k D^{4/3})$.
\end{cor}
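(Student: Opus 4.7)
The plan is to realize Cor.~\ref{cor:splinestable} as a direct corollary of Thm.~\ref{SSStableThm} applied to the perturbation $e = f_s - f$. The central step is to convert the geometric information about the sampling grid (encoded in $D$) into a uniform $L^\infty$ bound on $e$, and then read off the two conclusions of Thm.~\ref{SSStableThm} with the resulting noise parameter.

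First, I would invoke the classical spline approximation estimate: if $f$ is sufficiently regular that its fourth derivative is bounded, then cubic spline interpolation at nodes $\set{t'_m}$ with maximum spacing $D = \max_m |t'_{m+1}-t'_m|$ satisfies
\[
\norm{f_s - f}_{L^\infty} \;\leq\; C\, \norm{f^{(4)}}_{L^\infty}\, D^4,
\]
a standard result (see, e.g., de~Boor). For $f\in\mathcal{A}_{\epsilon,d}$, the fourth-derivative bound follows by differentiating $f_k=A_k\cos\phi_k$ four times, provided that $A_k$ and $\phi_k$ have the requisite higher smoothness; then $\norm{f^{(4)}}_{L^\infty}$ is controlled by fixed constants of the model (amplitude bounds, bounds on $\phi'_k$, $\phi''_k$, etc.).

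Next, I would set $\epsilon_D := (C/C_\psi)\, \norm{f^{(4)}}_{L^\infty}\, D^4$, so that by construction $\norm{e}_{L^\infty}\leq C_\psi \epsilon_D$, matching the hypothesis of Thm.~\ref{SSStableThm} with $\epsilon_D$ in place of $\epsilon$. Since $\epsilon_D^{1/3}=O(D^{4/3})$, part~1 of Thm.~\ref{SSStableThm} gives, on each scale band $Z_k$ where $|W_{f_s}|$ exceeds the prescribed threshold $M_k^{1/2}\epsilon_D+\epsilon_D^{1/3}$,
\[
|\omega_{f_s}(a,b)-\phi_k'(b)| \;\leq\; C_2\,\epsilon_D^{1/3} \;=\; O(M_k D^{4/3}),
\]
and part~2 gives reconstruction error $C_3 \epsilon_D^{1/3}=O(M_k D^{4/3})$ for each component $f_k$. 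This is exactly the claim.

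The main obstacle is calibrating the smoothness assumption so that the quartic interpolation estimate is legitimate on the class of interest: as stated, $\mathcal{A}_{\epsilon,d}$ only requires $A_k\in C^1$ and $\phi_k\in C^2$, which is not enough to invoke the $O(D^4)$ cubic-spline bound. Two natural remedies are to strengthen the class assumption by requiring $f\in C^4$ with an a priori $L^\infty$ bound on $f^{(4)}$, or to replace the cubic spline with a spline of order matching the available regularity (yielding a correspondingly lower power of $D$). Once this is pinned down, the remainder of the proof is a mechanical substitution into Thm.~\ref{SSStableThm}.
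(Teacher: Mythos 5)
Your proposal matches the paper's proof exactly: invoke the standard cubic-spline $L^\infty$ estimate $\|e\|_{L^\infty} \leq \frac{5}{384} D^4 \|f^{(4)}\|_{L^\infty}$, set the noise parameter of Thm.~\ref{SSStableThm} proportional to $D^4$, and read off $C_2\epsilon^{1/3}, C_3\epsilon^{1/3} = O(M_k D^{4/3})$. Your closing observation---that $\cA_{\epsilon,d}$ only guarantees $A_k\in C^1$ and $\phi_k\in C^2$, so the $O(D^4)$ spline bound needs an additional $C^4$ (or at least bounded-$f^{(4)}$) hypothesis that the paper leaves implicit---is a correct and worthwhile point the paper does not address.
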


\begin{proof}
This follows from Thm. \ref{SSStableThm} and the following standard
estimate on cubic spline approximations \cite[p. 97]{Stewart1998}:
\begin{center}
$
\left\Vert e \right\Vert_{L^{\infty}}
  \leq\frac{5}{384} D^4 \| f^{(4)} \|_{L^{\infty}}.
$
\end{center}
\end{proof}
Thus, we can Synchrosqueeze $f_s$ instead of $f$ and, as long as the
minimum sampling rate $D^{-1}$ is high enough, the results will match.
Furthermore, in practice errors are localized in time to areas of low
sampling rate, low component amplitude, and/or high component
frequency (see, e.g., \S\ref{sec:wmisc}).

\section{\label{sec:wmisc}Examples of Synchrosqueezing Properties}
We now provide numerical examples of several important properties of
Synchrosqueezing.  First, we compare Synchrosqueezing with two
common analysis transforms.

\subsection{Comparison of Synchrosqueezing to the CWT and STFT}
We compare Synchrosqueezing to the Wavelet
transform and the Short Time Fourier Transform (STFT)
\cite{Oppenheim1999}.  We show its superior precision, in both time and
frequency, at identifying components of sums of
quasi-harmonic signals.

\begin{figure}[ht]
  \centering
  \includegraphics[width=.9\columnwidth]{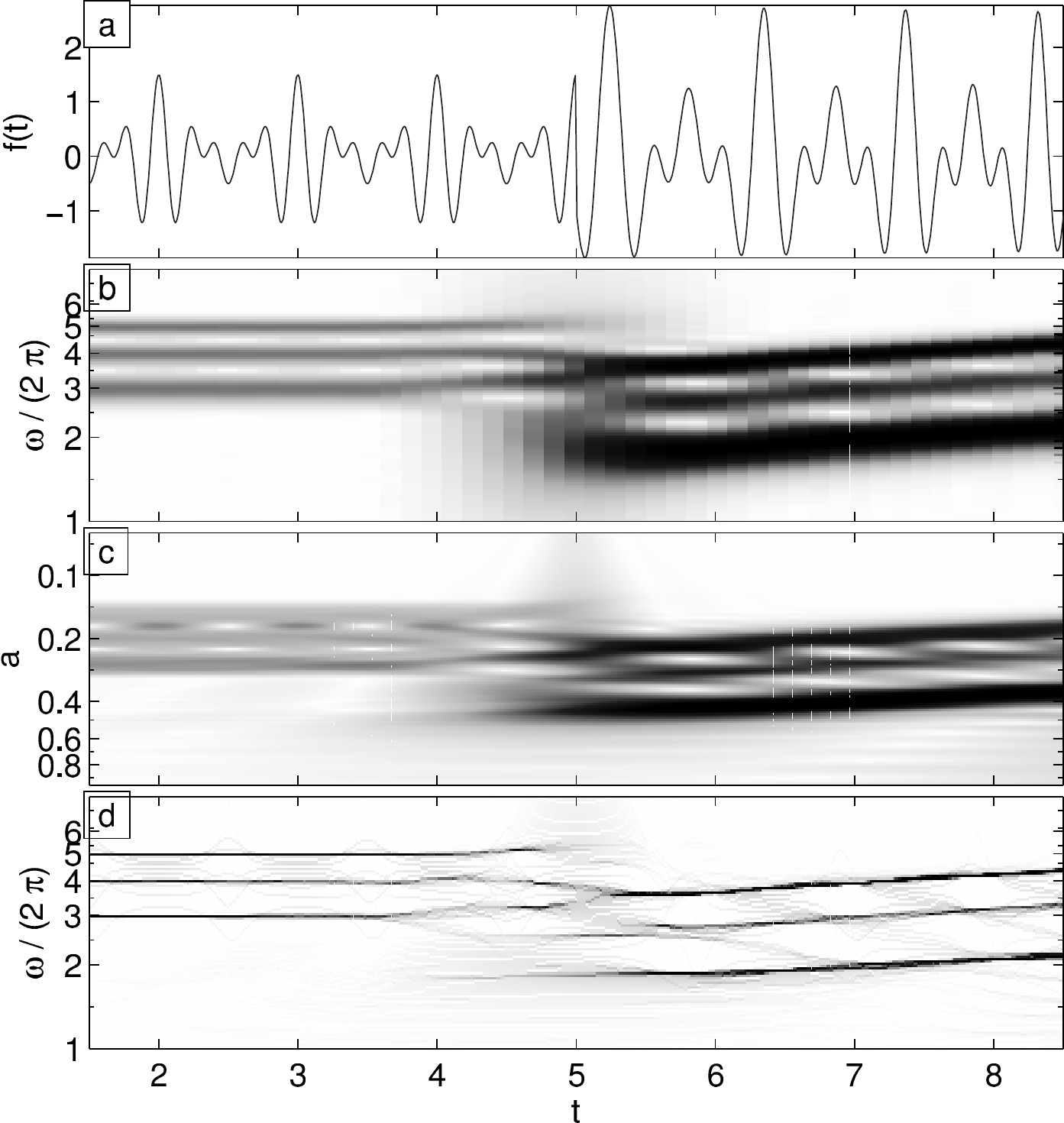}
  \caption[Comparison of Synchrosqueezing with Wavelet and the STFT.]%
   {\label{fig:cmpstftwave} 
    Comparison of Synchrosqueezing with Wavelet and the STFT.  (a)
    Synthetic signal $s(t)$.  (b) Short Time Fourier Transformed signal.
    (c) Wavelet transformed: $W_s(a,t)$.  (d) Synchrosqueezed: $T_s(\omega,t)$.  
  }
\end{figure}

In Fig.~\ref{fig:cmpstftwave} we focus on a signal $s(t)$ defined on
$t \in [0,10]$, that contains an abrupt transition at $t=5$, and
time-varying AM and FM modulation.  It is discretized to $n=1024$
points and is composed of the following components:
\begin{align*}
t < 5 :
 s_1(t) &= .5 \cos(2 \pi (3 t)), s_2(t) = .5 \cos(2 \pi (4t)), \\
 s_3(t) &= .5 \cos(2 \pi (5t)) \\
t \geq 5 :
 s_1(t) &= \cos(2 \pi (.5 t^{1.5})), \\
 s_2(t) &= \exp(-t/20) \cos(2 \pi (.75 t^{1.5})), \\
 s_3(t) &= \cos(2 \pi t^{1.5}).
\end{align*}
We used the shifted bump wavelet (see \S\ref{sec:wcmp}) and $n_v = 32$
for both the Wavelet and  Synchrosqueezing transforms, and a Hamming
window with length 300 and overlap of length 285 for the STFT.  These
STFT parameters focused on optimal precision in frequency, but not in
time \cite{Oppenheim1999}.
For $t<5$, the harmonic components of $s(t)$ are
clearly identified in the Synchrosqueezing plot $T_s$
(Fig.~\ref{fig:cmpstftwave}(d)) and the STFT plot
(Fig.~\ref{fig:cmpstftwave}(b)), though the
frequency estimate is more precise in $T_s$.  The higher frequency
components are better estimated up to the singularity at $t=5$ in
$T_s$, but in the STFT there is mixing at the singularity.  For $t \geq
5$, the frequency components are more clearly visible in $T_s$ due
to the smearing of lower frequencies in the STFT.
The temporal resolution in the STFT is also significantly lower than
for Synchrosqueezing due to the selected parameters.  A
shorter window in the STFT will provide higher temporal
resolution, but lower frequency resolution and more smearing between
the three components.

\subsection{\label{sec:ssnonunif}Nonuniform Sampling and Splines}

We now demonstrate how Synchrosqueezing and extraction work for a
more complicated signal that contains multiple time-varying
amplitude and frequency components, and has been irregularly
subsampled.  Let
\begin{align}
\label{eq:fsum}
f(t) &= \cos(4 \pi t)\\
     &+ (1+0.2\cos(2.5t))\cos(2\pi(5t+2t^{1.2})) \nonumber \\
      &+ e^{-0.2t}\cos(2\pi(3t+0.2\cos(t))), \nonumber
\end{align}
and let the sampling times be perturbations of uniformly spaced times
having the form $t'_{m}=\Delta t_1 m + \Delta t_2 u_{m}$, where
$\Delta t_2 < \Delta t_1$ and $\{u_{m}\}$ is sampled from
the uniform distribution on $[0,1]$. Here we fix $\Delta
t_1=11/180$ and $\Delta t_2=11/600$.  This leads to $\approx 160$
samples on the interval $t \in [0,10]$.  To correct for nonuniform
sampling, we fit a spline through $(t'_m,f(t'_m))$ to get the
function $f_s(t)$ and discretize on the finer
grid $t_m = m \Delta t$, with $\Delta t=10/1024$ and
$m=0,\ldots,1023$.  The resulting vector, $\tf_s$,
is a discretization of the original signal plus a spline
error term.  Fig.~\ref{fig:nonunif}(a) shows
$\tf_s$ for $t \in [2,8]$.

\begin{figure}[hb]
  \centering
  \includegraphics[width=.9\columnwidth]{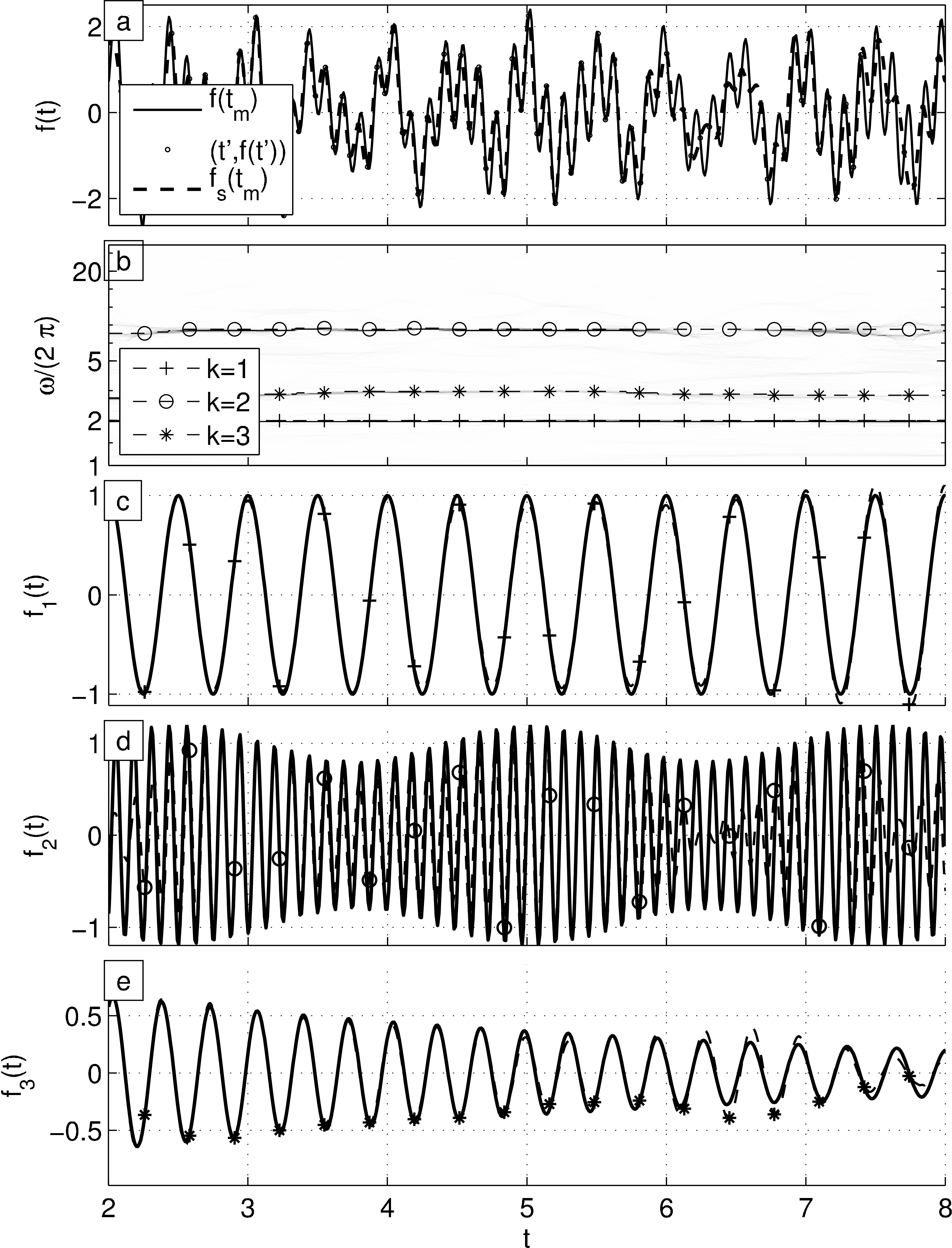}
   \caption[Nonuniform Sampling, Synchrosqueezing, and Component Extraction from $f$]
    {\label{fig:nonunif}
     (a) Nonuniform Sampling of $f$, with spline interpolation estimate $\tf_s$.
     (b) Synchrosqueezing $\wt{T}_{\tf_s}$ of $\tf_s$; components
     extracted via \eqref{eq:Cextract}.
     (c-e) Extracted components $\tf^*_k$ compared to originals
     $\tf_k$, $k=1,2,3$ (respectively).
   }
\end{figure}

Figs.~\ref{fig:nonunif}(b-e) show the results of
Synchrosqueezing and component extraction of $\tf_s$, for $t \in
[2,8]$.  All three components are well separated in
the TF domain.  The second component is the most difficult to
reconstruct, as it contains the highest frequency information.  Due to stability
(Thm. \ref{SSStableThm} and Cor. \ref{cor:splinestable}), extraction
of components with mid-range IFs is more stable to the error $e(t)$.
Fig.~\ref{fig:nonunif} shows that reconstruction errors are
time localized to the locations of errors in $\tf_s$.

\subsection{White Noise and Reconstruction}
We take the signal $f(t)$ of \eqref{eq:fsum}, now regularly sampled on
the fine grid with $\Delta t = 10/1024$ ($n=1024$ samples) as before,
and corrupt it with white Gaussian noise having a standard deviation
of $\sigma_N = 1.33$.  This signal, $\tf_N$ (see
Fig.~\ref{fig:noise}(a)) has an SNR of $-1$ dB.

\begin{figure}[ht]
  \centering
  \includegraphics[width=.9\columnwidth]{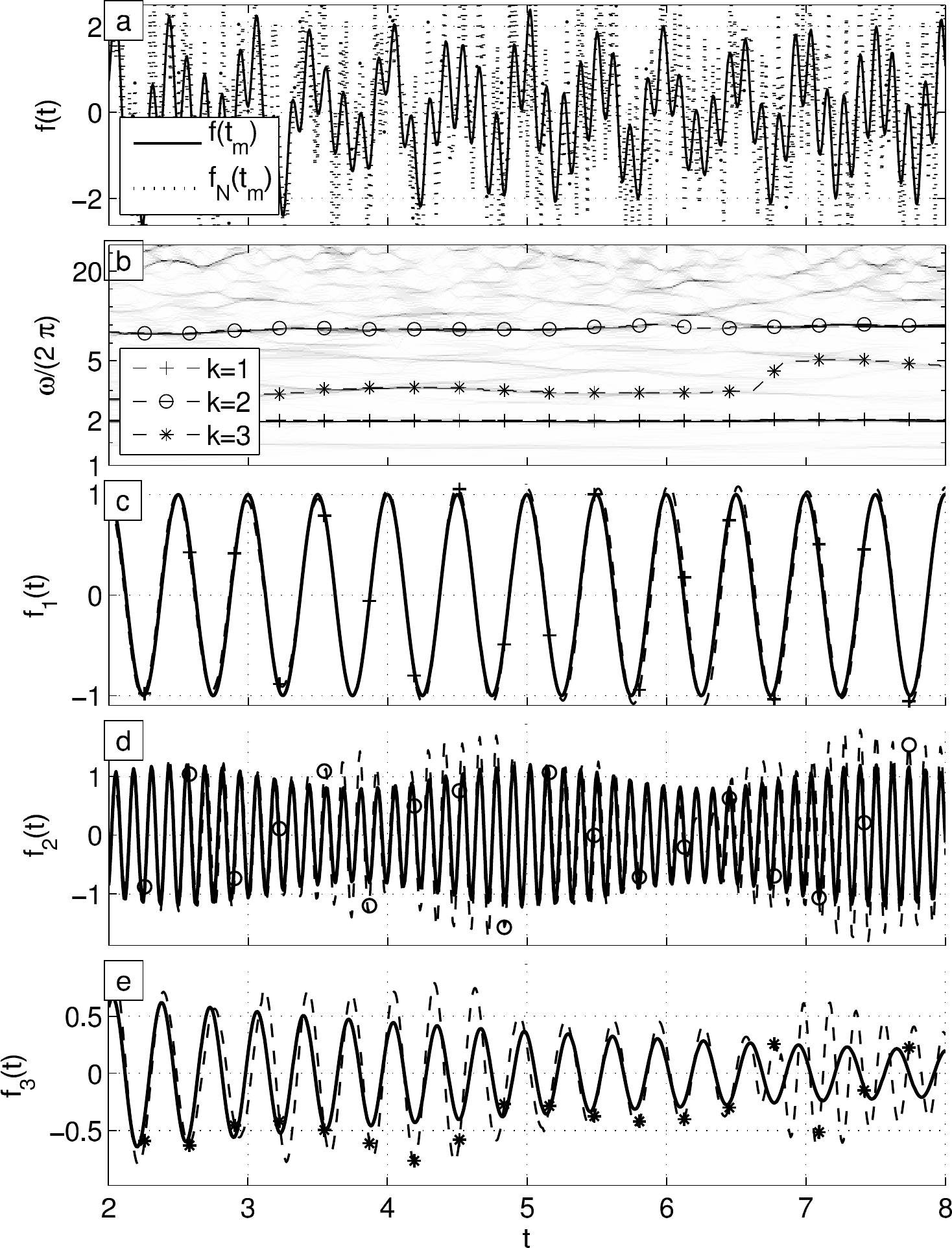}
   \caption[Synchrosqueezing, and Component Extraction from $\tf_N$]%
    {\label{fig:noise}
     (a) Uniform sampling of $f$, $\tf$, corrupted by normal white noise
     with standard deviation $\sigma_N = 1.33$ (SNR is $-1$ dB): $\tf_N$.
     (b) Synchrosqueezing $\wt{T}_{\tf_N}$ of $\tf_N$; components
     extracted via \eqref{eq:Cextract}.
     (c-e) Extracted components $\tf^*_k$ compared to originals
     $\tf_k$, $k=1,2,3$ (respectively).
   }
\end{figure}

Figs.~\ref{fig:noise}(b-e) show the results of
Synchrosqueezing and component extraction of $\tf_N$, for $t \in
[2,8]$.  As seen in
Fig.~\ref{fig:noise}(b), most of the additional energy, caused by
the white noise, appears in the higher frequencies.
Again, all three components are well separated in
the TF domain, though now the third, lower-amplitude, component
experiences a ``split'' at $t \approx 6.5$.
Reconstruction of signal components is less reliable in
locations of high frequencies and low magnitudes (note the axis in
Fig.~\ref{fig:noise}(e) is half that of the others).
This again numerically confirms Thm. \ref{SSStableThm}:
components with mid-range IFs and higher amplitudes are more stable to
the noise.

\section{\label{sec:wcmp}Invariance to the underlying transform}

As mentioned in \S\ref{sec:analysis} and in \cite{Daubechies2010},
Synchrosqueezing is invariant to the underlying choice of
transform.  The only differences one sees in practice are due to two factors: the
time compactness of the underlying analysis atom (e.g. mother
wavelet), and the frequency compactness of this atom.  That is,
$\abs{\psi(t)}$ should fall off quickly away from zero,
$\wh{\psi}(\xi)$ is ideally zero for $\xi<0$, and $\Delta$ (of
Thm. \ref{SSThm}) is small.

Fig.~\ref{fig:wcmp} shows the effect of Synchrosqueezing the
discretized spline signal $\tf_s$ of the synthetic
nonuniform sampling example in \S\ref{sec:ssnonunif}, using three
different complex CWT mother wavelets.  These wavelets are:
\begin{align*}
&\textbf{a. Morlet (shifted Gaussian)} \\
&\qquad \wh{\psi}_a(\xi) \propto \exp(-(\mu-\xi)^2/2),
  \quad \xi \in \bbR\\
&\textbf{b. Complex Mexican Hat} \\
&\qquad \wh{\psi}_b(\xi) \propto \xi^2 \exp(-\sigma^2 \xi^2/2),
  \quad \xi > 0\\
&\textbf{c. Shifted Bump} \\
&\qquad \wh{\psi}_d(\xi) \propto 
  \exp\left(- (1- ((\xi-\mu)/\sigma )^2 )^{-1} \right), \\
&\qquad \xi \in [\sigma(\mu-1), \sigma(\mu+1)]
\end{align*}
where for $\psi_a$ we use $\mu=2\pi$, for $\psi_b$ we use $\sigma=1$,
for and for $\psi_c$ we use $\mu=5$ and $\sigma=1$.

\begin{figure}[h]
  \centering
  \includegraphics[width=.9\columnwidth]{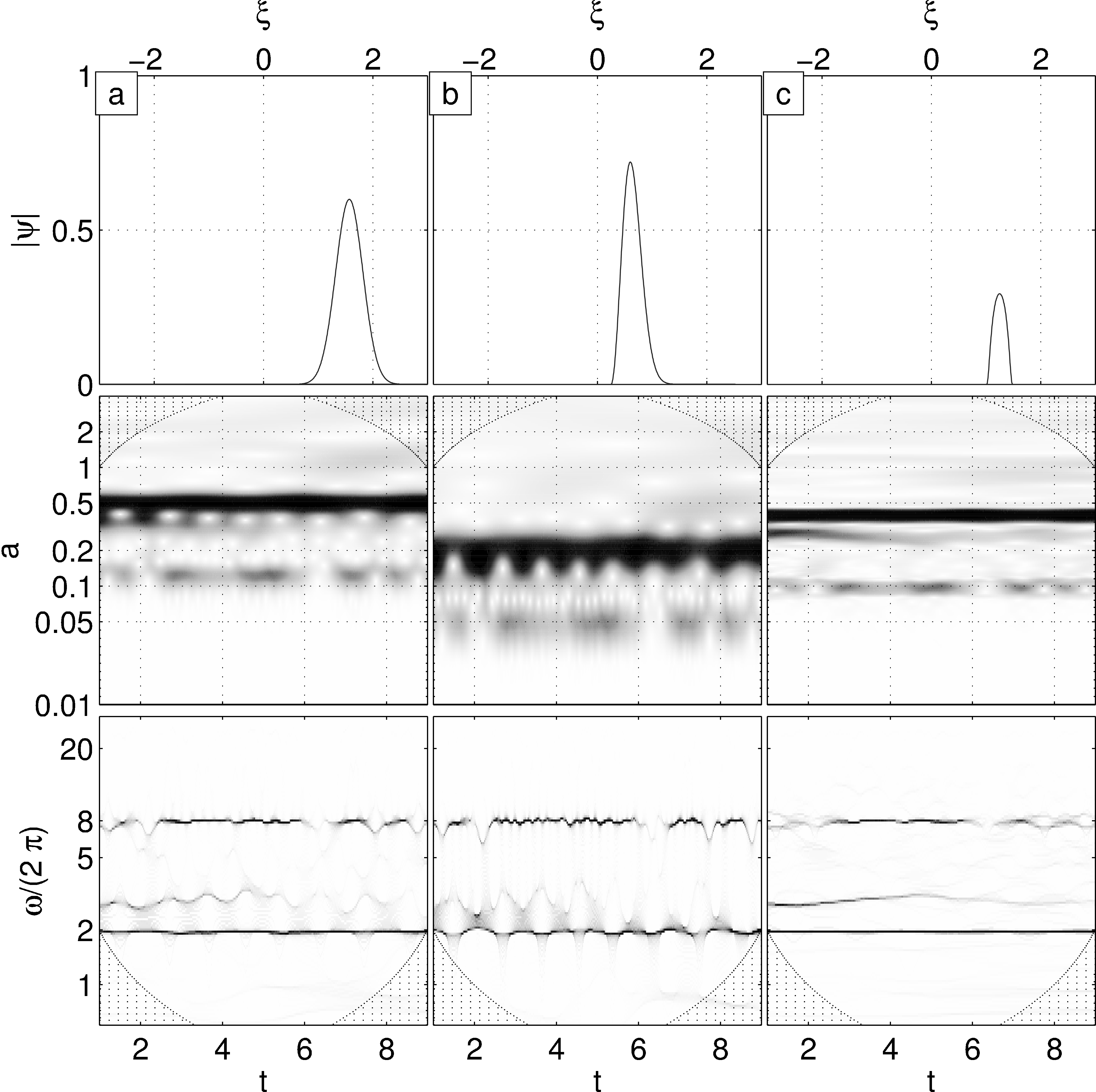}
  \caption[Wavelet and Synchrosqueezing transforms of $\tf_s$.]
   {\label{fig:wcmp} 
    Wavelet and Synchrosqueezing transforms of $\tf_s$.
    Columns (a-c) represent choice of mother wavelet $\psi_a \ldots \psi_c$.
    Top row: $|2 \wh{\psi}(4\xi)|$.  Center row: $|W_{f_s}|$.  Bottom row:
    $|T_{f_s}|$.
    }
\end{figure}

The Wavelet representations of $\tf_s$ differ due to
differing mother wavelets, but the Synchrosqueezing representation is
mostly invariant to these differences.  As expected from
Thm. \ref{SSThm}, more accurate representations are given
by wavelets having compact frequency support on $\xi$
away from $0$.

\section{\label{sec:ssfuture}Conclusions and Future Work}
Synchrosqueezing can be used to extract the instantaneous spectra of,
and filter, a wide variety of signals that include complex simulation data
(e.g. dynamical models), and physical signals
(e.g. climate proxies).  A careful implementation runs in
$O(n_v n \log^2 n)$ time, and is stable (in theory and in practice)
to errors in these types of signals.

Areas in which Synchrosqueezing has shown itself to be an important
analysis tool include ECG analysis (respiration and T-end detection),
meteorology and oceanography (large-scale teleconnection and
ocean-atmosphere interaction), and climatology.  Some of these
examples are described in the next chapter.

Additional future work includes theoretical analysis of the
Synchrosqueezing transform, including the development of
Synchrosqueezing algorithms that directly support nonuniform sampling,
the analysis of Synchrosqueezing when the signal is perturbed by
Gaussian, as opposed to bounded, noise, and extensions to higher
dimensional data.

\chapter{\label{ch:ssapp}Synchrosqueezing: Applications%
\chattr{Section \ref{sec:resp} of this chapter are 
  based on work in collaboration with Hau-Tieng~Wu and Gaurav~Thakur.
  Section \ref{sec:SSpaleo} is based on work in collaboration with
  Neven~S.~Fu\v{c}kar, International Pacific Research Center,
  University of Hawaii, as submitted in \cite{Brevdo2011b}.}}

\section{Introduction}

The theoretical results of Chapter \ref{ch:ss} provide important
guarantees and guidelines for the use of Synchrosqueezing in data
analysis techniques.  Here, we focus on two specific applications in
which Synchrosqueezing, in combination with preprocessing methods such
as spline interpolation, provides powerful new analysis tools.

This chapter is broken down into two sections.  First, we use
Synchrosqueezing and spline interpolation to estimate patients'
respiration from the R-peaks (beats) in their Electrocardiogram (ECG)
signals.  This extends earlier work on the ECG-Derived Respiration problem.

Second, we visit open problems in paleoclimate studies of the last
2.5\,Myr, where Synchrosqueezing provides improved insights.
We compare a calculated solar flux index with a deposited $\delta^{18}
O$ paleoclimate proxy over this period.  Synchrosqueezing cleanly
delineates the orbital cycles of the solar radiation, provides an
interpretable representation of the orbital signals of
$\delta^{18}O$, and improves our understanding of the effect that the
solar flux distribution has had on the global climate.
Compared to previous analyses of these data, the Synchrosqueezing
representation provides more robust and precise estimates in the
time-frequency plane.

\section{\label{sec:resp}ECG Analysis: Respiration Estimation}

We first demonstrate how Synchrosqueezing can be
combined with nonuniform subsampling of a single lead
ECG recording to estimate the instantaneous
frequency of, and in some cases extract, a patients's respiration signal.
We verify the accuracy of our estimates by comparing them with the
instantaneous frequency (IF) extracted from a simultaneously recorded
respiration signal.

The respiratory signal is usually recorded mechanically via, e.g.,
spirometry or plethysmography. There are two common disadvantages to
these techniques. First, they require the use of complicated devices
that might interfere with natural breathing. Second,
they are not appropriate in many situations, such as
ambulatory monitoring. However, having the respiratory signal is often
important, e.g. for the diagnosis of obstructive sleep apnea. Thus,
finding a convenient way to directly
record, or indirectly estimate, information about the respiration
signal is important from a clinical perspective. 

ECG is a cheap, non-invasive, and ubiquitous technique, in which
voltage differences are passively measured between electrodes (leads)
connected to a patient's body (usually the chest and arms).  The
change of the thoracic electrical impedance caused by inhalation and
exhalation, and thus physiological respiratory information, is
reflected in the ECG amplitude.  The
respiration-induced distortion of ECG was first studied in
\cite{einthoven} and \cite{flaherty}. A well-known ECG-Derived
Respiration (EDR) technique \cite{MMZM85}
experimentally showed that ``electrical rotation'' during the
respiratory cycle is the main contributor to the distortion of ECG
amplitude, and that the contribution of thoracic impedance variations
is relatively minor.  These prior work confirm that analyzing ECG may
enable us to estimate respiration. More details about EDR are available
in \cite{ecgbook}.

Relying on the coupling between physiological respiration and R-peak
amplitudes (the tall spikes in Fig.~\ref{fig:ecgFs}(a)), we
use the R-peaks as a proxy for the respiration signal.  More
specifically, we hypothesize that the R peaks, taken as samples of the
envelope of the ECG signal $f_E(t)$, have the same IF profile as the
true respiration signal $f_R(t)$.  By sampling $f_E(t)$ at the R
peaks and performing spline interpolation on the resulting samples, we
hope to see a time shifted, amplitude scaled, version of $f_R(t)$
near the respiratory frequency (0.25Hz).

\begin{figure}[h]
\centering
\includegraphics[width=.9\columnwidth]{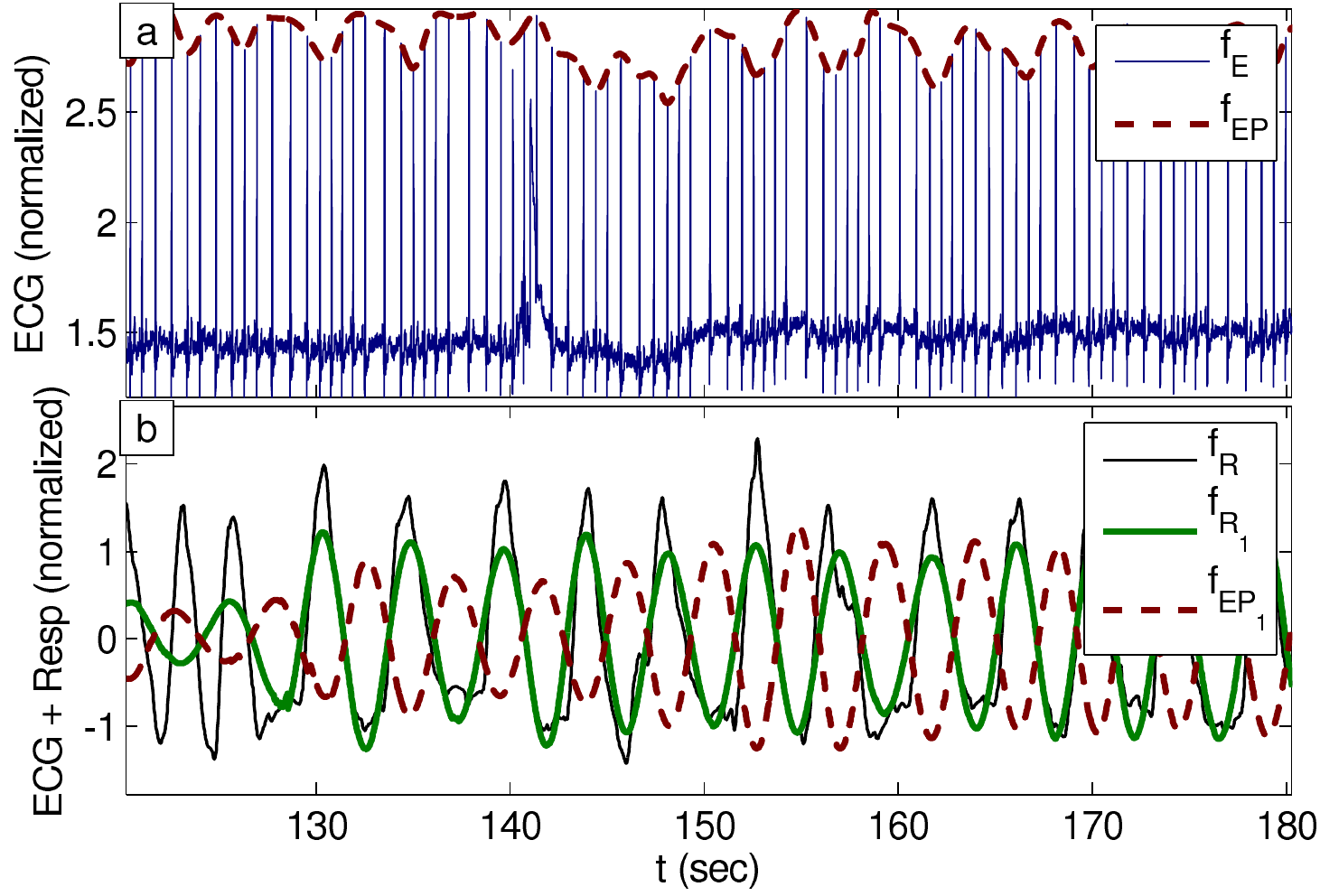}
\caption[Original and Filtered ECG signal and spline R-peak envelope]%
{\label{fig:ecgFs}
(a) ECG signal and spline R-peak envelope; $t \in [120,180]$.
(b) Respiration, full (black), filtered (green), and estimated from ECG R-peak envelope (red).}
\end{figure}

\begin{figure}[h]
  \centering
\includegraphics[width=.9\columnwidth]{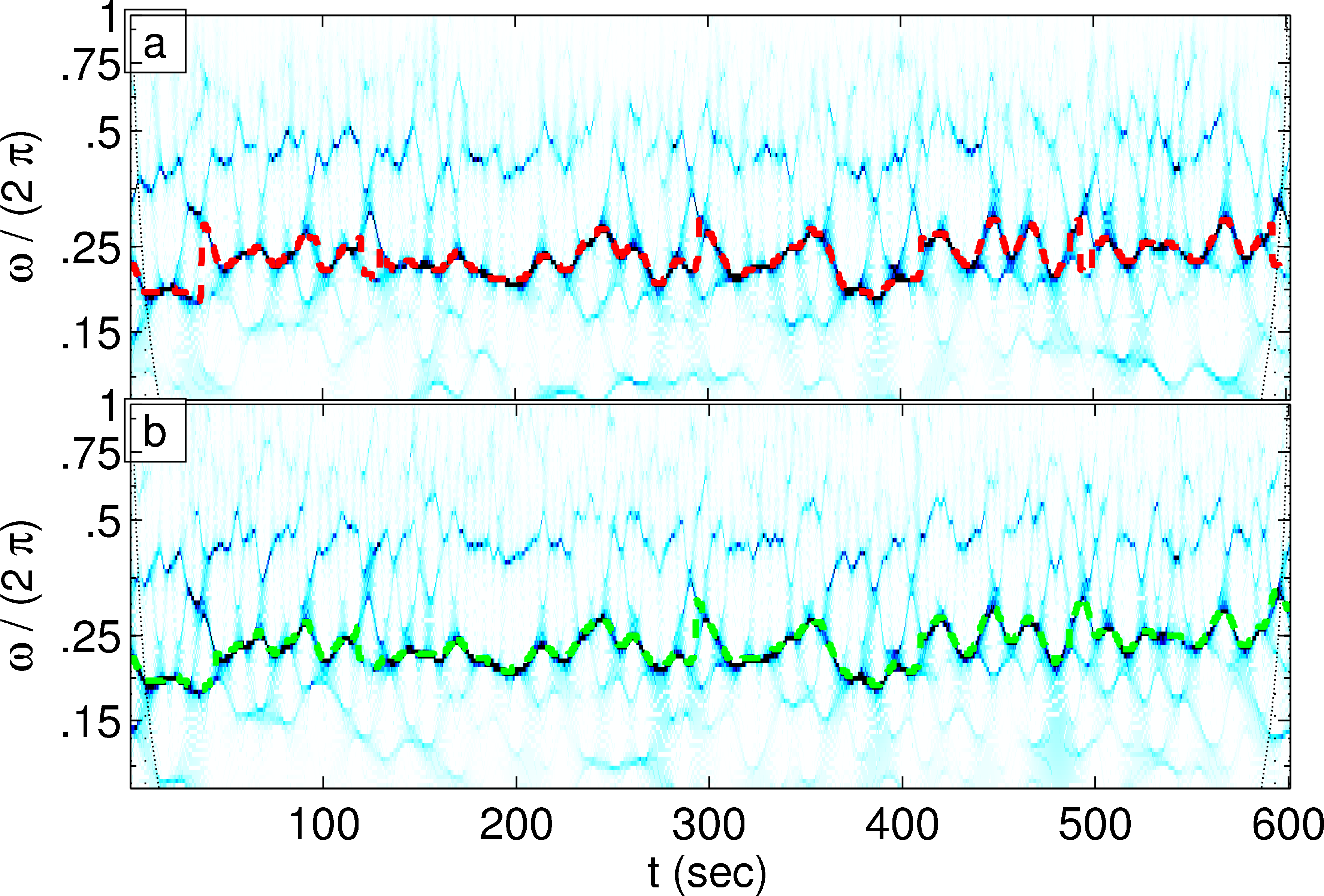}
\caption[Synchrosqueezing of ECG spline R-peak envelope $\tf_{EP}$]
{\label{fig:ecgTs} (a) Synchrosqueezing of ECG spline R-peak
  envelope $\tf_{EP}$ with main extracted curve representing
  $\tf_{EP_1}$; $t \in [0,600]$ and $\omega \in [.2\pi, 2\pi]$. 
  (b) Synchrosqueezing of Respiration $\tf_R$ with main extracted
  curve representing $\tf_{R_1}$.}
\end{figure}

In Fig.~\ref{fig:ecgFs}, we show the lead II ECG signal and the
true respiration signal (via respiration belt) of a healthy $30$ year
old male, recorded over a $10$ minute interval ($t \in [0,600]$ sec).
The sampling rates of the ECG and respiration signals are respectively
400Hz and 50Hz within this interval. There are $846$ R peaks
appearing at nonuniform times $t'_{m}\in[0,600]$,
$m = 1,\ldots,846$. We run cubic spline interpolation on the R-peaks
$\{(t'_m,f_E(t'_m))\}$ to get $f_{EP}(t)$, which we discretize at 50Hz
(with $n=30000$) to get $\tf_{EP}$. Fig.~\ref{fig:ecgTs} shows the
result of running Synchrosqueezing on $\tf_R$ and $\tf_{EP}$.
The computed IF, $\tf_{EP_1}$, turns out to be a good (shifted and
scaled) approximation to the IF of the true respiration, $\tf_{R_1}$.
It can be seen, from Figs. \ref{fig:ecgFs} and
\ref{fig:ecgTs}, that the spacing of respiration
cycles in $f_R(t)$ is reflected by the main IF of $\tf_{EP}$: closer
spacing corresponds to higher IF values, and wider spacing to
lower values.

These results were confirmed by tests on several subjects.
Thanks to the stability of Synchrosqueezing (Thm. \ref{SSStableThm} and
Cor. \ref{cor:splinestable}), this algorithm has the potential
for broader clinical usage.

\subsection{Notes on Data Collection and Analysis Parameters}

The ECG signal $\tf_E$ was collected at 400Hz via a MSI MyECG E3-80.
The respiration signal $\tf_R$ was collected at 50Hz via a
respiration belt and PASCO SW750.  The ECG signal was filtered to
remove the worst nonstationary noise by thresholding signal values
below the $0.01\%$ and above the $99.99\%$ quantiles, to these
quantile values. The ECG R-peaks were then extracted from $\tf_E$
by first running the physionet
\texttt{ecpguwave}~%
\footnote{{ecgpuwave may be found at: \url{http://www.physionet.org/physiotools/ecgpuwave/}}}
program, followed by
a ``maximum'' peak search within a 0.2 sec window of
each of the ecgpuwave-estimated R-peaks.  

For Synchrosqueezing, the
parameters $\gamma = 10^{-8}$ and $\lambda = 10^5$ were used for
thresholding $W_f$ and extracting contours from both the R-peak spline
and respiratory signals.

%
%

\section{\label{sec:SSpaleo}Paleoclimatology: Aspects of the mid-Pleistocene transition}
Next, we apply Synchrosqueezing to analyze the
characteristics of a calculated index of the incoming solar radiation
(insolation) and to
measurements of repeated transitions between glacial (cold) and
interglacial (warm) climates during the Pleistocene epoch:
$\approx$\,1.8\,Myr to 12\,kyr before the present.

The Earth's climate is a complex, multi-component nonlinear, system
with significant stochastic elements \cite{Pierrehumbert2010}. The key
external forcing field is the insolation at the top of the
atmosphere (TOA). Local insolation has predominately harmonic
characteristics in time (diurnal cycle, annual cycle and
Milankovi\'{c} orbital cycles). However, response of planetary
climate, which varies at all time scales \cite{Huybers2006}, also
depends on random perturbations (e.g., volcanism), solid boundary
conditions (e.g., plate tectonics and global ice distribution),
internal variability and feedbacks (e.g., global carbon
cycle). Various paleoclimate records or proxies provide us with
information about past climates beyond observational records. Proxies
are biogeochemical tracers, i.e., molecular or isotopic properties,
imprinted into various types of deposits (e.g., deep-sea sediment),
and they indirectly represent physical conditions (e.g., temperature)
at the time of deposition. We focus on climate variability during the
last 2.5\,Myr (that also includes the late Pliocene) as recorded by
${\delta}^{18}O$ in foraminiferal shells at the bottom of the ocean
(benthic forams). Benthic $\delta^{18}O$ is the deviation of the ratio
of $^{18}O$ to $^{16}O$ in sea water with respect to the
present-day standard, as imprinted in benthic forams during their
growth.  It increases with glaciation during cold climates because $^{16}O$
evaporates more readily and accumulates in ice sheets. Thus, benthic
${\delta}^{18}O$ can be interpreted as a proxy for either
high-latitude temperature or global ice volume.

\begin{figure}[h!]
  \centering
   \includegraphics[width=.9\columnwidth]{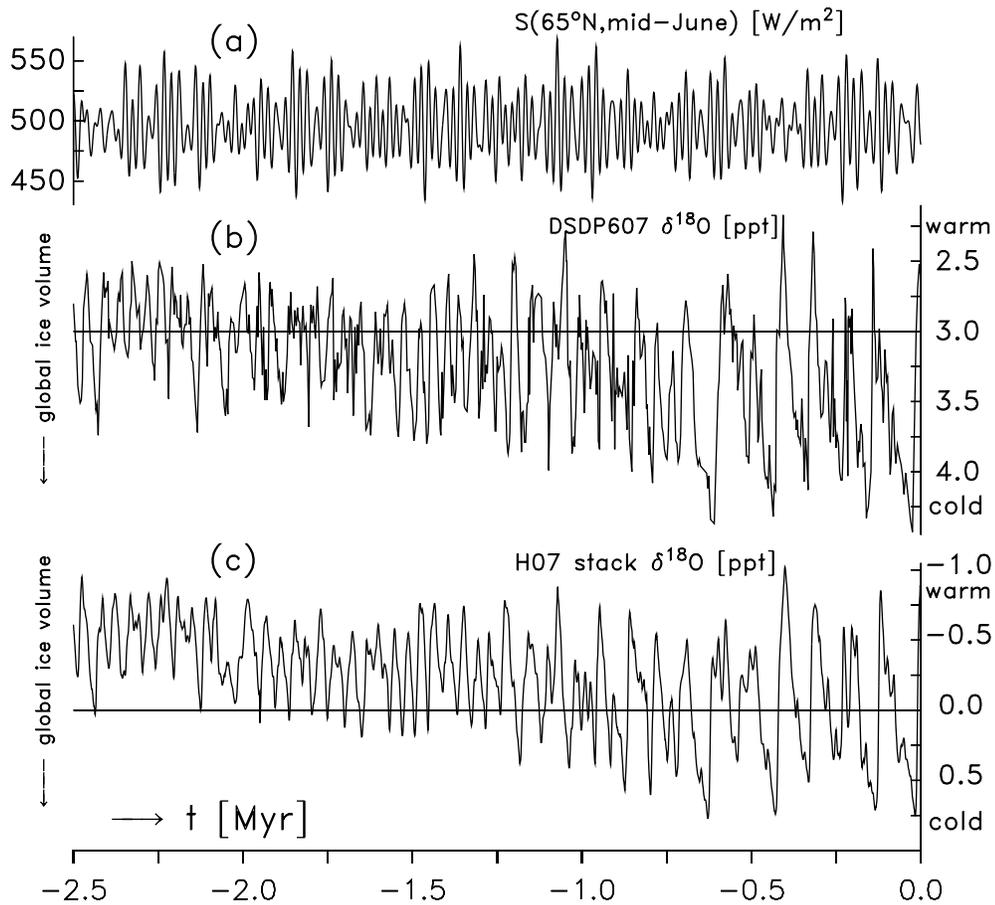}
   \caption[Calculated mid-June insolation flux and climate response as recorded by benthic ${\delta}^{18}O$]%
    {\label{fig:paleo_ts} (a) Calculated mid-June insolation
     flux at $65^{o}N$: $f_{SF}$.  Climate response as recorded by
     benthic ${\delta}^{18}O$ (b) in DSDP607 core: $f_{CR1}$, and (c) in
     H07 stack: $f_{CR2}$.}
\end{figure}

We first examine a calculated element of the TOA solar forcing
field. Fig.~\ref{fig:paleo_ts}(a) shows $f_{SF}$, the mid-June
insolation at $65^{o}N$ at 1\,kyr intervals \cite{Berger1992}. This
TOA forcing index does not encompass the full complexity of solar
radiation structure and variability, but is commonly used to gain
insight into the timing of advances and retreats of ice sheets in the
Northern Hemisphere in this period (e.g., \cite{Hays1976}). 
The Wavelet and Synchrosqueezing
decompositions in Fig.~\ref{fig:paleo_wavelet}(a) and
Fig.~\ref{fig:paleo_synsq}(a), respectively, show the key harmonic
components of $f_{SF}$. The application of a shifted bump mother
wavelet (see \S\ref{sec:wcmp}) yields an upward
shift of the spectral features along the scale axis in each of the
the representations in Fig.~\ref{fig:paleo_wavelet}.  Therefore
the scale $a$ should not be used to directly infer periodicities.
In contrast, the Synchrosqueezing spectrums in Fig.~\ref{fig:paleo_synsq}
explicitly present time-frequency (or here specifically
time-periodicity) decompositions with a sharper structure, and
are not affected by the scale shift inherent in the choice of mother
wavelet.

Fig.~\ref{fig:paleo_synsq}(a) clearly shows the presence of
strong precession cycles (at periodicities $\tau$=19\,kyr and
23\,kyr), obliquity cycles (primary at 41\,kyr and secondary at
54\,kyr), and very weak eccentricity cycles (primary periodicities at
95\,kyr and 124\,kyr, and secondary at 400\,kyr). This is in contrast
with Fig.~\ref{fig:paleo_wavelet}(a), which contains blurred and shifted
spectral structures only qualitatively similar to
Fig.~\ref{fig:paleo_synsq}(a).

We next analyze the climate response during the last 2.5\,Myr as deposited
in benthic ${\delta}^{18}O$ in long sediments cores.
(in which deeper layers contain forams settled further back in time).
Fig.~\ref{fig:paleo_ts}(b) shows
$f_{CR1}$: benthic ${\delta}^{18}O$, sampled at irregular time intervals
from a single core, DSDP Site 607, in the North Atlantic
\cite{Ruddiman1989}.  This signal was spline interpolated to 1\,kyr
intervals prior to the spectral analyses.  Fig.~\ref{fig:paleo_ts}(c)
shows $f_{CR2}$: the benthic ${\delta}^{18}O$ stack (H07) calculated
at 1\,kyr intervals from fourteen cores (most of them from the Northern
Hemisphere, including DSDP607) using the extended depth-derived age
model \cite{Huybers2007}.
Prior to combining the cores in the H07 stack, the record mean 
between 0.7\,Myr ago and the present was subtracted from each
${\delta}^{18}O$ record; this is the cause of the differing vertical
ranges in Figs. \ref{fig:paleo_ts}(b-c). Noise due
to local climate characteristics and measurement errors of
each core is reduced when we shift the spectral analysis from DSDP607
to the stack; and this is particularly visible in the finer
scales and higher frequencies.
\begin{figure}[h!]
  \centering
   \includegraphics[width=.9\columnwidth]{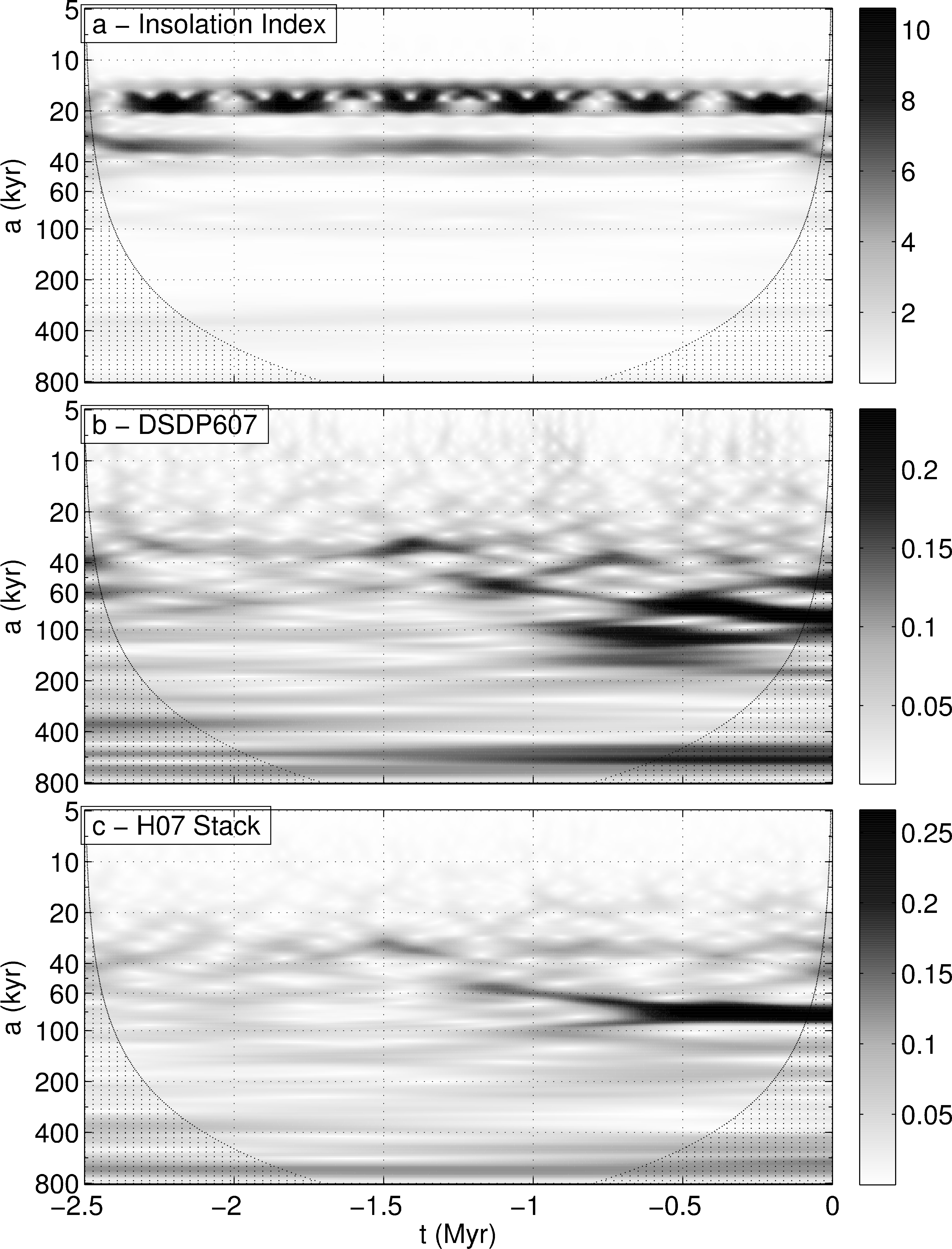}
   \caption[Wavelet magnitude time evolution of insolation index $f_{SF}$ and climate response]%
    {\label{fig:paleo_wavelet} Wavelet magnitude time
     evolution of (a) insolation index $f_{SF}$, and climate
     response in benthic ${\delta}^{18}O$ of (b) DSDP607 core,
     $f_{CR1}$, and (c) H07 stack, $f_{CR2}$.  }
\end{figure}
\begin{figure}[h!]
  \centering
   \includegraphics[width=.9\columnwidth]{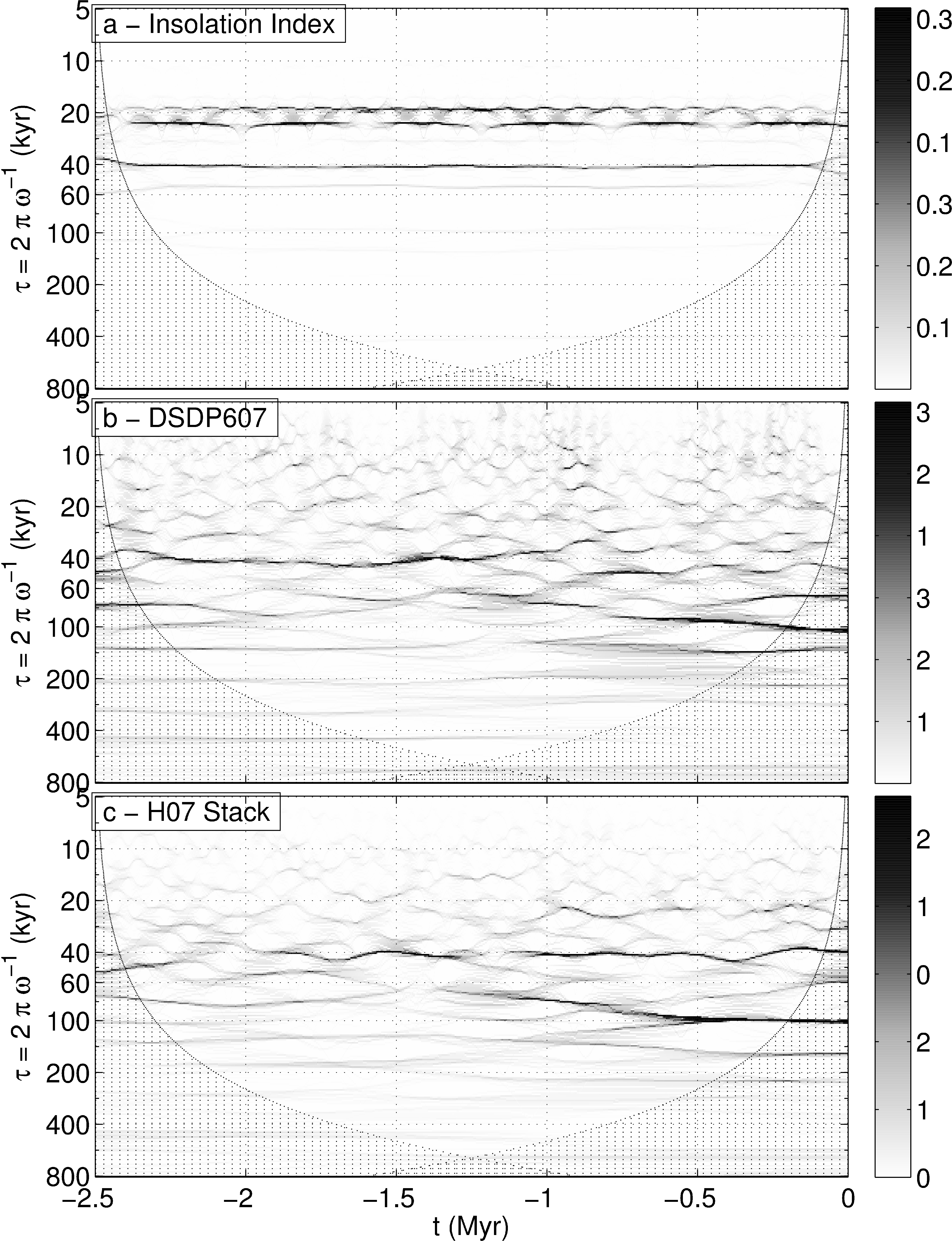}
   \caption[Synchrosqueezing spectral magnitude of insolation index $f_{SF}$ and climate response]%
    {\label{fig:paleo_synsq} Explicit time-periodicity
     decomposition of Synchrosqueezing spectral magnitude of (a) solar
     forcing index $f_{SF}$, and climate response in benthic
     ${\delta}^{18}O$ of (b) DSDP607 core, $f_{CR1}$, and (c) H07
     stack, $f_{CR2}$.  }
\end{figure}
The Synchrosqueezing decomposition in
Fig.~\ref{fig:paleo_synsq}(c) is a more precise time-frequency
representation of the stack than a careful STFT analysis
\cite[Fig.~4]{Huybers2007}.  In addition, it shows far less
stochasticity above the obliquity band as compared to
Fig.~\ref{fig:paleo_synsq}(b), enabling the 23\,kyr precession cycle
to become mostly coherent over the last 1\,Myr. Thanks to the
stability of Synchrosqueezing, the spectral differences below the
obliquity band are less pronounced between
Fig.~\ref{fig:paleo_synsq}(b) and Fig.~\ref{fig:paleo_synsq}(c).
Overall, the stack reveals sharper 
time-periodicity evolution of the climate system than
DSDP607 or any other single core possibly could.  The Wavelet
representations in Figs. \ref{fig:paleo_wavelet}(b-c)
also show this suppression of noise in
the stack (in more diffuse and scale shifted patterns).
Figs. \ref{fig:paleo_spectrum}(a) through \ref{fig:paleo_spectrum}(c)
show that the time average of Synchrosqueezing magnitudes (normalized
by $1/R_\psi$) is directly comparable with the Fourier spectrum, but
delineates the harmonic components much more clearly (not shown).

During the last 2.5\,Myr, the Earth experienced a gradual decrease in
global background temperature and $\text{CO}_2$ concentration, and an
increase in mean global ice volume accompanied with
glacial-interglacial oscillations that have intensified towards the present
(this is evident in Fig.~\ref{fig:paleo_ts}(b) and
\ref{fig:paleo_ts}(c)). The mid-Pleistocene transition, occurring
gradually or abruptly sometimes between 1.2\,Myr and 0.6\,Myr ago, was
the shift from 41\,kyr-dominated glacial cycles to 100\,kyr-dominated
glacial cycles recorded in deep-sea proxies (e.g., \cite{Ruddiman1986,
Clark2006, Raymo2008}). The origin of this strong 100\,kyr cycle
in the late-Pleistocene climate and the prior incoherency of the precession
band are still unresolved questions. Both types of spectral analyses of
selected ${\delta}^{18}O$ records indicate that the climate system
does not respond linearly to external periodic
forcing.

Synchrosqueezing enables the detailed time-frequency
decomposition of a noisy, nonstationary, climate time series due to
stability (Thm. \ref{SSStableThm}) and more precisely reveals key
modulated signals that rise above the stochastic background. The gain
(the ratio of the climate response amplitude to insolation forcing
amplitude) at a given frequency or period, is not constant.
The response to the 41\,kyr obliquity cycle is present almost
throughout the entire Pleistocene in
Fig.~\ref{fig:paleo_synsq}(c). The temporary incoherency of the
41\,kyr component starting about 1.25\,Myr ago 
roughly coincides with the initiation of a lower frequency signal
($\approx$\,70\,kyr) that evolves into a strong 100\,kyr component in
the late Pleistocene (about 0.6\,Myr ago). 
Inversion (e.g., spectral integration) of the Synchrosqueezing
decomposition of $f_{SF}$ and $f_{CR2}$ across the key
orbital frequency bands in Fig.~\ref{fig:paleo_filtered} again emphasize
the nonlinear relation between insolation and climate evolution.
Specifically, in Fig.~\ref{fig:paleo_filtered}(a) the amplitude
of the filtered precession signal of $f_{CR2}$ abruptly rises
1\,Myr ago, while in Fig.~\ref{fig:paleo_filtered}(c) the amplitude
of the eccentricity signal shows a gradual increase.

\begin{figure}[h!]
  \centering
   \includegraphics[width=.9\columnwidth]{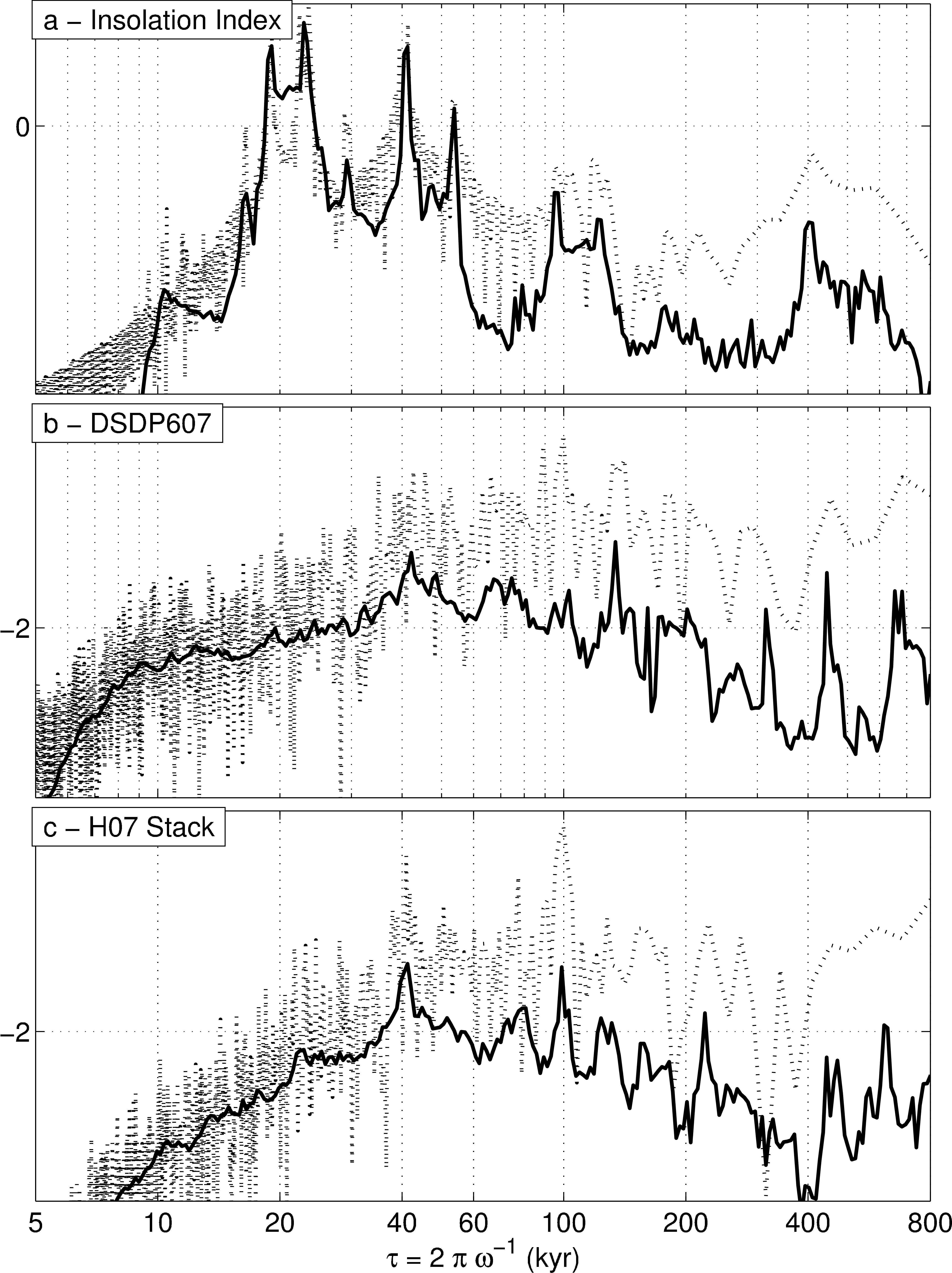}
   \caption[Comparison of Fourier magnitude and averaged
     Synchrosqueezing magnitude of insolation index $f_{SF}$ and
     climate response]%
   {\label{fig:paleo_spectrum} Comparison of Fourier 
     magnitude (gray) and Synchrosqueezing magnitude averaged
     over the entire period (black) of (a) insolation index
     $f_{SF}$, and climate response in benthic ${\delta}^{18}O$ of (b)
     DSDP607 core, $f_{CR1}$, and (c) H07 stack, $f_{CR2}$.  
     Synchrosqueezing averages are normalized by $1/R_\psi$ to
     correspond to the true signal magnitudes.}
\end{figure}

Synchrosqueezing analysis of the solar insolation index and
benthic $\delta^{18}O$ makes a significant contribution in three
important ways.  First, it produces spectrally sharp traces
of complex system evolution through the high-dimensional climate state
space (compare with, e.g., \cite[Fig.~2]{Clark2006}).  Second, it
delineates the effects of noise on specific
frequency ranges when comparing a single core to the stack. 
Low frequency components are mostly robust to noise induced by both
local climate variability and the measurement process.  Third,
thanks to its precision, Synchrosqueezing allows the filtered
reconstruction of signal components within frequency bands.

Questions about the key physical processes governing large scale
climate variability over the last 2.5\,Myr can be answered with
sufficient accuracy only by precise 
data analysis and the development of a hierarchy of
models at various levels of complexity that reproduce the key aspects
of Pleistocene history. The resulting dynamic stochastic
understanding of past climates may benefit our ability
to predict future climates.

\begin{figure}[h!]
  \centering
   \includegraphics[width=.9\columnwidth]{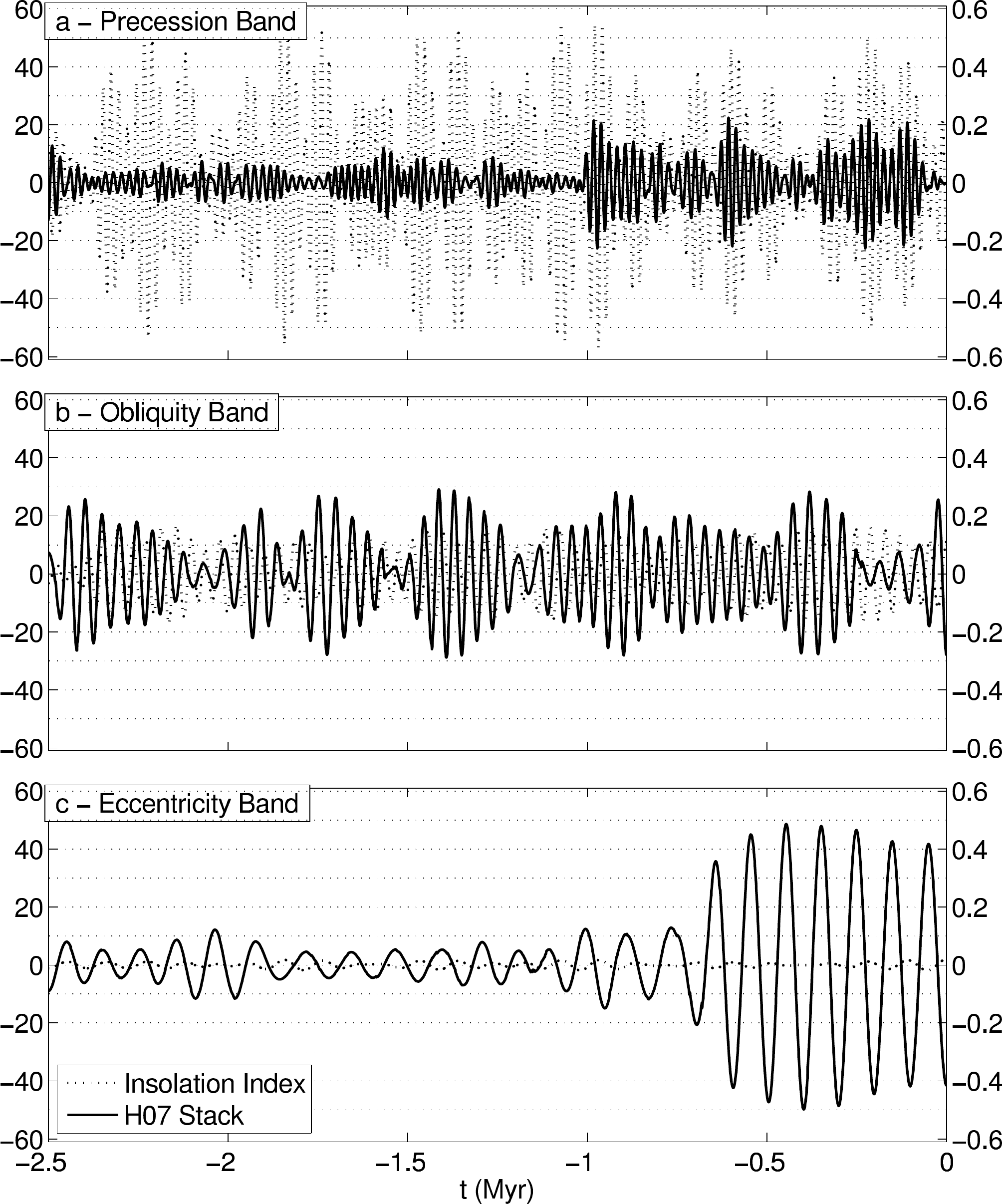}
   \caption[Milankovi\'{c} orbital components extracted by the inverse
     Synchrosqueezing transforms of insolation index $f_{SF}$ and
     climate response]%
    {\label{fig:paleo_filtered} Milankovi\'{c} orbital
     components extracted by the inverse Synchrosqueezing transforms
     of insolation index $f_{SF}$ (gray curve with vertical scale
     on the left) and climate response in benthic ${\delta}^{18}O$
     stack $f_{CR2}$ (black curve with vertical scale on the right)
     over (a) precession band (integrated from 17\,kyr to 25\,kyr),
     (b) obliquity band (40\,kyr - 55\,kyr), and (c) eccentricity band
     (90\,kyr - 130\,kyr).  }
\end{figure}



\chapter{\label{ch:sltr}Multiscale Dictionaries of Slepian Functions on the Sphere%
\chattr{This chapter is based on ongoing work in collaboration with
Frederik~J.~Simons, Department of Geosciences, Princeton University.}}

\section{Introduction}

The estimation and reconstruction of signals from their
samples on a (possibly irregular) grid is an old and important problem
in engineering and the natural sciences.  Over the last several
centuries, both approximation and sampling techniques have been
developed to address this problem.
Approximation theorems provide (possibly probabilistic) guarantees
that a function can be approximated to a specified precision with a
bounded number of coefficients in an alternate basis or frame.  In
general, such guarantees put constraints on the function (e.g.,
differentiability) and the domain (e.g., smoothness
and compactness). Sampling theorems guarantee that a function can be
reconstructed to a given precision from either point samples or some
other form of sampling technique (e.g., linear combinations of point
observations, as in the case of compressive sensing).  Again, sampling
theorems place requirements on both the sampling (e.g., grid
uniformity or a minimum sampling rate), on the original function (e.g.,
a bandlimit), and/or on the domain (e.g., smoothness, compactness).

Approximation and sampling techniques are closely linked due to
their similar goals.  For example, a signal can be estimated via
its representation in an alternate basis (e.g., via Riemannian
sums that numerically calculate projections of point samples onto the
basis functions).  The estimate then follows by expanding the function
in the given basis.  Regularization (the approximation) of the
estimate can be performed by excluding the basis elements assumed to
be zero.

\begin{figure}[h!]
\centering
\includegraphics[width=.75\linewidth]{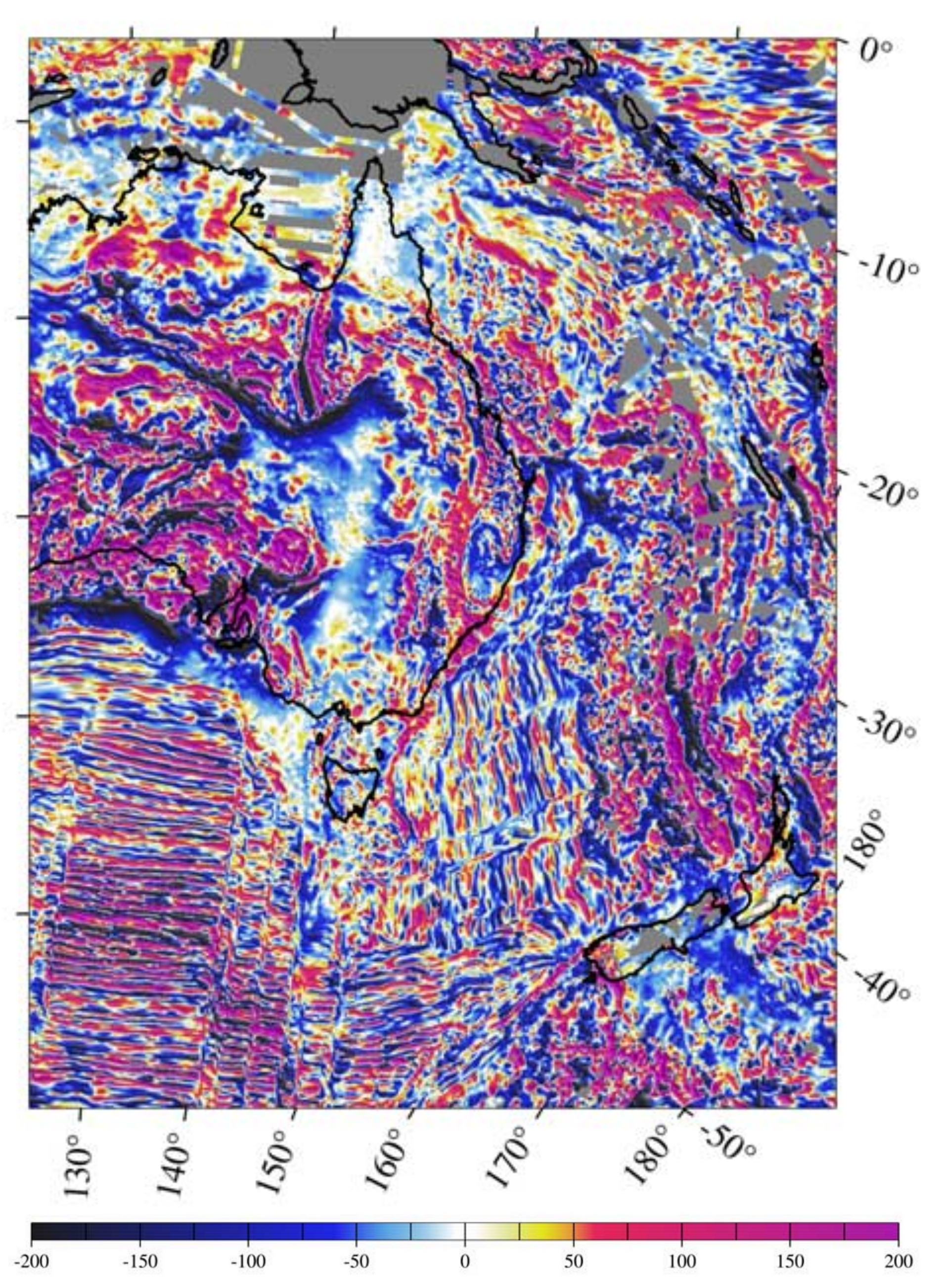}
\caption[EMAG2: Earth Magnetic Anomaly Grid (in nT), for the region
  covering Australia and New Zealand]{\label{fig:emag2}EMAG2: Earth
  Magnetic Anomaly Grid (nT), for the region covering Australia and
  New Zealand; reprinted from Fig.~6 of~\cite{Maus2009}.  Note the
  higher frequency, locally directionally consistent components in the
  ocean basins, and the lower frequency components on the continents.}
\end{figure}

In this chapter, we focus on the approximation and sampling problem
for subsets of the sphere $\cR \subset S^2$.  First, we are
interested in the representation of signals that are
bandlimited%
\footnote{In the sense that they have compact support
  in the spherical harmonic basis (see App.~\ref{app:fourier}).}
but whose contribution of higher frequencies arises from within a certain
region of interest (ROI), denoted $\cR$ (see, e.g.,
Fig.~\ref{fig:emag2}).  Second, we are interested in
reconstructing such functions from point samples within the ROI.  To
this end, we construct \emph{multiscale} dictionaries of functions that are
bandlimited on the sphere $S^2$ and space-concentrated in contiguous
regions $\cR$.

Our constructions are purely numerical, and are motivated by
subdivision schemes for wavelet constructions on the interval.  By
construction, the functions have low coherency (their pairwise inner
products are bounded in absolute value).  As a result, thanks to new
methods in sparse approximation (see, e.g.,
\cite{Candes2007,Gurevich2008}), they are good candidates for the
approximation and reconstruction of signals that are locally
bandlimited.

\section{Notation and Prior Work}
\label{sec:sltrprior}

We will focus on the important prior work in signal
representation, working our way up to Slepian functions on the
sphere --- the functions upon which our construction is based.

Before proceeding, we first introduce some notation.  For two
sequences (vectors) $\wh{f}, \wh{g} \in \ell^2$ and a subset of the
natural numbers $\Omega$, we refer to the product
${\ip{\wh{f}}{\wh{g}}_\Omega = \sum_{i \in \Omega} \wh{f}_i
\ol{\wh{g}_i}}$.  When $\Omega$ is omitted, we assume the sum is over
all indices.  The norm of $\wh{f}$ is denoted ${{\norm{\wh{f}}_\Omega^2
  = \ip{\wh{f}}{\wh{f}}}_\Omega}$.
For square-integrable functions $f$ and $h$ on a Riemannian manifold
$(\cM,g)$ (that is, $f,h \in L^2(\cM)$), and $\cR$ some subset of
$\cM$, we denote the inner product ${\ip{f}{h}_\cR = \int_\cR f(x)
\ol{h(x)} d\mu(x)}$, where $d\mu(x)$ is the volume element associated
with the metric $g$ (see Apps. \ref{app:diffgeom}--\ref{app:fourier}).
The norm is again defined as ${\norm{f}_\cR = \sqrt{\ip{f}{f}}_\cR}$.  The
type of inner product, manifold, and metric will be clear from the
context. Notation referring specifically to functions on the sphere
$S^2$ and the spherical harmonics may be found in
\S\ref{sec:fouriersphere}.

The prior literature in this area pertains to sampling,
interpolation, and basis functions on the real line, and we focus on
these next.

\subsection{Reconstruction of Bandlimited, Regularly Sampled Signals
  on $\bbR$}

A signal $f(t)$ on the real line is defined as bandlimited when it has
no frequencies higher than some bandlimit $W$.  That is, $\wh{f}(\omega) = 0$ for
$\abs{\omega} > W$.  The simplest version of the sampling theorem, as
given by Shannon \cite[\S II]{Shannon1949}, states that if $f$ is
sampled at regular intervals of with a frequency at or above the
sampling limit $t_s = 1/2W$, it can be exactly reconstructed via
convolution with the sinc low-pass filter:
\beq
\label{eq:sampthm}
f(t) = \sum_{n = -\infty}^\infty x_n \sinc(2 W t - n),
\eeq
where $x_n = f(t_s n)$ are the samples and $\sinc(z) = \sin(\pi z)/(\pi z)$.

Shannon also \emph{heuristically} describes \cite[\S III]{Shannon1949} that if a
bandlimited function $f$, with bandlimit~$W$, is also timelimited to
an interval $[-T,T]$ (that is, all of its samples, taken at
rate~$k_s$, are exactly 0 outside of this interval), then it requires
$N = \floor{2 T W}$ samples on this interval to reconstruct.  Thus a function
that is both time- and bandlimited as described above can be
described using only $N$ numbers---and the dimension of such functions
is $N$, which is called the Shannon number.

Shannon's heuristic definition was based on the (at that time)
well-known fact that a bandlimited, \emph{substantially} spacelimited%
\footnote{From Shannon's paper, we assume ``substantially'' implies
 space-concentrated.}
function can be represented well with~$N$ numbers.  In fact,
it is impossible to construct exactly space- and frequency- limited functions on
the line; see, e.g., the Paley-Wiener theorem%
\footnote{This theorem states that the Fourier transform of a compact
  function is entire.  The only entire function with an accumulation
  point of zeros (e.g., a compactly supported one) is the zero function.}
\cite[Thm.~7.22]{Rudin1973}.
Thus a different technique is needed for the estimation of bandlimited
functions on an interval, from samples only within that
interval.  As we will describe next the optimal representation for
this $N$-dimensional space is given by the ordered basis of
Slepian functions.  We will now focus on the construction of Slepian
functions, and will provide a more rigorous definition of the Shannon
number for the space of bandlimited, space-concentrated
functions.

\subsection{\label{sec:slepconstr1d}An Optimal Basis for Bandlimited Functions on the Interval}

The estimation of bandlimited signals on an interval requires the
construction of an optimal basis to represent such functions.  The
Prolate Spheroidal Wave Functions (PSWF)
\cite{Landau1961,Landau1962,Slepian1961}, also known as Slepian
functions, are one such basis.
The criterion of optimal concentration is with respect to the ratio of
$L^2$ norms.  Let $\cR = [-T,T]$ be the interval on the line, and
$\Omega=[-W,W]$ be the bandlimit in frequency.  The PSWF are
the orthogonal set of solutions to the variational problem
\begin{align}
\label{eq:sleptime1d}
\maximize_{g \in L^2(\bbR)} \quad &\lambda = \frac{\int_\cR g^2(t)
  dt}{\int_\bbR g^2(t) dt}, \\
\text{subject to} \quad & \hat{g}(\omega) = 0, \quad \omega \not\in \Omega,
\nonumber
\end{align}
where for $\alpha=1,2,\ldots$, the value $\lambda_\alpha$ is
achieved by $g_\alpha$, and we impose the orthonormality
constraint $\ip{g_\alpha}{g_{\alpha'}} = \delta_{\alpha \alpha'}$.
Here $\lambda_\alpha$ is the measure of concentration of $g_\alpha$
on~$\cR$; these eigenvalues are bounded: $0 < \lambda_\alpha < 1$.
We use the standard ordering of the Slepian functions, wherein
${\lambda_1 > \lambda_2 > \cdots}$ (the first Slepian function is
the most concentrated within $\cR$, the second is the second most
concentrated, and so on).

The orthonormal solutions to \eqref{eq:sleptime1d} satisfy the
integral eigenvalue problem \cite{Slepian1961,Simons2006b}:
\begin{align}
\label{eq:slep1dtimeint}
\int_\Omega &K_F(\omega,\omega') \wh{g}_\alpha(\omega') d\omega' = \lambda_\alpha
\wh{g}_\alpha(\omega),\ \omega \in \Omega, \quad \text{where}\\
&K_F(\omega,\omega') = \frac{\sin T(\omega-\omega)}{\pi(\omega-\omega')}.
\nonumber
\end{align}

Note that $K_F$ is a smooth, symmetric, positive-definite kernel
with eigenfunctions $\set{\wh{g}_\alpha}_\alpha$
(and associated eigenvalues $\set{\lambda_\alpha}_\alpha$).
Mercer's theorem therefore applies \cite[\S 97,98]{Riesz1956}, and we can write
\beq
\label{eq:DFmercer}
K_F(\omega,\omega') = \sum_{a \geq 1} \lambda_\alpha \wh{g}_\alpha(\omega) \ol{\wh{g}_\alpha(\omega')}
\qquad \text{and} \qquad
K_F(\omega,\omega) = \sum_{a \geq 1} \lambda_\alpha \abs{\wh{g}_\alpha(\omega)}^2.
\eeq

In practice, \eqref{eq:slep1dtimeint} can be solved exactly and
efficiently using a special ``trick'': this integral equation commutes
with a special second-order differential operator and thus its solution
can be found via the solution of a PDE of Sturm-Liouville type.  The
values of the $g_\alpha$'s, or of the $\wh{g}_\alpha$'s, can be evaluated
exactly at any set of points on their domains, via the factorization
of a special tridiagonal matrix~\cite{Simons2010,Slepian1978}.  This
construction is beyond the scope of this chapter.

It has been shown that the Slepian functions are all either very well
concentrated within the interval, or very well concentrated outside
of it.  That is, the eigenvalues of the Slepian functions are
all either nearly $1$ or nearly $0$~\cite[Table~1]{Slepian1961}.
Furthermore the values of any bandlimited signal with concentration
$1-\epsilon$ on the interval, can be estimated to within a squared error
(in $L^2$) bounded by $\epsilon$, for any arbitrarily
$\epsilon > 0$, by approximating this signal using a linear
combination of all the Slepian functions whose eigenvalues are near
unity~\cite[Thm.~3]{Landau1962}, and no fewer~\cite[Thm.~5]{Landau1962}.  

The number of Slepian functions required to approximate a bandlimited,
space-concentrated function can be therefore be calculated by summing
their energies:
\beq
\label{eq:shanline}
N_{T,W} = \sum_{\alpha \geq 1} \lambda_\alpha
= \sum_{\alpha \geq 1} \lambda_\alpha \int_\Omega \abs{\wh{g}_\alpha(\omega)}^2 d\omega
= \int_{\Omega} K_F(\omega,\omega) d\omega = \frac{2 T W}{\pi},
\eeq
where we used \eqref{eq:DFmercer} after swapping the sum and integral.

The difference between this value of $N$ and Shannon's version is due
to changes in normalization of Fourier transforms and the
constant $\pi$ inside the $\sin$ in \eqref{eq:sampthm}.

\subsection{An Optimal Basis for Bandlimited Functions on subregions
  of the Sphere $S^2$}
\label{sec:sltrsphere}

The construction of Slepian functions on the sphere proceeds similarly
to the interval case, with differences due to the compactness of
$S^2$.  Let $\cR$ be a closed and connected subset of $S^2$ and let
the frequency bandlimit
be ${\Omega=\set{(l,m) : 0 \leq l \leq L, -l \leq m \leq l}}$.
The set of square-integrable functions on $S^2$ with bandlimit~$\Omega$, 
which we will call $L^2_\Omega(S^2)$,  is an $(L+1)^2$
dimensional space.  This follows because any $f \in L^2_\Omega(S^2)$
can be written as%
\footnote{The spherical harmonics $Y_{lm}$ and their properties are
  given in App.~\ref{app:fourier}.}
\beq
\label{eq:plm2xyz}
f(\theta,\phi) = \sum_{l=0}^L \sum_{m=-l}^l \wh{f}_{lm} Y_{lm}(\theta,\phi),
\eeq
and therefore
$$
\dim L^2_\Omega(S^2) = \sum_{l=0}^L (2l+1) = (L+1)^2.
$$
For the rest of the chapter, we will refer to the bandlimits ``$\Omega$''
and ``$L$'' interchangeably.

We now proceed as in \cite{Simons2006b}.
Slepian functions concentrated on $\cR$ with bandlimit~$L$ are
the orthogonal set of solutions to the variational problem
\begin{align}
\label{eq:slepsphere}
\maximize_{g \in L^2(S^2)} \quad &\lambda = \frac{\int_\cR g^2(x) d\mu(x)}{\int_{S^2} g^2(x) d\mu(x)}, \\
\text{subject to} \quad & \hat{g}_{lm} = 0, \quad (l,m) \not\in \Omega,
\nonumber
\end{align}
where the value $\lambda_\alpha$ is achieved by $g_\alpha$,
$\alpha = 1,2,\ldots$, and we impose the orthonormality
constraint
\beq
\label{eq:sleponorm}
\int_{S^2} g_\alpha(x) g_{\alpha'}(x) d\mu(x) = \delta_{\alpha \alpha'}.
\eeq
Finally, as before, we use the standard order for the
Slepian functions: ${\lambda_1 \geq \lambda_2 \geq \cdots}$ (i.e., in
decreasing concentration).  Note that in contrast to the 1D case, the
concentration inequalities are not strict due to possible geometric
degeneracy.  Due to orthonormality, the Slepian functions also fulfill
the orthogonality constraint
\beq
\label{eq:slepo}
\int_{\cR} g_\alpha(x) g_{\alpha'}(x) d\mu(x) = \lambda_\alpha
\delta_{\alpha \alpha'}.
\eeq

The problem \eqref{eq:slepsphere} admits an integral
formulation equivalent to \eqref{eq:slep1dtimeint}.  In addition, as the Fourier
basis on $S^2$ is countable, the construction can be reduced to a
matrix eigenvalue problem.

Writing $g$ in \eqref{eq:slepsphere} via its Fourier series, as in
\eqref{eq:plm2xyz}, reduces the problem~to~\cite[Eq.~33]{Simons2010}
\begin{align}
\label{eq:slepspheremat}
\sum_{(l',m') \in \Omega}& K_{lm,l'm'} \wh{g}_{l'm'} = \lambda \wh{g}_{lm},
\quad (l,m) \in \Omega, \\
\text{where } &K_{l'm',lm} = \ip{Y_{lm}}{Y_{l'm'}}_\cR.
\nonumber
\end{align}
From now on, we will denote by $K$ the $(L+1)^2 \x (L+1)^2$
matrix with coefficients $K_{l'm',lm}$.  The vector $K \wh{g}$ is thus
an $(L+1)^2$-element vector, indexed by coefficients $(l,m)$, with $(K
\wh{g})_{lm} = \sum_{(l',m') \in \Omega} K_{lm,l'm'} \wh{g}_{l'm'}$
for any $(l,m) \in \Omega$.  The matrix $K$ is called the spectral
localization kernel; it is real, symmetric and positive-definite.
We can now rewrite \eqref{eq:slepsphere} as a proper eigenvalue
problem: we solve
\beq
\label{eq:slepsphereeig}
K \wh{G} = \wh{G} \Lambda
\eeq
where $\wh{G} = \left(\wh{g}_1~\cdots~\wh{g}_{(L+1)^2}\right)$ is the
orthonormal matrix of eigenvectors and the
diagonal matrix $\Lambda$ is composed of eigenvalues in decreasing
order: ${\Lambda = \diag{\lambda_1~\lambda_2~\cdots~\lambda_{(L+1)^2}}}$.
As $K$ is positive-definite and symmetric, its eigenvectors,
${\wh{g}_\alpha,~\alpha=1,2,\cdots,(L+1)^2}$, form an orthogonal set 
that spans the space $L^2_\Omega(S^2)$.  The spatial functions can be
easily calculated via \eqref{eq:plm2xyz} and efficient recursion
formulas for the spherical harmonics.

The Shannon number, $N_{\abs{\cR'},L}$ is again defined as the sum of the
eigenvalues,
\beq
\label{eq:shansphsum}
N_{\abs{\cR'},L} = \sum_{\alpha = 1}^{(L+1)^2} \lambda_\alpha = \text{Tr}(\Lambda)
= \text{Tr}(\wh{G}).
\eeq
It can also be shown (see,~e.g.,~\cite[\S4.3]{Simons2006b}) that the
Shannon number is given by a formula similar~to~\eqref{eq:shanline}:
\beq
\label{eq:shansph}
N_{\abs{\cR'},L} = \frac{\abs{\cR'}}{4 \pi} (L+1)^2,
\eeq
where $\abs{\cR'} = \int_{x \in \cR'} d\mu(x)$ is the area of $\cR'$.

\subsection{\label{sec:calcslepD}Calculation of the Spectral Localization Kernel $K$}
In contrast to the 1D construction of
\S\ref{sec:slepconstr1d}, and with the exception of the cases when the
domain $\cR$ has azimuthal and/or equatorial symmetry (see, e.g.,
\cite{Simons2006b}), there is no known differential operator that
commutes with the spatial integral version of~%
\eqref{eq:slepspheremat}.  Nevertheless, thanks to the discrete nature
of the problem, we can find tractable solutions using simple
numerical analysis.

We now briefly discuss the calculation of the symmetric
positive-definite matrix $K$ of \eqref{eq:slepspheremat}; basing the
discussion on the work in \cite[\S4.2]{Simons2007}.  As the
calculation of the largest eigenvalues and associated eigenvectors
of $K$ can be performed using standard efficient iterative solvers,
the main computational complexity lies in constructing the matrix
itself.

The problem of calculating $K$ reduces to numerically
estimating the constrained spatial inner product between spherical
harmonics,
\beq
\label{eq:YlmYlpmp}
K_{lm,l'm'} = \int_\cR Y_{lm}(x) Y_{l'm'}(x) d\mu(x),
\eeq
when we are given the (splined) boundary $\partial\cR$ (a closed simple
curve in $S^2$).  This is performed via a semi-analytic integration
over a grid.  We first find the northernmost and southernmost
colatitudes, $\theta_n$ and $\theta_s$, of $\partial \cR$.  For a
given colatitude $\theta$, we can find the westernmost and easternmost
points, $\phi_{e}(\theta)$ and $\phi_{w}(\theta)$ of $\partial
\cR$.  If $\cR$ is nonconvex, there will be some $I(\theta)$ number of
such points, which we denote $\phi_{e,i}(\theta)$~and~%
${\phi_{w,i}(\theta), i=1,\ldots,I(\theta)}$.
The integral \eqref{eq:YlmYlpmp} thus becomes
\begin{align}
\label{eq:Ylmp1}
K_{lm,l'm'} &= \int_{\theta_s}^{\theta_n} X_{lm}(\theta)
  X_{l'm'}(\theta) \Phi_{mm'}(\theta) \sin \theta d\theta, \\
\label{eq:Ylmp2}
\text{where} \quad \Phi_{mm'}(\theta) &=
 \sum_{i=1}^{I(\theta)} \int_{\phi_{e,i}(\theta)}^{\phi_{w,i}(\theta)}
 \sfN_{m} \sfN_{m'} \sfS_{m}(\phi) \sfS_{m'}(\phi) d\phi, \\
\sfN_{m} &= \sqrt{2 - \delta_{0m}},
\nonumber \\
\text{and} \quad \sfS_{m}(\phi) &=
\begin{cases}
\cos m \phi & \text{if } m \leq 0, \\
\sin m \phi & \text{if } m > 0.
\end{cases}
\nonumber
\end{align}
Above, $X_{lm}$ is the colatitudinal portion of $Y_{lm}$;
for more details, see App.~\ref{app:fourier}.

Equation \eqref{eq:Ylmp1} is calculated via Gauss-Legendre
integration using the Nystr\"{o}m method, first by discretizing the
colatitudinal integral into $J$ points $\set{\theta_j}_{j=1}^J$, and
then evaluating the integral \eqref{eq:Ylmp2} analytically 
at each point $\theta_j$.  The discretization number $J$
for the numerical integration is chosen large enough that the
\emph{spatial-domain} eigenfunctions, as calculated via
the diagonalization of $K$ and application of \eqref{eq:plm2xyz},
satisfy the Slepian orthogonality relations \eqref{eq:sleponorm} and
\eqref{eq:slepo} to within machine precision.

A second way involves the expansion of $\ip{Y_{lm}}{Y_{l'm'}}$ into
spherical harmonics --- the expansion coefficients are the
quantum-mechanical Wigner~$3j$ functions, which can be calculated
recursively.  The remaining integral over a single spherical harmonic
can be performed recursively in the manner of \cite{Paul1978}, which
is exact.  See also \cite{Eshagh2009}.

\section{\label{sec:sltrtree}Multiscale Trees of Slepian Functions}

We now turn our focus to numerically constructing a dictionary $\cD$ of
functions that can be used to approximate mostly low bandwidth signals
on the sphere.  As we will see in the next section, this dictionary
allows for the reconstruction of a variety of signals from their point
samples.

To construct $\cD$, we first need some definitions.  Let $\cR \subset
S^2$ be a simply connected subset of the sphere.  Let $L$ be the
bandwidth: the dictionary $\cD$ will be composed of functions
bandlimited to harmonic degrees $0 \leq l \leq L$.  The construction
is based on a binary tree.  Choose a positive integer (the node
capacity) $n_b$; each node of the tree corresponds to the first $n_b$
Slepian functions with bandlimit $L$ and concentrated on a subset 
$\cR' \subset \cR$.  The top node of the tree corresponds to the entire
region $\cR$, and each node's children correspond to a division of
$\cR'$ into two roughly equally sized subregions (the subdivision scheme
will be described soon).  As the child nodes will be concentrated in
disjoint subsets of $\cR'$, all of their corresponding functions and
children are effectively incoherent.

\begin{figure}[h!]
\centering
\includegraphics[width=.85\linewidth]{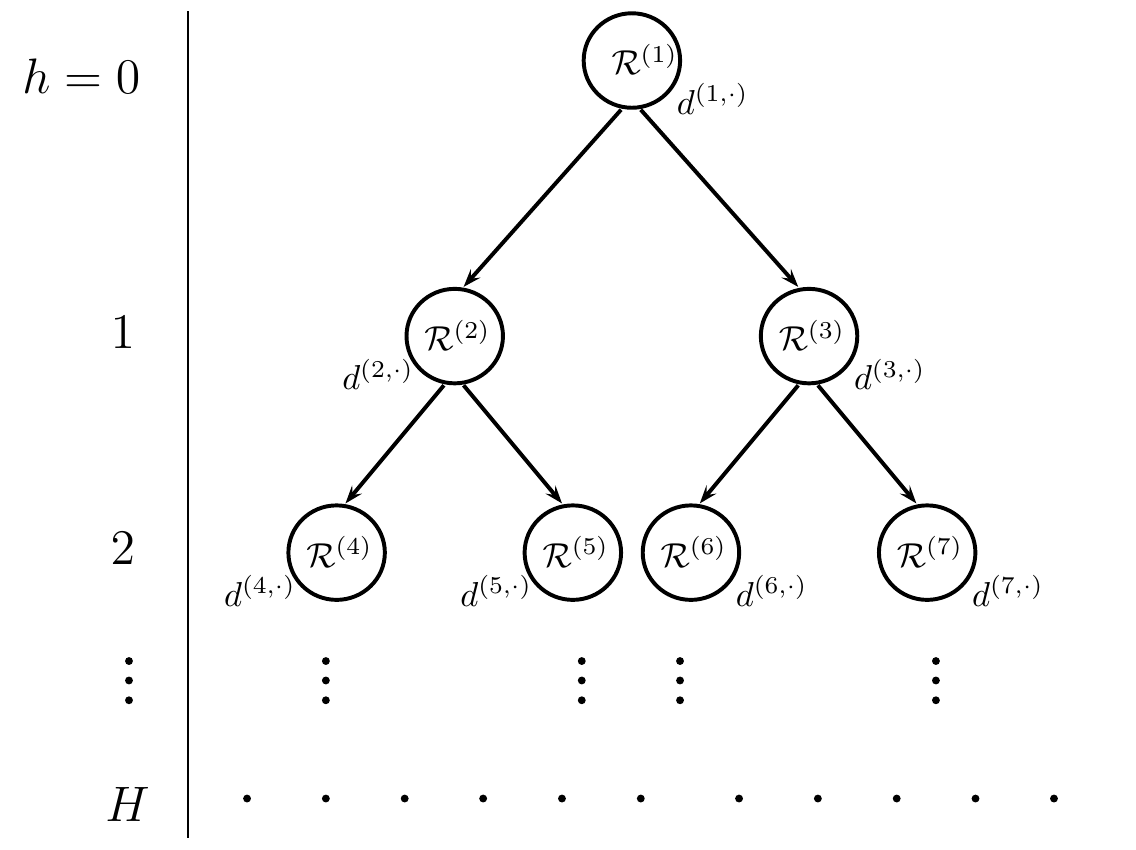}
\caption[The binary tree subdivision scheme and associated
  dictionary~$\cD_{\cR,L}$.]{\label{fig:sltrdiag}The binary tree
  subdivision scheme and associated dictionary~$\cD_{\cR,L}$.  We
  define the top-level region $\cR$ as $\cR^{(1)}$ and the generic
  subsets $\cR'$ as $R^{(j)}$.}
\end{figure}

We now fix a height $H$ of the tree: the number of times to subdivide
$\cR$.  The height is determined as the maximum number of binary
subdivisions of $\cR$ that can have $n_b$ well concentrated functions.
That is, we find the minimum integer $H$ such that 
$$
n_b \geq N_{2^{-H} \abs{\cR}, L}
$$
with the solution
$$
H = \ceil{\log_2\left(\frac{\abs{\cR}}{4 \pi} \frac{(L+1)^2}{n_b}\right)}.
$$
A complete binary tree with height $H$ has $2^{H+1}-1$ nodes, so from
now on we will denote the dictionary 
$$
\cD_{\cR,L,n_b} =
\set{d^{(1,1)},d^{(1,2)},\cdots,d^{(1,n_b)},\cdots,d^{\left(2^{H+1}-1,1\right)},
  \cdots,d^{\left(2^{H+1}-1,n_b\right)}}
$$
as the set of
$\abs{\cD_{\cR,L,n_b}} = n_b \, (2^{H+1}-1)$ functions thus constructed
on region $\cR$ with bandlimit $L$ and node capacity
$n_b$.
Fig.~\ref{fig:sltrdiag} shows the tree diagram of the
subdivision scheme.  We use the standard enumeration of nodes wherein
node $(j,\cdot)$ is subdivided into child nodes $(2j,\cdot)$ and
$(2j+1,\cdot)$, and at a level $0 \leq h \leq H$, the nodes are
indexed from $2^h \leq j \leq 2^{h+1}-1$.  More specifically, for
${j = 1,2,\ldots}$, we have ${\cR^{(j)} = \cR^{(2j)} \cup \cR^{(2j+1)}}$.
Furthermore, letting $g^{\cR'}_\alpha$ be the $\alpha$'th Slepian
function on $\cR'$ (the solution to \eqref{eq:slepsphere} with
concentration region $\cR'$), we have that
$$
d^{(j,\alpha)} = g^{\cR^{(j)}}_\alpha.
$$

Fig.~\ref{fig:sleptrafrica} shows an example of the construction when
$\cR$ is the African continent.  Note how, for example, $d^{(4,1)}$
and $d^{(5,1)}$ are the first Slepian functions associated with
the subdivided domains of $\cR^{(2)}$.
\begin{figure}[h!]
\centering
\begin{subfigure}[b]{.29\linewidth}
\includegraphics[width=\linewidth]{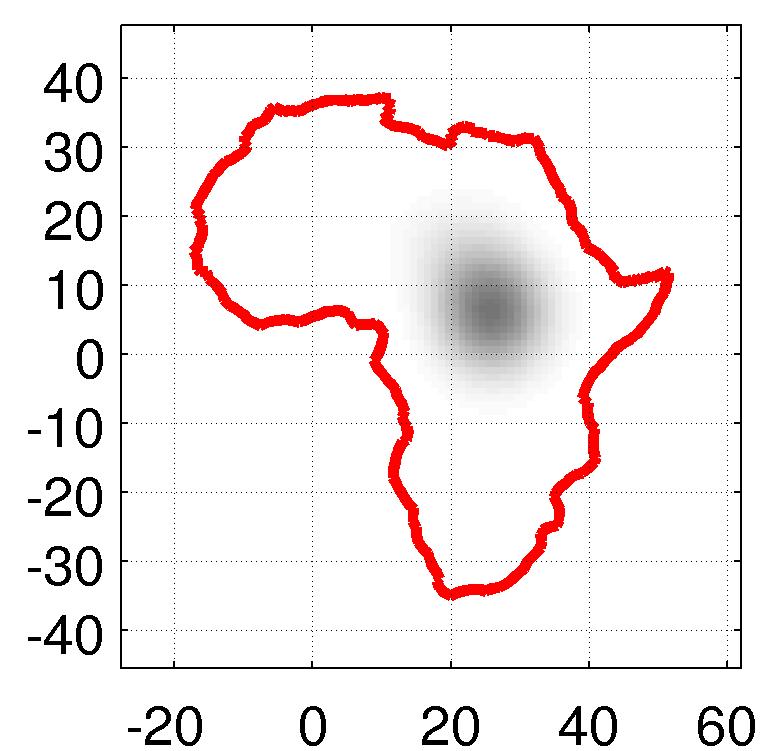}
\caption{$d^{(1,1)}$}
\end{subfigure}
\begin{subfigure}[b]{.29\linewidth}
\includegraphics[width=\linewidth]{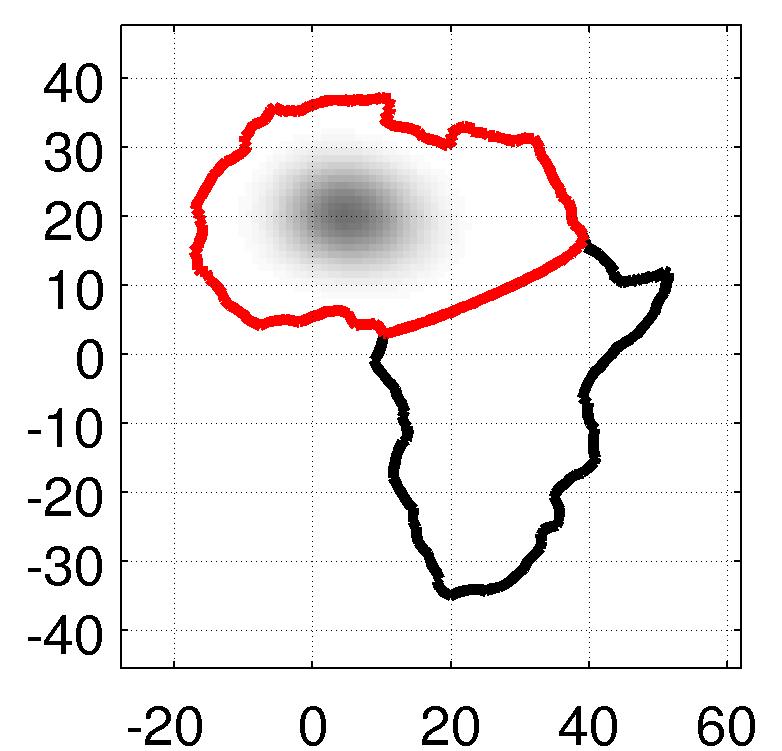}
\caption{$d^{(2,1)}$}
\end{subfigure}
\begin{subfigure}[b]{.29\linewidth}
\includegraphics[width=\linewidth]{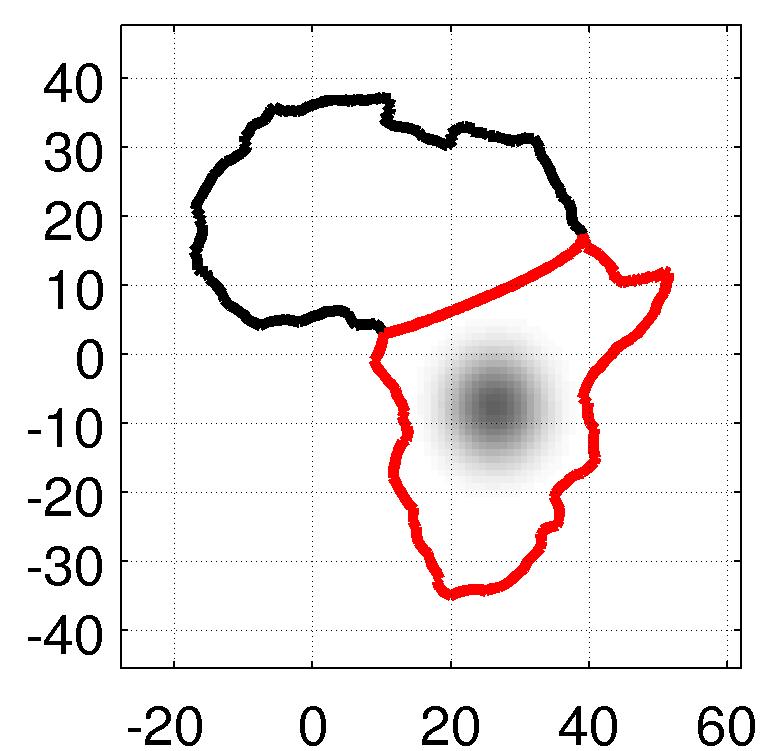}
\caption{$d^{(3,1)}$}
\end{subfigure}
\\
\begin{subfigure}[b]{.29\linewidth}
\includegraphics[width=\linewidth]{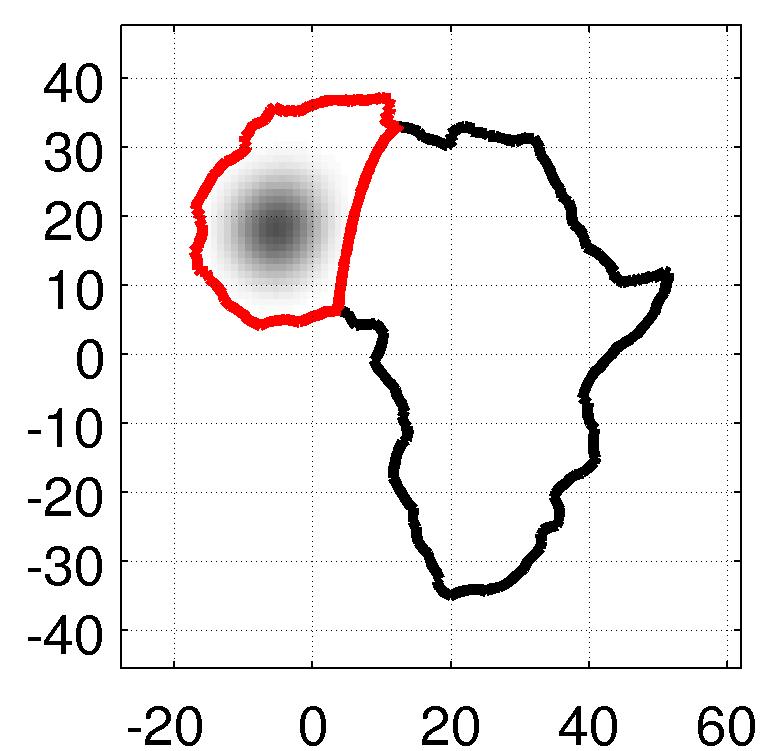}
\caption{$d^{(4,1)}$}
\end{subfigure}
\begin{subfigure}[b]{.29\linewidth}
\includegraphics[width=\linewidth]{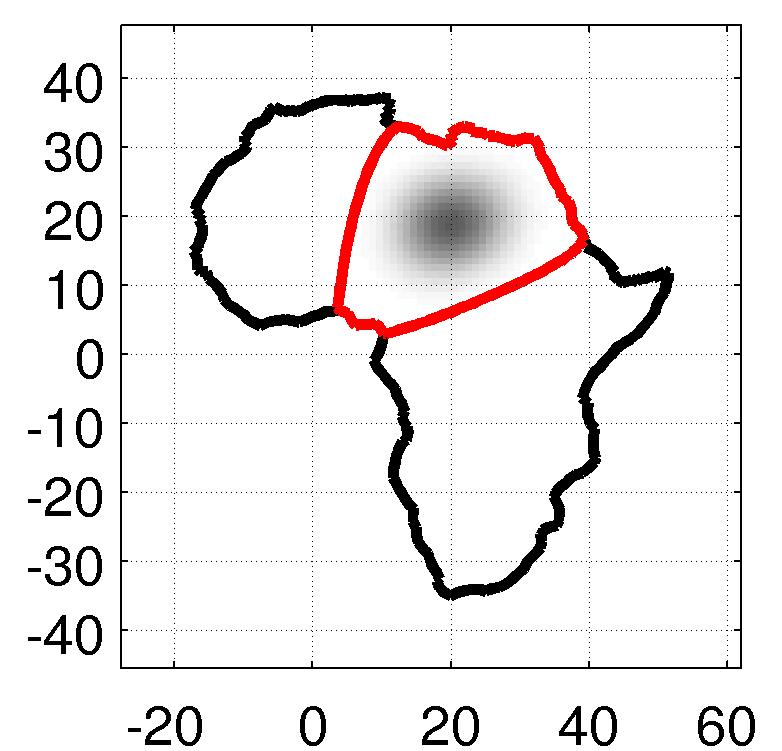}
\caption{$d^{(5,1)}$}
\end{subfigure}
\begin{subfigure}[b]{.29\linewidth}
\includegraphics[width=\linewidth]{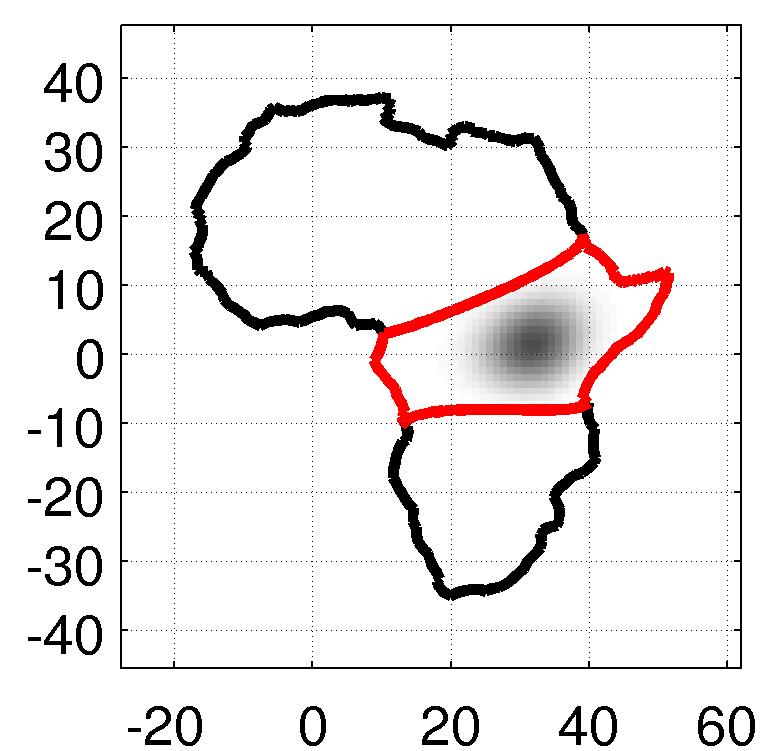}
\caption{$d^{(6,1)}$}
\end{subfigure}
\\
\begin{subfigure}[b]{.29\linewidth}
\includegraphics[width=\linewidth]{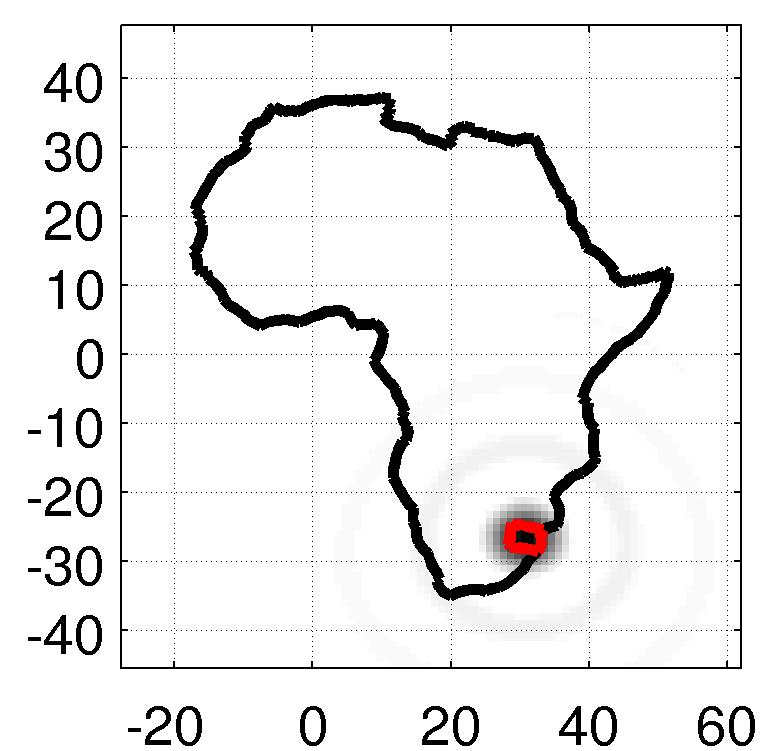}
\caption{$d^{(250,1)}$}
\end{subfigure}
\begin{subfigure}[b]{.29\linewidth}
\includegraphics[width=\linewidth]{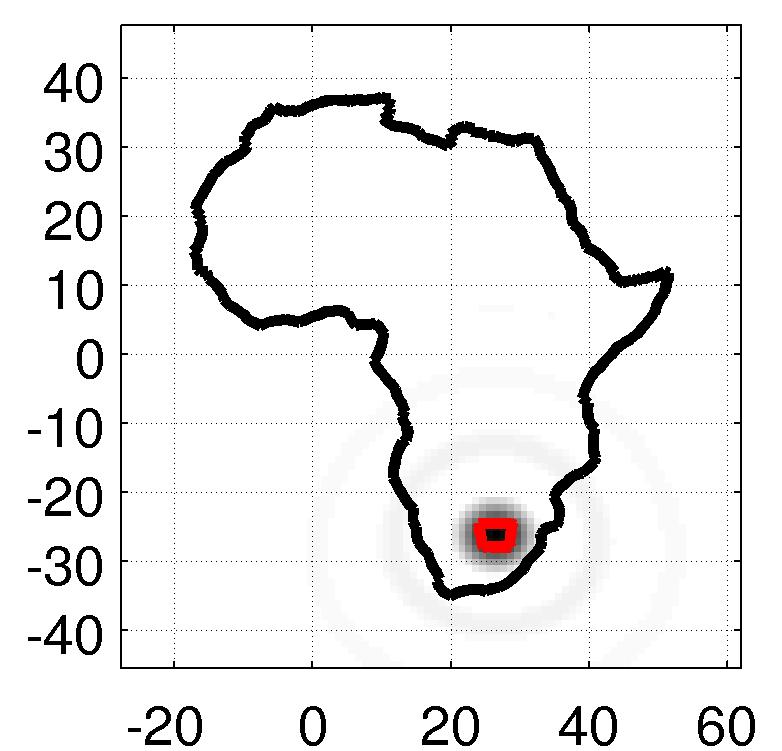}
\caption{$d^{(251,1)}$}
\end{subfigure}
\begin{subfigure}[b]{.29\linewidth}
\includegraphics[width=\linewidth]{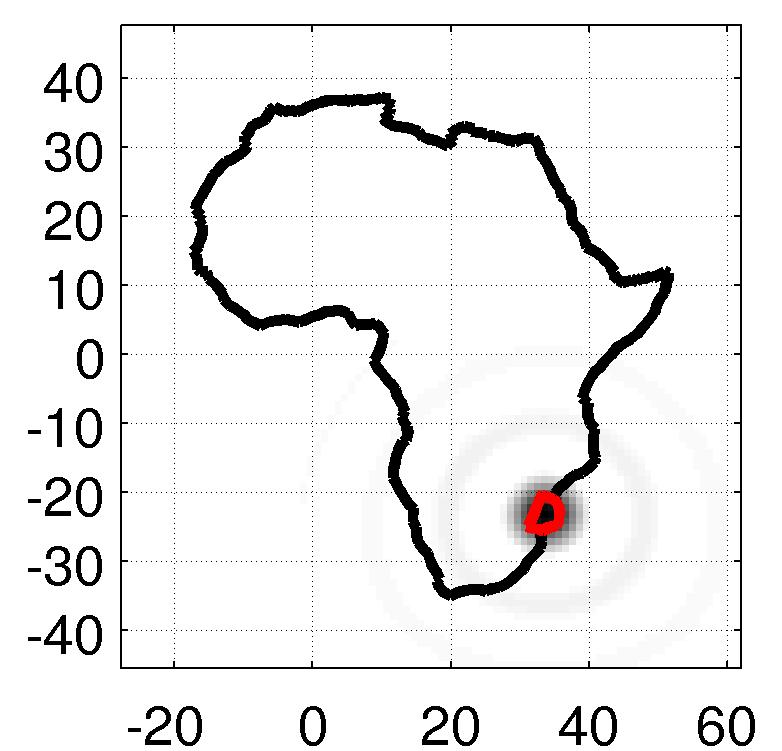}
\caption{$d^{(252,1)}$}
\end{subfigure}
\\
\begin{subfigure}[b]{.29\linewidth}
\includegraphics[width=\linewidth]{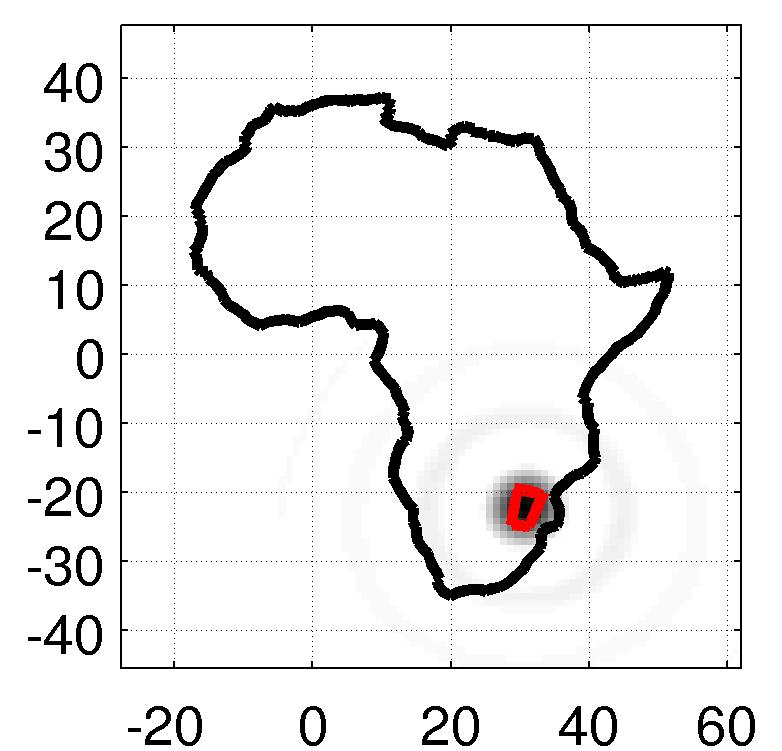}
\caption{$d^{(253,1)}$}
\end{subfigure}
\begin{subfigure}[b]{.29\linewidth}
\includegraphics[width=\linewidth]{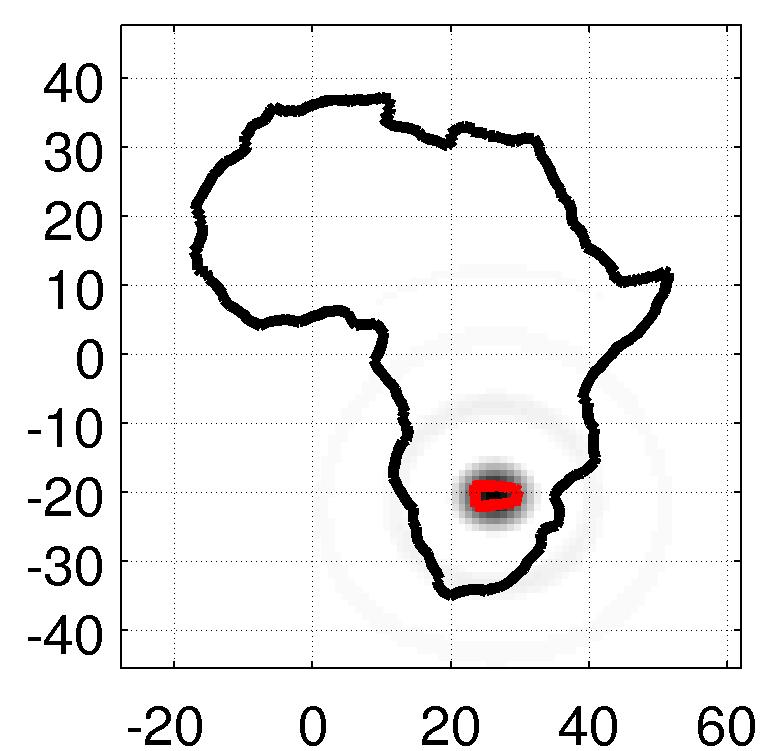}
\caption{$d^{(254,1)}$}
\end{subfigure}
\begin{subfigure}[b]{.29\linewidth}
\includegraphics[width=\linewidth]{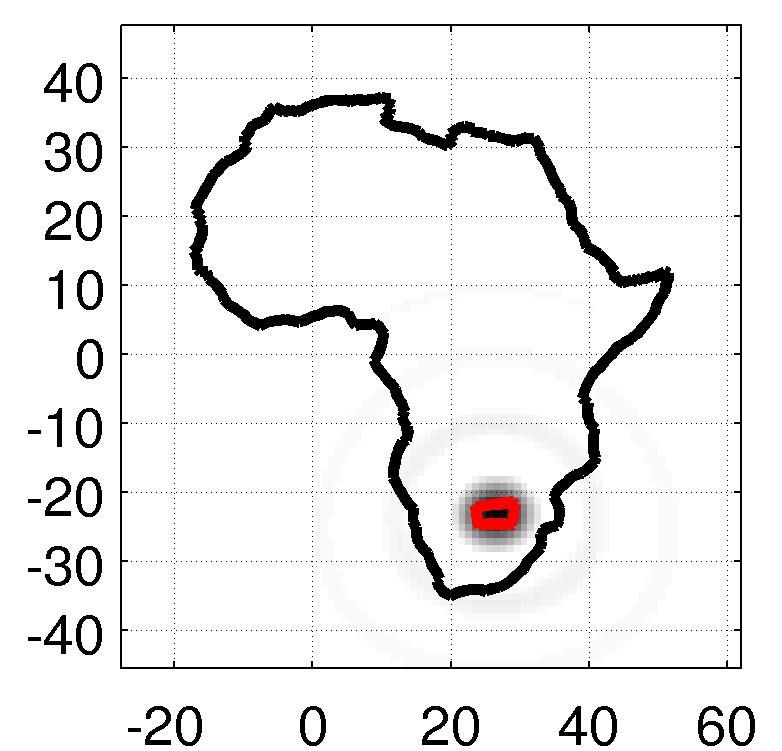}
\caption{$d^{(255,1)}$}
\end{subfigure}
\caption[Slepian Tree Dictionary $\cD_{\text{Africa},36,1}$]%
{\label{fig:sleptrafrica}Slepian Tree Dictionary
  $\cD_{\text{Africa},36,1}$ (having size $\abs{\cD}=255$);
  functions $d^{(1,1)}$ through $d^{(6,1)}$ and $d^{(250,1)}$ through
  $d^{(255,1)}$.
  The x-axis is longitude, the y-axis is colatitude.  Regions of
  concentration $\set{\cR^{(i)}}$ are outlined.}
\end{figure}

To complete the top-down construction, it remains to decide how to
subdivide a region $\cR'$ into equally sized subregions.
For roughly circular connected domains, the first Slepian function has
no sign changes, and the second Slepian function has a single
zero-level curve that subdivides the region into approximately equal
areas; when $\cR'$ is a spherical cap, the subdivision is
exact~\cite{Simons2006b}.  We thus subdivide a region $\cR'$ into the
two nodal domains associated with the second Slepian function on that
domain; see Fig.~\ref{fig:sleptrafricasecond} for a visualization of
the subdivision scheme as applied to the African continent.

\begin{figure}[h!]
\centering
\begin{subfigure}[b]{.29\linewidth}
\includegraphics[width=\linewidth]{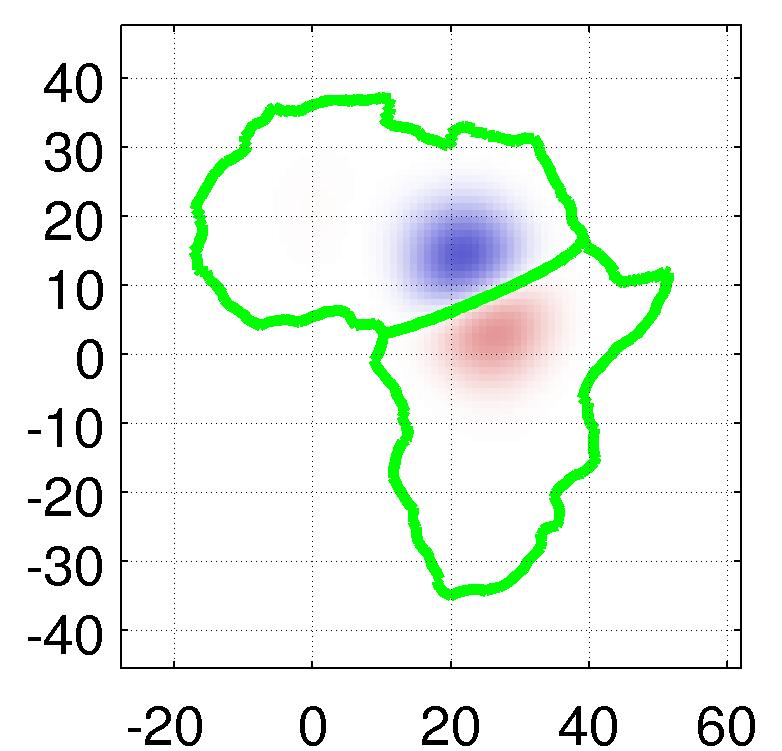}
\caption{$d^{(1,2)}$}
\end{subfigure}
\begin{subfigure}[b]{.29\linewidth}
\includegraphics[width=\linewidth]{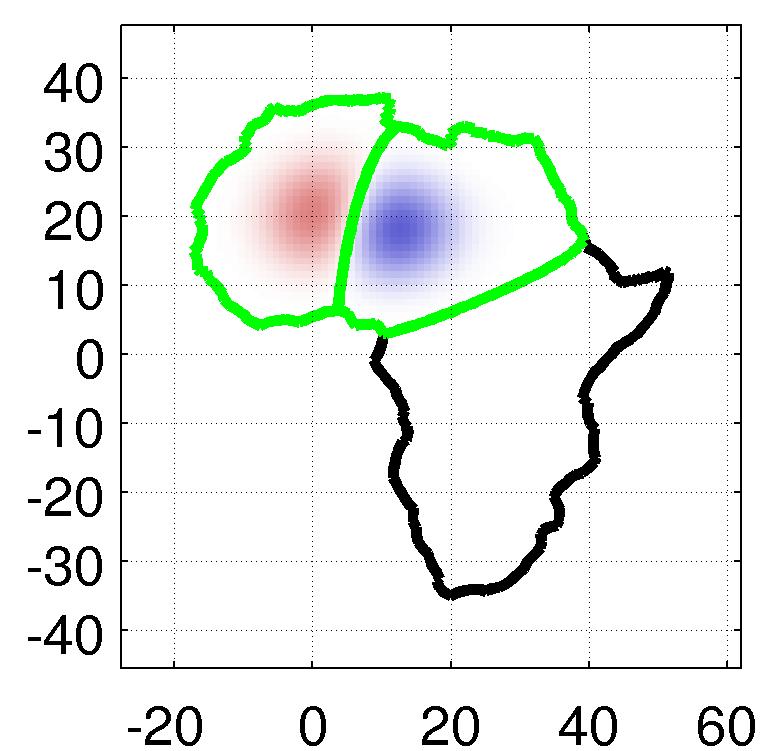}
\caption{$d^{(2,2)}$}
\end{subfigure}
\begin{subfigure}[b]{.29\linewidth}
\includegraphics[width=\linewidth]{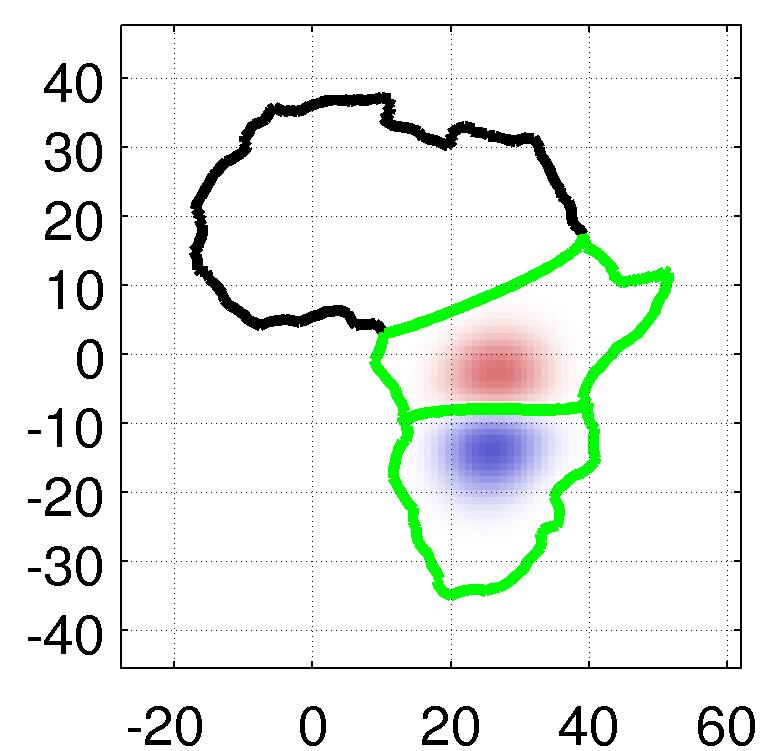}
\caption{$d^{(3,2)}$}
\end{subfigure}
\\
\begin{subfigure}[b]{.29\linewidth}
\includegraphics[width=\linewidth]{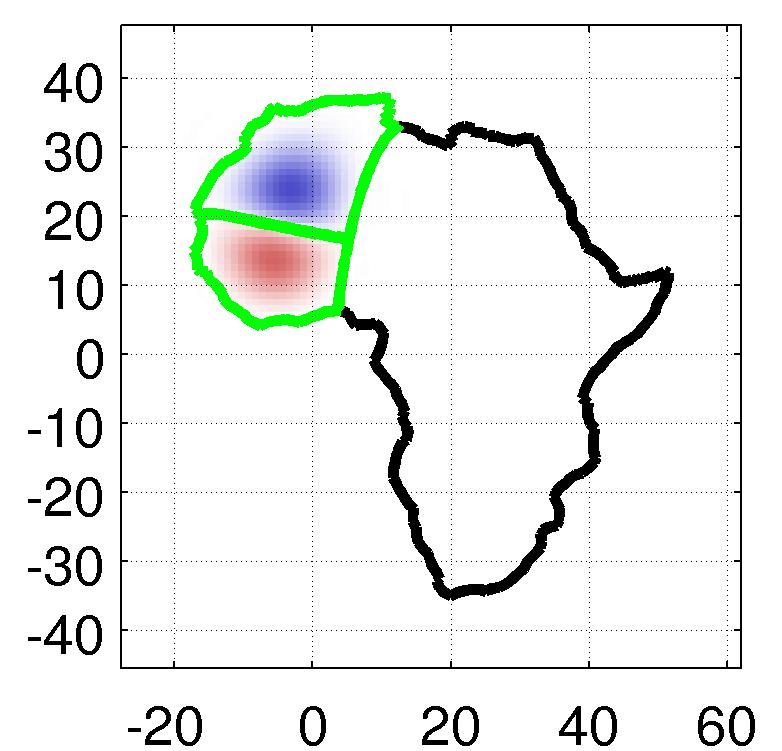}
\caption{$d^{(4,2)}$}
\end{subfigure}
\begin{subfigure}[b]{.29\linewidth}
\includegraphics[width=\linewidth]{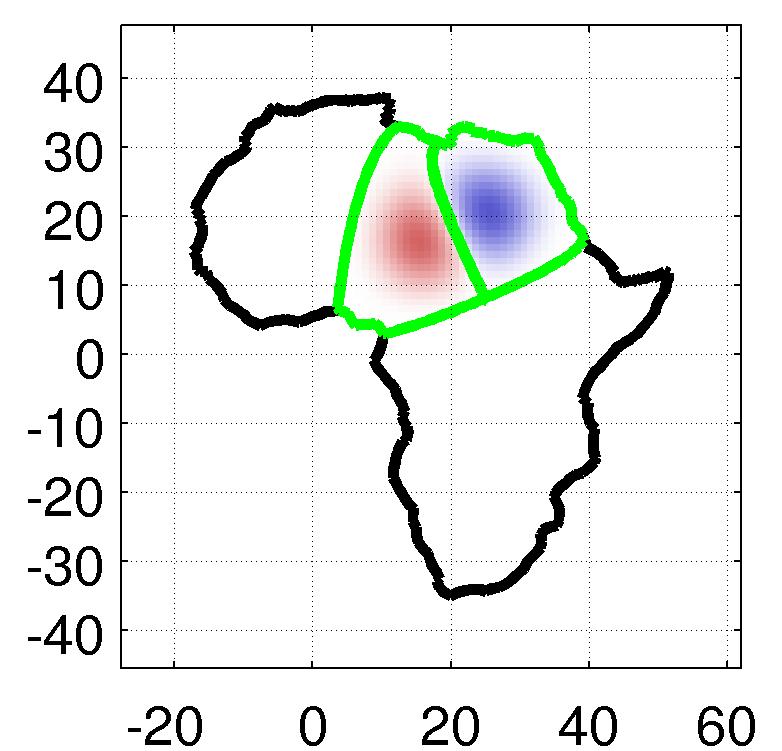}
\caption{$d^{(5,2)}$}
\end{subfigure}
\begin{subfigure}[b]{.29\linewidth}
\includegraphics[width=\linewidth]{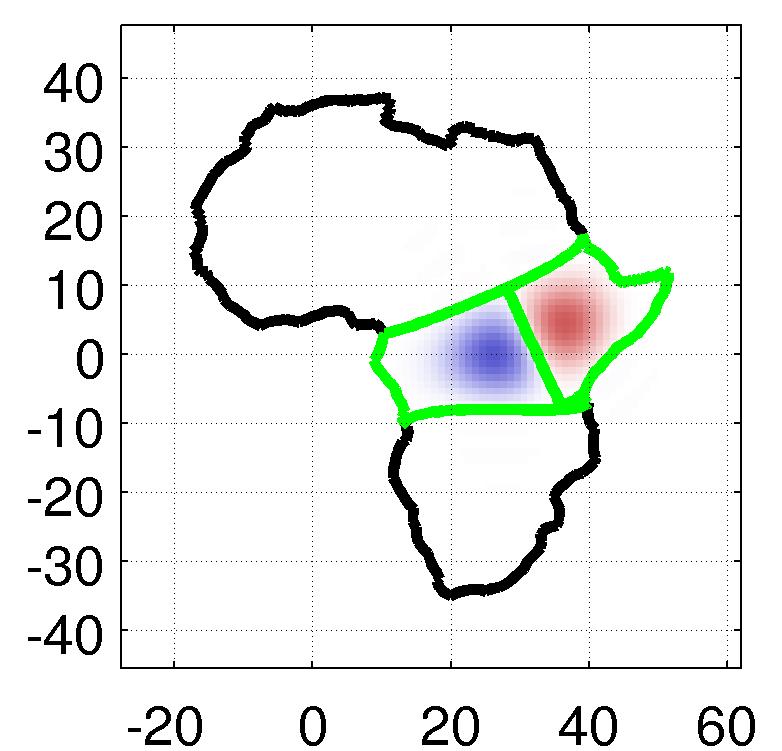}
\caption{$d^{(6,2)}$}
\end{subfigure}
\caption[Second Slepian functions associated with the regions in Figs.~\ref{fig:sleptrafrica}(a-f)]%
{\label{fig:sleptrafricasecond}Second Slepian functions associated
  with the regions in Figs.~\ref{fig:sleptrafrica}(a-f).
  The x-axis is longitude, the y-axis is colatitude.  Regions of
  concentration, and the central dividing contour (the zero-level set),
  are drawn in green.  Blue and red represent the sign of the
  Slepian function values.}
\end{figure}

\section{\label{sec:sltrprop}Concentration, Range, and Incoherence}
The utility of the Tree construction presented above depends on its
ability to represent bandlimited functions in a region $\cR$, and its
efficacy at reconstructing functions from point samples in $\cR$.
These properties, in turn, reduce to questions of concentration,
range, and incoherence:
\begin{itemize}
\item Dictionary $\cD$ is concentrated in $\cR$ if its
  functions are concentrated in $\cR$.
\item The range of dictionary $\cD$ is the subspace spanned by its
  elements.  Ideally, the basis formed by the first
  $N$ Slepian functions on $\cR$ is a subspace of the range~of~$\cD$.
\item When $\cD$ is incoherent, pairwise inner products of
  its elements have low amplitude: pairs of functions are
  approximately orthogonal.  This, in turn, is a useful property when
  using $\cD$ to estimate signals from point samples, as we will see
  in the next section.
\end{itemize}
In this section, we provide several techniques for analyzing these
properties for a given dictionary $\cD$, providing numerical
examples as we go along.

Unlike the eigenvalues of the Slepian functions on $\cR$, not all of
the eigenvalues of the elements of $\cD_{\cR}$ reflect their
concentration within this top-level (parent) region.  We thus define
the modified concentration value
\beq
\nu^{(j,\alpha)} = \int_{\cR} \left[d^{(j,\alpha)}(x)\right]^2 d\mu(x).
\eeq
Recalling that $\norm{d^{(j,\alpha)}}_2 = 1$, the value
$\nu$ is simply the percentage of energy of the
$(j,\alpha)^{\text{th}}$ element that is concentrated in $\cR$.  This value is
always larger than the element's eigenvalue,
which relates its fractional energy within the smaller subset $\cR^{(j)}$.
Figs.~\ref{fig:slepafricaeigvals},~\ref{fig:sleptrafricaeigvals},~%
and~\ref{fig:sleptrafricaconc} compare the eigenvalues of the Slepian
functions on the African continent with those of the Tree
construction, as well as with numerically calculated%
\footnote{Calculations performed using gridded Gauss-Legendre
integration similar to that in \S\ref{sec:calcslepD}.}
values of $\nu$.

\begin{figure}[h!]
\centering
\includegraphics[width=.49\textwidth]{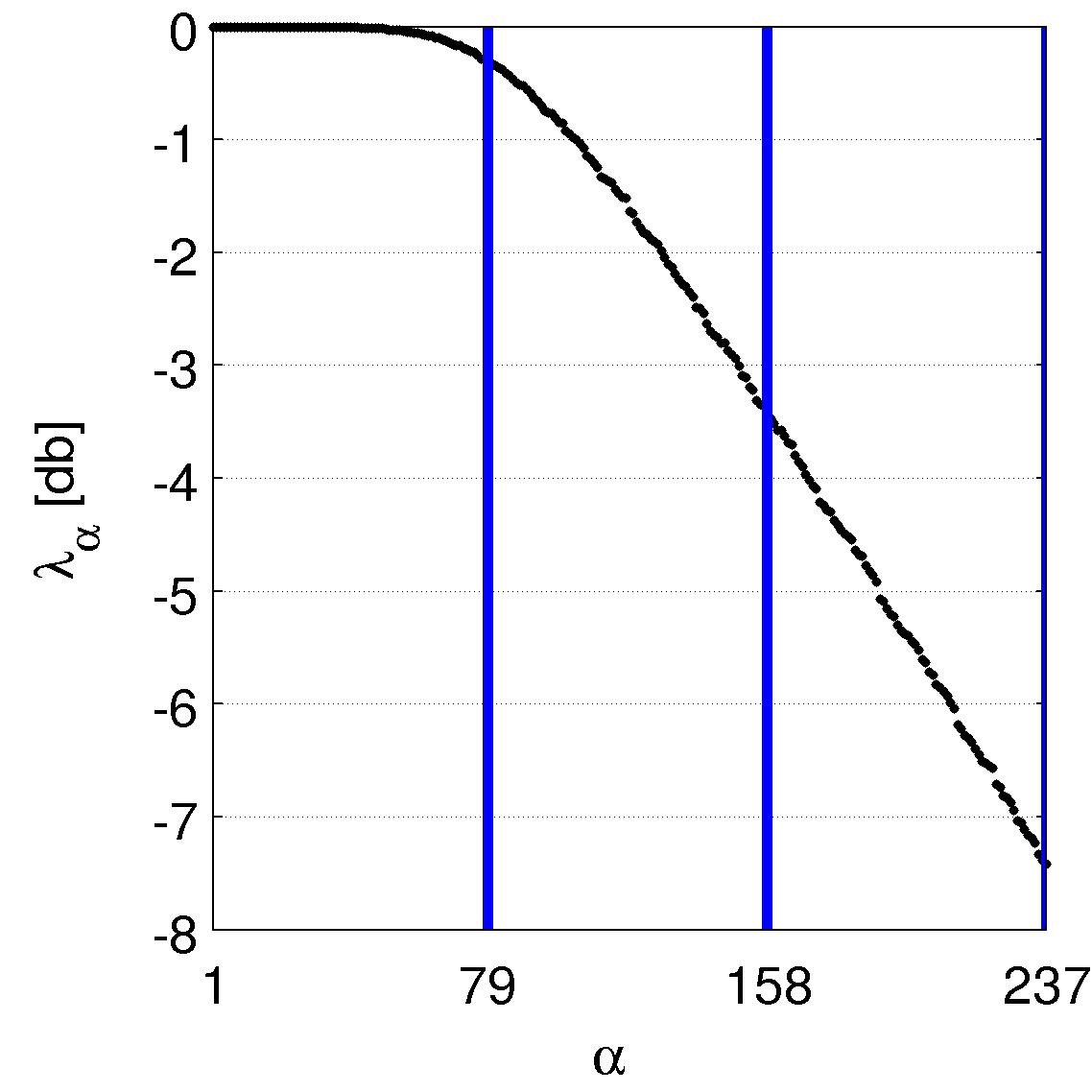}
\caption[Eigenvalues of the first $3 N_{\text{Africa},36}$ Slepian
  functions]{\label{fig:slepafricaeigvals}Eigenvalues of the first
  $N_{\text{Africa},36}$ Slepian functions for the African continent,
  normalized and on a base-10 log scale.  Blue lines
  correspond to integer multiples of the Shannon number~%
  ${N_{\text{Africa},36} \approx 79}$.}
\end{figure}

\begin{figure}[h!]
\centering
\includegraphics[width=.49\textwidth]{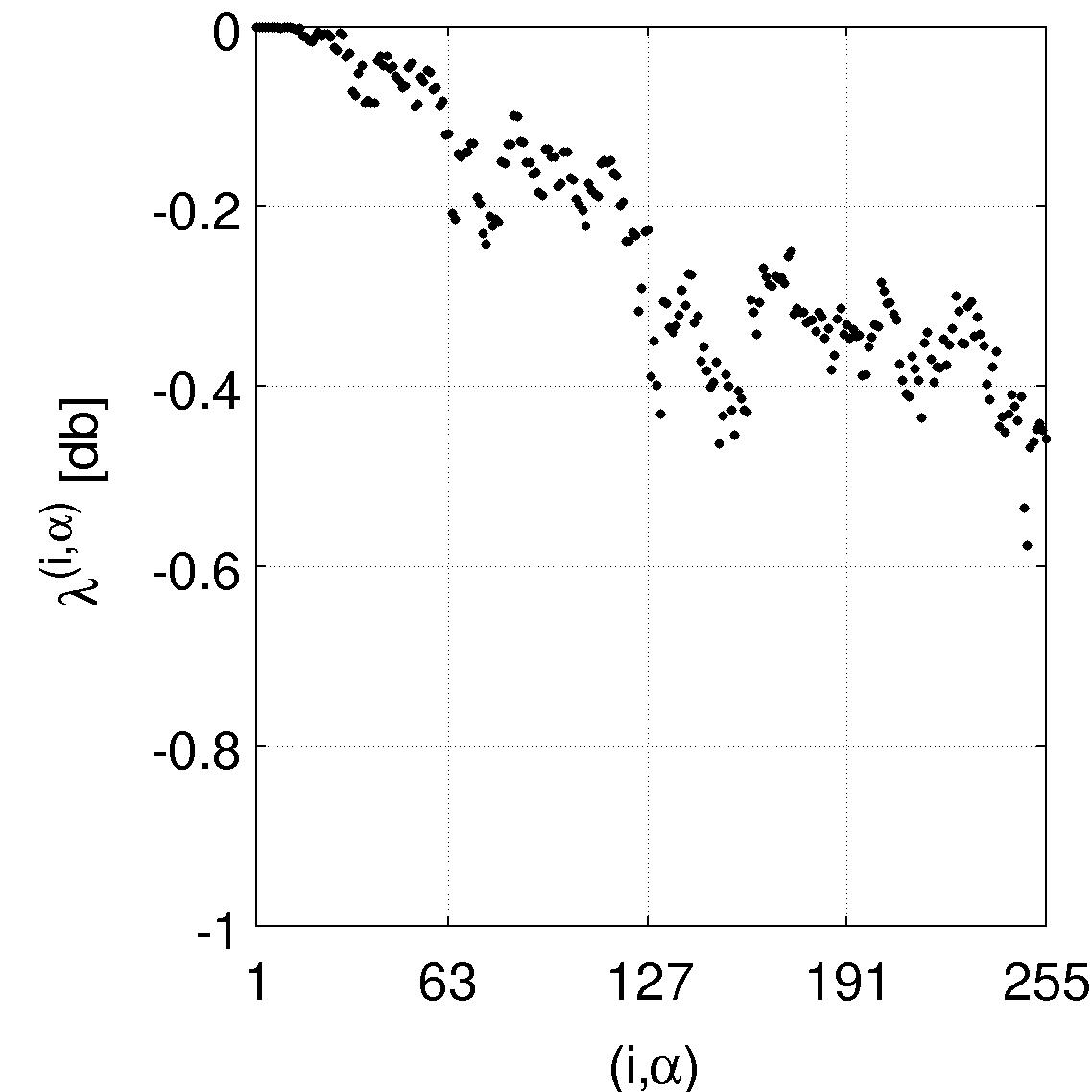}
\includegraphics[width=.49\textwidth]{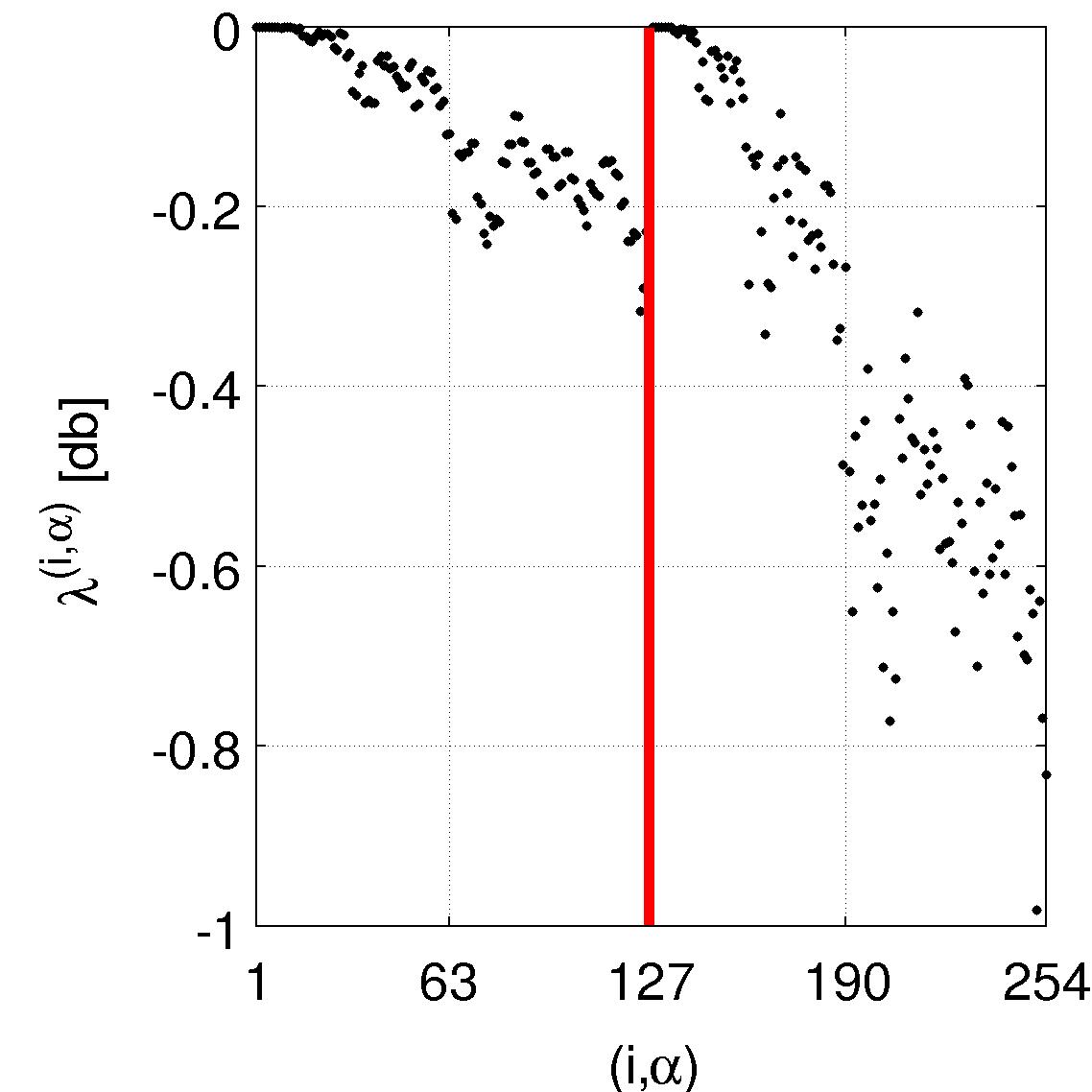}
\caption[Eigenvalues of the dictionaries ${\cD}_{\text{Africa},36,1}$ and
  ${\cD}_{\text{Africa},36,2}$]{\label{fig:sleptrafricaeigvals}Eigenvalues of
  the dictionary elements of ${\cD}_{\text{Africa},36,1}$ (left) and
  ${\cD}_{\text{Africa},36,2}$ (right), normalized and on a base-10
  log scale.  On the right pane, the thick line separates the 127 elements with
  $\alpha=1$ (left) and $\alpha=2$ (right).}
\end{figure}

\begin{figure}[h!]
\centering
\includegraphics[width=.49\textwidth]{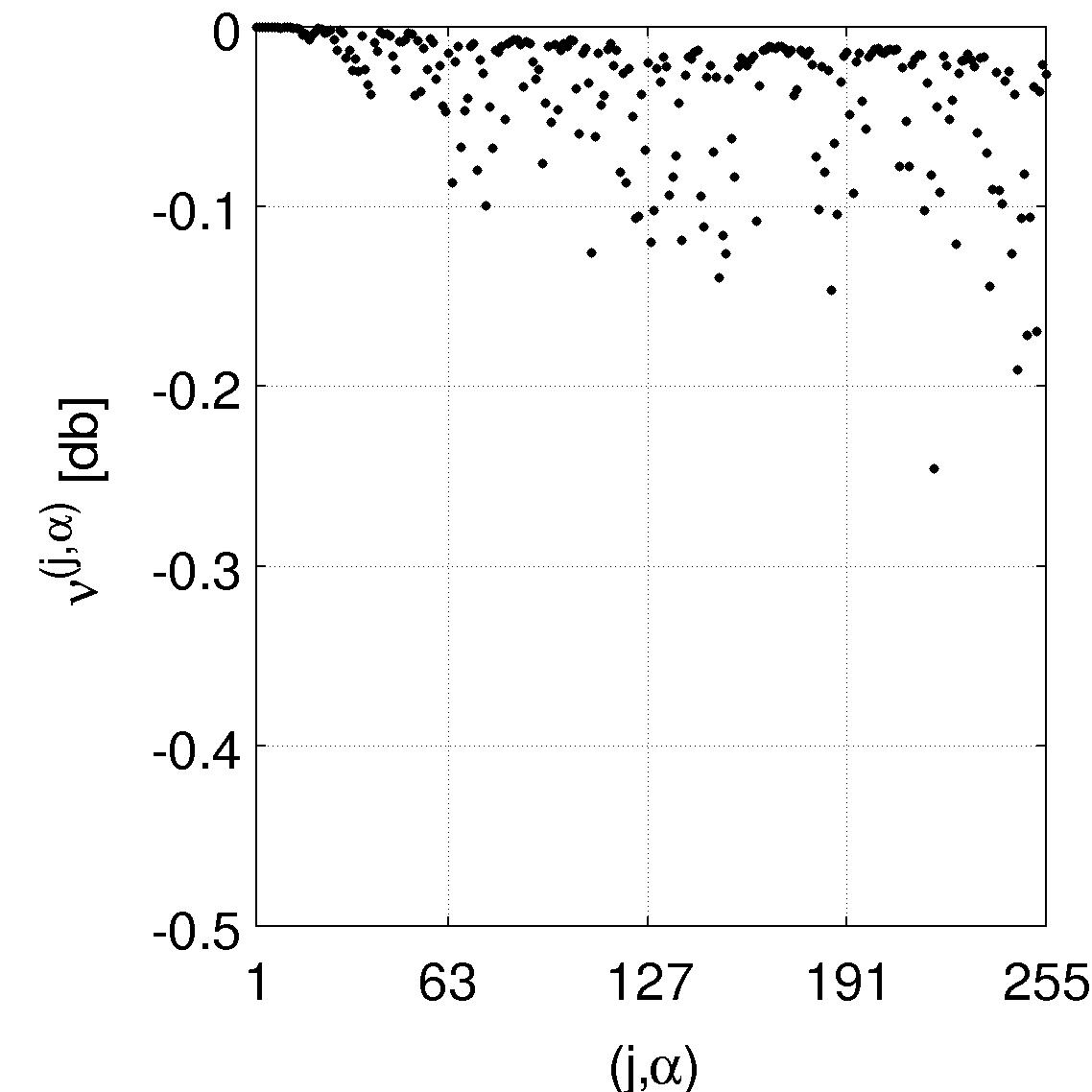}
\includegraphics[width=.49\textwidth]{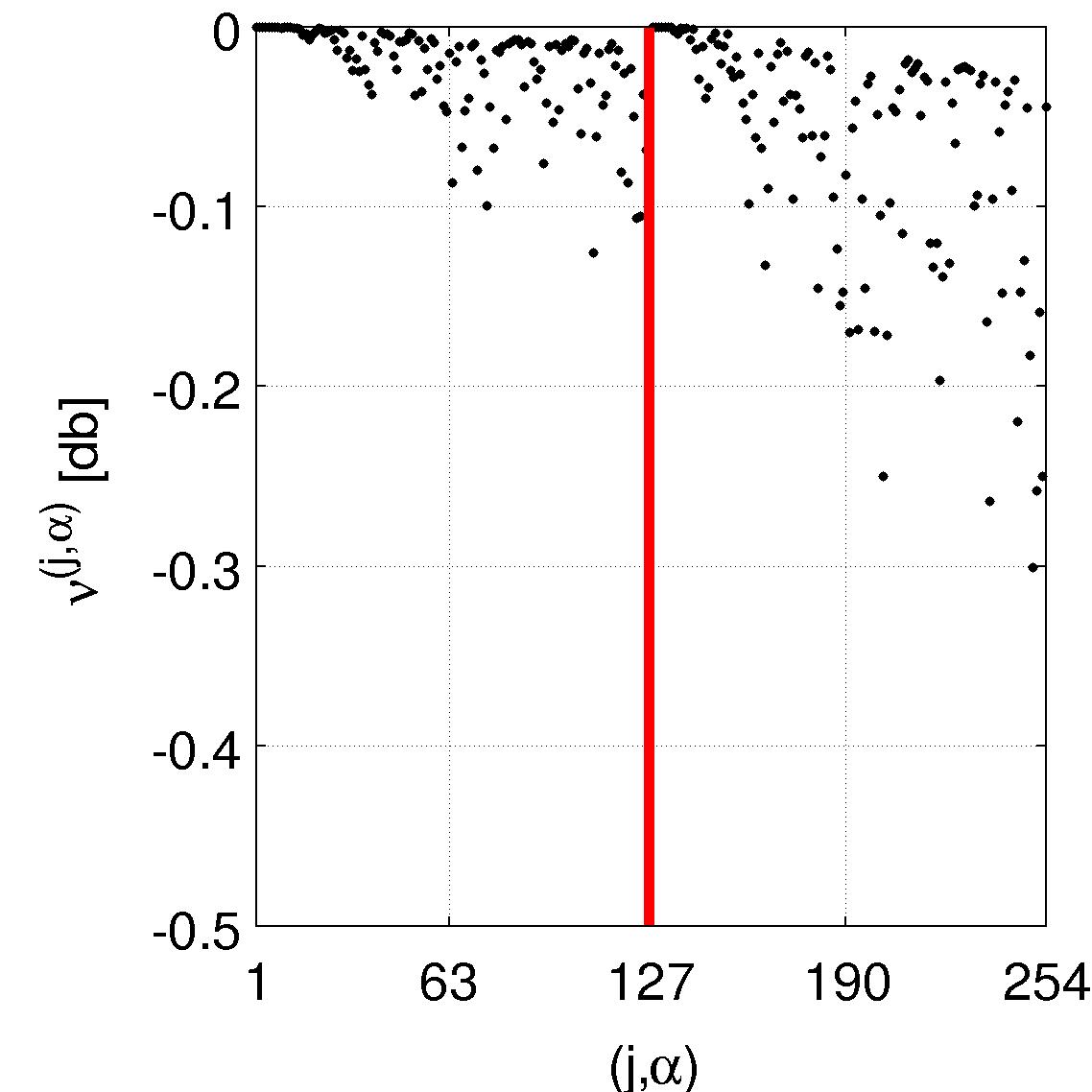}
\caption[Concentration values $\nu$ of the dictionaries ${\cD}_{\text{Africa},36,1}$ and
  ${\cD}_{\text{Africa},36,2}$]{\label{fig:sleptrafricaconc}Concentrations
  $\nu$ of the dictionary elements of
  ${\cD}_{\text{Africa},36,1}$ (left) and ${\cD}_{\text{Africa},36,2}$
  (right), normalized and on a base-10 log scale.
  On the right pane, the thick line separates the 127 elements with
  $\alpha=1$ (left) and $\alpha=2$ (right).}
\end{figure}

The size of dictionary $\cD_{\cR,L,n_b}$ is generally larger than
the Shannon number $N_{\abs{\cR},L}$ for any node capacity $n_b$, and
as a result it cannot form a proper basis (it has too many
functions).  Ideally, then, we require that elements of the range of
the dictionary spans the space of the first $N_{\abs{\cR},L}$ Slepian
functions.  We discuss two visual approaches for determining if this
is the case.

Though the spatial nature of the construction makes it
clear that dictionary elements tend to cover the entire domain $\cR$,
we also investigate the spectral energies of these elements; and
compare them with the energies of the $N_{\abs{\cR},L}$ Slepian
functions on $\cR$, which ``essentially'' form a basis for bandlimited
functions in $\cR$. The spectral energy density of a function $f$ is
given for each degree~${l=0,\ldots,L}$~by~\cite[Eq.~38]{Dahlen2008}:
$$
S^f_l = \frac{1}{2l+1} \sum_{m=-l}^l \abs{\wh{f}_{lm}}^2.
$$

Figs. \ref{fig:slepafricaspec} and \ref{fig:sleptrafricaspec} compare
the power spectra of the Slepian functions on the African continent
with the power spectra of two dictionaries given by the tree
construction.  While the Slepian functions are concentrated within
specific ranges of the harmonics, the tree construction leads to
spectra that depend on the degree.  The dictionary elements with
$\alpha=1$ tend to either contain mainly low-frequency harmonics or,
for elements concentrated on smaller regions, have a more flat
harmonic response within the bandlimit.  Dictionary elements
associated with higher order Slepian functions have a more pass-band
response when concentrated on larger regions and a more flat response
within the higher frequencies of the bandlimit when concentrated on
smaller regions.  So while it is clear that the Slepian functions span
the bandlimited frequencies, the spectral amplitude plots do not relay
this as clearly for the tree construction.

\begin{figure}[ht]
\centering
\includegraphics[width=.49\textwidth]{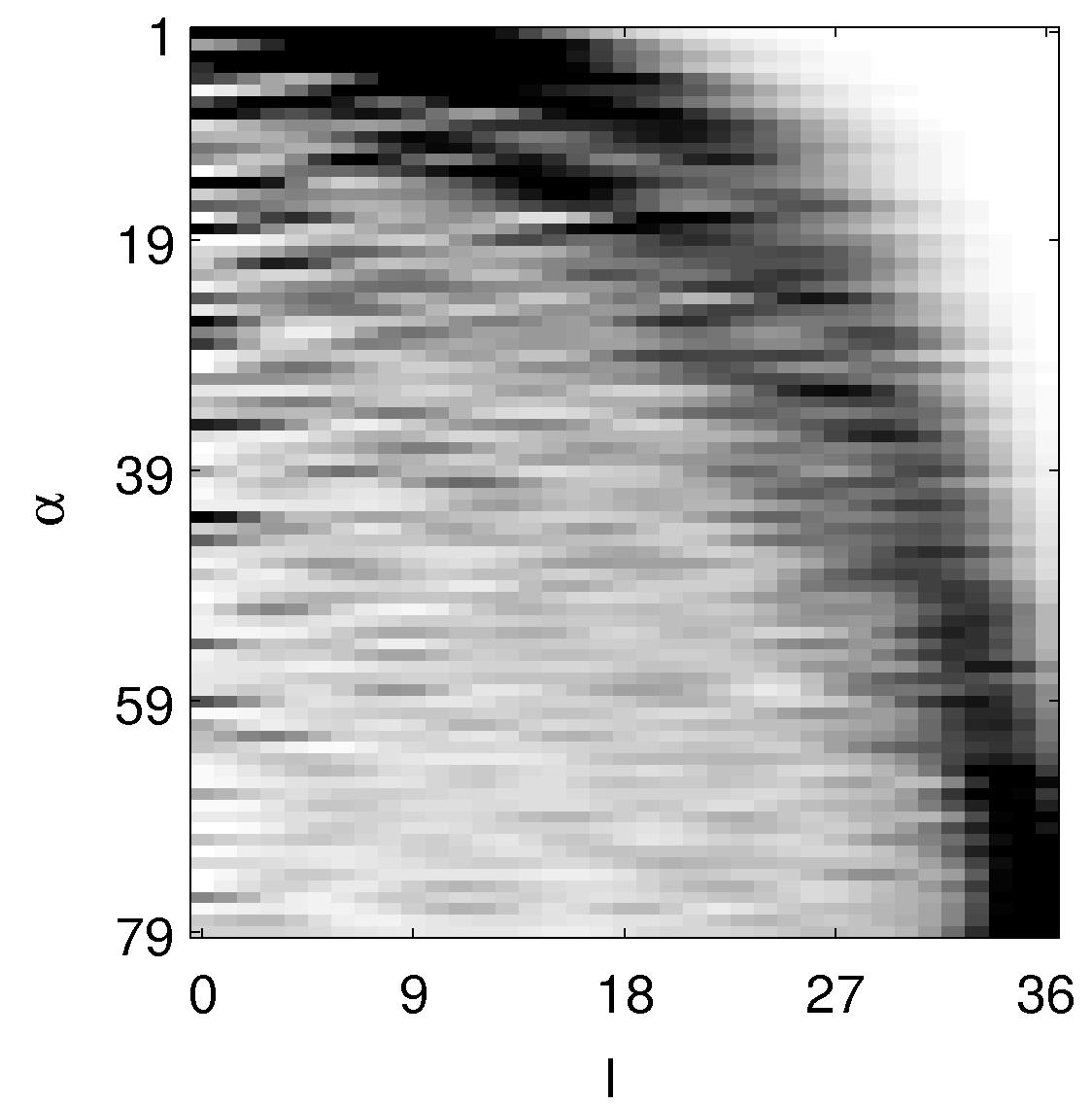}
\caption[Power Spectrum $S_l$ of the first $N_{\text{Africa},36}$ Slepian
  functions]{\label{fig:slepafricaspec}Power Spectrum $S_l$ of the first
  $N_{\text{Africa},36}$ Slepian functions for the African continent.
  The x-axis is degree $l$, the y-axis is Slepian function index
  $\alpha$.  Values are between 0 (white) and 1 (black).}
\end{figure}

\begin{figure}[ht]
\centering
\includegraphics[width=.49\textwidth]{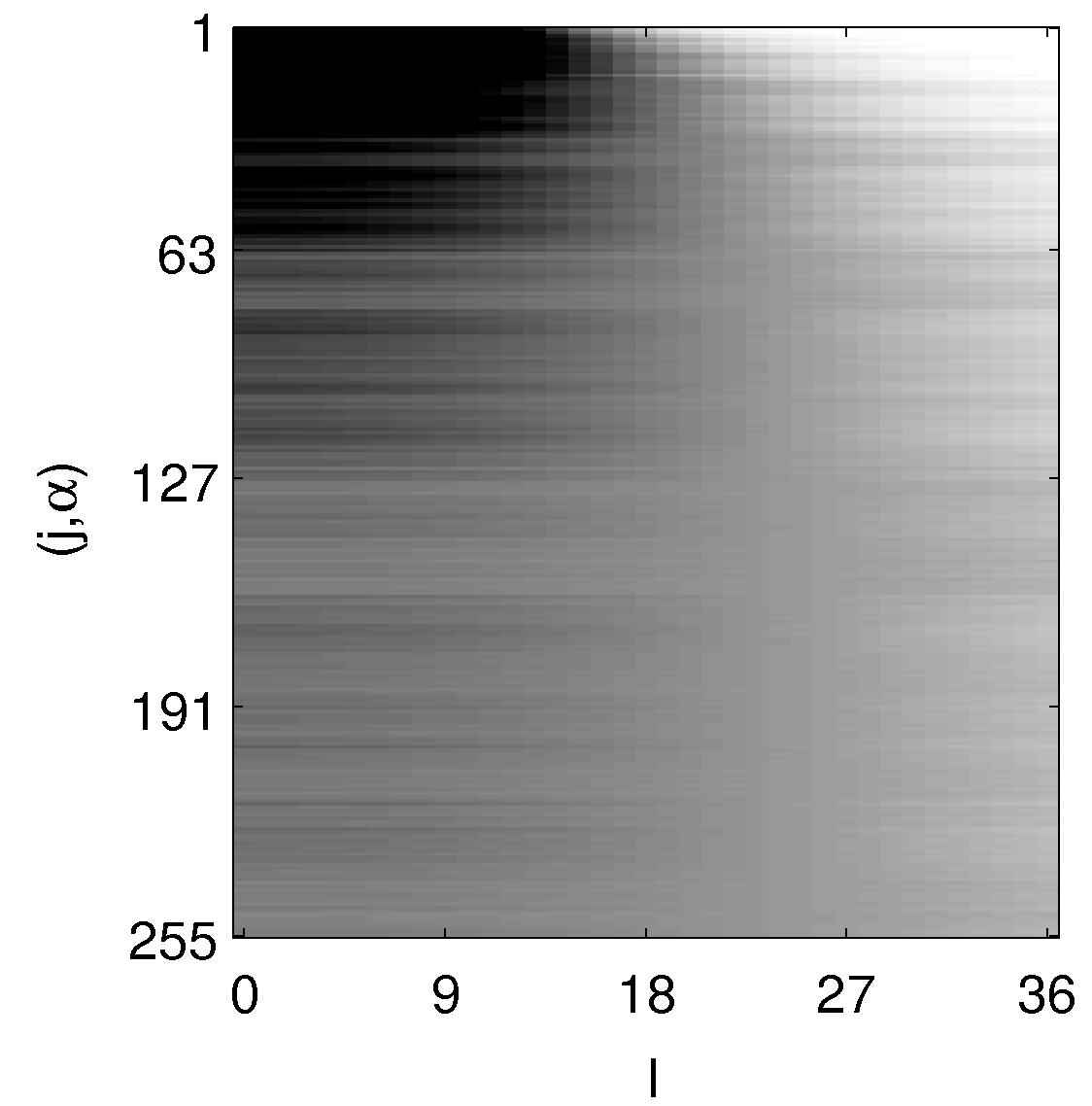}
\includegraphics[width=.49\textwidth]{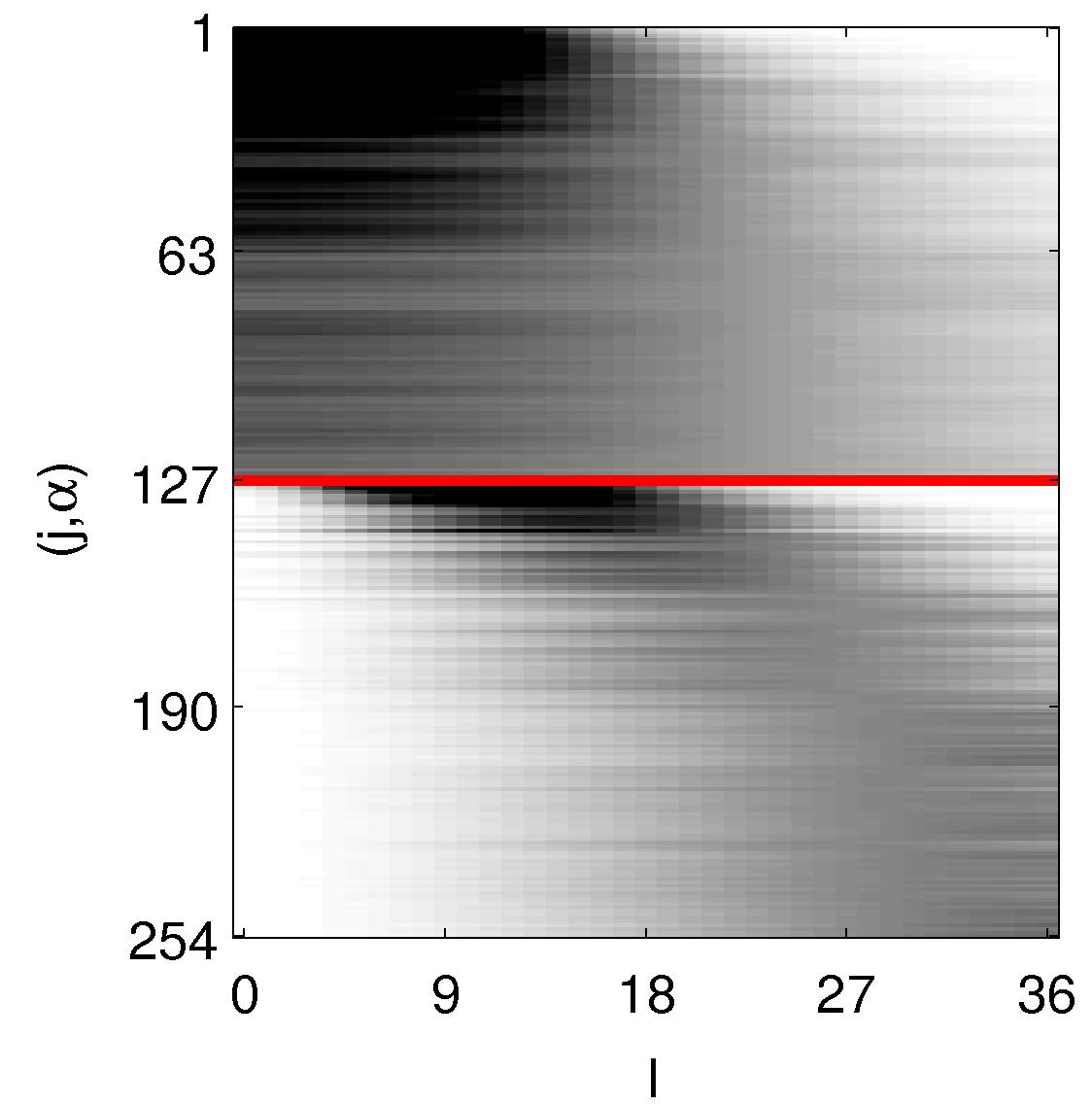}
\caption[Power Spectra $S_l$ of the
  elements of the dictionaries ${\cD}_{\text{Africa},36,1}$ and
  ${\cD}_{\text{Africa},36,1}$.]{\label{fig:sleptrafricaspec}Power
  Spectra $S_l$ of the elements of the dictionaries
  ${\cD}_{\text{Africa},36,1}$ (left) and ${\cD}_{\text{Africa},36,1}$
  (right).  The x-axis is degree $l$, the y-axis is dictionary index
  $(j,\alpha)$.   
  On the right pane, the red line separates the 127 elements with
  $\alpha=1$ (top) and $\alpha=2$ (bottom).
  Values are between 0 (white) and 1 (black).}
\end{figure}

A complementary answer to the question of the range of $\cD$ is given by
studying the angle between the subspaces spanned by elements of $\cD$
and the first $\alpha$ functions of the Slepian basis, for
${\alpha=1,2,\ldots}$~\cite{Wedin1983}.  The angle between two subspaces
$A$ and $B$ of $\bbC^n$ (having possibly different dimensions), is
given by the formula
\begin{align}
\angle(A,B) &= \min\left(\sup_{x \in A} \angle(x,B), \sup_{y \in B} \angle(y,A)\right), \quad \text{where} \\
\angle(x,B) &= \inf_{y \in B} \angle(x,y) = \cos^{-1} \frac{\norm{P_B x}}{\norm{x}}.
\end{align}

Here, $P_B$ is the orthogonal projection operator onto space $B$ and
all of the norms are with respect to the given subspace.
The angle $\angle(A,B)$ is symmetric, nonnegative, and zero iff
$A \subset B$ or $B \subset A$; furthermore it is invariant under
unitary transforms applied to both on $A$ and $B$ (such as Fourier
synthesis), and admits a triangle inequality.  It is thus a good
indicator of distance between two subspaces; furthermore, it can be
calculated accurately\footnote{See MATLAB function \texttt{subspace}.}
given two matrices whose columns span $A$ and $B$.
We can therefore identify the matrices $A$ and $B$ with the subspaces
spanned by their columns.

Let $\left(\wh{G}_{\cR,L}\right)_{1:\alpha}$ denote the matrix containing the
first $\alpha$ column vectors of $\wh{G}$ from \eqref{eq:slepsphereeig}.
Further, let $\wh{D}$ denote the $(L+1)^2 \x \abs{\cD_{\cR,L,n_b}}$ matrix
containing the spherical harmonic representations of the elements of
$\cD_{\cR,L,n_b}$.  Fig.~\ref{fig:slepvstrdist} shows
$\angle(\wh{G}_{1:\alpha}, \wh{D})$ for ${\cR=\text{Africa}}$ with ${L=36}$. 
The Shannon number is ${N_{\text{Africa},36} \approx 79}$ (see~%
Fig.~\ref{fig:slepafricaeigvals}~and~\eqref{eq:shansphsum}).
From this figure, it is clear that while the dictionaries
$\cD_{\text{Africa},36,1}$ and $\cD_{\text{Africa},36,1}$ do not
strictly span the space of functions bandlimited to $L=36$ and
optimally concentrated in Africa, they are a close approximation: the
column span of $\left(\wh{G}_{\text{Africa},36}\right)_{1:\alpha}$ is
nearly linearly dependent with the spans of
$\wh{D}_{\text{Africa},36,1}$ and $\wh{D}_{\text{Africa},36,2}$, for
$\alpha$ significantly larger than $N_{\text{Africa},36}$.

\begin{figure}[ht]
\centering
\includegraphics[width=.49\textwidth]{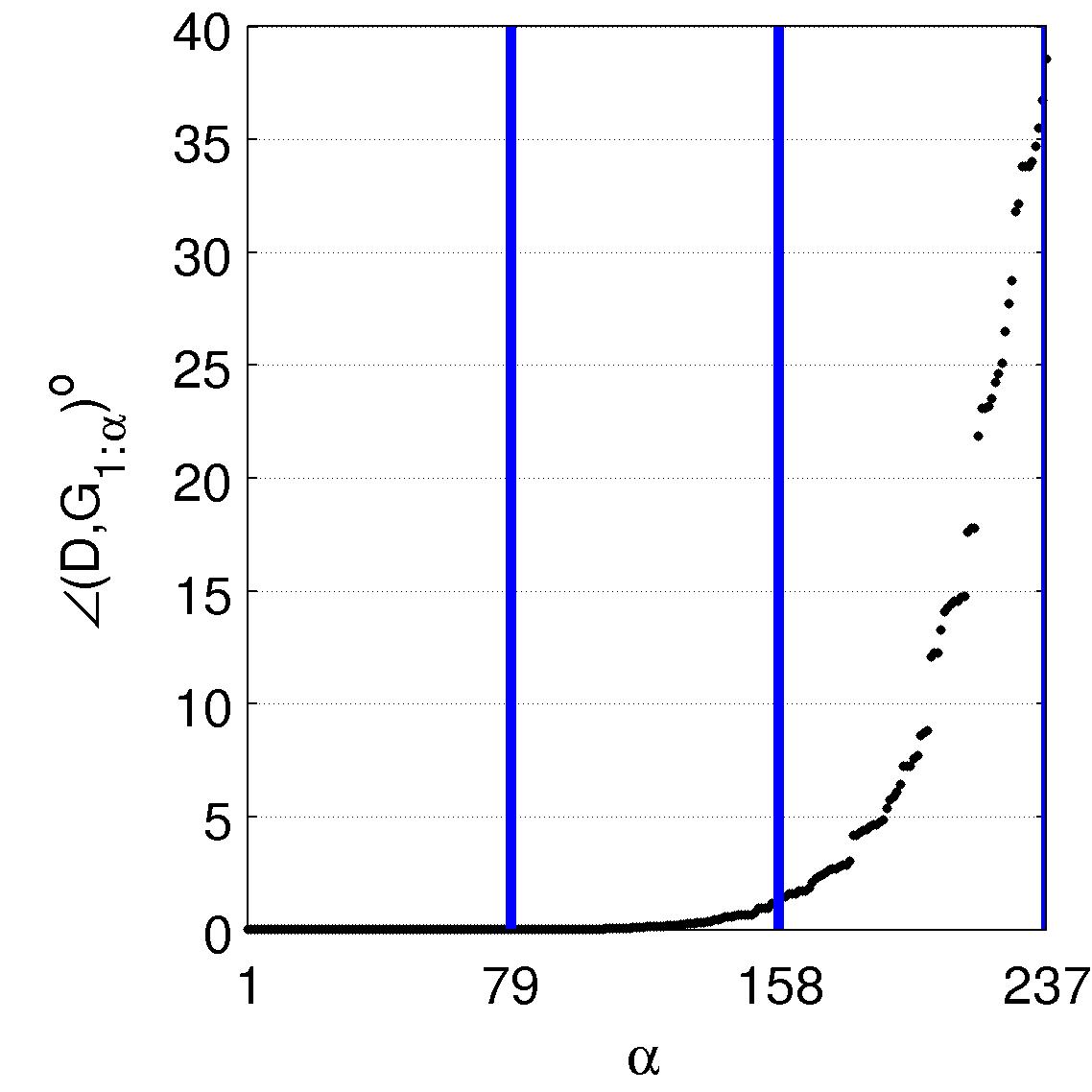}
\includegraphics[width=.49\textwidth]{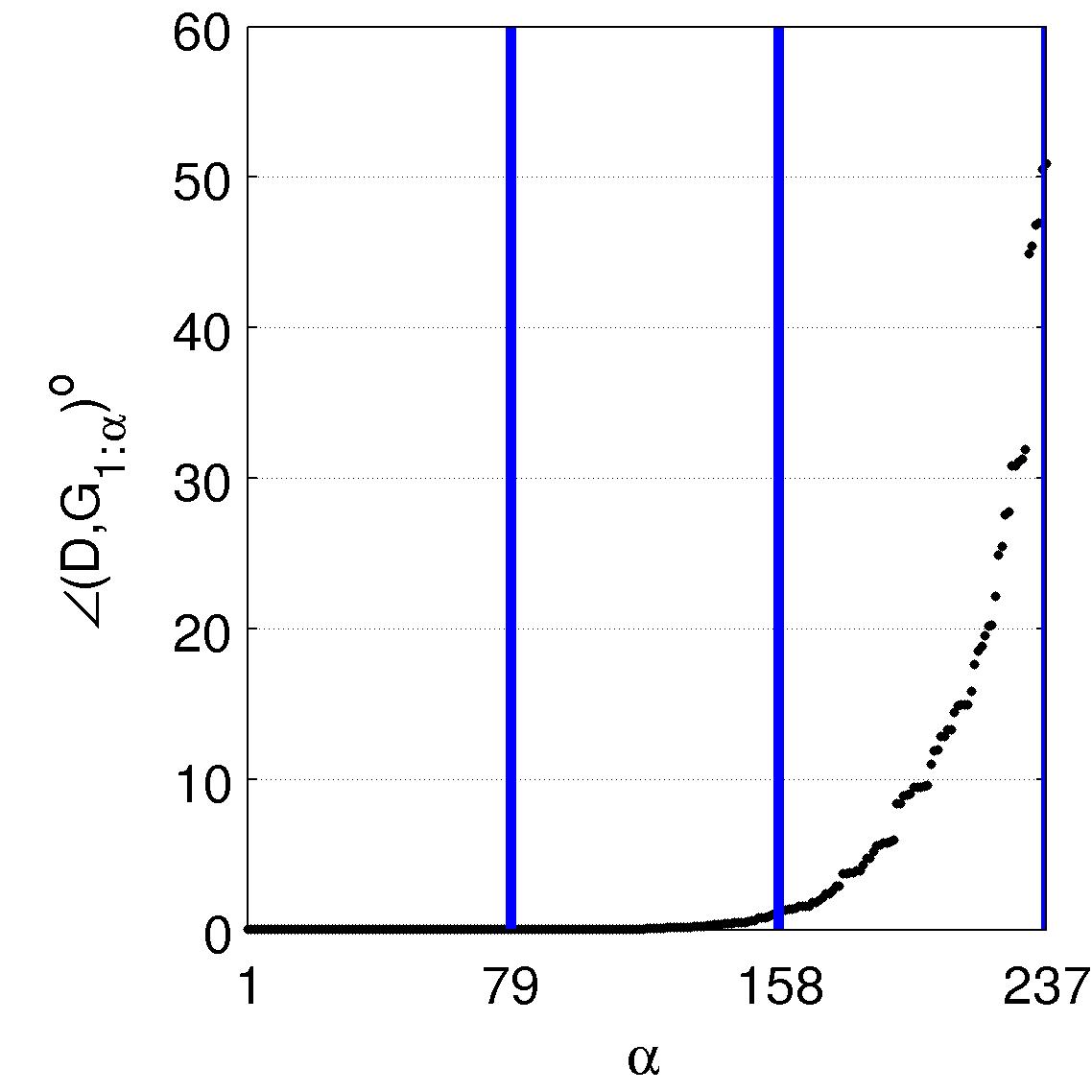}
\caption[Angles between the spaces spanned by $\left(\wh{G}_{\text{Africa},36}\right)_{1:\alpha}$ and the
  dictionary matrices $\wh{D}_{\text{Africa,36,1}}$ and
    $\wh{D}_{\text{Africa,36,2}}$, $\alpha=1,2,\ldots,3
  N_{\text{Africa},L}$.]{\label{fig:slepvstrdist}Angles (in degrees)
  between the spaces spanned by
  $\left(\wh{G}_{\abs{\cR},36}\right)_{1:\alpha}$ and the dictionary
  matrices $\wh{D}_{\cR,36,1}$ (left) and $\wh{D}_{\cR,36,2}$ (right),
  $\cR=\text{Africa}$, $\alpha=1,\ldots,3 N_{\abs{\cR},L}$.
  Thick lines correspond to integer multiples of the Shannon number~%
  ${N_{\text{Africa},36} \approx 79}$.}
\end{figure}

Thanks to novel approaches to signal approximation, which we will
discuss in the next section, the requirement that the dictionary
elements form an orthogonal basis is less important than the property
of mutual incoherence. Mutual  incoherence in a dictionary
means that the inner product (the angle, when elements are of unit norm)
between pairs of elements is almost always very low.
Figs. \ref{fig:sleptrip} and \ref{fig:sleptripecdf} numerically show
that the two tree constructions on continental Africa have good
incoherency properties: most dictionary element pairs are nearly
orthogonal.

\begin{figure}[ht]
\centering
\begin{subfigure}[b]{.49\linewidth}
\includegraphics[width=\textwidth]{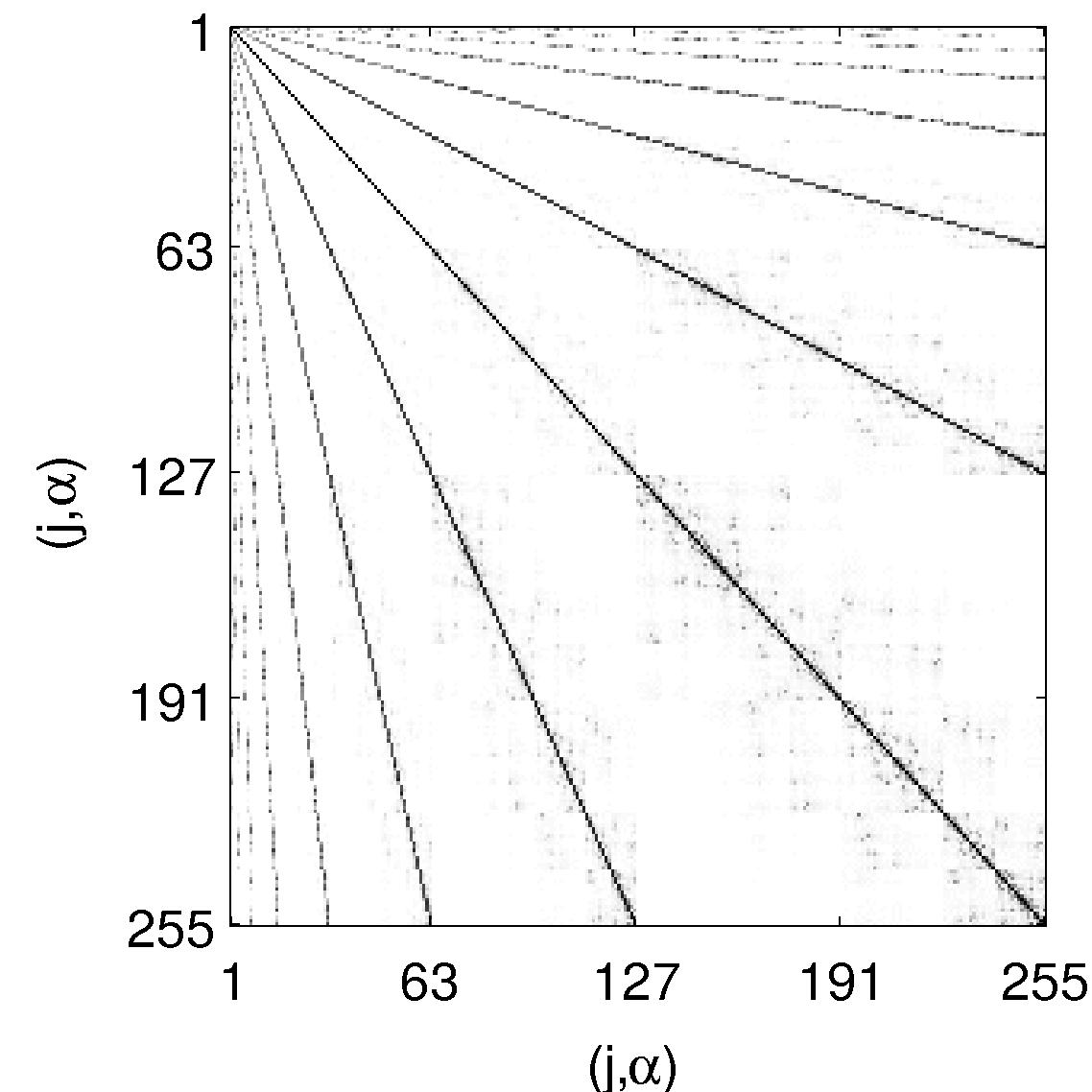}
\caption{$\cD_{\text{Africa},36,1}$}
\end{subfigure}
\begin{subfigure}[b]{.49\linewidth}
\includegraphics[width=\textwidth]{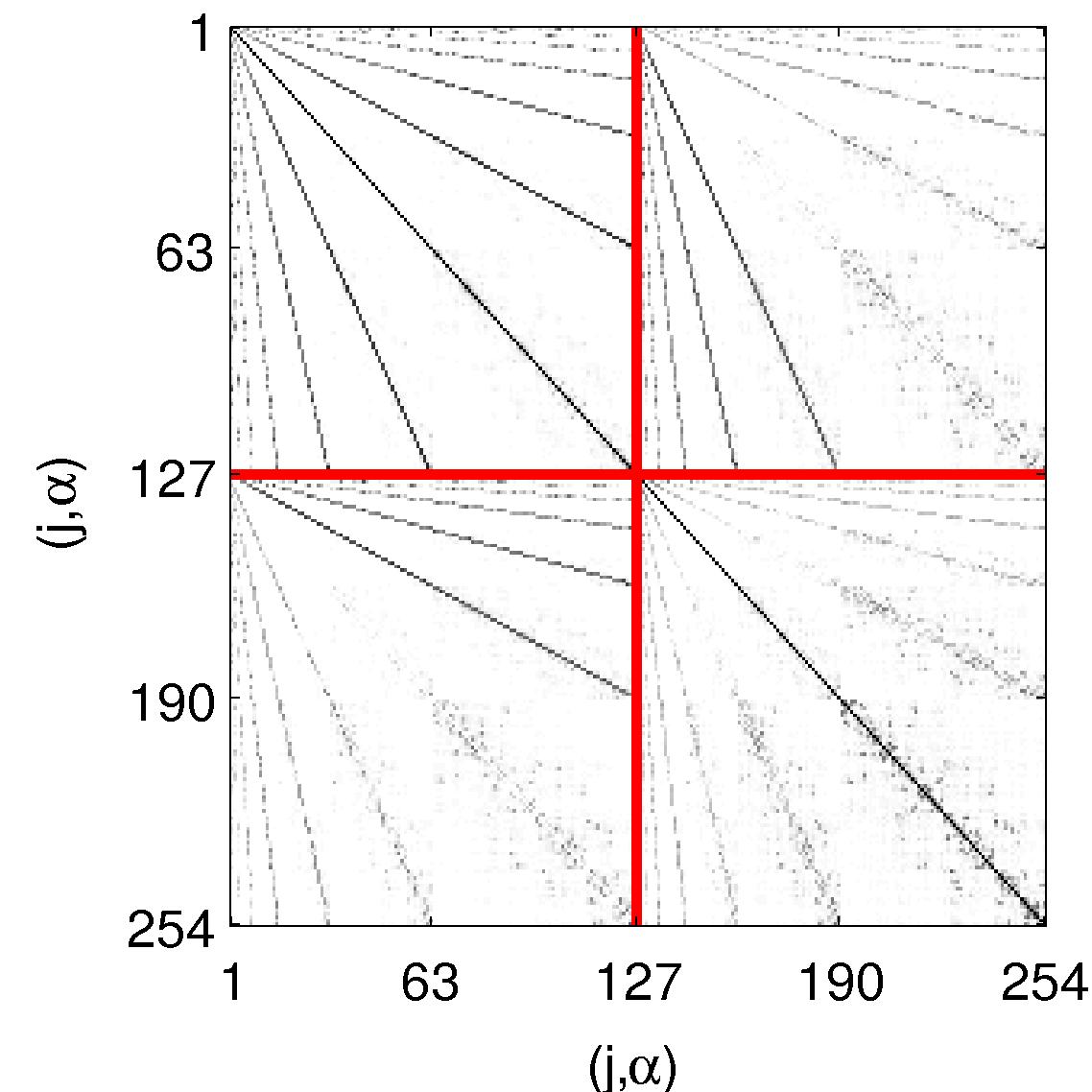}
\caption{$\cD_{\text{Africa},36,2}$}
\end{subfigure}
\caption[Magnitudes of pairwise inner products of dictionaries
  $\cD_{\text{Africa},36,1}$ and~$\cD_{\text{Africa},36,2}$]%
  {\label{fig:sleptrip}Magnitudes of pairwise inner products of
    dictionaries (a)~$\cD_{\text{Africa},36,1}$ and~(b)~$\cD_{\text{Africa},36,2}$.
    In (b), as before, thick lines separate inner products
    between the 127 elements with $\alpha=1$ (top left) and $\alpha=2$
    (bottom right), and their cross products.
    Values are between 0 (white) and 1 (black).
  }
\end{figure}

\begin{figure}[h]
\centering
\includegraphics[width=.49\textwidth]{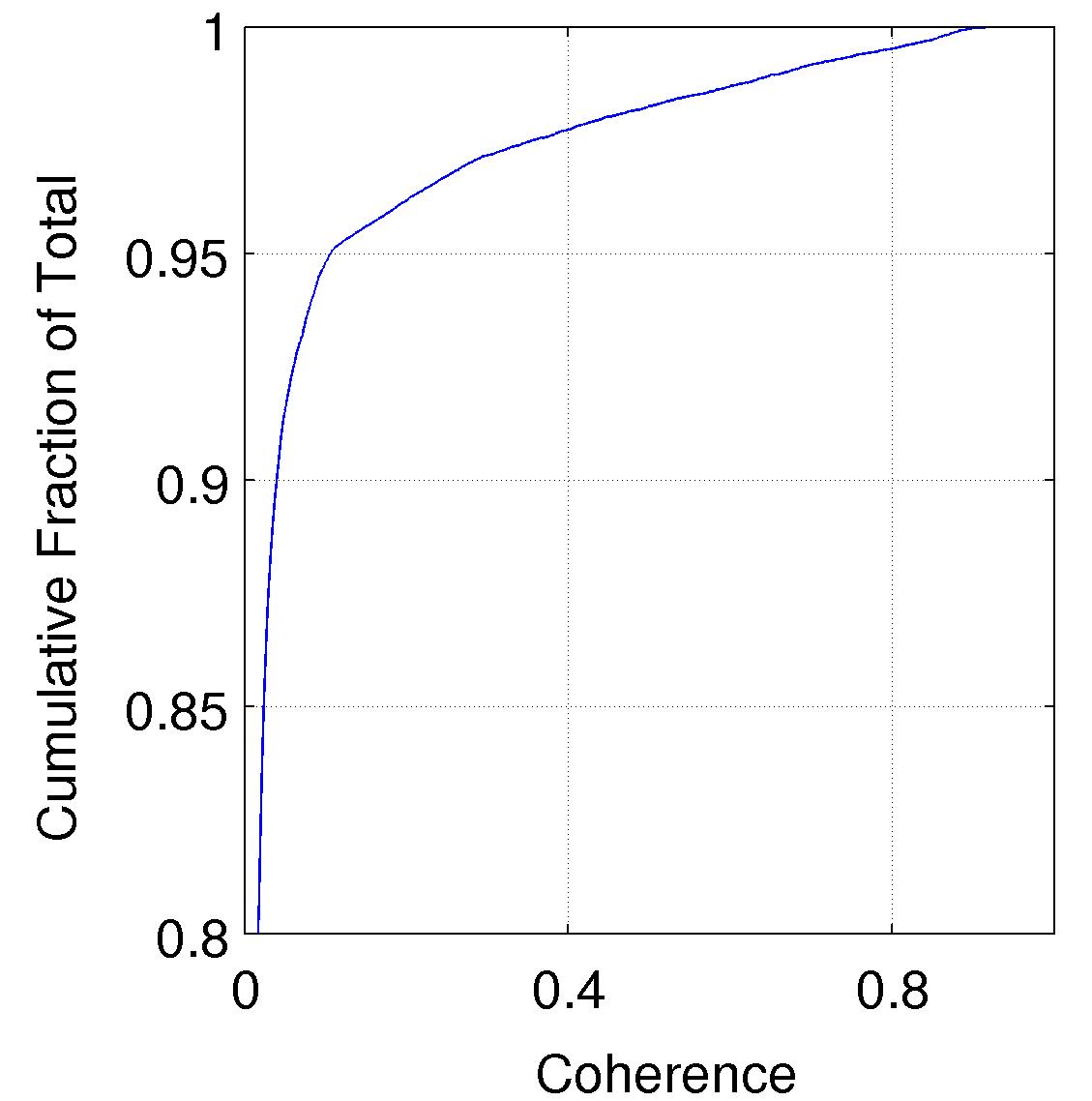}
\includegraphics[width=.49\textwidth]{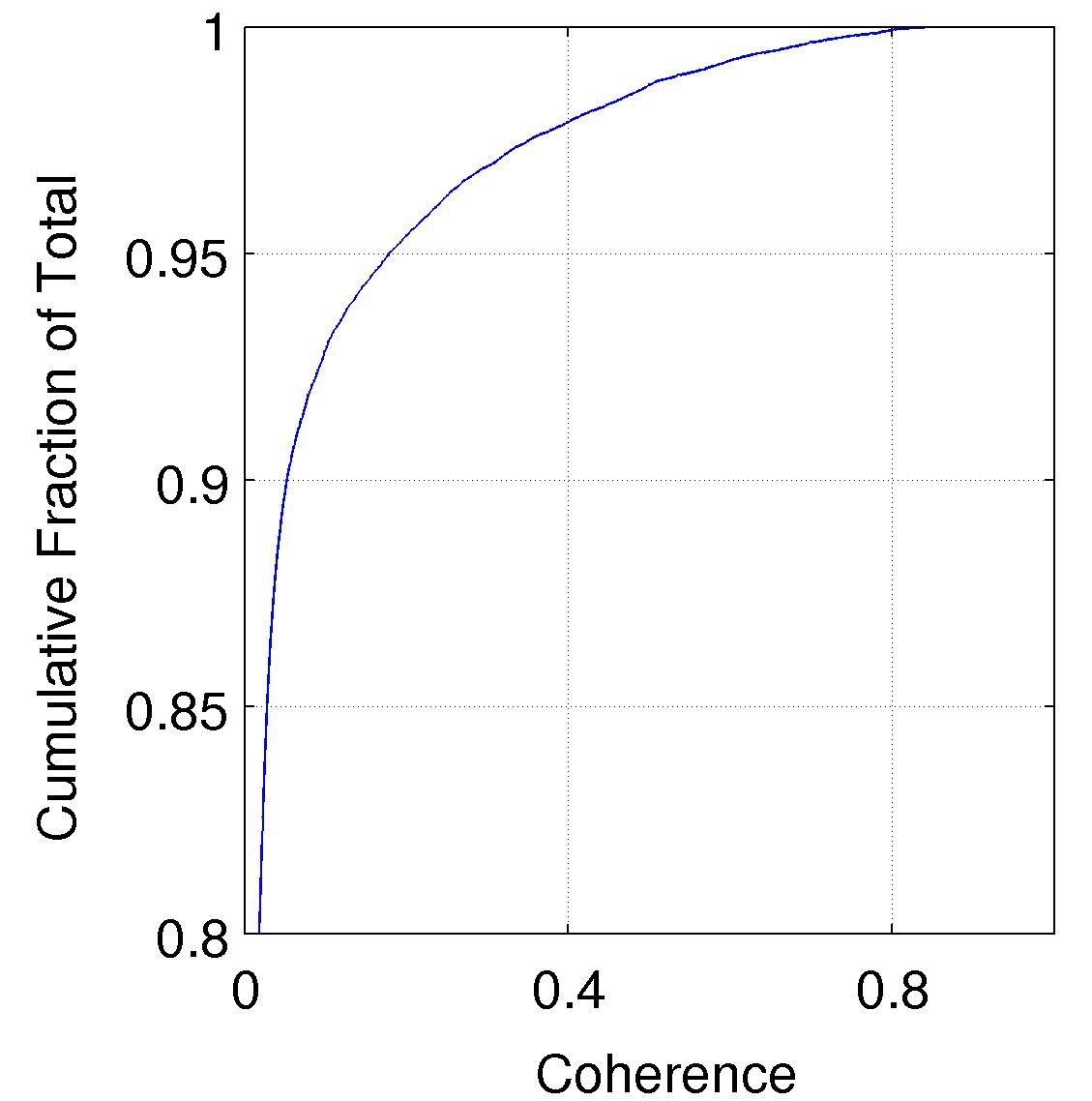}
\caption[Empirical CDFs of pairwise inner product magnitudes for
  dictionaries $\cD_{\text{Africa},36,1}$ and
  $\cD_{\text{Africa},36,1}$]{\label{fig:sleptripecdf}Empirical
  Cumulative Distribution Functions (CDFs) of pairwise inner product
  magnitudes for dictionaries $\cD_{\text{Africa},36,1}$ (left) and
  $\cD_{\text{Africa},36,1}$ (right).  Note, in both cases,
  approximately $95\%$ of pairwise inner products have a value within $\pm 0.1$.}
\end{figure}

In Fig. \ref{fig:sleptrip}(a), most pairwise inner products are
nearly zero, with the exception of nodes and their ancestors, which
share their parents' regions.  More specifically, as expected,
dictionary elements ${(j,1)}$, ${(2j,1)}$, ${(2j+1,1)}$, ${(2(2j),1)}$,
${(2(2j)+1,1)}$, ${(2(2j+1), 1)}$, ${(2(2j+1)+1,1), \ldots}$, tend to
have large inner products, while those elements with non-overlapping
borders do not.  This exact property is also visible in the two
diagonal submatrices of Fig. \ref{fig:sleptrip}(b).  In the
off-diagonals, due to the orthonormality of the construction, elements
of the form $(j,1)$ and $(j,2)$ are orthogonal.  In contrast, due to
the nature of the tree subdivision scheme, elements of the
form $(2j,1)$ or $(2j+1,1)$ and $(j,2)$ have a large magnitude inner
product.  However, the number of connections between nodes and their
ancestors is $O(n_b\, (2^H H))$, while the total number of pairwise
inner products is $O((n_b 2^{H})^2)$; and for reasonably sized values
of $L$ the ratio of ancestral connections to pairwise inner products
grows small (see Fig. \ref{fig:sleptripecdf}).



\section{\label{sec:sltrlin}Solution Approaches for Linear~Systems}
As we will show in \S\ref{sec:sltrapprox}, the signal approximation
problem on the sphere reduces to a linear problem of the form
\beq
\label{eq:linrecon}
y = A~x,
\eeq
where $y \in \bbR^m$ are samples of a function on a region ${\cR
\subset S^2}$, $x \in \bbR^n$ are estimate coefficients in
the given dictionary, and ${A \in \bbR^{m \x n}}$ represents a spatial
discretization of the dictionary elements.

While the number of samples $m$ is usually larger than the number of
dictionary elements, in practice, due to the nature of sampling and
discretization (e.g.,~\cite[Fig.~1b]{Slobbe2011}), the rank~$r$~of~$A$
is significantly lower than~$m$.  There are several examples
in the literature concerning spherical harmonics, for which
\eqref{eq:linrecon} is invertible (that is, with $r = n \approx m$).
For example, it can be shown via a Shannon-type theorem that when $x$
represents the spherical harmonic coefficients of a function with
bandlimit $L$, $y$ represents its samples on a special semi-regular
grid, and $A$ represents the discretization of the harmonics to the
grid, the signal $x$ can be reconstructed
exactly~\cite[Thm.~3]{Driscoll1994}.  In this 
case, the semi-regular grid consists of $m=4 L^2$ points on the entire
sphere $S^2$, and the number of harmonic coefficients (also the rank
of $A$) is~${r=n=(L+1)^2}$.  In the limit of large bandlimit $L$,
the ratio of the samples to unknowns is $4$.  As such, the sampling
scheme presented in that paper is in a sense optimal in order of
magnitude.  Nevertheless, even there the rank is significantly lower
than the sample number.

Under the assumption that $r < m$, we can write the
compact~Singular~Value~Decomposition~(SVD)~\cite[Chapter~7]{Horn1985}~of~$A$:
\beq
\label{eq:svdA}
A = U^{}_+ \Sigma^{}_+ V_+^* , \quad U_+^* U^{}_+ = I_{r \x r},
\text{ and } V_+^* V^{}_+ = I_{r \x r}.
\eeq
Here $U_+ \in \bbR^{m \x r}$, $\Sigma_+ \in \bbR^{r \x r}$ is a
diagonal matrix of positive singular values, and 
$V_+ \in \bbR^{n \x r}$.  We can thus rewrite \eqref{eq:linrecon} as
$$
\wh{y} = \wh{A} x, \quad \wh{y} =
U_+^* y, \text{ and } \wh{A} = \Sigma^{}_+ V_+^*,
$$
where $\wh{A} \in \bbR^{r \x n}$ and $r$ may be smaller than $n$.

We must therefore focus on \eqref{eq:linrecon} with all three
possible cases: the overdetermined case $m > n$ (with rank $r=n$), the
case $m = n$ (with rank $r=m=n$), and especially the underdetermined
case $m > n$ (with rank $r=n$).  We first quickly review the
overdetermined case.

\subsection{\label{sec:sltrover}Overdetermined and Square Cases: $m \geq n$}
When the system \eqref{eq:linrecon} is overdetermined, there many not
be one exact solution $x$.  However, in this case it is possible
to minimize the sum of squared errors
\beq
\label{eq:minlsqr}
x_M = \argmin_{x \in \bbR^n} \norm{y - A x}_2^2.
\eeq
Taking gradients with respect to vector $x$ and setting the resulting
system equal to zero, we get the least squares solution
\beq
\label{eq:linlsqr}
x_M = (A^* A)^{-1} A^* y.
\eeq

Note that \eqref{eq:minlsqr} arises from the Maximum Likelihood
formulation of a statistical model in which the samples $y$ are
observed as~\cite[Chapter~3]{Bishop2006}
$$
y = Ax + n,
$$
where $n \sim \cN(0,\sigma^2 I_{m \x m})$ is i.i.d Gaussian noise.

When $m=n=r$, \eqref{eq:linrecon} has exactly one solution.  The
matrix $A$ and its conjugate are invertible and,
using the identity $(AB)^{-1} = B^{-1}A^{-1}$,
\eqref{eq:linlsqr} reduces to ${x_M = A^{-1} x}$, as expected.

\subsection{\label{sec:sltrunder}Underdetermined Case: $m < n$}

When the linear system $A$ has more unknowns than equations
(or~rank~$r$), additional modeling or regularization is required.  We
discuss two possible statistical models on $x$, their limiting
cases, and the resulting computational considerations.

\subsubsection{Prior: $x$ distributed according to a Gaussian distribution}
One possible way to model the underdetermined case is via a
statistical model in which the coefficients of $x$ are generated
i.i.d. with a zero-mean, fixed variance Gaussian distribution:
\begin{align*}
y|x &= Ax + n, \\
n &\sim \cN(0,\sigma I_{m \x m}), \text{ and} \\
x &\sim \cN(0,\sigma_s I_{n \x n}).
\end{align*}

The maximum a posteriori (MAP) estimate follows from Bayes' rule:
\begin{align*}
\Pr(x|y) &\propto \Pr(y|x)\Pr(x) \\
 &= \frac{1}{(2 \pi)^{(m+n)/2} \sigma^{m/2} \sigma_s^{n/2}}
  \exp\set{-\left(\norm{y-Ax}^2_2/2 \sigma^2 + \norm{x}^2_2/2 \sigma_s^2\right)}.
\end{align*}
When $\sigma$ and $\sigma_s$ are fixed, maximizing $\Pr(x|y)$ is
equivalent to minimizing its negative logarithm.  The MAP problem in
this case becomes
\beq
\label{eq:minlsqrl2}
x_{N} = \argmin_{x \in \bbR^n} \norm{y-Ax}_2^2 + \gamma^2 \norm{x}_2^2,
\eeq
where ${\gamma = \sigma / \sigma_s}$.

Simple calculus again provides the solution, also known as the
Tikhonov regularized solution to \eqref{eq:minlsqrl2}:
\beq
x_{N} = (A^* A + \gamma^2 I)^{-1} A^* y.
\eeq

When the model noise power $\sigma$ grows small with respect to the
signal power $\sigma_s$, the regularization term $\gamma$ goes to
zero.  In this limiting case, we can write the limiting
solution~as~\cite[pp~421-422]{Horn1985}
\beq
\label{eq:minlsqrMP}
x_{MP} = \lim_{\gamma^2 \to 0} (A^* A + \gamma^2 I)^{-1} A^* y = A^\dagger y,
\eeq
where $A^\dagger$ is the Moore-Penrose generalized inverse of $A$.
For overdetermined and exactly determined systems, $A^*A$ has a well
defined inverse and $A^\dagger$ coincides with the matrix in
\eqref{eq:linlsqr}.  For underdetermined systems, the matrix
$A^\dagger$ is still well defined, and is given by
$$
A^\dagger = U^{}_+ \Sigma_+^{-1} V_+^*,
$$
where $U,\Sigma,V$ are given by the SVD as in \eqref{eq:svdA}.  Note
that $x_{MP}$ also solves the convex, quadratic optimization
problem
$$
x_{MP} = \argmin_{x \in \bbR^m} \norm{x}_2^2 \text{ subject to }
y = Ax.
$$

The Tikhonov and Moore-Penrose solutions \eqref{eq:minlsqrl2} and
\eqref{eq:minlsqrMP} are a common approach to solving underdetermined
inverse problems in the Geosciences literature
(see~e.g.,~\cite{Xu1998}).  However, depending on the dictionary used
for the representation of $x$ the Gaussian prior may not be an
ideal one; as it encourages \emph{all} of the coefficients to be
nonzero.

One of the major underlying foundations of this work includes
recent results in representation theory, which have shown that
overcomplete (redundant) multiscale frames and dictionaries with
certain incoherency properties can provide stable and noise-robust
estimates to ill-posed inversion problems.  The basic requirement in
the estimation stage is that the solution is as ``simple'' as possible:
most of the coefficients in of the solution $x$ are zero; $x$ is
\emph{sparse}.  We discuss this next.

\subsubsection{Prior: $x$ distributed according to a Laplace distribution}
It is well known in the statistics community (and, most
recently, in the Compressive Sensing literature), that applying a
super Gaussian prior $\Pr(x)$ induces MAP solutions that have many
zero components and a few large magnitude ones.  The zero-mean Laplace
distribution is one such particularly convenient distribution.  We
model each component of $x$, $x_i$ as i.i.d. Laplace distributed:
$$
\Pr(x) = \prod_{i =1}^n \frac{1}{2b} \exp(-\abs{x_i}/b) =
\frac{1}{(2b)^n} \exp(-\norm{x}_1/b).
$$

In a manner identical to that of the previous section, for fixed noise
power $\sigma^2$ and signal scale $b>0$, the MAP problem can be
reduced to
\beq
\label{eq:minlsqrl1}
x_{L} = \argmin_{x \in \bbR^n} \norm{y-Ax}_2^2 + \eta \norm{x}_1,
\eeq
where $\eta = 2 \sigma / b$.  For any $\eta>0$, there is a $t > 0$
such that the following problem is identical:
\beq
\label{eq:lasso}
\argmin_{x \in \bbR^n} \norm{y-Ax}_2^2 \text{ subject to } \norm{x}_1
\leq t,
\eeq
where for a given $\eta$, there exists a $t(\eta) > 0$ such that
\eqref{eq:lasso} gives a solution identical to \eqref{eq:minlsqrl1},
and $t(\eta)$ decreases monotonically with $\eta$.  Furthermore, for any
given $t$ to \eqref{eq:lasso}, an $\eta$ can be found for
\eqref{eq:minlsqrl1} that provides the identical
solution~\cite[\S12.4.2]{Mallat2009}; this result essentially follows
from the method of Lagrange multipliers.  In other words, the two
convex problems are completely equivalent.  They are also equivalent
to the popularly studied Compressive Sensing problem
$$
\argmin_{x \in \bbR^n} \norm{x}_1 \text{ subject to } \norm{y-Ax}_2^2
\leq \epsilon,
$$
where $\epsilon$ grows monotonically in $\eta$ and/or $t^{-1}$.

Fast and robust solvers for the convex quadratic optimization problems
\eqref{eq:minlsqrl1} and \eqref{eq:lasso} have been the subject of
study for many years.  For our calculations
we use the LASSO solver\footnote{The LASSO \texttt{glmnet}
  package:~\url{http://www-stat.stanford.edu/~tibs/lasso.html}.}
(see,~e.g.,~\cite{Tibshirani1996}).
LASSO is an iterative solution method that provides the full solution
paths to \eqref{eq:lasso}.  It is computationally efficient for small-
to medium- scale linear systems.  For systems with more than tens
of thousands of unknowns, there are a variety of other techniques for
the solution of \eqref{eq:lasso} that are
much more computationally tractable, though possibly less accurate
(see, e.g.,~\cite{Daubechies2010b}~and~\cite[\S12.4,~\S12.5]{Mallat2009}).

\subsubsection{Debiasing the $\ell_1$ solution}

An alternative approach to finding a sparse solution of
\eqref{eq:linrecon} is to attempt to minimize the problem
\beq
\label{eq:min0norm}
\min_{x \in \bbR^n} \norm{y-Ax} \text{ subject to } \norm{x}_0 \leq \kappa,
\eeq
where $\norm{x}_0 = \abs{\supp{x}}$ and 
${\supp{x} = \set{i : x_i \neq 0}}$.  That is, find the best matching
data to the model where the number of nonzero coefficients of the
model is bounded by $\kappa$.
Unfortunately, this combinatorial problem is nonconvex and therefore
usually intractable.  In some cases, it can be shown that the solution
is equivalent to the $\ell_1$ problem \eqref{eq:lasso}, when the
matrix $A$ fulfills one of a number of special properties, e.g., the
Restricted Isometry Property (RIP) or Null Space Property (NSP).  This
result is, in fact, a celebrated equivalence result in Compressive
Sensing~\cite{Candes2006,Donoho2006}.  Unfortunately, for physical
discretization matrices $A$, the RIP and its equivalents are
difficult to check~\cite{dAspremont2008,Juditsky2008}.

In practice, problem \eqref{eq:min0norm} can be reduced into two parts:
\begin{enumerate}
\item Estimate the support of $x$, $\cS = \supp{x}$, such that $\abs{\cS} < r$.
\item Solve the overdetermined system \eqref{eq:minlsqr} via
  \eqref{eq:linlsqr} on the reduced set $\supp{x}$ by
  keeping only the columns of $A$ associated with $\supp{x}$,
  $A_{\supp{x}}$, when solving the least squares problem.
\end{enumerate}
A tractable solution to the first part is to use the output
of the $\ell_1$~(LASSO) estimator:
$$
\cS = \supp{x} = \supp{x_L}.
$$
The final estimate is the vector $x_D$ where
\begin{align}
\label{eq:minsqrl1deb}
x'_{D} &= \argmin_{x' \in \bbR^{\abs{\cS}}} \norm{y-A_{\cS} x'}_2^2,
\text{ and }
\\
\nonumber x_{D,i} &= x'_{D,i} \delta_{i \in \cS},
\quad i=1,\ldots,n.
\end{align}
This alternative solution is also sometimes called the \emph{debiased}
$\ell_1$ solution, because after the $\ell_1$ minimization step, the
bias of the $\ell_1$ penalty is removed via least squares on the
estimated support.

As we will show in the next section, this final combination of
support estimate based on a sparsity-inducing prior, followed by
the solution to an overcomplete least squares problem on this support
set, allows for improvements in signal approximation over currently
standard techniques in geophysics.

\section{\label{sec:sltrapprox}Signal Approximation Models for Subsets of the Sphere~$S^2$}

We now turn to the problem of estimating a signal from noisy and/or
incomplete observations on a subset $\cR$ of the sphere.  Following
the notation of~\cite{Simons2010}, suppose we observe data (samples of
some function $f$) on a set of points within the region $\cR$,
consisting of signal $s$ plus noise $n$.  We are interested in
estimating the signal within~$\cR$ from these samples.

While in practice, most signals of interest in geophysics are not
bandlimited, this assumption allows us to perform estimates, and can
be thought of as a regularization of the signal, similar in nature to
assumptions of a maximum frequencies in audio analysis.  Furthermore,
as in 1D signal processing, constraints on physical sampling and high
frequency noise always reduce the maximum determinable frequency.
See, for example, a noise analysis for satellite observations in the GRACE
mission~\cite[Fig.~1]{Wahr1998}, and the effects of noise on power
spectral estimation for the CMB dataset~\cite[Fig.~12]{Dahlen2008}.

Let ${\cX = \set{x_i}_{i=1}^m, x_i \in \cR}$ be a set of points on
which data $f$ is observed, and let the corresponding observations be~%
$\set{f_i = f(x_i)}_{i=1}^m$, which we denote with the vector~$\tf$.
Then via the harmonic expansion \eqref{eq:plm2xyz}, we can write
\beq
\label{eq:fmodelsph}
f_i = \sum_{l=0}^L \sum_{m=-l}^l \wh{s}_{lm} Y_{lm}(\theta_i,\phi_i) + \nu_i,
\eeq
where the $\wh{s}_{lm}$ are the harmonic expansion coefficients of the
signal $s$ and $\nu_i = \nu(x_i)$ is a realization of the noise.  We will also
denote by $\tnu$ the vector of samples of the noise process.  Let
$Y$ be the $\abs{\cX} \x (L+1)^2$ harmonic sensing matrix, with 
$$
{Y_{i,lm} = Y_{lm}(\theta_i,\phi_i)} \text{ for } i =
1,\ldots,\abs{\cX}, \text{ and } (l,m) \in \Omega.
$$
Then we can rewrite \eqref{eq:fmodelsph} as
\beq
\label{eq:fmodelsphmat}
\tf = Y \wh{s} + \tnu.
\eeq
By restricting the bandlimit to $L$, we restrict the function $s$ to
lie in $L^2_\Omega(S^2)$.  Moreover, we are only interested in
estimating $s$ on $\cR \subset S^2$.

The Slepian functions are another basis for $L^2_\Omega(S^2)$, in which
the functions are ordered in terms of their concentration
on $\cR$.  As such, we may rewrite the samples via their Slepian
expansion:
\beq
\label{eq:fmodelslep}
f_i = \sum_{\alpha=1}^n \wt{s}_\alpha g_\alpha(\theta_i, \phi_i) + \nu_i
\eeq
where now the $\wt{s}_\alpha$ are the Slepian expansion coefficients,
and we are free to constrain $n$ from $1$ through $(L+1)^2$.  By
setting $n$ to $N_{\abs{\cR},L}$, we concentrate the estimate to
$\cR$, while choosing $n=(L+1)^2$ leads to a representation equivalent
to \eqref{eq:fmodelsph}.

Using the harmonic expansion of the Slepian functions, we rewrite
\eqref{eq:fmodelslep} via the spherical harmonics:
\beq
\label{eq:fmodelslepsph}
f_i = \sum_{\alpha=1}^n \wt{s}_\alpha \sum_{l=0}^L \sum_{m=-l}^l
\wh{g}_{\alpha,lm} Y_{lm}(\theta_i,\phi_i) + \nu_i,
\eeq
and, using the terminology $\wh{G}$ of \S\ref{sec:sltrprop} to
denote the harmonic expansion matrix of the Slepian functions,
and the ``colon'' notation to denote restrictions of matrices and
vectors to specific index subsets,  we can
rewrite~\eqref{eq:fmodelslep} in matrix notation:
\beq
\label{eq:fmodelslepmat}
\tf = Y \wh{G}_{1:n} \wt{s}_{S,{1:n}} + \tnu.
\eeq
As just described, by setting $n < (L+1)^2$ this model assumes that
all but the first $n$ of the Slepian coefficients $\wt{s}_S$ are zero.

Finally, using the Tree dictionary construction
of~\S\ref{sec:sltrtree} and the notation of~\S\ref{sec:sltrprop}, for
a given dictionary $\cD_{\cR,L,n_b}$ we can model the signal with the
linear model
\beq
\label{eq:fmodelsltr}
f_i = \sum_{\alpha=1}^{n_b} \sum_{j=1}^{2^{H+1}-1} \wt{s}_{T,(j,\alpha)}
d^{(j,\alpha)}_{\cR,L,n_b}(\theta_i,\phi_i) + \nu_i.
\eeq
Writing the dictionary elements via their harmonic expansions, the
matrix formulation of \eqref{eq:fmodelsltr} becomes
\beq
\label{eq:fmodelsltrmat}
\tf = Y \wh{D}_{\cR,L,n_b} \wt{s}_T + \tnu.
\eeq
As we will see next, this alternative way of describing bandlimited
functions on $\cR$ has a number of advantages.

\subsection{\label{sec:sltrinv}Regularized Inversion and Numerical Experiments}

In practice, the sensing matrix $Y$ is highly rank
deficient: depending on the sensing grid points $\cX$, its rank $r$
tends to be significantly smaller than the maximum possible value $n$,
$\dim L^2_\Omega(S^2) = (L+1)^2$.  As such, the estimation of
$s$ via direct inversion of \eqref{eq:fmodelsphmat} is ill
conditioned: it must be regularized.

As discussed in \S\ref{sec:sltrlin}, the most common form of
regularization is via the Moore-Penrose pseudoinverse:
\eqref{eq:linlsqr} for overdetermined systems or \eqref{eq:minlsqrMP}
for underdetermined ones.  Following the discussion of
\S\ref{sec:sltrsphere} and the statistical analyses
in~\cite[\S3]{Simons2010} and~\cite[\S7]{Simons2006}, the
``classically'' optimal way to estimate $\wt{s}$ is by restricting the 
reconstruction to be concentrated within $\cR$: that is, first by
choosing a small $n$ in \eqref{eq:fmodelslepmat}, such that $n < r$,
and then applying \eqref{eq:linlsqr} to estimate $\wt{s}_S$.  In
practice, we can consider $n$ ranging from $1$ to $N_{\abs{R},L}$
because the Shannon number is less than rank $r$.  We will call this
first estimation method Slepian~Truncated~Least~Squares~(STLS).

We now propose, first, a simple alternative approach: assume sparsity of
$\wt{s}$ (i.e., with respect to the Slepian basis).  As the Slepian
basis was initially constructed to promote sparsity in the
representation of bandlimited functions concentrated on
$\cR$~[\S3.1.2]\cite{Simons2010}, we expect that this assumption
should lead to estimates that are equivalent to STLS, if not
better.  The basic idea is to let $n=(L+1)^2$ in
\eqref{eq:fmodelslepmat}, and use the solution method
\eqref{eq:minsqrl1deb} with sparsity penalties $\eta_1 \geq \eta_2
\geq \cdots$ sufficiently large that only a few nonzero coefficients
are found in the support.  We consider a range of values $\eta$ from a
maximum $\ol{\eta}$ that induces only one nonzero coefficient, to a
minimum $\ul{\eta}$ that induces $N_{\abs{\cR},L}$.  We call this
estimation method Slepian~$\ell_1$~+~Debias~(SL1D).  Though in this
case we do not explicitly require the estimate to be well concentrated
in $\cR$ via choice of basis functions, by minimizing the squared
error between sample values on $\cX \subset \cR$ we expect that most
of estimated support will be within the first Slepian functions.

With the Tree construction of \S\ref{sec:sltrtree}, we have
a new dictionary of elements that are both concentrated in $\cR$,
bandlimited, and multiscale.  As such, these dictionaries are
excellent candidates for estimation via the $\ell_1$+~Debias technique
\eqref{eq:minsqrl1deb}.  This method is similar to the previous one:
apply \eqref{eq:minsqrl1deb} to the model \eqref{eq:fmodelsltrmat},
choosing a range of $\ell_1$ penalties $\eta$ that lead to between $1$
and $N_{\abs{\cR},L}$ dictionary elements in the support of
$\wt{x}_T$.  We call this the Slepian~Tree~$\ell_1$~Debias~(STL1D) method.

The experiments below numerically show that the two new estimation
(inversion) methods SL1D and STL1D provide improved performance over
the classic STLS, in terms of average reconstruction error over the
domain of interest $\cR$ using a small number of coefficients, for
several important types of bandlimited signals.  Furthermore, as
expected the multiscale and spatially concentrated dictionary elements
of the Tree construction provide improved estimation performance when
the signal is ``red'', i.e., when it contains more energy in the lower
harmonic components.

\subsubsection{Bandpass Filtered POMME Model}
Fig. \ref{fig:pomme4dlocal} shows a bandpass filtered version of the
radial component of Earth's crustal magnetic field, which we will call
$p(\theta,\phi)$: a preprocessed version of the 
output of the POMME model~\cite{Maus2006}.  The signal $p$
has been:
\begin{enumerate}
\item Bandpassed between $l_{\text{min}}=9$ and $l_{\text{max}}=36$.
\item Spatially tapered (multiplied) by the
first Slepian function bandlimited to ${L_t=18}$ and concentrated within
Africa.
\item Low-pass filtered to have maximum frequency~$L=36$~via direct
projection onto the first $(L+1)^2$ spherical harmonics using standard
Riemannian sum-integral approximations~\cite[Eq.~80]{Simons2010},
i.e., direct inversion.
\end{enumerate}
It can be shown~\cite[\S2]{Wieczorek2005} that the harmonics
of the tapered signal, at degree $l$, receive contributions from the
original coefficients in the range from $\abs{l-L_t}$ to $l+L_t$.  As
a result, only the first $l_{\text{max}}-L_t$ degree coefficients are
reliable estimates of the original signal's harmonics.

Samples of $p$ are given via the forward model \eqref{eq:fmodelsph}
(with $\tnu=0$), from the low-pass filtered ``ground truth'' signal
$\wt{p}$, on the intersection of the African continent $\cR$ with the
grid
$$\cX^* = \set{(k_\theta
  \Delta,k_\phi 
  \Delta), \Delta = 0.25^o, k_\theta = \pm 0, \pm 1, \ldots, k_\phi =
  \pm 0, \pm 1, \ldots}
$$
We denote this reduced set $\cX$; it contains $\abs{\cX} = 40250$
points.  For $L=36$, as before, the Shannon number of Africa is
$N_{\text{Africa},36} \approx 79$, the dimension of the bandlimited
space is $\dim L^2_\Omega(S^2) = (L+1)^2 = 1639$, and the rank of
the discretization matrix in \eqref{eq:fmodelsph}, $Y_{1:(L+1)^2}$, is
${r=528} \ll 1639$.

\begin{figure}[ht]
\centering
\includegraphics[width=.65\linewidth]{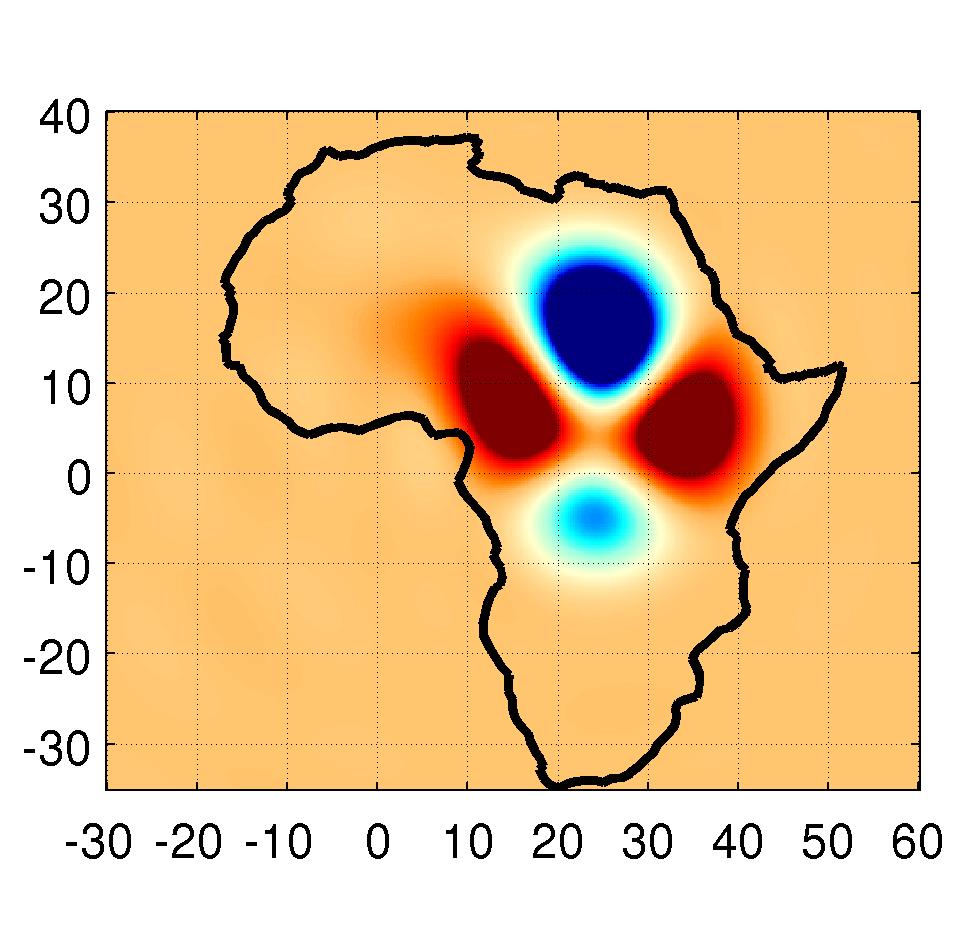}
\caption[The tapered and preprocessed ``ground
  truth'' POMME-4 signal $p$, sampled on the regular grid $\cX^*$
  around Africa.]{\label{fig:pomme4dlocal}The tapered and preprocessed ``ground
  truth'' POMME-4 signal $p$, sampled on the regular grid $\cX^*$
  around Africa.  Red colors are high magnitude positive values, blue
  are high magnitude negative values.}
\end{figure}

Figs. \ref{fig:pomme4slepr}, \ref{fig:pomme4slepl1r}, and
\ref{fig:pomme4sleptrr} show intermediate results in the estimation of
$p$ via the three methods: STLS, SL1D, and STL1D, respectively.
Specifically, they show the absolute error between the original
sampled signal $\tp$, and the expansion via \eqref{eq:plm2xyz} of the
three estimates.  In Fig. \ref{fig:pomme4slepr}, the number of nonzero
coefficients in the estimate is determined by the Slepian truncation
number $n$, while in Figs. \ref{fig:pomme4slepl1r} and
\ref{fig:pomme4sleptrr}, the number of nonzero coefficients are
indirectly determined by the parameter $\eta$ after the support
estimation stage, as per our earlier discussion.  As such, the number
of nonzero components do not always match that of
Fig. \ref{fig:pomme4slepr}; instead nearby values are used when
found.

\begin{figure}[h!]
\centering
\begin{subfigure}[b]{.29\linewidth}
\includegraphics[width=\linewidth]{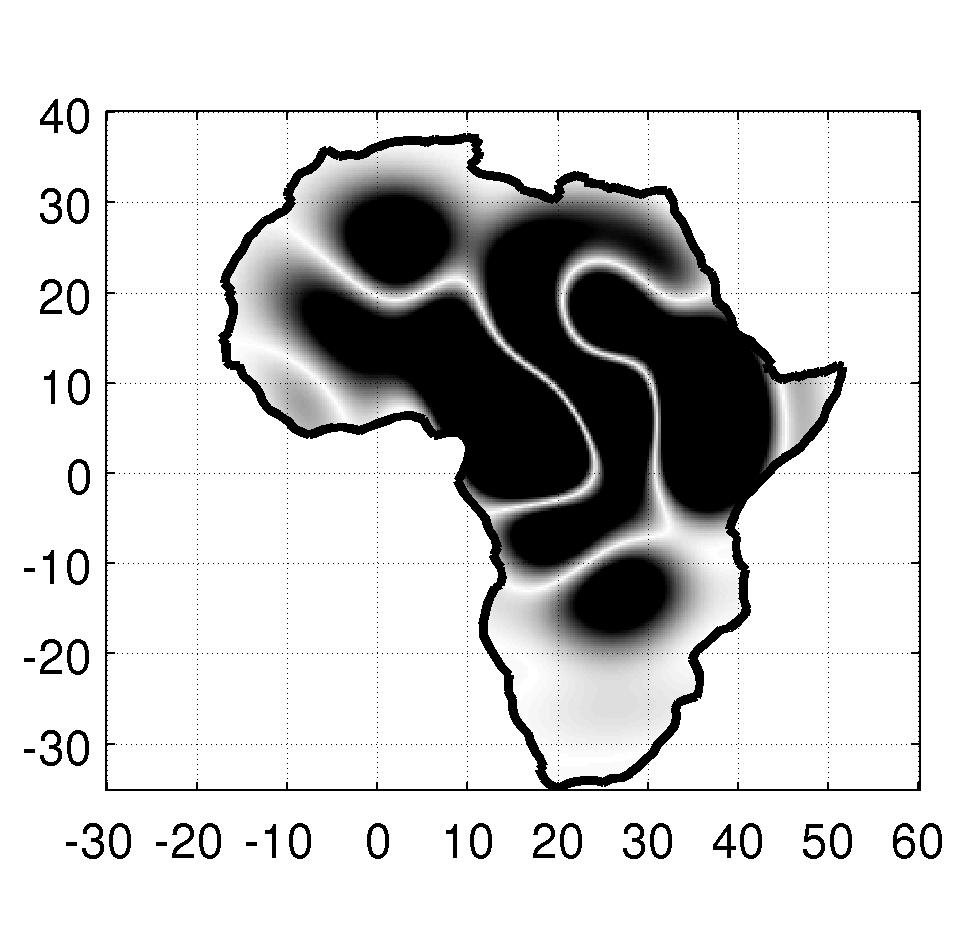}
\caption{$e_{10}$}
\end{subfigure}
\begin{subfigure}[b]{.29\linewidth}
\includegraphics[width=\linewidth]{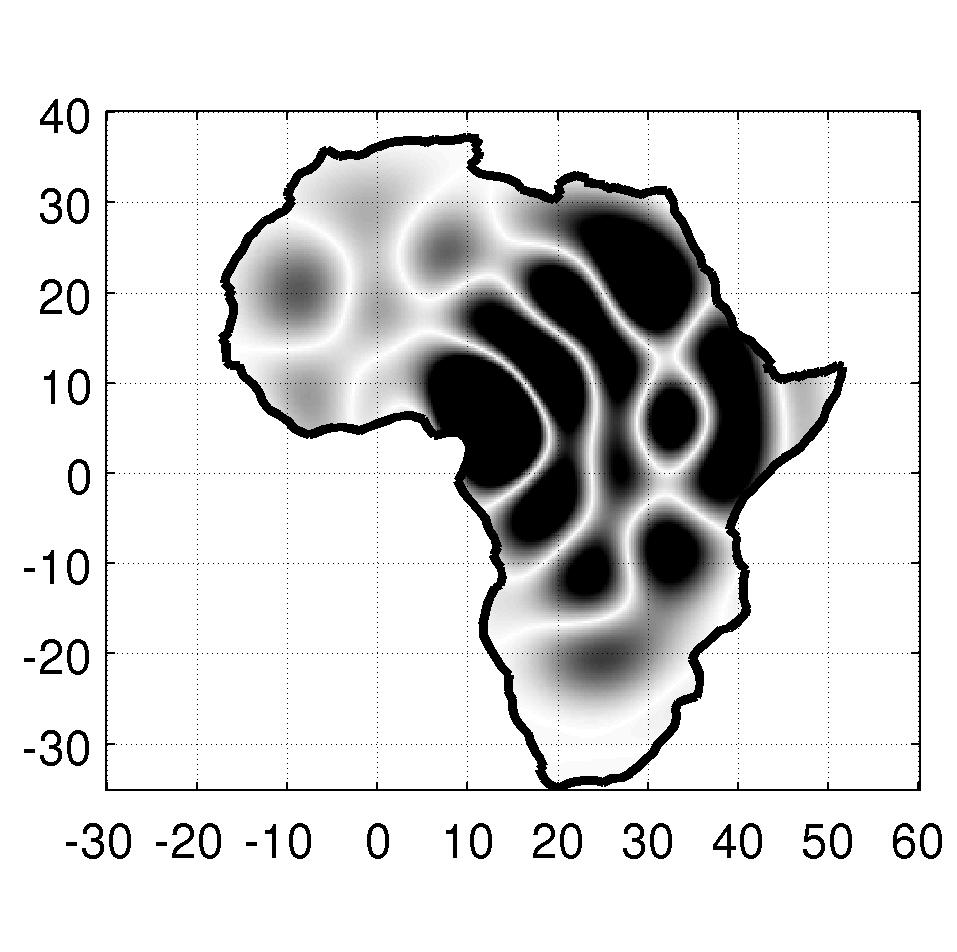}
\caption{$e_{20}$}
\end{subfigure}
\begin{subfigure}[b]{.29\linewidth}
\includegraphics[width=\linewidth]{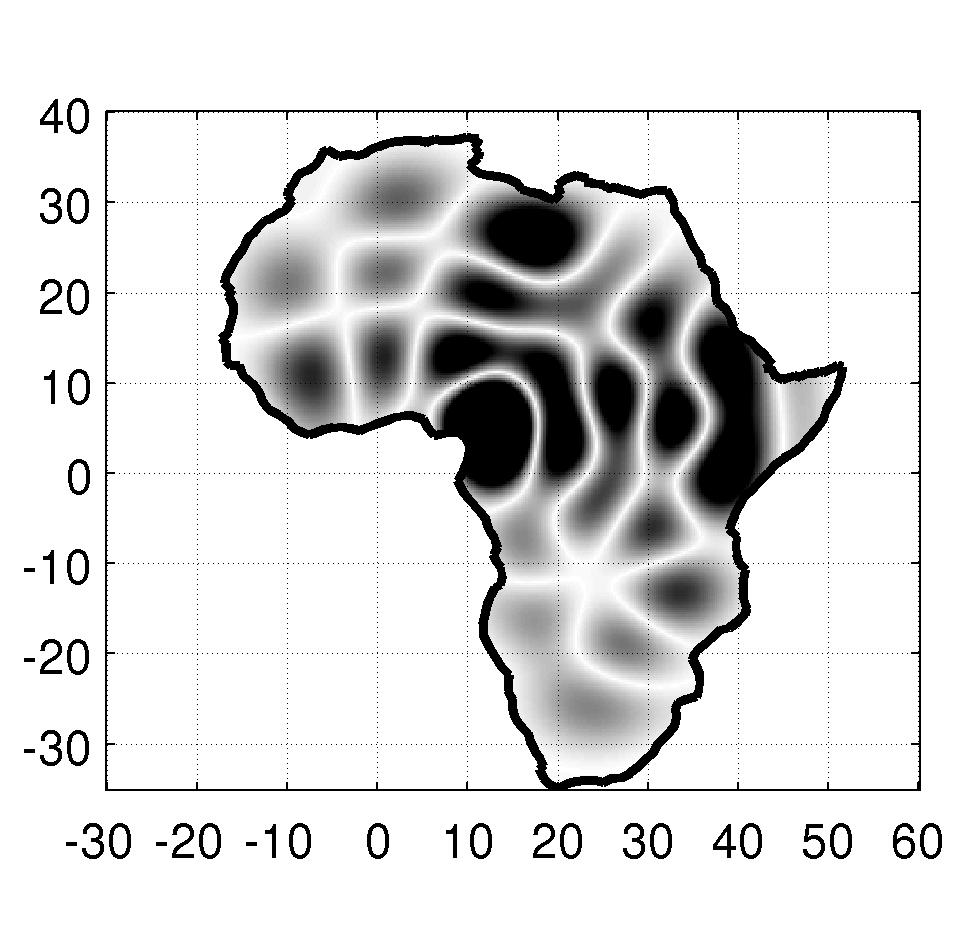}
\caption{$e_{30}$}
\end{subfigure}
\\
\begin{subfigure}[b]{.29\linewidth}
\includegraphics[width=\linewidth]{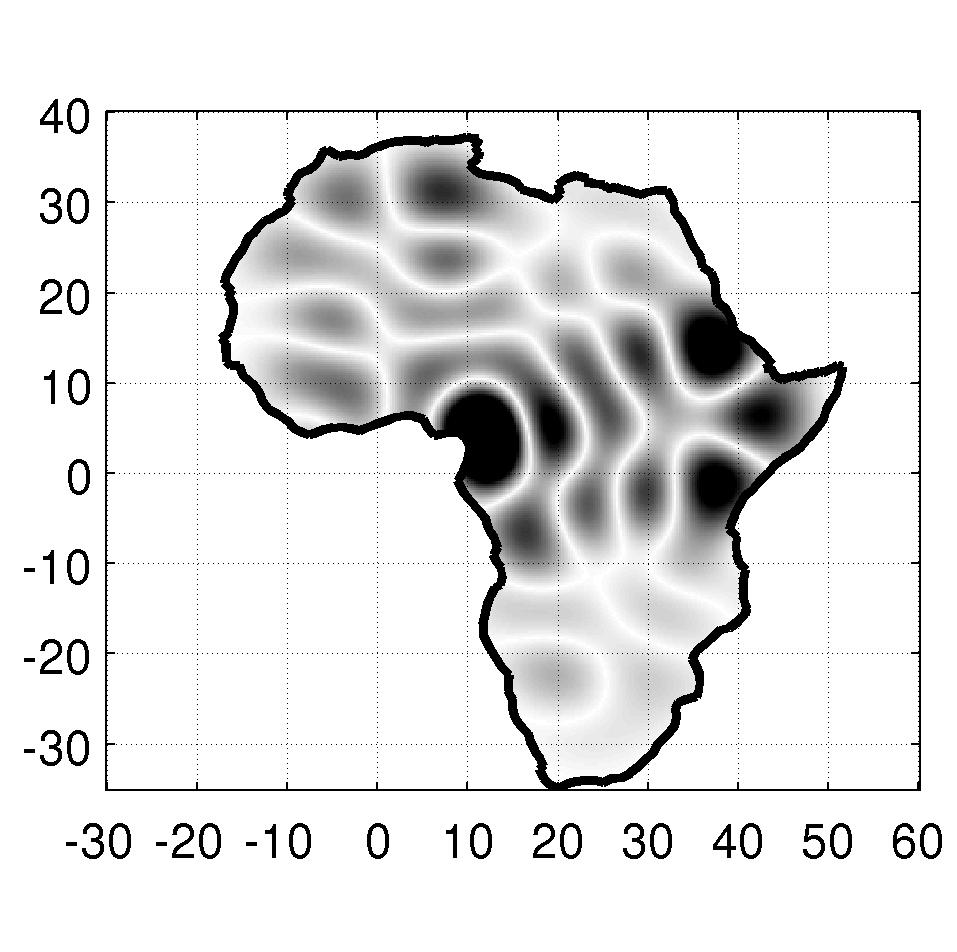}
\caption{$e_{40}$}
\end{subfigure}
\begin{subfigure}[b]{.29\linewidth}
\includegraphics[width=\linewidth]{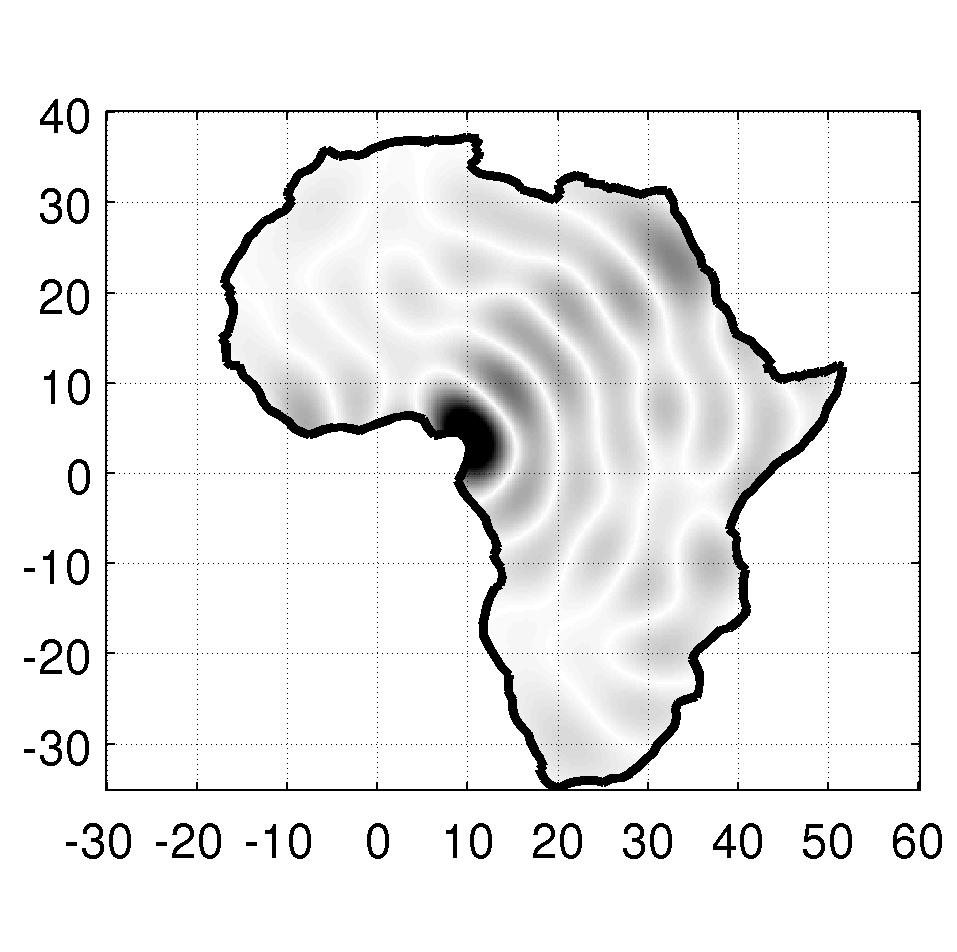}
\caption{$e_{50}$}
\end{subfigure}
\begin{subfigure}[b]{.29\linewidth}
\includegraphics[width=\linewidth]{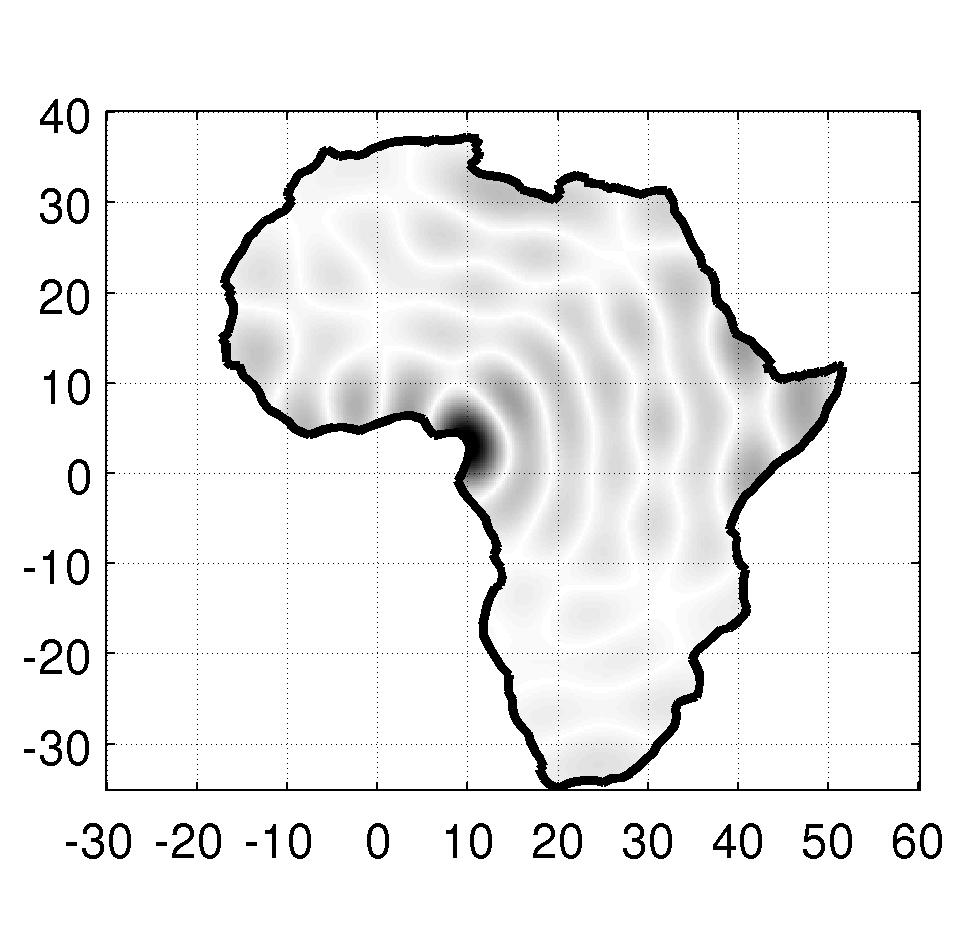}
\caption{$e_{60}$}
\end{subfigure}
\caption[Slepian Truncated Least Squares Reconstruction of POMME-4]%
{\label{fig:pomme4slepr}Residual errors of the Slepian Truncated Least
  Squares Reconstruction of POMME-4 data, using the $L=36$ basis concentrated on Africa.
  Labels above describe the number of nonzero entries in the
  reconstructed estimate.  Absolute error values range between 0 
  (white) to 50 (black) and above (thresholded black).}
\end{figure}

\begin{figure}[h!]
\centering
\begin{subfigure}[b]{.29\linewidth}
\includegraphics[width=\linewidth]{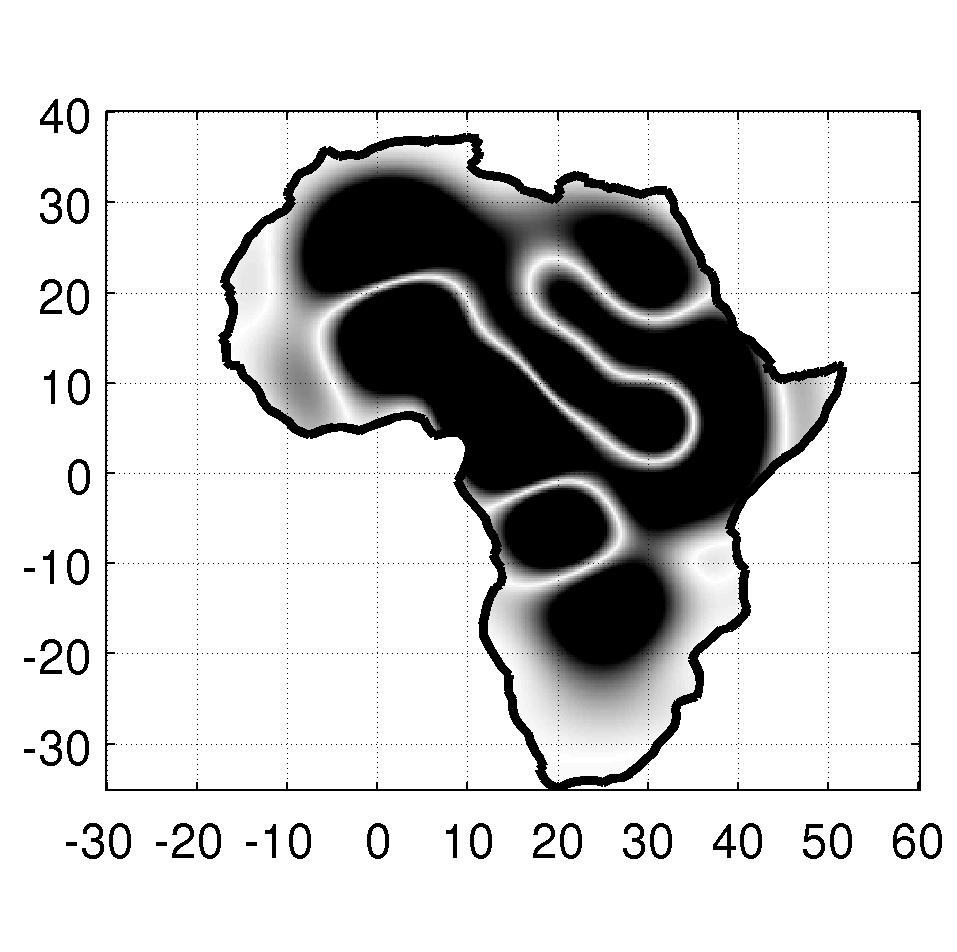}
\caption{$e_{10}$}
\end{subfigure}
\begin{subfigure}[b]{.29\linewidth}
\includegraphics[width=\linewidth]{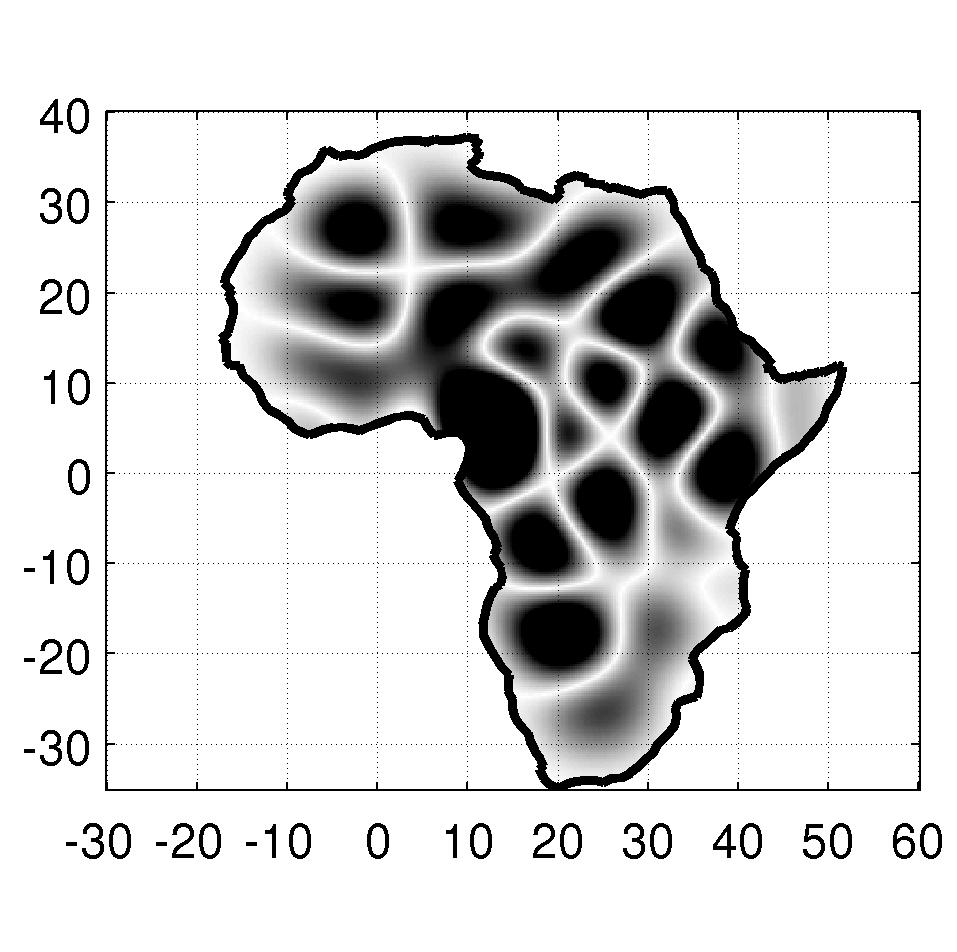}
\caption{$e_{20}$}
\end{subfigure}
\begin{subfigure}[b]{.29\linewidth}
\includegraphics[width=\linewidth]{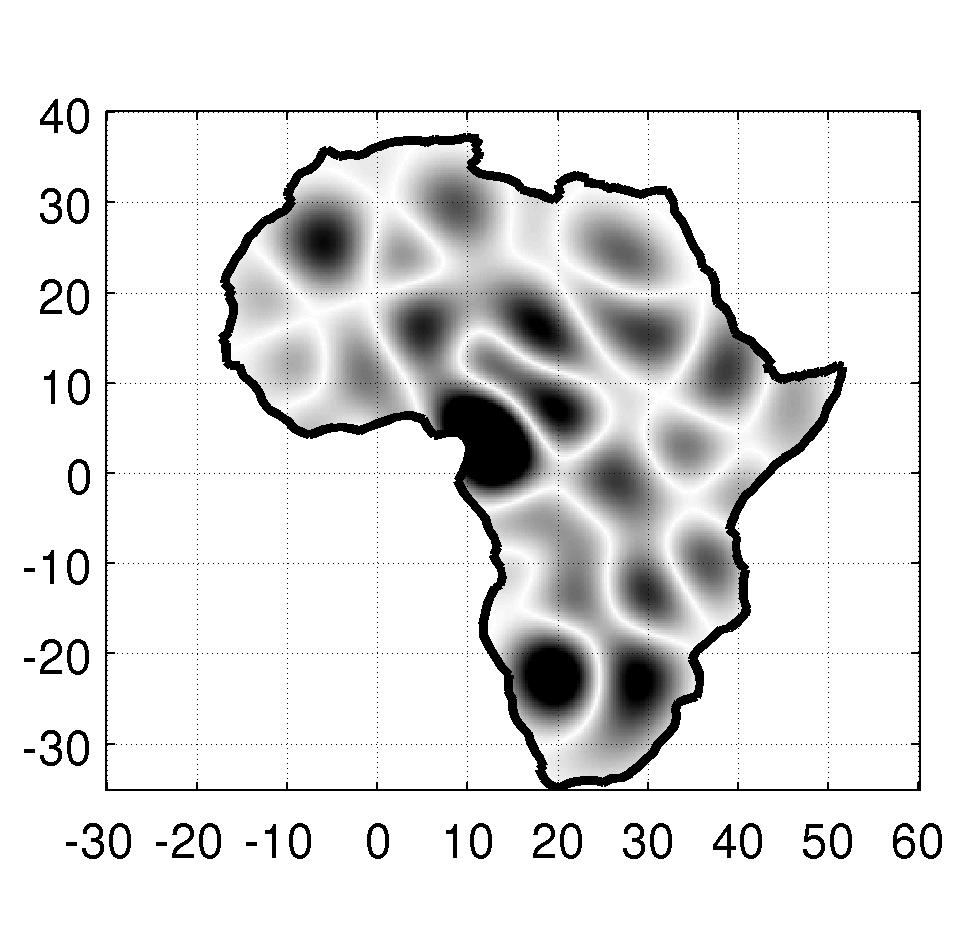}
\caption{$e_{30}$}
\end{subfigure}
\\
\begin{subfigure}[b]{.29\linewidth}
\includegraphics[width=\linewidth]{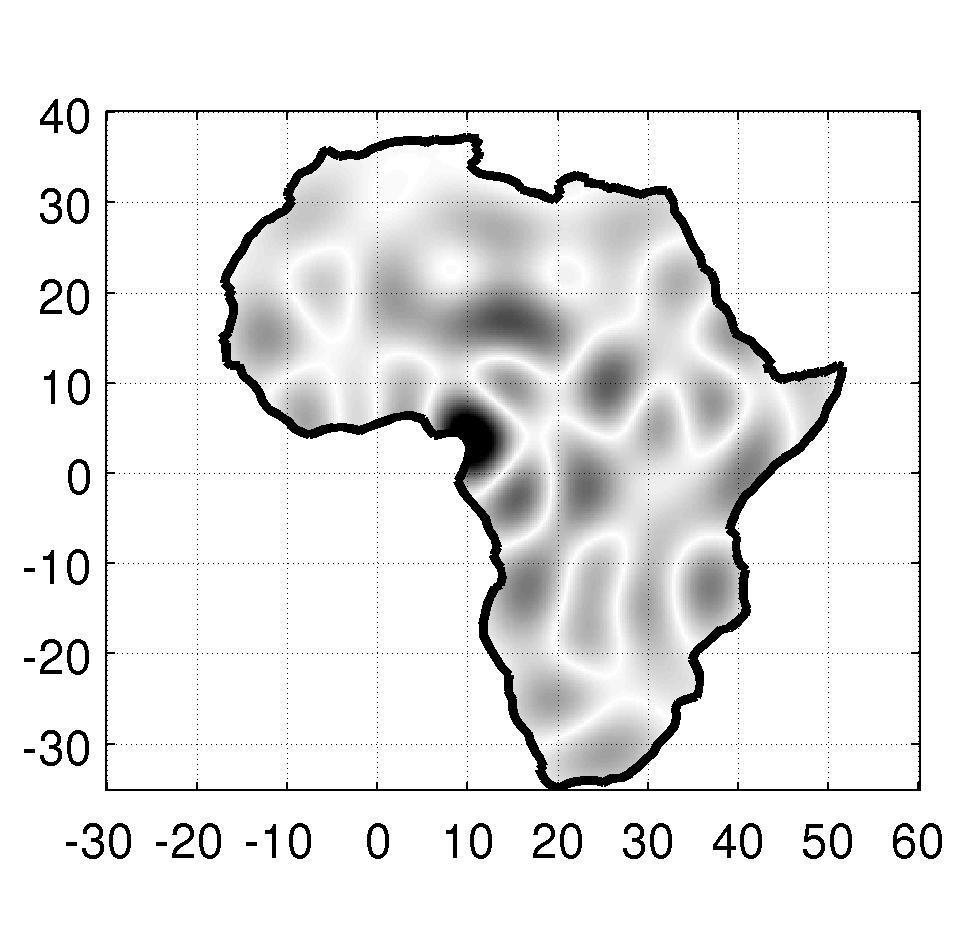}
\caption{$e_{42}$}
\end{subfigure}
\begin{subfigure}[b]{.29\linewidth}
\includegraphics[width=\linewidth]{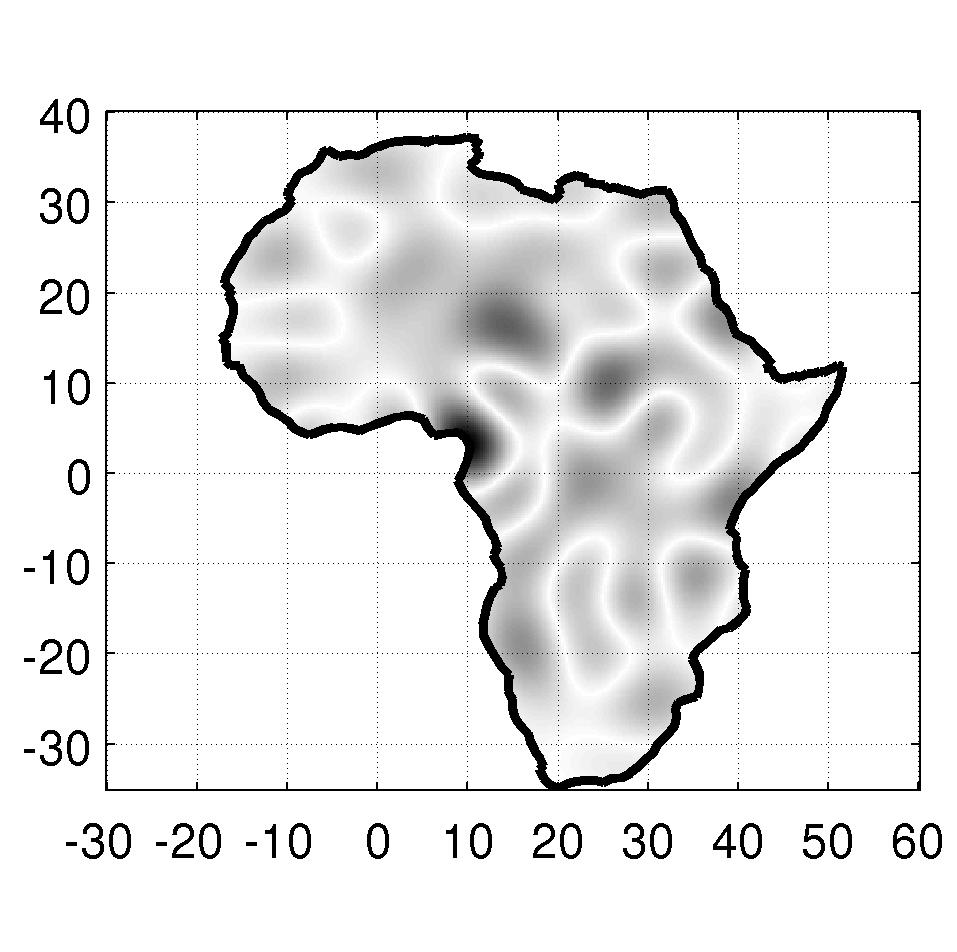}
\caption{$e_{50}$}
\end{subfigure}
\begin{subfigure}[b]{.29\linewidth}
\includegraphics[width=\linewidth]{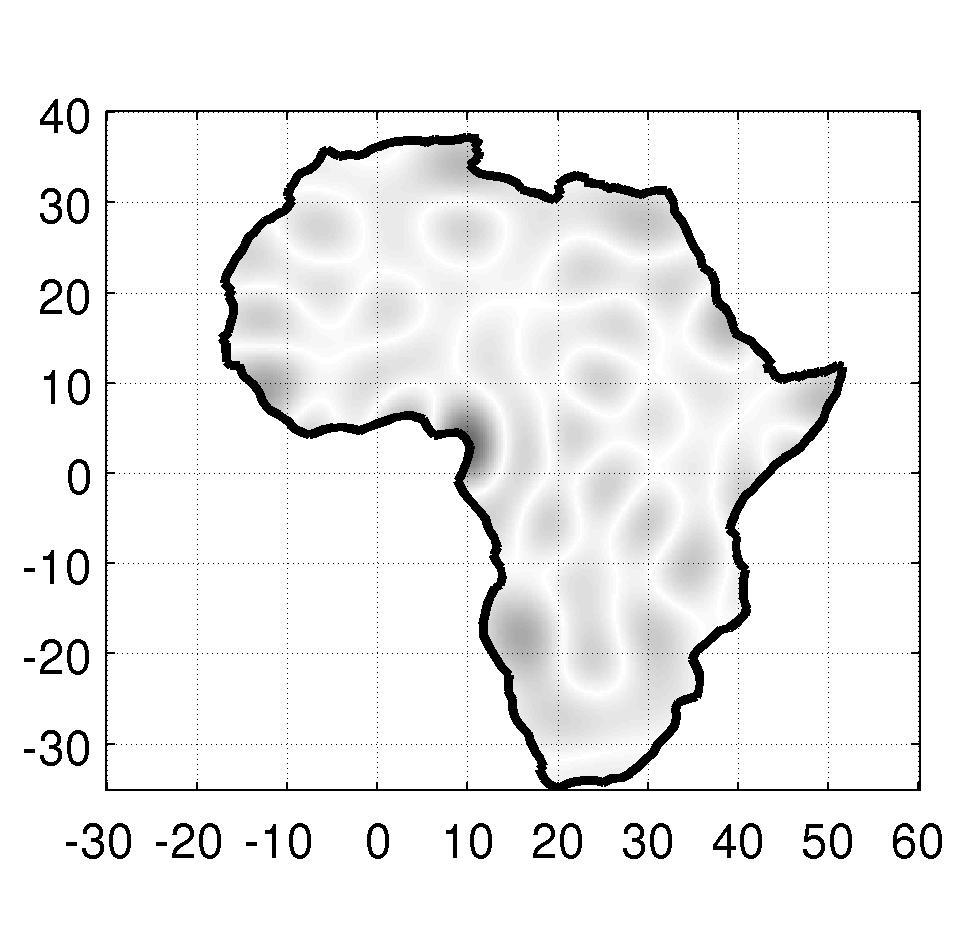}
\caption{$e_{61}$}
\end{subfigure}
\caption[Slepian~$\ell_1$~+~Debias Reconstruction of POMME-4]%
{\label{fig:pomme4slepl1r}Residual errors of the
  Slepian~$\ell_1$~+~Debias Reconstruction of POMME-4 data, using the
  $L=36$ basis concentrated on Africa.  Labels
  above describe the number of nonzero entries
  in the reconstructed estimate.  Absolute error values range between
  0 (white) to 50 (black) and above (thresholded black).}
\end{figure}

\begin{figure}[h!]
\centering
\begin{subfigure}[b]{.29\linewidth}
\includegraphics[width=\linewidth]{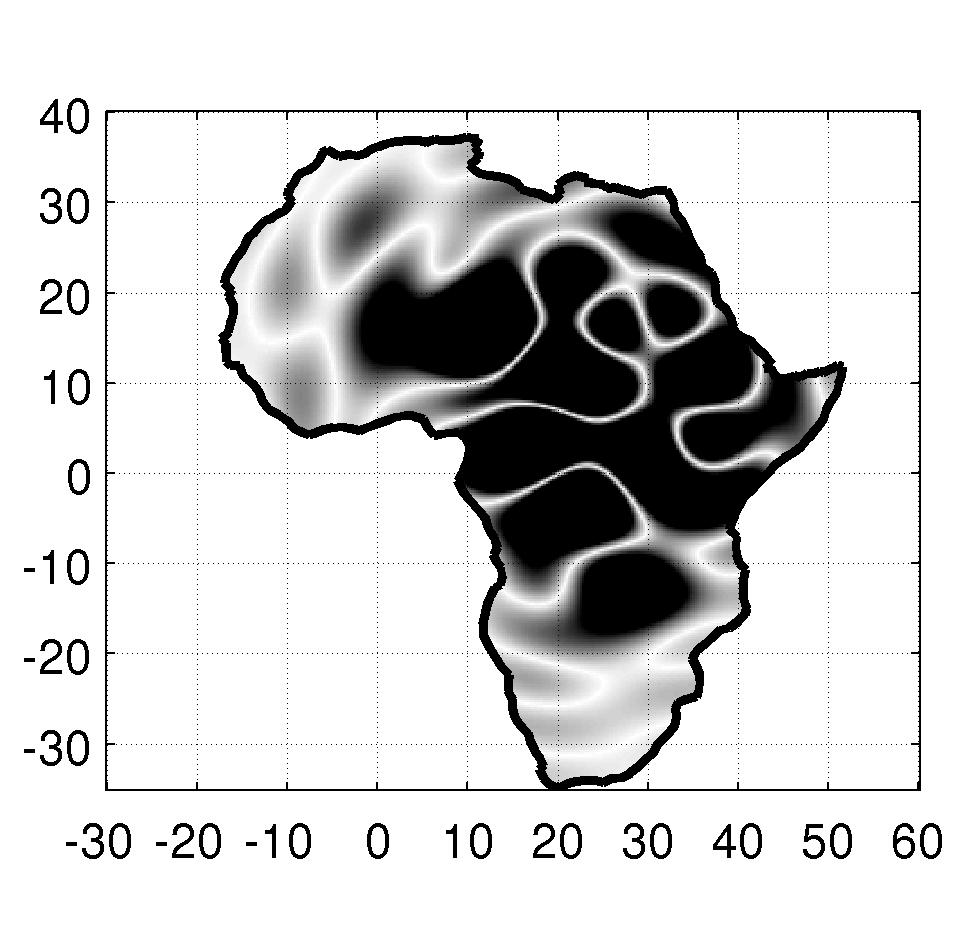}
\caption{$e_{9}$}
\end{subfigure}
\begin{subfigure}[b]{.29\linewidth}
\includegraphics[width=\linewidth]{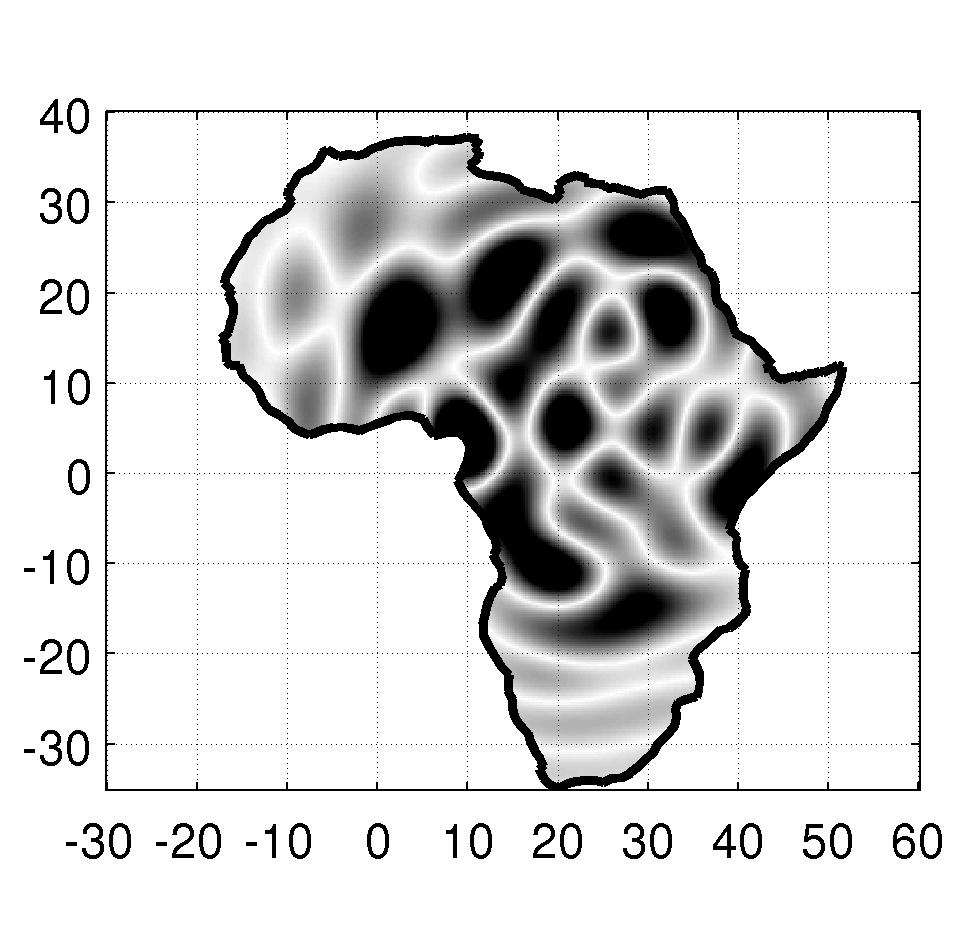}
\caption{$e_{20}$}
\end{subfigure}
\begin{subfigure}[b]{.29\linewidth}
\includegraphics[width=\linewidth]{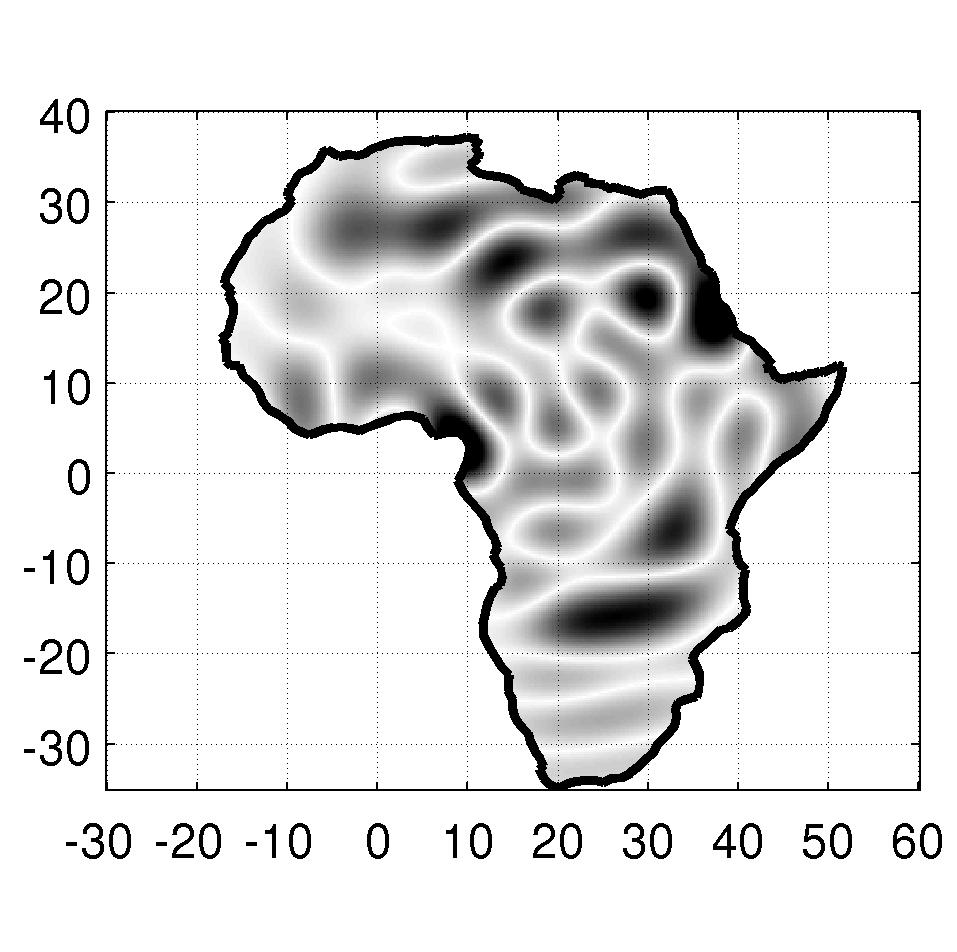}
\caption{$e_{31}$}
\end{subfigure}
\\
\begin{subfigure}[b]{.29\linewidth}
\includegraphics[width=\linewidth]{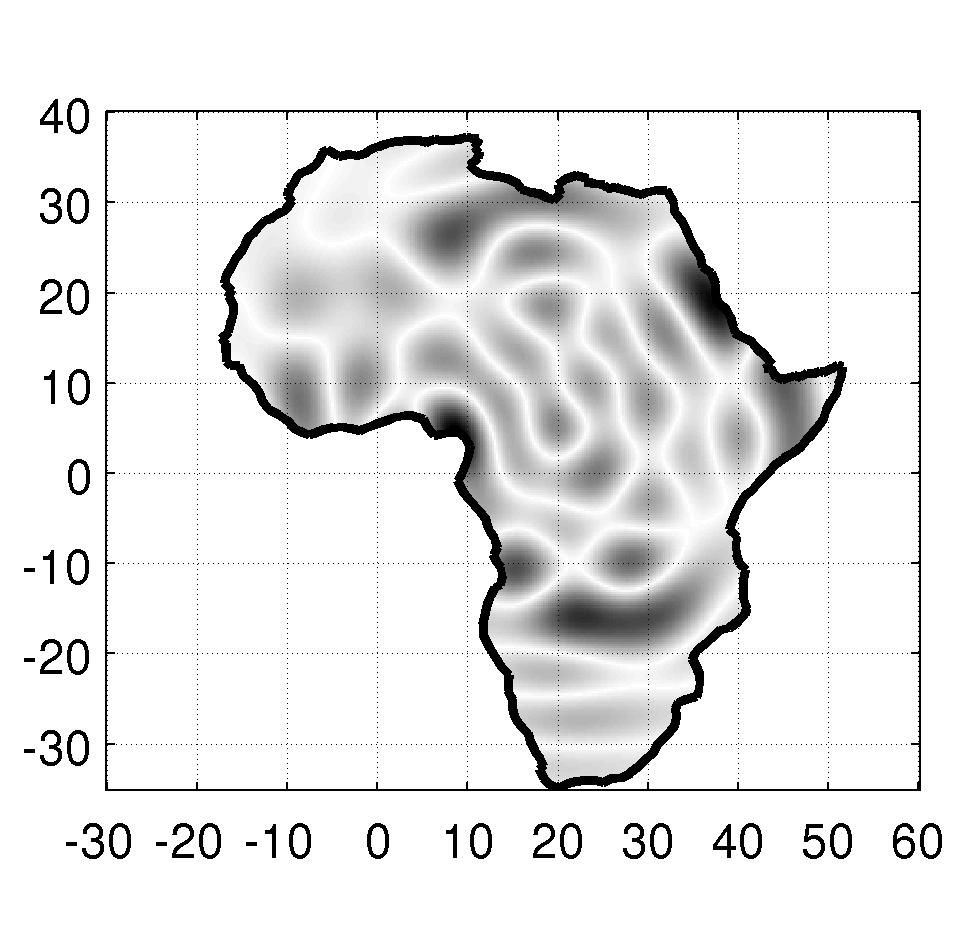}
\caption{$e_{40}$}
\end{subfigure}
\begin{subfigure}[b]{.29\linewidth}
\includegraphics[width=\linewidth]{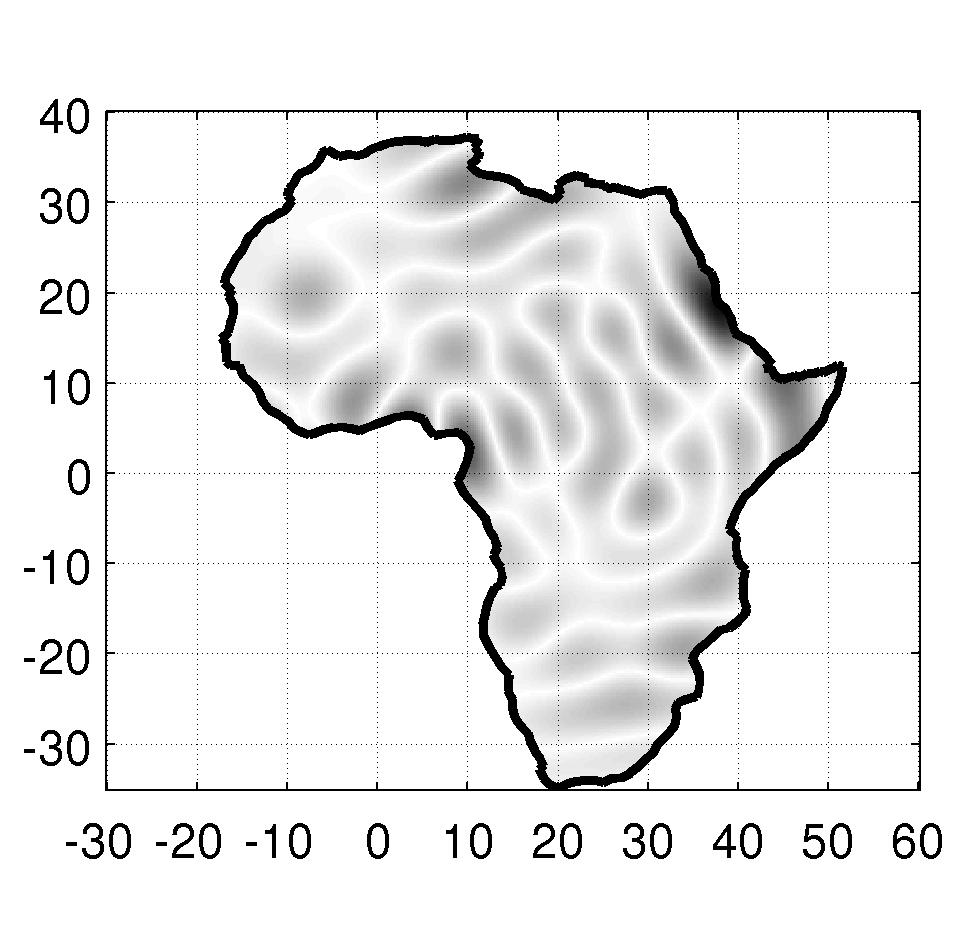}
\caption{$e_{50}$}
\end{subfigure}
\begin{subfigure}[b]{.29\linewidth}
\includegraphics[width=\linewidth]{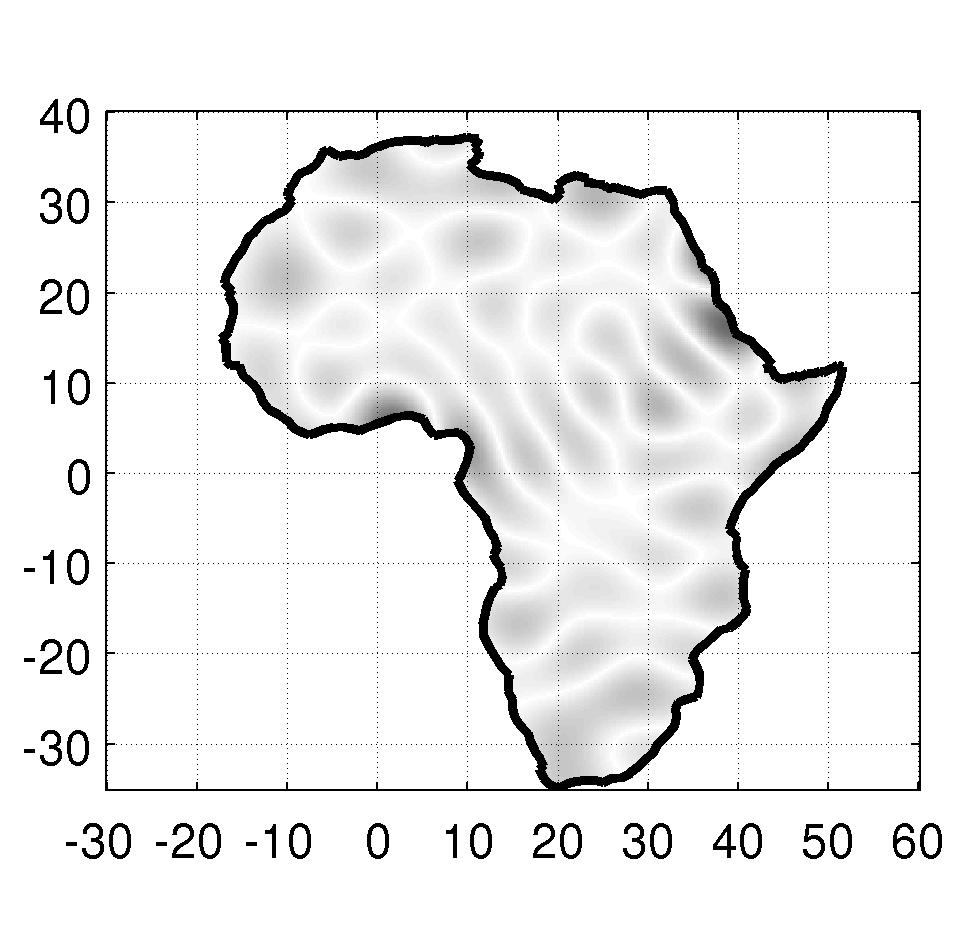}
\caption{$e_{62}$}
\end{subfigure}
\caption[Slepian~Tree~$\ell_1$~Debias Reconstruction of POMME-4]%
{\label{fig:pomme4sleptrr}Residual errors of the
  Slepian~Tree~$\ell_1$~Debias Reconstruction of POMME-4 data, using
  dictionary $\cD_{\text{Africa},36,1}$.  Labels above describe the
  number of nonzero entries in the reconstructed estimate.  Absolute
  error values range between 0 (white) to 50
  (black) and above (thresholded black).}
\end{figure}

As the dictionary elements given by the Tree construction are
localized in both scale and location, we can graphically show which
elements are ``turned on'' through the solution path, as more and more
elements in the support are chosen to be nonzero.
Fig.~\ref{fig:pomme4sleptrsuppr} shows the supporting regions
$\set{\cR^{(j,\alpha)}}$ of dictionary $\cD_{\text{Africa},36,1}$
associated the solutions given in the corresponding panels of
Fig.~\ref{fig:pomme4sleptrr}.  Clearly, larger scale dictionary
elements are chosen first; these reduce the residual error the most.
As more and more dictionary elements are added during the $\ell_1$
based inversion process, finer and finer details are included in the
reconstruction.

\begin{figure}[h!]
\centering
\begin{subfigure}[b]{.29\linewidth}
\includegraphics[width=\linewidth]{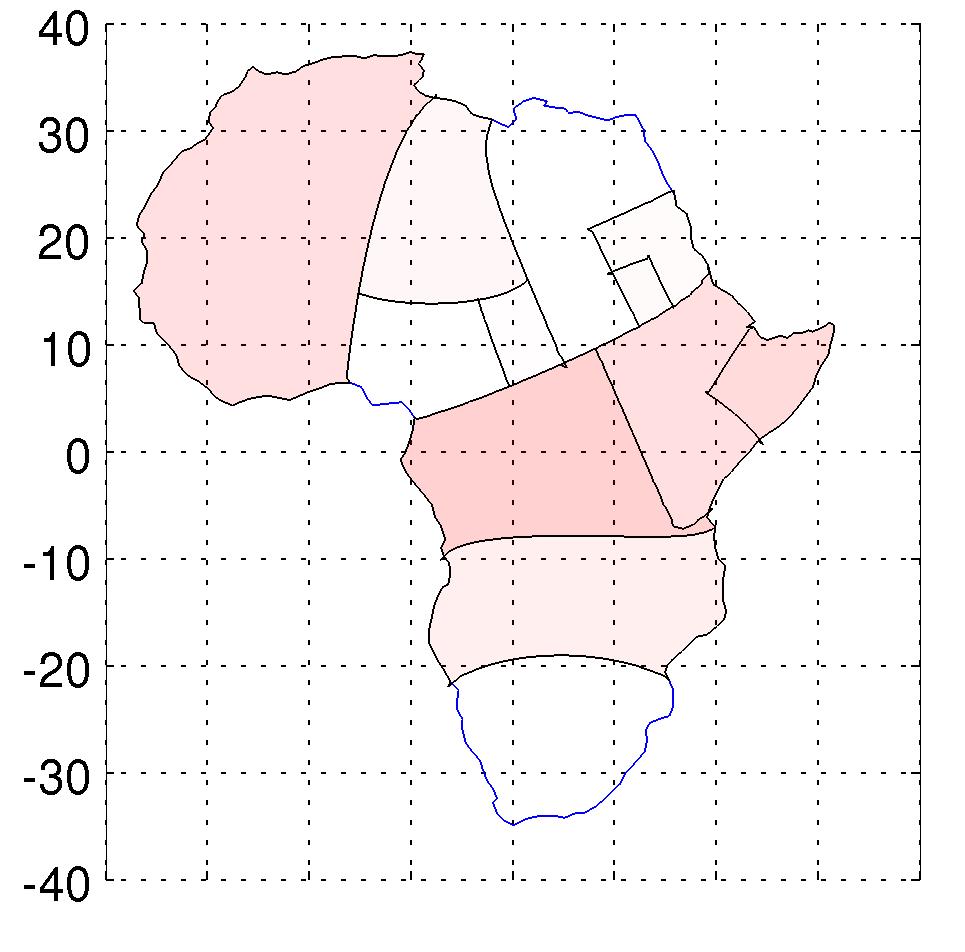}
\caption{$\cS_{9}$}
\end{subfigure}
\begin{subfigure}[b]{.29\linewidth}
\includegraphics[width=\linewidth]{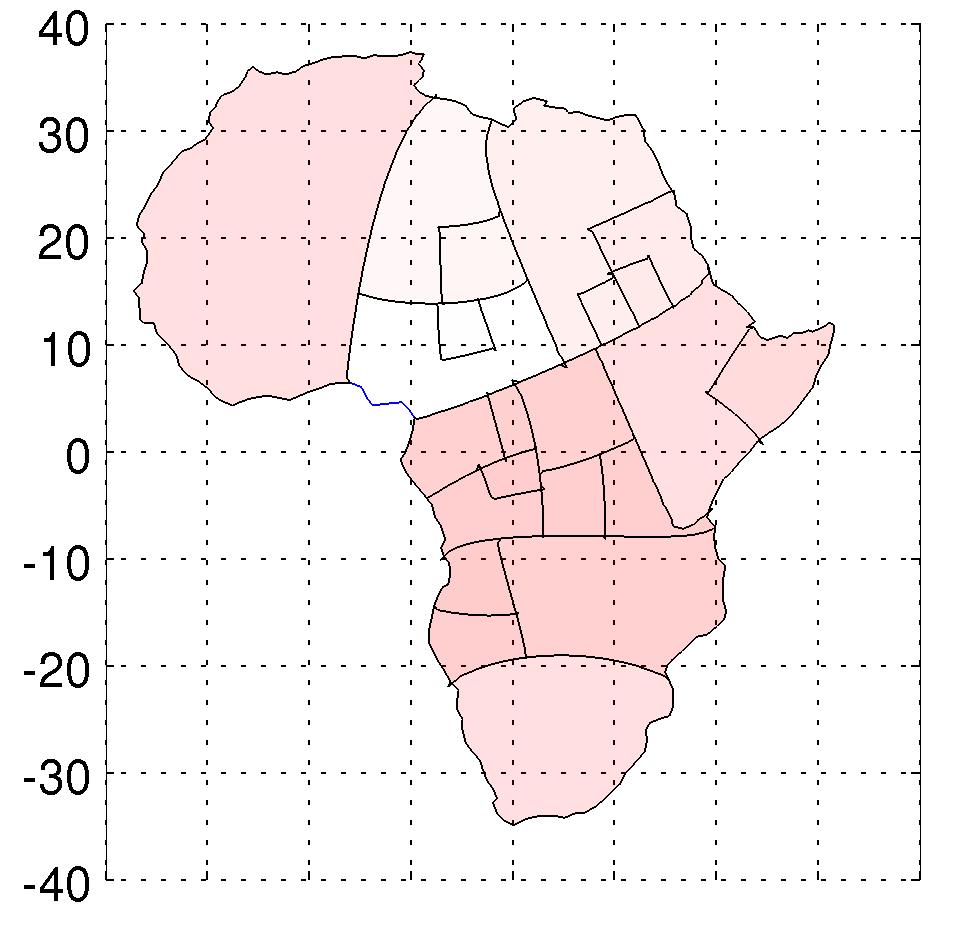}
\caption{$\cS_{20}$}
\end{subfigure}
\begin{subfigure}[b]{.29\linewidth}
\includegraphics[width=\linewidth]{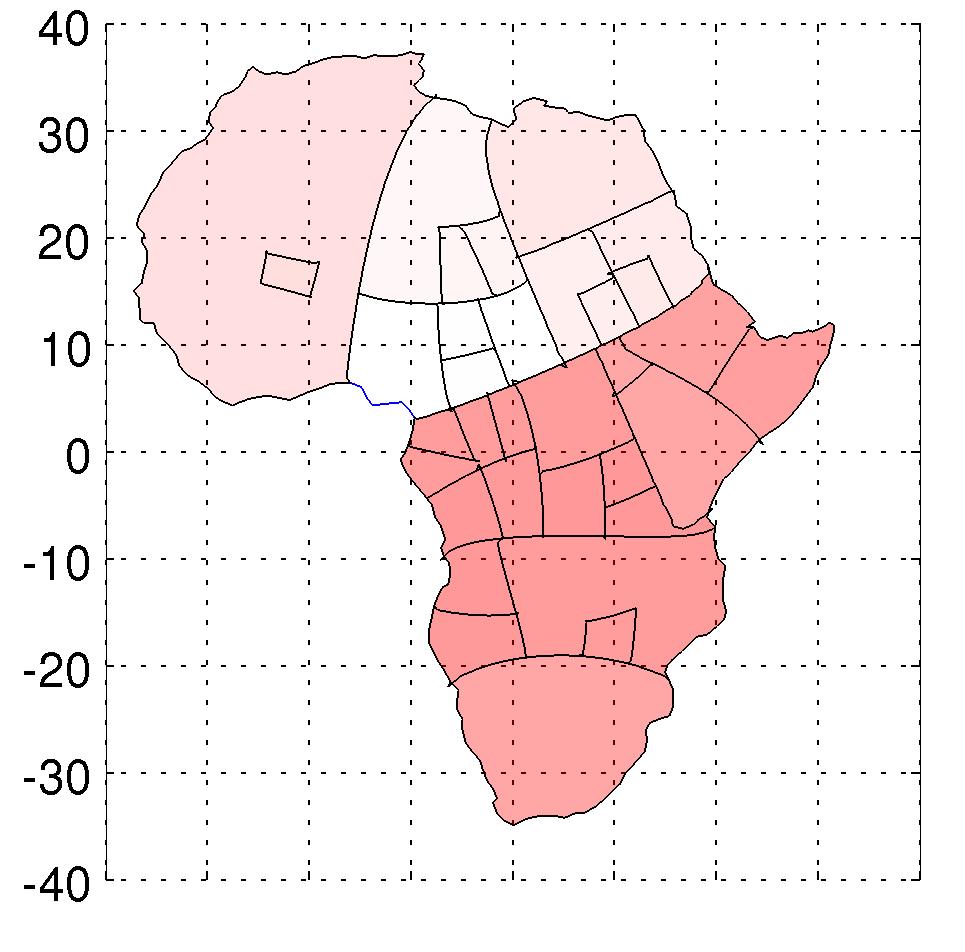}
\caption{$\cS_{31}$}
\end{subfigure}
\\
\begin{subfigure}[b]{.29\linewidth}
\includegraphics[width=\linewidth]{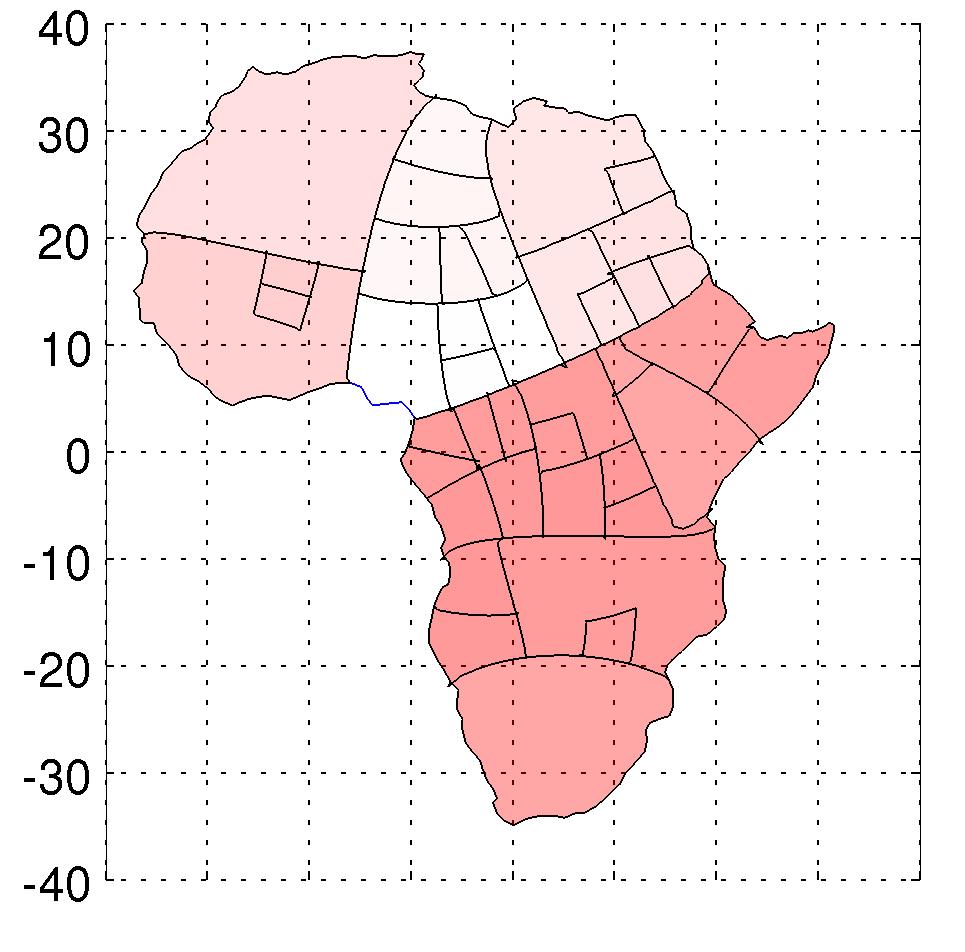}
\caption{$\cS_{40}$}
\end{subfigure}
\begin{subfigure}[b]{.29\linewidth}
\includegraphics[width=\linewidth]{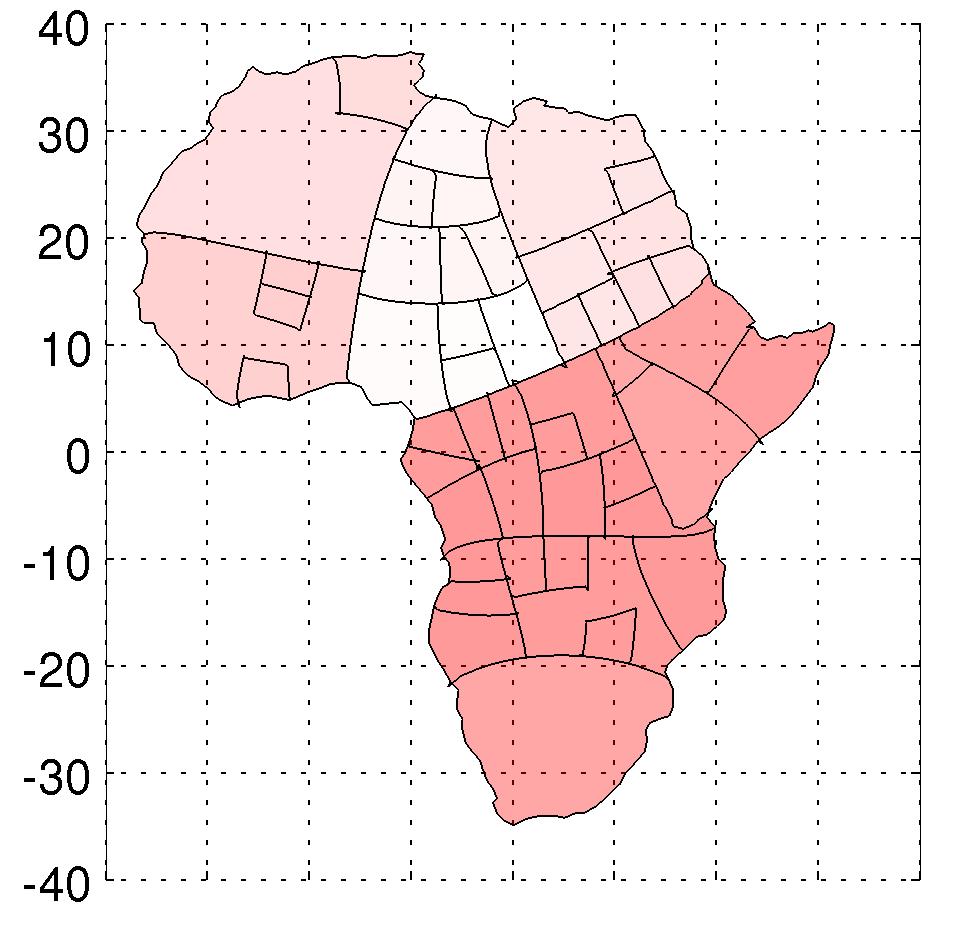}
\caption{$\cS_{50}$}
\end{subfigure}
\begin{subfigure}[b]{.29\linewidth}
\includegraphics[width=\linewidth]{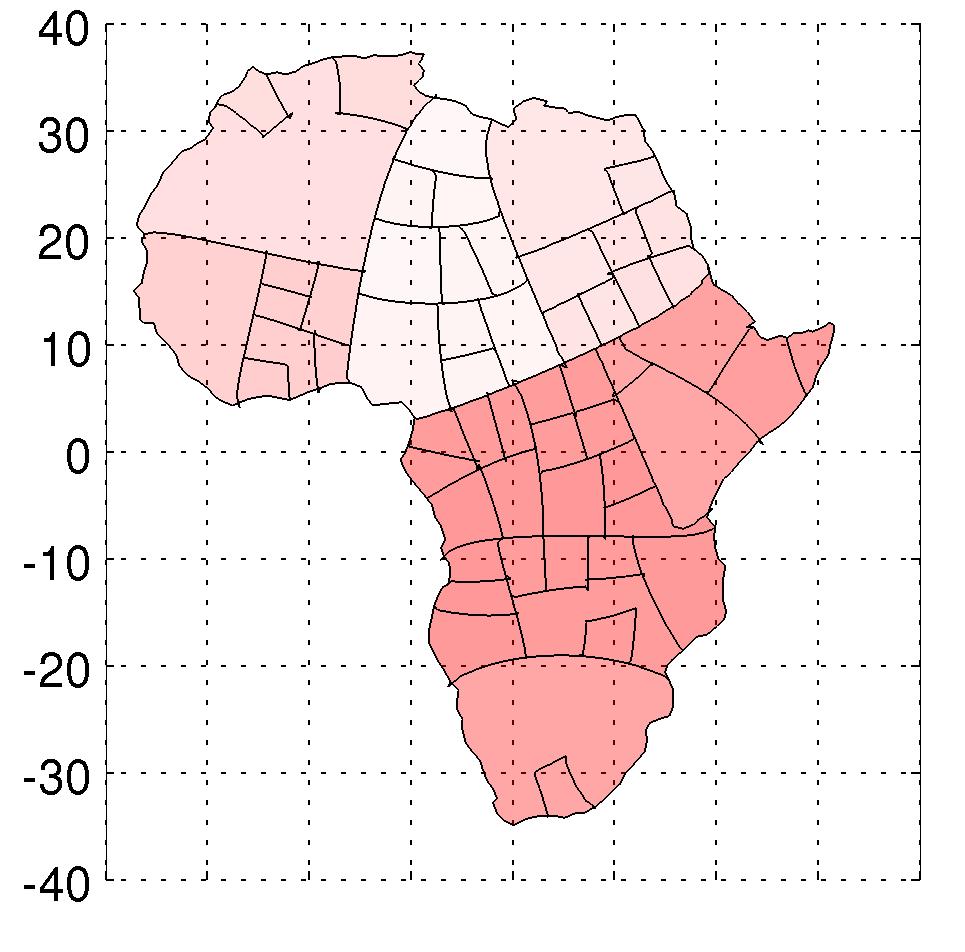}
\caption{$\cS_{62}$}
\end{subfigure}
\caption[Support sets estimated during Slepian~Tree~$\ell_1$~Debias Reconstruction of POMME-4]%
{\label{fig:pomme4sleptrsuppr}Support sets as estimated during the
  Slepian~Tree~$\ell_1$~Debias Reconstruction of POMME-4.  The
  dictionary used was $\cD_{\text{Africa},36,1}$.  These panes
  are associated with their corresponding panes in Fig.~\ref{fig:pomme4sleptrr}.}
\end{figure}

Fig. \ref{fig:pomme4error} compares, on a logarithmic scale, the
spatial residual errors (sum of squared differences) between the
three estimates, as a function of the number of nonzero components
allowed.  Clearly, when a small number of nonzero components
is allowed, the sparsity-based estimators outperform the standard
Slepian truncation-based inversion.  

\begin{figure}[h!]
\centering
\includegraphics[width=.65\linewidth]{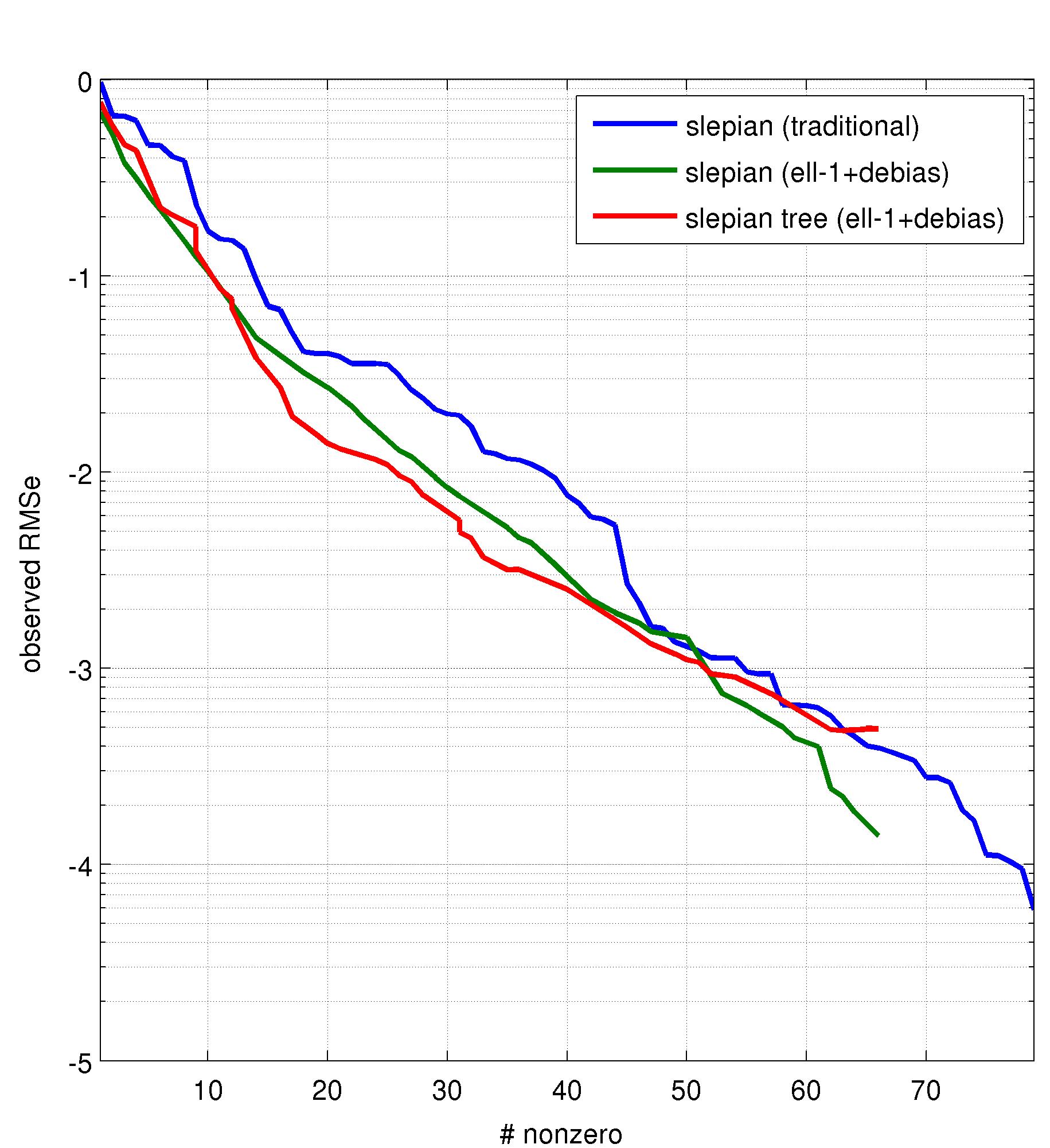} \\
\caption[Normalized residual error during reconstruction of POMME-4
  data]{\label{fig:pomme4error}Normalized residual error during
  reconstruction of POMME-4 data vector $\tp$.  Values are
  proportional to the sum of squared differences between $\tp$ and
  the spatial expansions of its estimates via STLS, SL1D, and STL1D, on
  the grid $\cX$.  The x-axis denotes the number of
  nonzero components allowed.  The y-axis is on a base-10 logarithmic
  (dB) scale.}
\end{figure}

\begin{figure}[h!]
\centering
\includegraphics[width=.45\linewidth]{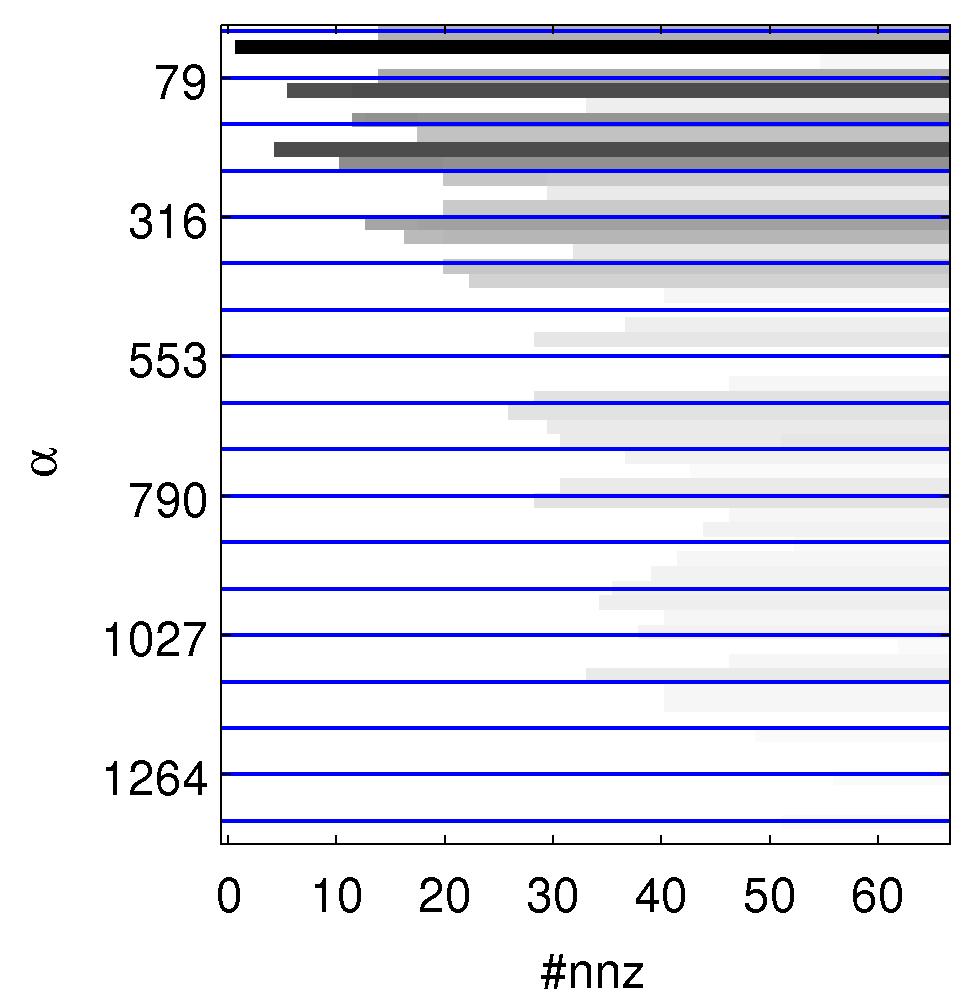}
\includegraphics[width=.45\linewidth]{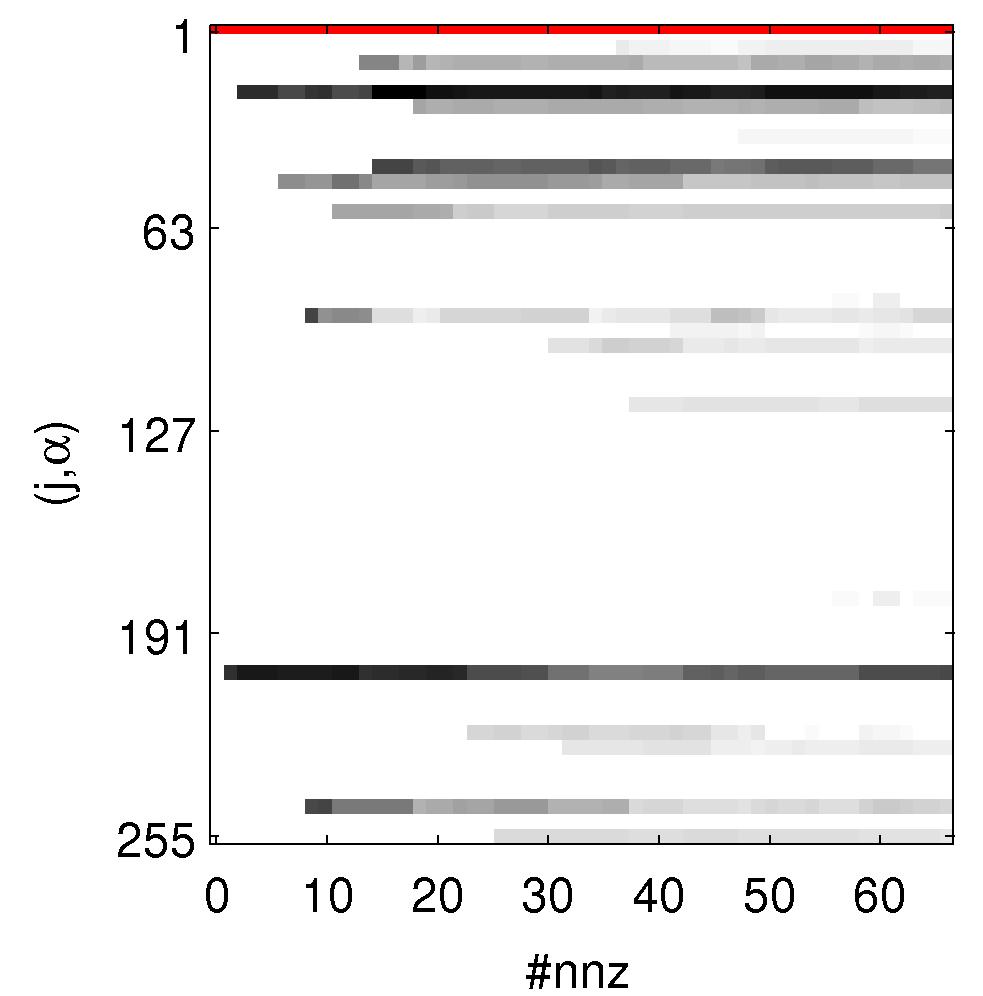}
\caption[Solution paths for the SL1D and STL1D estimators on the
  POMME-4 data]{\label{fig:pomme4solpath}Solution paths for the SL1D
  and STL1D estimators on the POMME-4 data $\tp$.  On the left, the
  solution path (coefficient magnitudes vs. support size) for the SL1D
  estimator.  Blue horizontal lines represent multiples of the Shannon
  number $N_{\text{Africa},36} \approx 79$.  On the right, the solution
  path for the STL1D estimator with dictionary~$\cD_{\text{Africa},36,1}$.}
\end{figure}

To study the consistency of the $\ell_1$-based SL1D
and STL1D estimators, and as a measure of how closely the SL1D
method matches the classical truncation strategy, we plot the solution
paths of these two estimators.  Fig. \ref{fig:pomme4solpath} shows
that while lower order (better-concentrated in $\cR$) Slepian functions were
chosen early on, when $n$ was small.  However, as more and more
nonzero indices were allowed, less well concentrated Slepian
functions, with lower magnitudes, were included in the solution,
probably as small ``tweaks'' to the estimate near the edges of the
region.  Once a Slepian function was included into the solution, its
magnitude did not change much throughout the solution path (as other
elements were added).

In contrast to the behavior of SL1D, the Slepian Tree
solution chose particular elements localized to the main features of
the signal, not simply elements that are well concentrated in all of
Africa on a large scale.  In addition, as the size of the support was
allowed to increase, the magnitudes of some coefficients were
decreased as new elements were added.  This supports the general
statement that multiscale dictionaries, when combined with
sparsity-inducing reconstruction techniques, ``fit'' the support to
the nature of the data.  Figs. \ref{fig:pomme4error} and
\ref{fig:pomme4solpath} thus help to clarify the behavior of the STL1D
estimator.


\subsubsection{White and Pink Noise}

As a second experiment, we generated multiple observations of either
pink or white noise fields, with a bandlimit of $L=36$, on the sphere
$S^2$, via randomization in the spherical harmonic representation of
$L^2_\Omega(S^2)$.  We then sampled these on $\cX$ to and attempted to
reconstruct them only in Africa, as in the previous example.

A random field $r$ with spectral slope $\beta$, up to degree $L$, is
defined as having the harmonic coefficients
\begin{align}
\wh{r}_{lm} &= l^{\beta/2} N_l^{-1} n_{lm} \quad (l,m) \in \Omega, \\
\text{where } n_{lm} &\operatorname*{\sim}^{\text{i.i.d.}} \cN(0,1)
\quad (l,m) \in \Omega, \nonumber \\
\text{and } N_l &= (2l+1)^{-1} \textstyle{\sqrt{\sum_{m=-l}^l n_{lm}^2}}. \nonumber
\end{align}

For a white noise process, with equal signal power across its
spectrum, $\beta = 0$.  Most spatial processes in geophysics, however,
have some $\beta < 0$; their power drops off with degree $l$.  Fields
with $\beta$ near $-2$ are considered to be ``pink'', while fields
with $\beta$ near $-4$ are ``red''.  The more red a noise process, the
higher its spatial correlation, the less ``random'' it looks.  The
earth's geopotential field, for example, is modeled as having ${\beta =
-4.036}$.  In our experiments, we use $\beta=0$ to generate white
noise processes and $\beta=-2$ for pink.

For each of $T=200$ iterations, we generated both pink and white noise
fields $r$ (bandlimited to $L=36$) and sampled them on $\cX$ (the grid
over the African continent) to get the vectors $\tr$.  As before, the
discretization matrix $Y$ is of rank $r=528$ so direct inversion is
impossible.  We again performed reconstruction via the three methods
STLS, SL1D, and STL1D. As in Fig. \ref{fig:pomme4error}, for each of
the iterations we calculated the normalized residual: the sum of
squared differences between the estimates, as expanded on $\cX$, and
the original samples, normalized by the sum of squares
$\norm{\tr}_2^2$.

Figs.~\ref{fig:slepwhitenoiseavg} and~\ref{fig:sleprednoiseavg} show
the mean normalized error over the $T$ iterations for white and pink
noise, respectively.  For white noise, the ability to use any of the
Slepian functions clearly provides an advantage for the SL1D
algorithm.  In contrast, for pink noise the spatial localization of
the dictionary elements and the smaller size of the dictionary give an
edge to the Tree-based estimator.  Clearly, however, both estimators
lead to lower normalized observation error on average, as compared
with the classically optimal Slepian truncation method STLS.

\begin{figure}[hb]
\centering
\includegraphics[width=.55\linewidth]{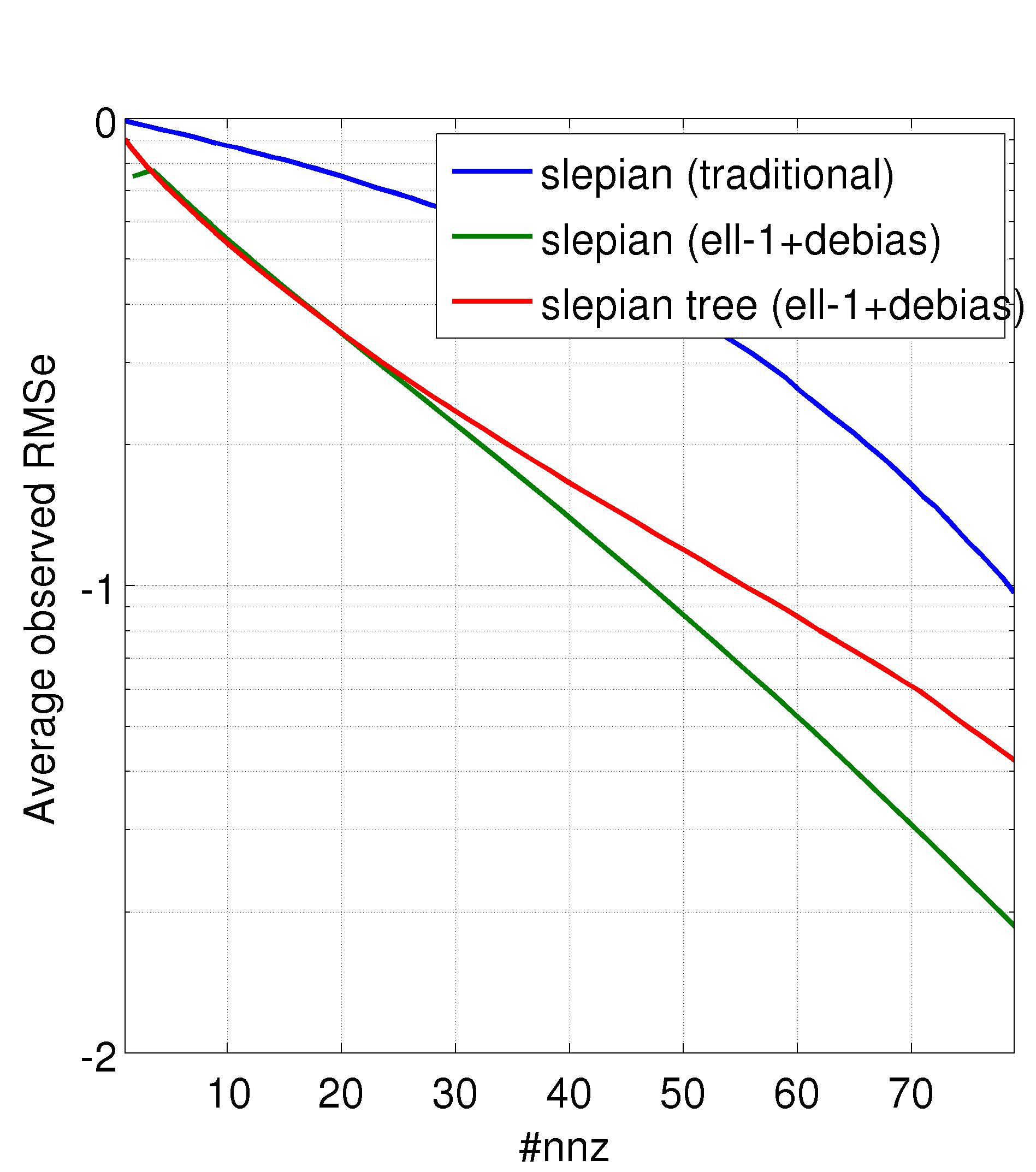}
\caption[White noise ($\beta=0$) average reconstruction
  error]{\label{fig:slepwhitenoiseavg}White noise ($\beta=0$) average
  reconstruction error, normalized by signal power within each
  iteration, on a base-10 log scale.  Average taken over $T=200$
  iterations.  X-axis represents the number of nonzero elements
  allowed in the support.  The Tree dictionary is~$\cD_{\text{Africa},36,1}$.}
\end{figure}

\begin{figure}[ht]
\centering
\includegraphics[width=.55\linewidth]{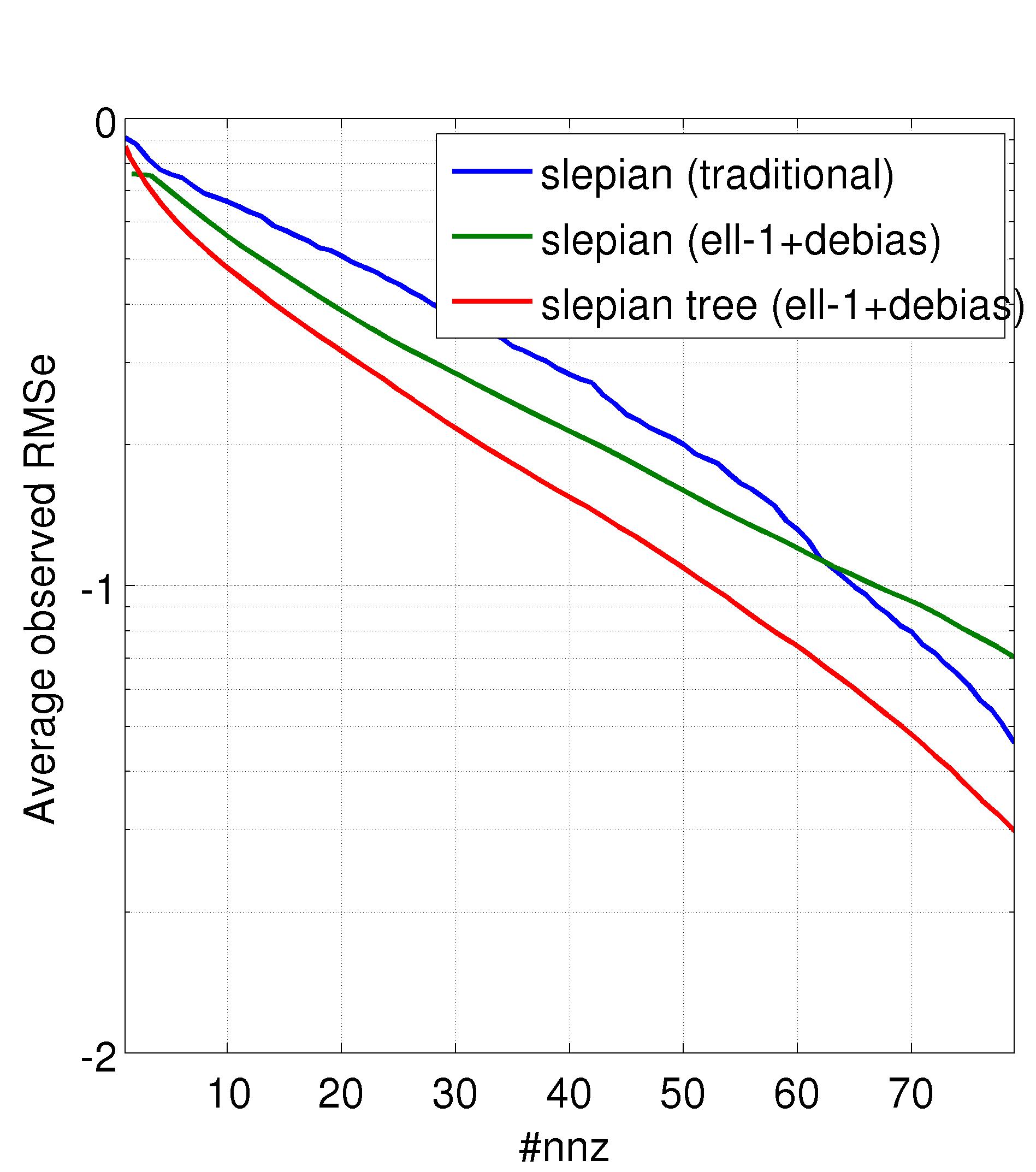}
\caption[Pink noise ($\beta=-2$) average reconstruction
  error]{\label{fig:sleprednoiseavg}Pink noise ($\beta=-2$) average
  reconstruction error, normalized by signal power within each
  iteration, on a base-10 log scale.  Average taken over
  $T=200$ iterations.  X-axis represents the number of nonzero elements
  allowed in the support.  The Tree dictionary is~$\cD_{\text{Africa},36,1}$.}
\end{figure}

\section{Conclusion and Future Work}

We have motivated and described a construction for
dictionaries of multiscale, bandlimited functions on the sphere.  When
paired with the modern inversion techniques of \S\ref{sec:sltrunder},
these dictionaries provide a powerful tool for the approximation
(inversion) of bandlimited signals concentrated on subsets of the
sphere.
The numerical examples in \S\ref{sec:sltrinv} provide
good evidence for the efficacy of the estimators SL1D and STL1D.  More
simulations are required to confirm and explore their numerical
accuracy.

In addition, more theoretical analysis of the existing dictionary
constructions (e.g., their concentration properties)
is also required.  Especially when working in concert with the
$\ell_1$-based estimators, questions of coherence are especially
important~\cite{Gurevich2008,Candes2006}.

The theoretical underpinnings of the SL1D estimator have not been
studied, to our knowledge.  In contrast, the identically equivalent
question of estimating the support of the Fourier transform of a
signal, given its (possibly nonuniform) samples, is one that has been
studied extensively in the Compressive Sensing
community (starting with, e.g.,~\cite{Candes2006,Candes2006b}).

The top-down subdivision based scheme described in this chapter is not
the only way to construct multiscale dictionaries.  Followup work may
include one or more of the following ideas:
\begin{itemize}
\item Instead of estimating an optimal height $H$ during construction,
  simply prune a tree element $d^{(j,\alpha)}$ if its spectral
  concentration $\lambda^{(j,\alpha)}$ or concentration in $\cR$,
  $\nu^{(j,\alpha)}$, is below a minimum threshold.  This allows for
  more adaptive and better concentrated dictionary elements near
  high-curvature borders.
\item While the dictionaries described here describe ``summary''
  functions (for $\alpha=1$), it is possible to use Gram-Schmidt
  orthogonalization to construct an alternate ``difference''
  dictionary by orthogonalizing each node with its parent and
  sibling.  Such dictionaries would be better tuned to find ``edges'',
  and would provide sparser representations for mostly smooth data.
  In practice, this leads to better performance of $\ell_1$-based
  estimators like STL1D.
\item Other subdivision construction schemes should be considered.  For
  example, when the subregion $\cR$ is highly nonconvex (e.g., when
  $\cR$ is the interior of the Earth's oceans), even the second
  Slepian function contains more than one mode.  In this case, it is
  unclear how to subdivide the domain from the top down.  Instead, a
  bottom-up approach would work, wherein a fine grid is constructed on
  the region $\cR$, and grid elements are ``merged'' until their area
  is large enough that reasonably well concentrated Slepian functions
  with bandwidth $L$ will fit in them.
\end{itemize}

The ultimate goal of the constructions in this chapter is an
overcomplete multiscale \emph{frame} of bandlimited functions that are
well concentrated on $\cR$, can be constructed quickly, and admit fast
forward and inverse transforms.  That is, we seek a methodology
similar to the Wavelet transforms but allowing for bandlimits.  The work 
here should be considered a stepping stone in that direction as it
shares many of the properties of third generation Wavelets treated
elsewhere, especially the one that may be most important: numerical
accuracy in the solution of ill-posed inverse problems.


\appendix 
\chapter{Differential Geometry: Definitions \label{app:diffgeom}}

The manifold $(\cM,g)$ with metric tensor $g$ admits an atlas of charts
$\set{(\phi_\alpha,U_\alpha)}_\alpha$, where $\phi_\alpha : U_\alpha
\to \bbR^d$ is a diffeomorphism from the open subset $U_\alpha \subset
\cM$ \cite[Chs. 0,4]{doCarmo1992}.
The choice of a standard orthonormal basis in $\bbR^d$ defines a
corresponding basis for the tangent plane to $\cM$ at each $x \in
U_\alpha$, as well as a local coordinate system $\set{v^j}$ near $x$.
The components of the tensor $g$ are defined as
the inner products between the partial derivatives of $\phi_\alpha$ in
this coordinate system:
$g_{ij}(x) = \ip{\partial \phi_\alpha(x) / \partial v^i}{\partial
\phi_\alpha(x)/ \partial v^j}$
where $v = \phi_\alpha(x)$. The inverse tensor,
denoted by $g^{ik}(x) = (g^{-1}(x))_{ik}$, is smooth, everywhere
symmetric and positive definite because $g$ is.  We use the notation
$\abs{g} = \det{(g_{ij})}$.  Note that we will often drop the position
term $x$; for example, $g^{-1} = g^{-1}(x)$.

We assign to $\cM$ the standard
gradient $\grad$, inner product $\cdot$, divergence $\grad \cdot$, and
Laplacian $\lap = \grad \cdot \grad$ at a point $x$
\cite[Ch. 1]{Rosenberg1997}.
For $f$ a twice-differentiable function on $\cM$ (i.e. $f \in C^2(\cM)$)
and $\bm{f}$ and $\bm{g}$ differentiable vector fields on $\cM$,
\begin{align*}
\partial_i f(x) = \frac{\partial f(x)}{\partial v^i}
\qquad (\grad f)^j = \sum_i g^{ij}(x) \partial_i f(x)
\qquad \bm{f} \cdot \bm{g} = \sum_{i,j} g_{ij}(x) \bm{f}^i(x) \bm{f}^j(x) \\
\grad \cdot \bm{f} = \sum_i \frac{1}{\sqrt{\abs{g(x)}}} \partial_i
    \left(\sqrt{\abs{g(x)}} \bm{f}^i(x) \right)
\qquad   \lap f = \sum_{i,j} \frac{1}{\sqrt{\abs{g(x)}}} \partial_i
    \left(\sqrt{\abs{g(x)}} g^{ij}(x) \partial_j f(x) \right),
\end{align*}
where again $v = \phi_\alpha(x)$ and $1 \leq i, j, k \leq d$.
We use the definitions above throughout, as well as the norm
definition $\norm{\bm{f}}^2 = \bm{f} \cdot \bm{f}$; thus
$\norm{\grad f}^2 = \sum_{i,j} g^{ij} \partial_i f \partial_j f$.

\chapter{Spectral Theory \label{app:fourier}}

In this appendix we state some basic facts about the existence
of the Fourier transform for functions in $L^2(\bbR^n)$.  We also
discuss the existence and properties of Fourier series representations
for functions in $L^2(\cM)$, where $(\cM,g)$ is a
Riemannian manifold with metric $g$ (see App. \ref{app:diffgeom}).  A
comprehensive review of Fourier transforms and convolutions on general
(possibly non-compact) spaces is available in \cite[Ch. 7]{Rudin1973},
and on Riemannian Manifolds in particular in
\cite[Ch. 1]{Rosenberg1997}.

\section{Fourier Transform on $\bbR^n$}

The Fourier transform of a function $f \in L^2(\bbR^n)$ is formally defined,
for all $\omega \in \bbR^n$, as
$$
\wh{f}(\omega) = \int_{x \in \bbR^n} f(x) \ol{\psi_\omega(x)} dx,
$$
where $\psi_\omega(x) = e^{i x \cdot \omega}$ solves the eigenvalue problem
$$
\Delta \psi_\omega(x) = -\lambda_\omega^2 \psi_\omega(x), \qquad x \in \bbR^n
$$
with $\Delta f(x) = \sum_{j=1}^n \frac{\partial^2}{\partial x_j^2} f(x)$ and $\lambda_\omega =
\norm{\omega}_2$.

The Fourier inversion theorem and its extensions \cite[Thms. 7.7 and
7.15]{Rudin1973} state that $\wh{f}$ is well defined, and that $f$
can be represented via $\wh{f}$.  For all $x \in \bbR^n$,
$$
f(x) = (2 \pi)^{-n} \int_{\omega \in \bbR^n} \wh{f}(\omega) \psi_\omega(x) dx.
$$

\section{Fourier Analysis on Riemannian Manifolds}

Suppose $\cH$ is a separable Hilbert space, e.g., $\cH = L^2(M)$ where
$M$ is a compact set.  Further, suppose $T : \cH \to \cH$ is a
bounded, compact, self-adjoint operator.  Then the Hilbert-Schmidt theorem
\cite[Thm. VI.16]{Reed1980} states that there is a complete
orthonormal basis $\set{\psi_k}$ for $\cH$ such that $T \psi_k(x) =
\lambda_k \psi_k(x)$, $k=0,1,\ldots$, and $\lambda_k \to 0$ as $k \to
\infty$.  The Laplacian operator $T = \Delta$ defined on compact
subsets of $M \subset \bbR^n$ therefore admits a complete orthonormal
basis (as its inverse is compact), and from now on we refer to
$\set{\psi_k}$ as the eigenfunctions of the Laplacian.  This set is
called the Fourier basis.

We can therefore write any function $f \in L^2(M)$ as
$$
f(x) = \sum_{k=0}^\infty \wh{f}_k \psi_k(x) \qquad \text{where}
\qquad
\wh{f}_k = \int_M f(x) \ol{\psi_k(x)} dx,
$$
where $\set{\wh{f}_k}$ are called the Fourier coefficients.  The
calculation of Fourier coefficients is called analysis, and the
reconstruction of $f$ by expansion in the Fourier basis is called
synthesis.

On a Riemannian manifold $(\cM,g)$ a similar analysis applies
\cite[Thm. 1.29]{Rosenberg1997}:

\begin{prop}[Hodge Theorem for Functions]
\label{prop:hodge}Let $(\cM,g)$ be a compact connected oriented
Riemannian manifold.  There exists an orthonormal basis of
$L^2(\cM)$ consisting of eigenfunctions of the Laplacian.  All the
eigenvalues are positive, except that zero is an eigenvalue with
multiplicity one.  Each eigenvalue has finite multiplicity, and the
eigenvalues accumulate only at infinity.
\end{prop}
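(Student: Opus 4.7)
The plan is to reduce the statement to the cited Hilbert-Schmidt theorem (Thm.~VI.16 of Reed and Simon) by inverting the Laplacian on an appropriate subspace. Since $-\lap$ (using the sign convention that makes it nonnegative) is densely defined and unbounded on $L^2(\cM)$, I cannot apply Hilbert-Schmidt to it directly. The idea is to show that $-\lap$ is self-adjoint and that its Green's operator $G = (-\lap)^{-1}$, defined on an appropriate subspace, is compact and self-adjoint. Hilbert-Schmidt applied to $G$ then produces the eigenfunctions and eigenvalues of $-\lap$ via the reciprocal map.

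First I would verify the basic functional-analytic setup. The operator $-\lap$, with domain the smooth functions on $\cM$, is symmetric since Green's identity on a compact manifold without boundary gives $\ip{-\lap u}{v} = \ip{\grad u}{\grad v} = \ip{u}{-\lap v}$, with no boundary terms to worry about because $\cM$ has no boundary (this uses the orientability and compactness hypotheses). Its closure is self-adjoint by a standard argument. I would then decompose $L^2(\cM) = \ker(-\lap)\oplus\ker(-\lap)^{\perp}$ and characterize the kernel: if $-\lap u = 0$, then $\int_\cM \norm{\grad u}^2 \, \dvol = \ip{-\lap u}{u} = 0$, so $\grad u = 0$ pointwise, and the connectedness of $\cM$ forces $u$ to be a constant. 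Thus $\ker(-\lap)$ is exactly the one-dimensional span of the constant functions, which immediately gives the ``multiplicity one'' part of the statement for the eigenvalue zero. The same integration-by-parts computation shows $\ip{-\lap u}{u}\geq 0$ with equality only for constants, establishing positivity of every nonzero eigenvalue.

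The heart of the proof is showing that the Green's operator $G : \ker(-\lap)^{\perp} \to \ker(-\lap)^{\perp}$, defined by $Gf = u$ where $u$ is the unique mean-zero solution to $-\lap u = f$, is compact and self-adjoint. Existence and uniqueness of $u$, together with the elliptic regularity estimate $\norm{u}_{H^2(\cM)}\leq C\,\norm{f}_{L^2(\cM)}$, follow from standard elliptic PDE theory for $-\lap$ on a compact Riemannian manifold (which can be obtained by working in charts, using G{\aa}rding's inequality or a parametrix construction, and patching by a partition of unity). The Rellich-Kondrachov compact embedding $H^2(\cM)\hookrightarrow L^2(\cM)$, valid on compact manifolds, then makes $G$ compact on $L^2$. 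Self-adjointness of $G$ is immediate from that of $-\lap$: for $f,g$ mean zero with $Gf = u$ and $Gg = v$, we have $\ip{Gf}{g} = \ip{u}{-\lap v} = \ip{-\lap u}{v} = \ip{f}{Gg}$.

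Finally I would apply the Hilbert-Schmidt theorem to $G$ on the separable Hilbert space $\ker(-\lap)^{\perp}$. This yields a complete orthonormal basis $\set{\psi_k}_{k\geq 1}$ of $\ker(-\lap)^{\perp}$ with $G\psi_k = \mu_k \psi_k$ and $\mu_k \to 0$; since $\ker(G) = \set{0}$ on this subspace, each $\mu_k \neq 0$. Setting $\lambda_k = 1/\mu_k$, we get $-\lap \psi_k = \lambda_k \psi_k$ with $\lambda_k > 0$ and $\lambda_k \to \infty$ (the only accumulation point). Finite multiplicity of each $\lambda_k$ follows from finite multiplicity of the corresponding $\mu_k \neq 0$ for a compact operator. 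Adjoining the normalized constant function $\psi_0 \equiv \dvol(\cM)^{-1/2}$ yields the desired complete orthonormal basis of all of $L^2(\cM)$. The main technical obstacle is the elliptic regularity estimate on $\cM$, which is classical but requires some care to formulate intrinsically; all the other steps (Green's identity, Rellich compactness, Hilbert-Schmidt) are essentially off-the-shelf once that estimate is in hand.
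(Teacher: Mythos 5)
Your proposal is correct. The paper does not give its own proof of Prop.~\ref{prop:hodge}; it simply cites Rosenberg (Thm.~1.29), whose argument is the same classical one you outline: reduce to the Green's operator on the orthogonal complement of the constants, obtain compactness from the elliptic estimate $\norm{u}_{H^2}\leq C\norm{f}_{L^2}$ together with Rellich--Kondrachov, apply the spectral theorem for compact self-adjoint operators, and take reciprocals of the eigenvalues. Your characterization of the kernel via $\ip{-\lap u}{u}=\norm{\grad u}^2$ and connectedness, and your handling of the multiplicity-one statement for the zero eigenvalue, are exactly what is needed, and you correctly flag the two places that require nontrivial PDE input (essential self-adjointness and the elliptic regularity estimate) while leaving them to standard theory. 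This is the same route as the paper's source, just written out rather than cited.
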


The Laplacian given above is the negative of the Laplace-Beltrami
operator defined in \S\ref{app:diffgeom}, and the eigenfunctions have
Neumann boundary conditions.

Fourier analysis and synthesis can be written as
$$
f(x) = \sum_{k=0}^\infty \wh{f}_k \psi_k(x), x \in \cM \qquad
\text{where} \qquad
\wh{f}_k = \int_\cM f(x) \ol{\psi_k(x)} d\mu(x),
$$
where the analysis integral above is with respect to the volume
metric $g$.  Fourier analysis is a unitary operation (this is
known as Parseval's theorem).  Note that practical definitions of the
forward and inverse Fourier operators often differ by the choice and
placement of normalization constants, in order to simplify notation
(as is the case throughout this thesis).

\subsection{Fourier Analysis on the Circle $S^1$}

On the circle, $S^1$, parametrized by $\set{\theta : \theta \in
  [0,2\pi)}$ with $d\mu(\theta) = d\theta$, we have $\Delta e^{i
 \theta k} = - k^2 e^{i k \theta}$ for 
$k = 0, \pm 1, \pm 2, \ldots$.  We can therefore decompose $L^2(S^1)$ via
projections onto $\set{e^{i \theta k}}_k$.  That is, we can write
$$
f(\theta) = \sum_{k = -\infty}^\infty \wh{f}_k e^{ i \theta k} \qquad \text{where} \qquad
\wh{f}_k = \frac{1}{2 \pi} \int_{S^1} f(\theta) e^{- i \theta k} d\theta
$$
for any $f \in L^2(S^1)$.

The analysis, as defined, is an isometry.  By Parseval's theorem, for
$f, g \in L^2(S^1)$,
$$
\int_{S^1} f(\theta) \ol{g(\theta)} d\theta = \sum_{k=-\infty}^\infty \wh{f}_k \ol{\wh{g}_k}
$$

This analysis can also be applied to functions on $L^2[0,2\pi]$ with
periodic boundary conditions by identifying the interval with $S^1$.

\subsection{Fourier Analysis on the Sphere $S^2$\label{sec:fouriersphere}}

The unit sphere, $S^2$, can be parametrized by $\set{(\theta,\phi) : \theta
  \in [0,\pi], \phi \in [0,2\pi)}$ where $\theta$ is the colatitude and
$\phi$ is the longitude.  In this case, the Laplacian is
$$
\Delta f(\theta,\phi) = \frac{1}{\sin \theta} \partial_\theta
  \left(\sin \theta \partial_\theta f(\theta,\phi) \right)
  + \frac{1}{\sin^2 \theta} \partial_{\phi \phi} f(\theta,\phi)
$$
and the volume element is $d\mu(\theta,\phi) = \sin \theta d\phi d\theta$.

For our purposes, we are interested in real-valued functions on the
sphere.  As such, we study the \emph{real-valued} eigenfunctions of
the Laplacian on $S^2$.  The real surface spherical harmonics,
$\set{Y_{lm}(\theta,\phi)}$, are parametrized by
the degree $l$ and order $m$, where $l = 0, 1, \ldots$ and $m = -l,
\ldots, l$.  These can be given by \cite{Simons2006}:
\begin{align}
Y_{lm}(\theta,\phi) &=
\begin{cases}
\sqrt{2} X_{l\abs{m}}(\theta) \cos(m \theta) & -l \leq m < 0, \\
X_{l0}(\theta) & m = 0,\\
\sqrt{2} X_{lm}(\theta) \sin(m \theta) & 0 < m \leq l,
\end{cases} \qquad \text{where}\\
X_{lm}(\theta) &= (-1)^m \left( \frac{(2l+1)}{4 \pi} \frac{(l-m)!}{(l+m)!} \right)^{1/2}
P_{lm} (\cos \theta),
\end{align}
and $P_{lm}(t)$ are the associated Legendre functions of degree $l$
and degree $m$ \cite[\S8.1.1]{Abramowitz1964}.  Each spherical
harmonic $Y_{lm}$ fulfills the eigenvalue relationship $\Delta Y_{lm}
= -l(l+1) Y_{lm}$.

We can therefore decompose $L^2(S^2,\bbR)$ via
projections onto $\set{Y_{lm}(\theta,\phi)}_{lm}$, where now
$\ol{Y_{lm}} = Y_{lm}$.  That is, we can write
$$
f(\theta,\phi) = \sum_{lm} \wh{f}_{lm} Y_{lm}(\theta,\phi) \qquad \text{where} \qquad
\wh{f}_{lm} = \int_{S^2} f(\theta,\phi) Y^{m}_l(\theta,\phi) d\mu(\theta,\phi),
$$
for any $f \in L^2(S^2,\bbR)$.

The analysis, as defined, is an isometry.  By a
Parseval's theorem, for real-valued functions
$f, g \in L^2(S^2)$,
$$
\int_{S^2} f(\theta,\phi) g(\theta,\phi) d\mu(\theta,\phi) 
 = \sum_{l \geq 0} \sum_{m = -l}^l \wh{f}_{lm} \wh{g}_{lm}.
$$


\singlespacing
\bibliographystyle{siam}

\cleardoublepage
\ifdefined\phantomsection
  \phantomsection  
\else
\fi
\addcontentsline{toc}{chapter}{Bibliography}

\bibliography{thesis,paleo,respiration}

\begin{thebibliography}{100}

\bibitem{Abramowitz1964}
{\sc M.~Abramowitz and I.~Stegun}, {\em {Handbook of mathematical functions
  with formulas, graphs, and mathematical tables}}, Dover publications, 1964.

\bibitem{Azagra2008}
{\sc D.~Azagra, J.~Ferrera, and B.~Sanz}, {\em {Viscosity solutions to second
  order partial differential equations on Riemannian manifolds}}, Journal of
  Differential Equations, 245 (2008), pp.~307--336.

\bibitem{Balasubramanian2002}
{\sc M.~Balasubramanian and E.~L. Schwartz}, {\em The isomap algorithm and
  topological stability}, Science, 295 (2002), p.~7.

\bibitem{Belkin2004}
{\sc M.~Belkin, I.~Matveeva, and P.~Niyogi}, {\em {Regularization and
  semi-supervised learning on large graphs}}, Learning theory,  (2004),
  pp.~624--638.

\bibitem{Belkin2008th}
{\sc M.~Belkin and P.~Niyogi}, {\em {Towards a theoretical foundation for
  Laplacian-based manifold methods}}, Journal of Computer and System Sciences,
  74 (2008), pp.~1289--1308.

\bibitem{Belkin2006}
{\sc M.~Belkin, P.~Niyogi, and V.~Sindhwani}, {\em Manifold regularization: a
  geometric framework for learning from labeled and unlabeled examples}, JMLR,
  7 (2006), pp.~2399--2434.

\bibitem{Belkin2008}
{\sc M.~Belkin, J.~Sun, and Y.~Wang}, {\em {Discrete Laplace operator on meshed
  surfaces}}, in Proceedings of the twenty-fourth annual symposium on
  Computational geometry, ACM, 2008, pp.~278--287.

\bibitem{Bengio2004}
{\sc Y.~Bengio, J.~Paiement, P.~Vincent, O.~Delalleau, N.~Le~Roux, and
  M.~Ouimet}, {\em {Out-of-sample extensions for lle, isomap, mds, eigenmaps,
  and spectral clustering}}, in Advances in Neural Information Processing
  Systems 16: Proceedings of the 2003 Conference, The MIT Press, 2004, p.~177.

\bibitem{Berger1992}
{\sc A.~Berger and M.~Loutre}, {\em {Astronomical solutions for paleoclimate
  studies over the last 3 million years}}, Earth and Planetary Science Letters,
  111 (1992), pp.~369--382.

\bibitem{Bishop2006}
{\sc C.~Bishop}, {\em Pattern recognition and machine learning}, vol.~4,
  Springer New York, 2006.

\bibitem{Brandes2001}
{\sc U.~Brandes}, {\em A faster algorithm for betweenness centrality}, Journal
  of Mathematical Sociology, 25 (2001), p.~163–177.

\bibitem{Brevdo2011b}
{\sc E.~Brevdo, N.~Fu\v{c}kar, G.~Thakur, and H.~Wu}, {\em {The
  Synchrosqueezing algorithm: a robust analysis tool for signals with
  time-varying spectrum.}}, Submitted (Arxiv preprint arXiv:1105.0010),
  (2011).

\bibitem{Brevdo2010}
{\sc E.~Brevdo and P.~Ramadge}, {\em Bridge detection and robust geodesics
  estimation via random walks}, IEEE Int. Conference on Acoustics, Speech \&
  Signal Processing, March 2010.

\bibitem{Brevdo2011a}
\leavevmode\vrule height 2pt depth -1.6pt width 23pt, {\em {Semisupervised
  learning: the regularized Laplacian, geodesics, and the small viscosity
  limit}}, Submitted,  (2011).

\bibitem{Candes2007}
{\sc E.~Cand{\`e}s and J.~Romberg}, {\em Sparsity and incoherence in
  compressive sampling}, Inverse problems, 23 (2007), p.~969.

\bibitem{Candes2006b}
{\sc E.~Cand{\`e}s, J.~Romberg, and T.~Tao}, {\em Robust uncertainty
  principles: Exact signal reconstruction from highly incomplete frequency
  information}, Information Theory, IEEE Transactions on, 52 (2006),
  pp.~489--509.

\bibitem{Candes2006}
{\sc E.~Candes, J.~Romberg, and T.~Tao}, {\em Stable signal recovery from
  incomplete and inaccurate measurements}, Communications on pure and applied
  mathematics, 59 (2006), pp.~1207--1223.

\bibitem{CHANG2006}
{\sc H.~Chang and D.~Yeung}, {\em Robust locally linear embedding}, Pattern
  Recognition, 39 (2006), pp.~1053--1065.

\bibitem{Chapelle2006}
{\sc O.~Chapelle, B.~Sch{\"o}lkopf, and A.~Zien}, eds., {\em Semi-Supervised
  Learning}, MIT Press, Cambridge, MA, 2006.

\bibitem{Chapelle2005}
{\sc O.~Chapelle and A.~Zien}, {\em {Semi-supervised classification by low
  density separation}}, in Proceedings of the tenth international workshop on
  artificial intelligence and statistics, vol.~2005, 2005.

\bibitem{Chen2006}
{\sc H.~Chen, G.~Jiang, and K.~Yoshihira}, {\em Robust nonlinear dimensionality
  reduction for manifold learning}, ICPR, 1 (2006), pp.~447--450.

\bibitem{Clark2006}
{\sc P.~Clark, D.~Archer, D.~Pollard, J.~Blum, J.~Rial, V.~Brovkin, A.~Mix,
  N.~Pisias, and M.~Roy}, {\em {The middle Pleistocene transition:
  characteristics, mechanisms, and implications for long-term changes in
  atmospheric pCO2}}, Quaternary Science Reviews, 25 (2006), pp.~3150--3184.

\bibitem{Coifman2006}
{\sc R.~Coifman and S.~Lafon}, {\em {Diffusion maps}}, Applied and
  Computational Harmonic Analysis, 21 (2006), pp.~5--30.

\bibitem{Coifman2005}
{\sc R.~R. Coifman, S.~Lafon, A.~B. Lee, M.~Maggioni, B.~Nadler, F.~Warner, and
  S.~W. Zucker}, {\em Geometric diffusions as a tool for harmonic analysis and
  structure definition of data: diffusion maps}, PNAS, 102 (2005),
  pp.~7426--31.

\bibitem{Cormen2001}
{\sc T.~Cormen, C.~Leiserson, R.~Rivest, and C.~Stein}, {\em {Introduction to
  algorithms.}}, Cambridge, MA: MIT Press, 2001.

\bibitem{Crandall1992}
{\sc M.~G. Crandall, H.~Ishii, and P.-L. Lions}, {\em User's guide to viscosity
  solutions of second order partial differential equations}, Bulletin of the
  American Mathematical Society, 27 (1992), pp.~1--68.

\bibitem{Dahlen2008}
{\sc F.~A. Dahlen and F.~J. Simons}, {\em {Spectral estimation on a sphere in
  geophysics and cosmology}}, Geophysical Journal International, 174 (2008),
  pp.~774--807.

\bibitem{dAspremont2008}
{\sc A.~d'Aspremont and L.~El~Ghaoui}, {\em Testing the nullspace property
  using semidefinite programming}, Mathematical Programming,  (2008),
  pp.~1--22.

\bibitem{Daubechies1992}
{\sc I.~Daubechies}, {\em {Ten lectures on wavelets}}, Society for Industrial
  Mathematics, 1992.

\bibitem{Daubechies2010b}
{\sc I.~Daubechies, R.~DeVore, M.~Fornasier, and C.~G{\"u}nt{\"u}rk}, {\em
  Iteratively reweighted least squares minimization for sparse recovery},
  Communications on Pure and Applied Mathematics, 63 (2010), pp.~1--38.

\bibitem{Daubechies2010}
{\sc I.~Daubechies, J.~Lu, and H.-T. Wu}, {\em Synchrosqueezed wavelet
  transforms: An empirical mode decomposition-like tool}, Applied and
  Computational Harmonic Analysis,  (2010).

\bibitem{Daubechies1996}
{\sc I.~Daubechies and S.~Maes}, {\em {A nonlinear squeezing of the continuous
  wavelet transform based on auditory nerve models}}, Wavelets in Medicine and
  Biology,  (1996), pp.~527--546.

\bibitem{doCarmo1992}
{\sc M.~P. do~Carmo}, {\em Riemannian Geometry}, Birkh\"auser, Boston, MA,
  1992.

\bibitem{Donoho2006}
{\sc D.~Donoho}, {\em Compressed sensing}, Information Theory, IEEE
  Transactions on, 52 (2006), pp.~1289--1306.

\bibitem{Driscoll1994}
{\sc J.~Driscoll and D.~Healy}, {\em Computing fourier transforms and
  convolutions on the 2-sphere}, Advances in Applied Mathematics, 15 (1994),
  pp.~202--250.

\bibitem{einthoven}
{\sc W.~Einthoven, G.~Fahr, and A.~Waart}, {\em {On the Direction and Manifest
  Size of the Variations of Potential in the Human Heart and on the Influence
  of the Position of the Heart on the Form of the Electrocardiogram}}, Am.
  Heart J., 40 (1950).

\bibitem{Eshagh2009}
{\sc M.~Eshagh}, {\em Spatially restricted integrals in gradiometric boundary
  value problems}, Artificial Satellites, 44 (2009), pp.~131--148.

\bibitem{Evans1998}
{\sc L.~C. Evans}, {\em {Partial Differential Equations (Graduate Studies in
  Mathematics, V. 19) GSM/19}}, American Mathematical Society, June 1998.

\bibitem{flaherty}
{\sc J.~Flaherty, S.~Blumenschein, A.~Alexander, R.~Gentzler, T.~Gallie,
  J.~Boineau, and M.~Spach}, {\em Influence of respiration on recording cardiac
  potentials:: Isopotential surface-mapping and vectorcardiographic studies},
  The American Journal of Cardiology, 20 (1967), pp.~21--28.

\bibitem{Flandrin1999}
{\sc P.~Flandrin}, {\em Time-frequency/time-scale analysis}, vol.~10 of Wavelet
  Analysis and its Applications, Academic Press Inc., San Diego, CA, 1999.

\bibitem{Flandrin2002}
{\sc P.~Flandrin, F.~Auger, and E.~Chassande-Mottin}, {\em {Time-frequency
  reassignment -- from principles to algorithms}}, in {Applications in
  time-frequency signal processing}, A.~Papandreou-Suppappola, ed., CRC, 2003.

\bibitem{Fulop2006}
{\sc S.~Fulop and K.~Fitz}, {\em {Algorithms for computing the time-corrected
  instantaneous frequency (reassigned) spectrogram, with applications}}, J.
  Acoust. Soc. Am., 119 (2006), p.~360.

\bibitem{Goupillaud1984}
{\sc P.~Goupillaud, A.~Grossmann, and J.~Morlet}, {\em {Cycle-octave and
  related transforms in seismic signal analysis}}, Geoexploration, 23 (1984),
  pp.~85--102.

\bibitem{Gurevich2008}
{\sc S.~Gurevich and R.~Hadani}, {\em {Incoherent dictionaries and the
  statistical restricted isometry property}}, Arxiv preprint arXiv:0809.1687,
  (2008).

\bibitem{Ham2004}
{\sc J.~Ham, D.~D. Lee, S.~Mika, and B.~Sch\"{o}lkopf}, {\em A kernel view of
  the dimensionality reduction of manifolds}, in ICML, ACM New York, NY, USA,
  2004.

\bibitem{Hays1976}
{\sc J.~Hays, J.~Imbrie, N.~Shackleton, et~al.}, {\em {Variations in the
  Earth’s orbit: pacemaker of the ice ages}}, Science, 194 (1976),
  pp.~1121--1132.

\bibitem{Hazewinkel1995}
{\sc M.~Hazewinkel and I.~Vinogradov}, {\em {Encyclopaedia of mathematics}},
  Kluwer Academic Publishers, 1995.

\bibitem{Hein2005}
{\sc M.~Hein, J.~Audibert, and U.~Von~Luxburg}, {\em {From graphs to
  manifolds--weak and strong pointwise consistency of graph Laplacians}},
  Learning theory,  (2005), pp.~470--485.

\bibitem{Horn1985}
{\sc R.~Horn and C.~Johnson}, {\em Matrix analysis}, Cambridge University Press
  New York, NY, USA, 1985.

\bibitem{Huang1998}
{\sc N.~Huang, Z.~Shen, S.~Long, M.~Wu, H.~Shih, Q.~Zheng, N.~Yen, C.~Tung, and
  H.~Liu}, {\em {The empirical mode decomposition and the Hilbert spectrum for
  nonlinear and non-stationary time series analysis}}, Proceedings:
  Mathematical, Physical and Engineering Sciences, 454 (1998), pp.~903--995.

\bibitem{Huybers2007}
{\sc P.~Huybers}, {\em {Glacial variability over the last two million years: an
  extended depth-derived agemodel, continuous obliquity pacing, and the
  Pleistocene progression}}, Quaternary Science Reviews, 26 (2007), pp.~37--55.

\bibitem{Huybers2006}
{\sc P.~Huybers and W.~Curry}, {\em {Links between annual, Milankovitch and
  continuum temperature variability}}, Nature, 441 (2006), pp.~329--332.

\bibitem{Jones2008}
{\sc P.~W. Jones, M.~Maggioni, and R.~Schul}, {\em Manifold parametrizations by
  eigenfunctions of the laplacian and heat kernels}, PNAS, 105 (2008), p.~1803.

\bibitem{Jost2001}
{\sc J.~Jost}, {\em {Riemannian Geometry and Geometric Analysis}}, Springer,
  3rd~ed., 2001.

\bibitem{Juditsky2008}
{\sc A.~Juditsky and A.~Nemirovski}, {\em On verifiable sufficient conditions
  for sparse signal recovery via $\ell_1$ minimization}, Mathematical
  Programming,  (2008), pp.~1--32.

\bibitem{Kim2009}
{\sc K.~Kim, F.~Steinke, and M.~Hein}, {\em {Semi-supervised regression using
  Hessian energy with an application to semi-supervised dimensionality
  reduction}}, 22 (2009).

\bibitem{Kimmel1998}
{\sc R.~Kimmel and J.~Sethian}, {\em {Computing geodesic paths on manifolds}},
  Proceedings of the National Academy of Sciences of the United States of
  America, 95 (1998), p.~8431.

\bibitem{Kruzkov1975}
{\sc S.~N. Kruzkov}, {\em Generalized solutions of the hamilton-jacobi
  equations of eikonal type. i. formulation of the problems; existence,
  uniqueness and stability theorems; some properties of the solutions},
  Mathematics of the USSR-Sbornik, 27 (1975), pp.~406--446.

\bibitem{Kubrusly2007}
{\sc C.~Kubrusly}, {\em {Measure theory: a first course}}, Academic Press,
  2007.

\bibitem{Lafon2004}
{\sc S.~Lafon}, {\em {Diffusion maps and geometric harmonics}}, PhD thesis,
  Yale University, 2004.

\bibitem{Landau1961}
{\sc H.~Landau and H.~Pollak}, {\em Prolate spheroidal wave functions, fourier
  analysis and uncertainty ii.}, Bell Syst. Tech. J, 40 (1961), pp.~65--84.

\bibitem{Landau1962}
\leavevmode\vrule height 2pt depth -1.6pt width 23pt, {\em Prolate spheroidal
  wave functions, fourier analysis and uncertainty. iii. the dimension of the
  space of essentially time-and band-limited signals}, Bell Syst. Tech. J, 41
  (1962), pp.~1295--1336.

\bibitem{Li2008}
{\sc B.~Li, D.-S. Huang, and C.~Wang}, {\em Improving the robustness of isomap
  by de-noising}, IJCNN,  (2008), pp.~266--270.

\bibitem{Lions1982}
{\sc P.~Lions}, {\em {Generalized solutions of Hamilton--Jacobi equations}},
  Pitman Boston-London, 1982.

\bibitem{Luxburg2007}
{\sc U.~Luxburg}, {\em {A tutorial on spectral clustering}}, Statistics and
  Computing, 17 (2007), p.~416.

\bibitem{ecgbook}
{\sc M.~Malik and J.~Camm}, {\em Dynamic Electrocardiography}, Wiley, 2004.

\bibitem{Mallat2009}
{\sc S.~Mallat}, {\em {A Wavelet Tour of Signal Processing, Third Edition: The
  Sparse Way}}, Academic Press, 3~ed., Dec. 2008.

\bibitem{Mantegazza2002}
{\sc C.~Mantegazza and A.~C. Mennucci}, {\em {Hamilton--Jacobi equations and
  distance functions on Riemannian manifolds}}, Applied Mathematics and
  Optimization, 47 (2002), pp.~1--25.

\bibitem{Maus2009}
{\sc S.~Maus, U.~Barckhausen, H.~Berkenbosch, N.~Bournas, J.~Brozena,
  V.~Childers, F.~Dostaler, J.~Fairhead, C.~Finn, R.~von Frese, et~al.}, {\em
  Emag2: A 2--arc min resolution earth magnetic anomaly grid compiled from
  satellite, airborne, and marine magnetic measurements}, Geochemistry
  Geophysics Geosystems, 10 (2009), p.~Q08005.

\bibitem{Maus2006}
{\sc S.~Maus, M.~Rother, C.~Stolle, W.~Mai, S.~Choi, H.~L{\"u}hr, D.~Cooke, and
  C.~Roth}, {\em Third generation of the potsdam magnetic model of the earth
  (pomme)}, Geochem. Geophys. Geosyst, 7 (2006).

\bibitem{Memoli2005}
{\sc F.~Memoli and G.~Sapiro}, {\em {Distance functions and geodesics on
  submanifolds of $\mathbb{R}^d$ and point clouds}}, SIAM Journal on Applied
  Mathematics, 65 (2005), p.~1227.

\bibitem{Maple10}
{\sc M.~B. Monagan, K.~O. Geddes, K.~M. Heal, G.~Labahn, S.~M. Vorkoetter,
  J.~McCarron, and P.~DeMarco}, {\em Maple~10 Programming Guide}, Maplesoft,
  Waterloo ON, Canada, 2005.

\bibitem{MMZM85}
{\sc G.~B. Moody, R.~G. Mark, A.~Zoccola, and M.~S.}, {\em {Derivation of
  respiratory signals from multi-lead ECGs}}, Computers in Cardiology,  (1985).

\bibitem{Nadler2009}
{\sc B.~Nadler, N.~Srebro, and X.~Zhou}, {\em {Semi-supervised learning with
  the graph Laplacian: the limit of infinite unlabelled data}}, in Advances in
  Neural Information Processing Systems, vol.~22, 2009.

\bibitem{Nayfeh1973}
{\sc A.~Nayfeh}, {\em {Perturbation methods}}, vol.~6, Wiley Online Library,
  1973.

\bibitem{Newman2004}
{\sc M.~Newman and M.~Girvan}, {\em Finding and evaluating community structure
  in networks}, Phys. Rev. E, 69 (2004), pp.~1--15.

\bibitem{Oppenheim1999}
{\sc A.~Oppenheim, R.~Schafer, and J.~Buck}, {\em {Discrete-Time Signal
  Processing}}, Prentice Hall, 1999.

\bibitem{Paul1978}
{\sc M.~Paul}, {\em Recurrence relations for integrals of associated legendre
  functions}, Journal of Geodesy, 52 (1978), pp.~177--190.

\bibitem{Pierrehumbert2010}
{\sc R.~Pierrehumbert}, {\em {Principles of planetary climate}}, Cambridge
  University Press, 2010.

\bibitem{Raymo2008}
{\sc M.~Raymo and P.~Huybers}, {\em {Unlocking the mysteries of the ice ages}},
  Nature, 451 (2008), pp.~284--285.

\bibitem{Reed1980}
{\sc M.~Reed and B.~Simon}, {\em {Methods of modern mathematical physics}},
  Academic press, 1980.

\bibitem{Riesz1956}
{\sc F.~Riesz and B.~Nagy}, {\em Functional analysis. (Translated from the 2nd
  French edition by LF Boron).}, 1956.

\bibitem{Rosenberg1997}
{\sc S.~Rosenberg}, {\em {The Laplacian on a Riemannian manifold: an
  introduction to analysis on manifolds}}, Cambridge Univ Press, 1997.

\bibitem{Ruddiman1989}
{\sc W.~Ruddiman, M.~Raymo, D.~Martinson, B.~Clement, and J.~Backman}, {\em
  {Pleistocene evolution: northern hemisphere ice sheets and North Atlantic
  Ocean}}, Paleoceanography, 4 (1989), pp.~353--412.

\bibitem{Ruddiman1986}
{\sc W.~Ruddiman, M.~Raymo, and A.~McIntyre}, {\em {Matuyama 41,000-year
  cycles: North Atlantic Ocean and northern hemisphere ice sheets}}, Earth and
  Planetary Science Letters, 80 (1986), pp.~117--129.

\bibitem{Rudin1973}
{\sc W.~Rudin}, {\em {Functional Analysis}}, McGraw-Hill, New York, 1973.

\bibitem{Schuss2005}
{\sc Z.~Schuss and A.~Spivak}, {\em {On recovering the shape of a domain from
  the trace of the heat kernel}}, Arxiv preprint math-ph/0501008,  (2005).

\bibitem{Sethian2000}
{\sc J.~Sethian and A.~Vladimirsky}, {\em {Fast methods for the Eikonal and
  related Hamilton--Jacobi equations on unstructured meshes}}, Proceedings of
  the National Academy of Sciences of the United States of America, 97 (2000),
  p.~5699.

\bibitem{Shannon1949}
{\sc C.~Shannon}, {\em {Communication in the presence of noise}}, Proceedings
  of the IRE, 37 (1949), pp.~10--21.

\bibitem{Simons2010}
{\sc F.~J. Simons}, {\em Slepian functions and their use in signal estimation
  and spectral analysis}, in The Handbook of Geomathematics, W.~Freeden, Z.~M.
  Nashed, and T.~Sonar, eds., Springer Verlag, 2010.

\bibitem{Simons2006}
{\sc F.~J. Simons and F.~A. Dahlen}, {\em {Spherical Slepian functions and the
  polar gap in geodesy}}, Geophysical Journal International, 166 (2006),
  pp.~1039--1061.

\bibitem{Simons2007}
{\sc F.~J. Simons and F.~A. Dahlen}, {\em A spatiospectral localization
  approach to estimating potential fields on the surface of a sphere from
  noisy, incomplete data taken at satellite altitudes}, in Proc. of SPIE Vol,
  vol.~6701, 2007.

\bibitem{Simons2006b}
{\sc F.~J. Simons, F.~A. Dahlen, and M.~A. Wieczorek}, {\em Spatiospectral
  concentration on a sphere}, SIAM Review, 48 (2006), p.~504.

\bibitem{Singer2006}
{\sc A.~Singer}, {\em From graph to manifold laplacian: The convergence rate},
  Applied and Computational Harmonic Analysis, 21 (2006), pp.~128 -- 134.

\bibitem{Singer2010}
{\sc A.~Singer, R.~Coifman, F.~Sigworth, D.~Chester, and Y.~Shkolnisky}, {\em
  Detecting consistent common lines in cryo-em by voting}, Journal of
  structural biology, 169 (2010), pp.~312--322.

\bibitem{Singer2009}
{\sc A.~Singer and H.~Wu}, {\em 2-d tomography from noisy projections taken at
  unknown random directions}, Submitted,  (2009).

\bibitem{Singer2011}
{\sc A.~Singer and H.~Wu}, {\em Vector diffusion maps and the connection
  laplacian}, Arxiv preprint arXiv:1102.0075,  (2011).

\bibitem{Slepian1978}
{\sc D.~Slepian}, {\em Prolate spheroidal wave functions, fourier analysis, and
  uncertainty. v-the discrete case}, Bell Syst. Tech. J, 57 (1978),
  pp.~1371--1430.

\bibitem{Slepian1961}
{\sc D.~Slepian and H.~O. Pollak}, {\em Prolate spheroidal wave functions,
  fourier analysis, and uncertainty-i}, Bell System Tech. J., 40 (1961),
  pp.~43--64.

\bibitem{Slobbe2011}
{\sc D.~C. Slobbe, F.~J. Simons, and R.~Klees}, {\em The spherical slepian
  basis as a means to obtain spectral consistency between mean sea level and
  the geoid}, Journal of Geodesy,  (2011).

\bibitem{Smola2003}
{\sc A.~Smola and R.~Kondor}, {\em {Kernels and regularization on graphs}}, in
  Learning theory and Kernel machines: 16th Annual Conference on Learning
  Theory and 7th Kernel Workshop, COLT/Kernel 2003, Washington, DC, USA, August
  24-27, 2003: proceedings, Springer Verlag, 2003, p.~144.

\bibitem{Stewart1998}
{\sc G.~Stewart}, {\em Afternotes goes to graduate school: lectures on advanced
  numerical analysis}, Society for Industrial Mathematics, 1998.

\bibitem{Subbarao2006a}
{\sc R.~Subbarao and P.~Meer}, {\em Subspace estimation using projection based
  m-estimators over grassmann manifolds}, ECCV, 3951 (2006), pp.~301--312.

\bibitem{Tadmor1986}
{\sc E.~Tadmor}, {\em {The exponential accuracy of Fourier and Chebyshev
  differencing methods}}, SIAM Journal on Numerical Analysis, 23 (1986),
  pp.~1--10.

\bibitem{Tenenbaum2000}
{\sc J.~Tenenbaum, V.~Silva, and J.~Langford}, {\em {A global geometric
  framework for nonlinear dimensionality reduction}}, Science, 290 (2000),
  p.~2319.

\bibitem{Thakur2010}
{\sc G.~Thakur and H.~Wu}, {\em {Synchrosqueezing-based recovery of
  instantaneous frequency from nonuniform samples}}, Submitted (Arxiv preprint
  arXiv:1006.2533),  (2010).

\bibitem{Tibshirani1996}
{\sc R.~Tibshirani}, {\em Regression shrinkage and selection via the lasso},
  Journal of the Royal Statistical Society. Series B (Methodological), 58
  (1996), pp.~267--288.

\bibitem{Ting2010}
{\sc D.~Ting, L.~Huang, and M.~Jordan}, {\em An analysis of the convergence of
  graph laplacians}, Tech. Rep. 792, Statistics Department -- University of
  California Berkeley, July 2010.

\bibitem{Gilbarg1983}
{\sc D.~Trudinger and N.~S. Gilbarg}, {\em {Elliptic partial differential
  equations of second order}}, Springer-Verlag, Berlin, second~ed., 1983.

\bibitem{Wahr1998}
{\sc J.~M. Wahr, M.~Molenaar, and F.~Bryan}, {\em Time variability of the
  {E}arth's gravity field: {H}ydrological and oceanic effects and their
  possible detection using {GRACE}}, JGR, 103 (1998), pp.~30205--30229.

\bibitem{Wedin1983}
{\sc P.~Wedin}, {\em On angles between subspaces of a finite dimensional inner
  product space}, Matrix Pencils,  (1983), pp.~263--285.

\bibitem{Wieczorek2005}
{\sc M.~A. Wieczorek and F.~J. Simons}, {\em Localized spectral analysis on the
  sphere}, Geophysical Journal International, 162 (2005), pp.~655--675.

\bibitem{Xu1998}
{\sc P.~Xu}, {\em Truncated svd methods for discrete linear ill-posed
  problems}, Geophysical Journal International, 135 (1998), pp.~505--514.

\bibitem{Zhou2004}
{\sc D.~Zhou and B.~Sch\"{o}lkopf}, {\em Learning from labeled and unlabeled
  data using random walks}, in Pattern Recognition: 26th DAGM Symp., Springer,
  2004, pp.~237--244.

\bibitem{Zhu2003}
{\sc X.~Zhu, Z.~Ghahramani, and J.~Lafferty}, {\em {Semi-supervised learning
  using Gaussian fields and harmonic functions}}, in International Conference
  on Machine Learning, vol.~20, 2003, p.~912.

\end{thebibliography}

\end{document}